\begin{document}

\makeatletter
\newcommand{\definetitlefootnote}[1]{
  \newcommand\addtitlefootnote{
    \makebox[0pt][l]{$^{*}$}
    \footnote{\protect\@titlefootnotetext}
  }
  \newcommand\@titlefootnotetext{\spaceskip=\z@skip $^{*}$#1}
}
\makeatother
\definetitlefootnote{
  This is an extended version of the reference~\cite{ramics2018}}

\title{The continuous weak order\addtitlefootnote} 

\author[M. J. Gouveia]{Maria Jo\~{a}o Gouveia$^1$}
\address{
  $^1$Faculdade de Ci\^{e}ncias, Universidade de
  Lisboa, Portugal
}
\email{mjgouveia@fc.ul.pt}
\thanks{$^1$Partially supported by FCT under grant
SFRH/BSAB/128039/2016}

\author[L. Santocanale]{Luigi Santocanale$^2$}
\address{
  $^2$LIS, CNRS UMR 7020, Aix-Marseille Universit\'e,
  France
}
\email{luigi.santocanale@lis-lab.fr}

\maketitle

\begin{abstract}
  
The set of permutations on a finite set can be given the lattice
structure known as the weak Bruhat order. This lattice structure is
generalized to the set of words on a fixed alphabet
$\Sigma = \{\,x,y,z,\ldots \,\}$, where each letter has a fixed number
of occurrences. These lattices are known as multinomial lattices and,
when $\mathrm{card}(\Sigma) = 2$, as lattices of lattice paths.  By
interpreting the letters $x,y,z,\ldots $ as axes, these words can be
interpreted as discrete increasing paths on a grid of a
$d$-dimensional cube, with $d = \mathrm{card}(\Sigma)$.

We show how to extend this ordering to images of continuous monotone
functions from the unit interval to a $d$-dimensional cube and prove
that this ordering is a lattice, denoted by $\LId$.  This construction
relies on a few algebraic properties of the quantale of
join-continuous functions from the unit interval of the reals to
itself: it is cyclic $\star$-autonomous and it satisfies the mix rule.

We investigate structural properties of these lattices, which are
self-dual and not distributive.  We characterize \jirr elements and
show that these lattices are generated under infinite joins from their
\jirr elements, they have no \cjirr elements nor compact elements.  We
study then embeddings of the $d$-dimensional multinomial lattices into
$\LId$. We show that these embeddings arise functorially from
subdivisions of the unit interval and observe that $\LId$ is the
Dedekind-MacNeille completion of the colimit of these embeddings.
Yet, if we restrict to embeddings that take rational values and if
$d > 2$, then every element of $\LId$ is only a join of meets of
elements from the colimit of these embeddings.

\end{abstract}

\smallskip
\noindent \textbf{Keywords.} Weak order; weak Bruhat order;
permutohedron; multinomial lattice; multipermutation; path; quantale;
star-autonomous; involutive residuated lattice; \jc; \mc. 

\section{Introduction}

The weak Bruhat order \cite{GuRo63,YaOk69} on the set of permutations
of an $n$-element set, also known as \Permutohedron, see \cite{STA0}
for an elementary exposition, is a lattice structure which has been
widely studied in view if its close connections to combinatorics and
geometry, see e.g. \cite{Bjo84,bjornerbrenti,STA2-9,STA2-10}.  Its
algebraic structure has also been investigated and, by now, is well
understood \cite{Casp00,SaWe13,JEMS}.

Multinomial lattices \cite{BB,Flath93,RAMIREZALFONSIN2002,ORDER-24-3},
or lattices of multipermutations, generalize \Permutohedra in a
natural way.  Elements of a multinomial lattice are multipermutations,
namely 
words on a totally ordered finite alphabet
$\Sigma = \{\,x,y,z\ldots \,\}$ with a fixed number of occurrences of
each letter. The \wo on multipermutations is the reflexive and
transitive closure of the binary relation $\covered$ defined by
$wabu \covered wbau$, for $a,b \in \Sigma$ and $a < b$.  If each
letter of the alphabet has exactly one occurrence, then these words
are permutations and the ordering is the weak Bruhat ordering.
Multinomial lattices embed into \Permutohedra as principal ideals;
possibly, this is a reason for the lattice theoretic literature on
them not to be contained. Multipermutations have, however, a strong
geometrical flavour that in our opinion justifies exploring further
their lattice theoretic structure.
These words can be given a geometrical interpretation as discrete
increasing paths in some Euclidean cube of dimension
$d = \card(\Sigma)$; the \wo can be thought of as a way of organizing
these paths into a lattice structure. When $\card(\Sigma) = 2$, the
connection with geometry is well-established: in this case these
lattices are also known as lattices of lattice paths with North and
East steps \cite{pinzani}; the objects these lattices are made of are
among the most studied in enumerative combinatorics
\cite{Krattenthaler97,BANDERIER2002} and many counting results are
implicitly related to the order and lattice structures.  We did not
hesitate in \cite{ORDER-24-3} to call the multinomial lattices
``lattices of paths in higher dimensions''.  Willing to understand the
geometry of higher dimensional multinomial lattices, we started
wondering whether there are full geometric relatives of these
lattices. More precisely, we asked whether the \wo can be extended
from discrete paths to continuous increasing paths. We present in this
paper our answer to this question.  Our main result sounds as follows:
\begin{theorem*}
  Let $d \geq 2$.  Images of increasing continuous paths from
  $\vec{0}$ to $\vec{1}$ in $\mathbb{R}^{d}$ can be given the
  structure of a lattice; moreover, all the \Permutohedra and all the
  multinomial lattices can be embedded into one of these lattices
  while respecting the dimension $d$.
\end{theorem*}
We call this lattice the \emph{\cwo} in dimension $d$.
While a proof of the above statement was 
available a few years ago, only recently we could structure and ground
that proof on a solid algebraic setting, making it possible to further
study these lattices.
The algebra we consider is the one of the quantale $\LjI$ of \jc
functions from the unit interval of the reals to itself. This is a
\saq, see \cite{barr79}, and moreover it satisfies the mix rule, see
\cite{cockettSeely}.
The construction of the \cwo is actually an instance of a general
construction of a lattice $\Ld{Q}$ from a \saq $Q$ satisfying the mix
rule. When $Q = \two$ (the two-element Boolean algebra) this
construction yields the usual \wBo on permutations; when $Q = \LjI$,
this construction yields the \cwo. Moreover, when $Q$ is the quantale
of \jc functions from the finite chain $\set{0,1,\ldots ,n}$ to
itself, this construction yields a multinomial lattice. The functorial
properties of this construction are a key tool for analysing various
embeddings.
The step we took can be understood as an instance of moving to a
different set of (non-commutative, in this case) truth values, as
notably suggested in \cite{lawvere}.

\medskip 
Let us state our algebraic results.  Let
$\langle Q, 1,\otimes,\opp{}\rangle$ be a cyclic non-commutative \saq
satisfying the MIX rule. That is, we require that
$x \otimes y \leq x \oplus y$, for each $x,y \in Q$, where $\oplus$
is the monoid structure dual to $\otimes$.
Let $d \geq 2$, $\cd := \{\,(i,j) \mid 1\leq i < j \leq d\,\}$ and
consider the product $Q^{\cd}$. Say that a tuple $f \in Q^{\cd}$ is
\emph{closed} if $f_{i,j} \otimes f_{j,k} \leq f_{i,k}$ (each
$i < j < k$), and that it is \emph{open} if
$f_{i,k} \leq f_{i,j} \oplus f_{j,k}$ (each $i < j < k$). Say that $f$
is \emph{clopen} if it is closed and open. Under these conditions, the
following statement hold:
\begin{theorem*}
  The set of clopen tuples of $Q^{\cd}$ is, with the pointwise ordering,
  a lattice, noted $\Ld{Q}$. The construction $\Ld{-}$ yields a
  limit preserving functor to the category of lattices.
\end{theorem*}
We shall make later in the text precise the domain of this functor.
Paired with the following statement, relating the algebraic structure
of $\QjI$ to the reals, we obtain a proof the main result stated
above.
\begin{theorem*}
  Clopen tuples of $\LjI^{\cd}$ bijectively correspond to images of
  monotonically increasing continuous functions $\p : \I \rto \I^{d}$
  such that $\p(0) = \vec{0}$ and $\p(1) = \vec{1}$.
\end{theorem*}
Let us mention that motivations for developing this work also
originated from various researches undergoing in theoretical computer
science, modelling the behaviour of concurrent processes via directed
homotopy \cite{goubault2003,Grandis2005} and discrete approximation of
continuous paths via words \cite{reutenauer}.  The relationship
between directed homotopies and congruences of two-dimensional
multinomial lattices was discussed in \cite{ORDER-24-3}. The
connection with discrete geometry appears in the conference version of
this work \cite{ramics2018}. In both cases it was distinct to us the
need of developing the mathematics of a continuous \wo in dimension
$d \geq 3$.

\bigskip

The paper is organized as follows.  We recall in \secRef{notation}
some definitions and elementary results, mainly on \jcont (and \mcont)
functions and adjoints.
In \secRef{bisemigroups} we identify the least algebraic structure
needed to perform the construction of the lattice $\Ld{Q}$.
Therefore, we introduce and study mix \lbs{s} which, in the cases of
interest to us, arise from \msaq{s}.
\secRef{quantalesFromChains} proves that if $I$ is what we call a
perfect chain, then the quantale of \jc functions from $I$ to itself
is mix \staraut. 
Finite chains and the unit interval of the real numbers are examples
of perfect chains.
\secRef{latticesFromQuantales} describes the construction of the
lattice $\Ld{Q}$, for an integer $d \geq 2$ and a \lbs $Q$.
In \secRef{quantaleLI} we focus on the particular structure of $\LjI$,
the quantale of continuous functions from the unit interval to itself.
\secRef{paths} defines the central notion of path and discusses its
equivalent characterizations.  In \secRef{pathsDimensionTwo} we show
that paths in dimension $2$ are in bijection with elements of the
quantale $\QjI$.  In \secRef{pathsDimensionMore} we argue that paths
in higher dimensions bijectively correspond to clopen tuples of the
product lattice $\QjI^{\cd}$, that is, to elements of $\Ld{\QjI}$.
In \secRef{weakBruhat} we discuss some structural properties of the
lattices $\LjI$; in particular we characterize \jirr elements of these
lattices and argue that these lattices do not have any \cjirr element
nor any compact element.
In \secRef{embeddings} we argue that embeddings from multinomial
lattices into the \cwo functorially arise from complete maps of
perfect chains.
Finally, in \secRef{generation}, we argue that if we restrict to the
embeddings of multinomial lattices obtained from splitting the unit
interval into $n$ intervals of the same size, then the \cwo is not the
\DMNc of the colimit of these embeddings, yet every element is a join
of meets (and a meet of joins) of elements from such a colimit.

\section{Elementary facts on \jcont functions}
\label{sec:notation}

Throughout this paper, $\setof{d}$ shall denote the set
$\set{1,\ldots ,d}$ while we let
$\cd := \set{(i,j) \mid 1 \leq i < j \leq d}$.

Let $P$ and $Q$ be complete posets; a function $f : P \rTo Q$ is
\emph{\jcont} (\resp \emph{\mcont}) if
\begin{align}
  \label{eq:meetcontinuous}
  f(\bigvee X) & = \bigvee_{x \in X} f(x)\,, 
  \;\;\;\;(\text{\resp}\; f(\bigwedge X) = \bigwedge_{x \in X} f(x))\,,  
\end{align}
for every $X\subseteq P$ such that $\bigvee X$ (\resp $\bigwedge X$)
exists. We say that $f$ is \emph{\bcont} if it is both \jc and \mc.

Recall that $\bot_{P} := \bigvee \emptyset$ (\resp
$\top_{P} := \bigwedge \emptyset$) is the least (\resp greatest)
element of $P$. Note that if $f$ is join-continuous (\resp \mcont)
then $f$ is monotone and $f(\bot_P)=\bot_Q$ (\resp
$f(\top_P)=\top_Q$).
Let $f$ be as above; a map $g : Q \rto P$ is \emph{\LADJ} to $f$ if
$g(q) \leq p$ holds if and only if $q \leq f(p)$ holds, for each
$p \in P$ and $q \in Q$; it is \emph{\RADJ} to $f$ if $f(p) \leq q$ is
equivalent to $p \leq g(q)$, for each $p \in P$ and $q \in Q$.  Notice
that there is at most one function $g$ that is \LADJ (\resp \RADJ) to
$f$; we write this relation by $g = \ladj{f}$ (\resp $g =
\radj{f}$). Clearly, when $f$ has a \RADJ, then
$f = \ladj{(\radj{g})}$, and a similar formula holds when $f$ has a
\LADJ.  We shall often use the following fact:
\begin{lemma}
  If $f : P \rto Q$ is monotone and $P$ and $Q$ are two complete
  posets, then the following are equivalent:
  \begin{enumerate}
  \item $f$ is \jcont (\resp \mcont),
  \item $f$ has a \RADJ (\resp \LADJ).
  \end{enumerate}
\end{lemma}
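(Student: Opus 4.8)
The plan is to treat this as the classical adjoint-functor criterion for complete posets: prove the two directions of the \jcont{}/\RADJ equivalence directly, and then obtain the parenthetical \mcont{}/\LADJ version for free by order-duality.

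First I would do $(2) \Rightarrow (1)$. Suppose $f$ has a \RADJ $g = \radj{f}$ and let $X \subseteq P$ be a subset for which $\bigvee X$ exists. Monotonicity of $f$ makes $f(\bigvee X)$ an upper bound of $\set{f(x) \mid x \in X}$. To see that it is the least one, pick any $q$ with $f(x) \leq q$ for all $x \in X$: by the adjunction this says $x \leq g(q)$ for every $x$, whence $\bigvee X \leq g(q)$, and applying the adjunction once more gives $f(\bigvee X) \leq q$. As $Q$ is complete, $\bigvee_{x \in X} f(x)$ exists, and the computation just made shows that it equals $f(\bigvee X)$; thus $f$ is \jcont.

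For $(1) \Rightarrow (2)$, assume $f$ is \jcont. Using completeness of $P$, define $g : Q \rto P$ by
\[
  g(q) := \bigvee \set{p \in P \mid f(p) \leq q}\,.
\]
It then remains to check that $f(p) \leq q$ is equivalent to $p \leq g(q)$. The forward implication is immediate, since $f(p) \leq q$ places $p$ among the elements whose supremum defines $g(q)$. For the converse, from $p \leq g(q)$ and monotonicity we get $f(p) \leq f(g(q))$; here the full strength of join-continuity is used, namely $f(g(q)) = \bigvee \set{f(p') \mid f(p') \leq q}$, a supremum each of whose members is below $q$, so that $f(g(q)) \leq q$ and hence $f(p) \leq q$. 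This shows $g = \radj{f}$.

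Finally, the \mcont{}/\LADJ case follows by applying the equivalence just proved to the order-duals of $P$ and $Q$: passing to the opposite orders turns \mcont functions into \jcont ones, meets into joins, and left adjoints into right adjoints, so the statement transports without further work. I do not expect any genuine obstacle here: the only point where monotonicity alone would not suffice is the inequality $f(g(q)) \leq q$, which is precisely where join-continuity enters.
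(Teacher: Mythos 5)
Your proof is correct and is exactly the standard adjoint-criterion argument the paper has in mind: the paper states this lemma as a known fact without proof, and immediately afterwards records the very formula $\radj{f}(q) = \bigvee \{\,p \in P \mid f(p) \leq q\,\}$ that you use to construct the right adjoint, with the dual case handled by order-duality just as you do. No gaps: the only delicate step, $f(\radj{f}(q)) \leq q$, is justified precisely where you invoke join-continuity.
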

If $f$ is \jcont (\resp \mcont), then we have
\begin{align*}
  \radj{f}(q) & = \bigvee \set{p \in P \mid f(p) \leq q} \qquad
  (\,\text{\resp} \;\;\ladj{f}(q) = \bigwedge \set{p \in P \mid q \leq
    f(p)}\,)\,, \tag*{for each $q \in Q$.}
\end{align*}
Moreover, if $f$ is surjective, then these formulas can be
strengthened so to substitute inclusions with equalities:
\begin{align}
  \label{eq:adjsurjective}
  \radj{f}(q) & = \bigvee \set{p \in P \mid  f(p) = q}
  \qquad (\,\text{\resp}
  \;\;\ladj{f}(q) = \bigwedge \set{p \in P \mid  q = f(p)}\,)\,, \\
  &\mbox{\hspace{80mm}}\tag*{for each $q \in Q$.}
\end{align}
The set of monotone functions from $P$ to $Q$ can be ordered
point-wise: $f \leq g$ if $f(p) \leq g(p)$, for each $p \in
P$. Suppose now that $f$ and $g$ both have \RADJ{s}; let us argue that
$f \leq g$ implies $\radj{g} \leq \radj{f}$: for each $q \in Q$, the
relation $\radj{g}(q) \leq \radj{f}(q)$ is obtained by transposing
$f(\radj{g}(q)) \leq g(\radj{g}(q)) \leq q$, where the inclusion
$g(\radj{g}(q)) \leq q$ is the counit of the adjunction.  Similarly,
if $f$ and $g$ both have \LADJ{s}, then $f \leq g$ implies
$\ladj{g} \leq \ladj{f}$.

Let $P$ be a poset, and let $\iota : P \rto Q$ be an embedding of $P$
into a complete lattice $Q$. Such embedding is a \emph{\DMNc} if
$\iota$ is \bcont and, for each $q \in Q$, there are sets
$X,Y \subseteq P$ such that
$q =\bigvee_{x \in X} \iota(x) = \bigwedge_{y \in Y} \iota(y)$. The
\DMNc is unique up to isomorphism.

\section{\LBS{s}}
\label{sec:bisemigroups}

A (\nc, \bd) \emph{\Lbs} (\lbs, for short) is a structure
$\langle Q, \bot,\vee,\top,\land,\otimes,\oplus\rangle$ where
$\langle Q, \bot,\vee,\top,\land\rangle$ is a bounded lattice,
$\otimes$ is a binary associative operation on $Q$ which distributes
overs finite joins, $\oplus$ is a binary associative operation on $Q$
which distributes over finite meets; moreover, the following relations
\begin{align}
    \label{eq:hemidistr1p}
    \beta \otimes (\gamma \oplus \delta) & \leq (\beta \otimes \gamma)
    \oplus \delta \,,\\
    \label{eq:hemidistr2p}
    (\alpha \oplus \beta) \otimes \gamma & \leq
    \alpha \oplus (\beta \otimes \gamma)\,.
\end{align}
holds, for each $\alpha,\beta,\gamma,\delta \in Q$. We call these
inclusions \emph{hemidistributive} laws.  We say that an \lbs is
\emph{mix} if the relation
\begin{align}
  \label{eq:mix}
  \alpha \otimes \beta & \leq \alpha \oplus \beta\,.
\end{align}
holds, for each $\alpha,\beta \in Q$.  We call this inclusion the
\emph{mix} rule.
The inclusions \eqref{eq:hemidistr1p} and \eqref{eq:hemidistr2p} are
non-commutative versions of the hemidistributive law of \cite[\S
6.9]{dunn2001algebraic} and are related to the weak distributivity of
\cite{cockettSeely2}. The mix rule \eqref{eq:mix} is well known in
proof theory, see e.g. \cite{cockettSeely}.

\begin{remark}
  All the \lbs{s} that we shall consider have units; therefore, they
  are (possibly non-commutative) \lbm{s} in the sense of
  \cite{GalatosLADT2018}.  We use $1$ (\resp $0$) to denote the unit
  of the operation $\otimes$ (\resp of $\oplus$) of an \lbm.  The
  signature of \lbm{s} is obtained by adding the two unit constants to
  the signature of \lbs{s}.
  Let us emphasize, however, that the morphisms between \lbm{s} that
  we shall consider do not, in general, preserve units.  This is the
  reason for which we emphasize the weaker structure of \lbs.
\end{remark}

We shall also use the following generalized
hemidistributive laws:
\begin{align}
  \label{eq:hemidistr1}
  (\alpha \oplus \beta) \otimes (\gamma \oplus \delta) &\leq
  \alpha \oplus (\beta \otimes \gamma) \oplus \delta\,, \\
  \alpha \otimes (\beta \oplus \gamma) \otimes \delta&\leq
  (\alpha \otimes \beta) \oplus (\gamma \otimes \delta)\,,
  \label{eq:hemidistr2} 
\end{align}

\begin{lemma}
  The inclusions \eqref{eq:hemidistr1} and \eqref{eq:hemidistr2} are
  derivable from \eqref{eq:hemidistr1p} and \eqref{eq:hemidistr2p}.
  Moreover, in the extended language  of \lbm{s} (using units)
  these pairs of inclusions are equivalent and the mix rule
  \eqref{eq:mix} is equivalent to $0 \leq 1$.
\end{lemma}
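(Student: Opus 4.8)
The plan is to prove the three assertions in turn; each reduces to a short chain of inequalities that uses only the associativity of $\otimes$ and $\oplus$ and their monotonicity in each argument, monotonicity being an immediate consequence of distribution over finite joins (\resp finite meets).

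To derive \eqref{eq:hemidistr1}, I would start from $(\alpha \oplus \beta) \otimes (\gamma \oplus \delta)$, apply \eqref{eq:hemidistr2p} with $\gamma \oplus \delta$ substituted for the right-hand factor to obtain the upper bound $\alpha \oplus (\beta \otimes (\gamma \oplus \delta))$, then bound $\beta \otimes (\gamma \oplus \delta)$ by $(\beta \otimes \gamma) \oplus \delta$ via \eqref{eq:hemidistr1p}, and conclude with monotonicity and associativity of $\oplus$. Symmetrically, \eqref{eq:hemidistr2} follows by reading $\alpha \otimes (\beta \oplus \gamma) \otimes \delta$ as $\bigl(\alpha \otimes (\beta \oplus \gamma)\bigr) \otimes \delta$, bounding the left factor by $(\alpha \otimes \beta) \oplus \gamma$ with \eqref{eq:hemidistr1p}, using monotonicity of $\otimes$, and then applying \eqref{eq:hemidistr2p}. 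Both derivations use no units, hence are valid in every \lbs.

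For the equivalence of the two pairs in the signature with units, the remaining direction is obtained by instantiating the constants: \eqref{eq:hemidistr1} specialized at $\alpha = 0$ yields \eqref{eq:hemidistr1p}, and at $\delta = 0$ yields \eqref{eq:hemidistr2p} (and, dually, \eqref{eq:hemidistr2} at $\delta = 1$ yields \eqref{eq:hemidistr1p} and at $\alpha = 1$ yields \eqref{eq:hemidistr2p}). Finally, the mix rule \eqref{eq:mix} instantiated at $\alpha = 1$, $\beta = 0$ gives $0 = 1 \otimes 0 \leq 1 \oplus 0 = 1$; conversely, if $0 \leq 1$ then $\alpha = \alpha \oplus 0 \leq \alpha \oplus 1$, so $\alpha \otimes \beta \leq (\alpha \oplus 1) \otimes \beta \leq \alpha \oplus (1 \otimes \beta) = \alpha \oplus \beta$, by monotonicity of $\otimes$ followed by \eqref{eq:hemidistr2p}.

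None of this presents a real obstacle; the only point to keep track of is which steps require units. The generalized hemidistributive laws follow from the basic ones purely equationally and without units, whereas the converse implications and the reformulation of mix as $0 \leq 1$ genuinely rely on the constants $0$ and $1$ — which is consistent with the earlier remark that morphisms of \lbm{s} need not preserve units, so the weaker unitless formulation is the one that transports along morphisms.
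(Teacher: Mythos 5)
Your proposal is correct and follows essentially the same route as the paper: the derivation of \eqref{eq:hemidistr1} via \eqref{eq:hemidistr2p} then \eqref{eq:hemidistr1p} is identical, as are the unit instantiations $\alpha = 0$ and $\delta = 0$ recovering \eqref{eq:hemidistr1p} and \eqref{eq:hemidistr2p}, and the derivation of $0 \leq 1$ from \eqref{eq:mix}. The only (harmless) differences are that you spell out the symmetric derivation of \eqref{eq:hemidistr2}, which the paper leaves implicit, and you obtain mix from $0 \leq 1$ via $\alpha \leq \alpha \oplus 1$ and \eqref{eq:hemidistr2p}, whereas the paper instantiates \eqref{eq:hemidistr1} at $\beta = \gamma = 0$ — a cosmetic variation of the same idea.
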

\begin{proof}
  Having both \eqref{eq:hemidistr1p} and \eqref{eq:hemidistr2p}, 
  we derive \eqref{eq:hemidistr1} as follows:
  \begin{align*}
    (\alpha \oplus \beta) \otimes (\gamma \oplus \delta) & \leq \alpha
    \oplus (\beta \otimes (\gamma \oplus \delta)) \leq \alpha \oplus
    (\beta \otimes \gamma) \oplus \delta\,.
  \end{align*}
  Using units, we obtain \eqref{eq:hemidistr1p} from \eqref{eq:hemidistr1} by
  instantiating $\alpha$ to $0$; we obtain \eqref{eq:hemidistr2p} from
  \eqref{eq:hemidistr1} by instantiating $\delta$ to $0$. 
  For the last statement, if \eqref{eq:mix} holds, then $0 \leq 1$
  is derived by instantiating in \eqref{eq:mix} $\alpha$ with $0$ and
  $\beta$ with $1$.
  Conversely, suppose that $0 \leq 1$ and observe then that
  $0 \otimes 0 \leq 0 \otimes 1 = 0$.  Letting $\beta = \gamma = 0$
  in \eqref{eq:hemidistr1}, we derive \eqref{eq:mix} as follows:
  \begin{align*}
    \alpha \otimes \delta & = (\alpha \oplus 0) \otimes (0 \oplus
    \delta) \leq \alpha \oplus (0 \otimes 0) \oplus \delta \leq
    \alpha \oplus 0 \oplus \delta = \alpha \oplus \delta\,.
    \tag*{\qedhere}
  \end{align*}
\end{proof}

\medskip

All the \lbs{s} that we shall consider arise from \nc \bd \irl.

\smallskip

A (\nc, \bd) \emph{\rl} is a structure
$\langle Q,\bot,\vee,\top,\land,1,\otimes,\lrimpl,\rlimpl\rangle$ such
that $\langle Q,\bot,\vee,\top,\land\rangle$ is a \bd lattice,
$\langle Q, 1,\otimes\rangle$ is a monoid structure compatible with
the lattice ordering (noted $\leq$) which moreover is related to the
binary operations $\lrimpl,\rlimpl$ as follows: 
\begin{align}
  \label{eq:residuation}
  \alpha \otimes \beta & \leq \gamma \quad \tiff\quad \alpha \leq
  \gamma\rlimpl \beta \quad \tiff\quad \beta \leq \alpha \lrimpl
  \gamma\,, \quad \text{for each $\alpha,\beta,\gamma \in Q$}.
\end{align}
The operations $\lrimpl,\rlimpl$ are called the residuals (or
adjoints) of $\otimes$. Let us recall that the following inclusions
are valid:
\begin{align}
  \label{eq:units}
  \alpha \otimes (\alpha \lrimpl \beta) & \leq \beta\,,
  & (\beta \rlimpl \alpha)\otimes \alpha & \leq \beta\,.
\end{align}

A (\emph{unital}) \emph{quantale} \cite{rosenthal1990} is a complete
lattice $Q$ coming with a monoid structure $1,\otimes$ such that
$\otimes$ distributes over arbitrary joins in both variables. A
quantale is a \rl in a canonical way, as distribution over arbitrary
joins ensures the existence of the residuals.

\smallskip

A \rl is said to be
\emph{involutive} if it comes with an element $0 \in Q$ such that
\begin{align*}
  x \lrimpl 0 & = 0 \rlimpl x\,,
  \\
   0\rlimpl  (x \lrimpl 0) & = x\,,
\end{align*}
for each $x \in Q$. Such an element $0$ is called \emph{cyclic} and
\emph{dualizing}.
  In \cite{ramics2018} we called a complete \irl a \emph{\saq}, as these
  structures are posetal version of \staraut categories
  \cite{barr79}. Similar namings, such as \emph{(pseudo) \staraut
    lattice}, have also been used in the literature, see
  e.g. \cite{Paoli2005,Emanovsky2008}. We shall stick to this naming
  in the future sections as all the \irl{s} that we consider are
  complete.
Given an \irl
$\langle Q,\bot,\vee,\top,\land,1,\otimes,\lrimpl,\rlimpl,0\rangle$,
we obtain an \lbm by defining
\begin{align}
  \label{def:oppoplus}
  \opp{x} & := x \lrimpl 0\,,
  &
  f \oplus g &  := \Opp{(\opp{g} \otimes
    \opp{f})}\,.
\end{align}
From these definitions it follows that $\oppfun$ is an antitone
involution of $Q$ and that $0 = \opp{1}$. Moreover, considering that
\begin{align*}
  \opp{(x \otimes y)} & = y \lrimpl \opp{x} = \opp{y} \rlimpl x\,, \\
  \quad \opp{x} \oplus y  & = \Opp{(\opp{y} \otimes x)} = x \lrimpl
   \OPP{\opp{y}} 
    = x \lrimpl y\,,
   \\ \quad
   x \oplus \opp{y} & = x \rlimpl y\,,
 \end{align*}
 the relations in \eqref{eq:residuation} can be expressed as follows:
\begin{align}
  \alpha \otimes \beta & \leq \gamma \quad \tiff\quad \alpha \leq
  \gamma\oplus \opp{\beta}\quad \tiff\quad \beta \leq
  \opp{\alpha}\oplus \gamma\,, \quad \text{for each
    $\alpha,\beta,\gamma \in Q$}.
  \label{eq:residuationoplus}
\end{align}
\begin{lemma}
  With the definitions given in equation~\eqref{def:oppoplus}, each
  \irl is a \lbm, and therefore an \lbs.
\end{lemma}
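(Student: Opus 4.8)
The plan is to verify, one clause at a time, the defining axioms of an \lbm{} for the structure $\langle Q,\bot,\vee,\top,\land,1,\otimes,0,\oplus\rangle$ obtained from an \irl{} by the definitions in \eqref{def:oppoplus}; the passage from an \lbm{} to an \lbs{} is then just a matter of forgetting the two unit constants. The bounded-lattice fragment and the fact that $\langle Q,1,\otimes\rangle$ is a monoid are part of the hypotheses. Since $\otimes$ carries residuals, for each fixed $\beta$ the map $x\mapsto x\otimes\beta$ is a left adjoint (to $x\mapsto x\rlimpl\beta$) and hence preserves all existing joins, and symmetrically for $x\mapsto\beta\otimes x$; in particular $\otimes$ distributes over binary joins and over $\bot=\bigvee\emptyset$, i.e.\ over finite joins.

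For the $\oplus$-side I would lean on the facts already recorded above: $\oppfun$ is an antitone involution of $Q$ with $\opp{1}=0$ (hence also $\opp{0}=1$, $\opp{\bot}=\top$, $\opp{\top}=\bot$), and, being an order anti-automorphism, it interchanges finite meets with finite joins. Each monoid-and-distribution law for $\oplus$ then follows by transporting the corresponding law for $\otimes$ along this order-reversing bijection: associativity of $\oplus$ from associativity of $\otimes$; the unit laws $x\oplus 0 = x = 0\oplus x$ from $x\otimes 1 = x = 1\otimes x$ together with $\opp{0}=1$; and distribution of $\oplus$ over finite meets from distribution of $\otimes$ over finite joins. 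In particular $\oplus$ is monotone in each argument, a fact needed below.

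The only clauses carrying genuine content are the hemidistributive laws \eqref{eq:hemidistr1p} and \eqref{eq:hemidistr2p}, and these I would derive directly from the reformulation \eqref{eq:residuationoplus} of residuation, $\alpha\otimes\beta\leq\gamma\iff\beta\leq\opp{\alpha}\oplus\gamma\iff\alpha\leq\gamma\oplus\opp{\beta}$. Transposing the reflexivity inequality $\beta\otimes\gamma\leq\beta\otimes\gamma$ in the two available ways gives $\gamma\leq\opp{\beta}\oplus(\beta\otimes\gamma)$ and $\beta\leq(\beta\otimes\gamma)\oplus\opp{\gamma}$. From the first, monotonicity of $\oplus$ followed by associativity yields $\gamma\oplus\delta\leq\opp{\beta}\oplus\bigl((\beta\otimes\gamma)\oplus\delta\bigr)$, whence a further application of \eqref{eq:residuationoplus} produces $\beta\otimes(\gamma\oplus\delta)\leq(\beta\otimes\gamma)\oplus\delta$, i.e.\ \eqref{eq:hemidistr1p}. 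From the second, monotonicity and associativity give $\alpha\oplus\beta\leq\bigl(\alpha\oplus(\beta\otimes\gamma)\bigr)\oplus\opp{\gamma}$, and \eqref{eq:residuationoplus} again produces $(\alpha\oplus\beta)\otimes\gamma\leq\alpha\oplus(\beta\otimes\gamma)$, i.e.\ \eqref{eq:hemidistr2p}. Thus the only real obstacle is the observation that both hemidistributive laws are transposes of the reflexivity of $\leq$, modulo the associativity and monotonicity of $\oplus$ secured in the previous step; once this is seen the remaining checks are routine bookkeeping, so I would present them in compressed form.
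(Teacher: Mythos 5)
Your proof is correct, and it shares the paper's overall architecture: the $0,\oplus$ fragment is obtained by transporting the $1,\otimes$ fragment along the antitone involution, and the hemidistributive laws come from adjointness in the $\oplus$-form \eqref{eq:residuationoplus}. The execution of the second step differs, though. The paper proves the generalized law \eqref{eq:hemidistr1} in a single computation: writing $\alpha\oplus\beta=\opp{\alpha}\lrimpl\beta$ and $\gamma\oplus\delta=\gamma\rlimpl\opp{\delta}$, it bounds $\opp{\alpha}\otimes(\alpha\oplus\beta)\otimes(\gamma\oplus\delta)\otimes\opp{\delta}$ by $\beta\otimes\gamma$ using the counit inequalities \eqref{eq:units} and then transposes once by \eqref{eq:residuationoplus}, the primed laws being recovered by instantiating units as in the preceding lemma. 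You instead hit \eqref{eq:hemidistr1p} and \eqref{eq:hemidistr2p} directly, each as a double transposition of the reflexivity $\beta\otimes\gamma\leq\beta\otimes\gamma$ (i.e.\ from the unit rather than the counit inequalities of the adjunctions), padded by monotonicity and associativity of $\oplus$; both of your transpositions check out against \eqref{eq:residuationoplus}. Your route has the minor advantage of verifying exactly the axioms in the definition of an \lbs, with no detour through the four-variable law or the equivalence-via-units observation; the paper's route yields the stronger inclusion \eqref{eq:hemidistr1} (the form actually invoked later, e.g.\ in Proposition~\ref{prop:interiorofclosed}) in one stroke. Since the unit and counit inequalities are themselves transposes of reflexivity, the difference is one of bookkeeping rather than substance, and your compressed treatment of the transported monoid laws for $\oplus$ matches the paper's one-line appeal to duality.
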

\begin{proof}
  Since $0,\oplus$ are dual to $1,\otimes$, $\oplus$ is a monoid
  operation on $Q$ with unit $0$ and which distributes over meets.

  We therefore verify that the hemidistributive laws holds in $Q$.
  Considering that
  $\alpha \oplus \beta = \oppopp{\alpha} \oplus \beta = \opp{\alpha}
  \lrimpl \beta$ and, similarly,
  $\gamma \oplus \delta = \gamma \rlimpl \opp{\delta}$, we derive
  \begin{align*}
    \opp{\alpha} \otimes (\alpha \oplus \beta) \otimes (\gamma
    \oplus \delta) \otimes \opp{\delta} & = \opp{\alpha} \otimes
    (\opp{\alpha} \lrimpl \beta) \otimes (\gamma \rlimpl
    \opp{\delta}) \otimes \opp{\delta} \leq \beta \otimes \gamma \,,
  \end{align*}
  using \eqref{eq:units}. Yet, the inequality so deduced is equivalent
  to \eqref{eq:hemidistr1} by adjointness \eqref{eq:residuationoplus}.
\end{proof}

According to our previous observations, we could have defined a \irl
as a structure $\langle Q, \lattsig, 1, \otimes, 0, \oppfun\rangle$
where $\langle Q,\lattsig \rangle$ is a \bd lattice, $\otimes$ is a
monoid operation (with unit $1$) on $Q$ that distributes over joins,
$ \oppfun : Q \rto Q$ is an antitone involution of $Q$, subject to the
residuation laws as in \eqref{eq:residuationoplus}, where the
structure $(0,\oplus)$ on $Q$ is defined by duality:
\begin{align}
  0 & := \opp{1} \quad \tand \quad f \oplus g := \Opp{(\opp{g} \otimes
    \opp{f})}\,.
  \label{eq:defopluszero}
\end{align}
This shall be our preferred way to verify that a \rl with a distinct
element $0$ is an \irl. For the sake of verifying that a structure is
an \irl, let us remark that we can simplify our work according to the
following statement.
\begin{lemma}
  \label{lemma:verifHalf}
  Consider a structure $\langle Q, \lattsig, 1, \otimes, 0,
  \oppfun\rangle$ as above, where we only require that
  $\alpha \otimes \beta \leq \gamma$ is equivalent to $\alpha \leq
  \gamma\oplus \opp{\beta}$, for each $\alpha,\beta,\gamma \in Q$.
  Then $\alpha \otimes \beta \leq \gamma$ is also equivalent to
  $\beta \leq
  \opp{\alpha}\oplus \gamma$, for each $\alpha,\beta,\gamma \in Q$.
\end{lemma}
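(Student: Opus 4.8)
The plan is to deduce the second equivalence from the first one by a pure dualisation argument using the antitone involution $\oppfun$; in particular, associativity of $\otimes$ is not needed. The one computational ingredient is the pair of De~Morgan-type identities
\begin{align*}
  \opp{(x \otimes y)} & = \opp{y} \oplus \opp{x}\,, & \opp{(x \oplus y)} & = \opp{y} \otimes \opp{x}\,,
\end{align*}
valid for all $x,y \in Q$, which follow at once from the definition $f \oplus g := \opp{(\opp{g} \otimes \opp{f})}$ in \eqref{eq:defopluszero} together with the fact that $\oppfun$ is an involution.

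First I would transpose the left-hand side of the target equivalence: since $\oppfun$ is an order-reversing bijection, $\alpha \otimes \beta \leq \gamma$ is equivalent to $\opp{\gamma} \leq \opp{(\alpha \otimes \beta)}$, and by the first identity above this reads $\opp{\gamma} \leq \opp{\beta} \oplus \opp{\alpha}$. Next I would apply the hypothesised equivalence ``$a \otimes b \leq c \iff a \leq c \oplus \opp{b}$'' with the substitution $a := \opp{\gamma}$, $b := \alpha$, $c := \opp{\beta}$; noting that then $c \oplus \opp{b} = \opp{\beta} \oplus \opp{\alpha}$, this turns $\opp{\gamma} \leq \opp{\beta} \oplus \opp{\alpha}$ into $\opp{\gamma} \otimes \alpha \leq \opp{\beta}$. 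Finally I would transpose once more: $\opp{\gamma} \otimes \alpha \leq \opp{\beta}$ is equivalent to $\beta \leq \opp{(\opp{\gamma} \otimes \alpha)}$, and by the first identity (and $\opp{\opp{\gamma}} = \gamma$) the right-hand side equals $\opp{\alpha} \oplus \gamma$. Chaining the three equivalences yields $\alpha \otimes \beta \leq \gamma \iff \beta \leq \opp{\alpha} \oplus \gamma$, as desired.

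There is no genuine obstacle here; the only point requiring care is bookkeeping of argument order, since $\oppfun$ reverses the factors when passing between $\otimes$ and $\oplus$, so one must not drop the swap when computing $\opp{(\alpha \otimes \beta)}$ or $\opp{(\opp{\gamma} \otimes \alpha)}$.
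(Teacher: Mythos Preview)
Your argument is correct and is essentially the same as the paper's: both proofs exploit the antitone involution $\oppfun$ together with the De~Morgan identity $\opp{(x\otimes y)}=\opp{y}\oplus\opp{x}$ and one application of the assumed equivalence. The only cosmetic difference is the order of operations: the paper goes ``equivalence, apply $\oppfun$, equivalence'' (from $\alpha\le\gamma\oplus\opp{\beta}$ to $\beta\otimes\opp{\gamma}\le\opp{\alpha}$ to $\beta\le\opp{\alpha}\oplus\gamma$), whereas you go ``apply $\oppfun$, equivalence, apply $\oppfun$''; the underlying idea is identical.
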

\begin{proof}
  Suppose that $\alpha \otimes \beta \leq \gamma$, so
  $\alpha \leq \gamma\oplus \opp{\beta}$. Apply $\oppfun$ to
  this relation and derive
  $\beta \otimes \opp{\gamma} = \OPP{(\gamma\oplus \opp{\beta})} \leq
  \opp{\alpha}$; derive then
  $\beta \leq \opp{\alpha} \oplus \OPP{\opp{\gamma}} = \opp{\alpha}
  \oplus \gamma$. For the converse direction, observe that all these
  transformations are reversible.
\end{proof}

\begin{example}
  Boolean algebras are the \irl{s} such that $\wedge = \otimes$ and
  $\vee = \oplus$. 
  Similarly, distributive lattices are the \lbs{s} such that
  $\wedge = \otimes$ and $\vee =
  \oplus$. 
\end{example}

\begin{example}
  \label{ex:Sugihara}
  Consider the following structure on the
  ordered set $\set{-1 < 0 <1}$:
  \begin{align*}
    \begin{array}{r@{\;\;}|@{\;\;}rrr}
      \otimes&-1&0&1\\\hline
      -1 &-1&-1&-1\\
      0 &-1&0&1\\
      1 &-1&1&1
    \end{array}
    \qquad
    \begin{array}{r@{\;\;}|@{\;\;}rr@{\;\;\,}r}
      \oplus&-1&0&1\\\hline
      -1 &-1&-1&1\\
      0 &-1&0&1\\
      1 &1&1&1
    \end{array}
    \qquad
    \begin{array}{r@{\;\;}|@{\;\;}r}
       &\star\\\hline
      -1 &1\\
      0 &0\\
      1 &-1
    \end{array}
  \end{align*}
  Together with the lattice structure on the chain, this structure
  yields a mix \irl, known in the literature as the Sugihara monoid on
  the three-element chain, see e.g. \cite{GALATOS20122177}.
\end{example}

\begin{example}
  \label{ex:jcfunctions}
  As the category of complete lattices and \jcont functions is a
  symmetric monoidal closed category, for every complete lattice $X$
  the set of \jcont functions from $X$ to itself is a monoid object in
  that category, that is, a quantale, see
  \cite{joyaltierney,rosenthal1990}, and therefore a \rl.
  We review this next.
  For  a complete lattice $X$, let $\Qj(X)$ denote the set of \jc functions
  from $X$ to itself. For $f,g \in \Qj(X)$ define
  $f \otimes g := g \circ f$. Considering that the ordering in
  $\Qj(X)$ is pointwise, let us verify that $\otimes $ distributes
  over arbitrary joins:
  \begin{align*}
    ((\bigvee_{i \in I} f_{i})  \otimes g)(x) & =
    (g \circ \bigvee_{i \in I} f_{i})(x)
    = g((\bigvee_{i \in I} f_{i})(x))
    = g(\bigvee_{i \in I} f_{i}(x))
    \\
    &
    = \bigvee_{i \in I} g(f_{i}(x))
    = \bigvee_{i \in I} ((g \circ f_{i})(x))
    = (\bigvee_{i \in I} g \circ f_{i})(x) = (\bigvee_{i \in I} (f_{i}  \otimes g))(x)\,, \\
    (f \otimes (\bigvee_{i \in I} g_{i}))(x)
    & =
    ((\bigvee_{i \in I} g_{i}) \circ f)(x)  = (\bigvee_{i \in I}
    g_{i})(f(x))
    \\
    &
    = \bigvee_{i \in I} g_{i}(f(x)) 
    = (\bigvee_{i \in I} (g_{i} \circ f))(x)
    = (\bigvee_{i \in I} (f \otimes g_{i}))(x)
    \,.
  \end{align*}
  Obviously, the identity is the unit for $\otimes$.
  We argue in the next Section that if $I$ is a finite chain or the
  interval $[0,1]$, then $\Qj(I)$ has a cyclic dualizing element, thus
  a \irl extending the \rl
  structure. 
\end{example}

\section{Mix \saq{s} from perfect chains}
\label{sec:quantalesFromChains}

We consider complete chains $I$ such that the two transformations
\begin{align}
  \label{eq:meetofjoinof}
  \meetof{f}(x) & = \bigwedge_{x < x'} f(x')\,,
  & 
  \joinof{f}(x) & = \bigvee_{x' < x} f(x')\,.
\end{align}
yield an order isomorphism from $\Lj(I)$ to $\Lm(I)$. We shall say
that such a chain is \emph{\perfect}.
\begin{example}
  Let $n \geq 0$ and let $\In$ be the chain $\set{0,\ldots ,n}$. A \jc
  function from $\In$ to $\In$ is uniquely determined by the value on
  the set $\set{1,\ldots ,n}$ of its \jp elements. Similarly, a \mc
  function from $\In$ to $\In$ is uniquely determined by its
  restriction to the set $\set{0,\ldots ,n-1}$ of its \mp elements.
  We immediately deduce that $\Lj(\In)$ and $\Lm(\In)$ are order
  isomorphic. The functions defined in~\eqref{eq:meetofjoinof} realize
  this isomorphism.
  Observe that, for $I = \In$, we have
  \begin{align*}
    \meetof{f}(x) & =
    \begin{cases}
      n\,, & x = n\,, \\
      f(x +1)\,, & \ttoth\,,
    \end{cases}
    &
    \joinof{f}(x) & 
    =
    \begin{cases}
      0\,, & x = 0\,, \\
      f(x -1)\,, & \ttoth\,.
    \end{cases} 
  \end{align*}
\end{example}
\begin{example}
  We shall see with Proposition~\ref{prop:meet-cont-closure} that the
  interval $[0,1]$ of the reals, later on denoted by $\I$, is
  perfect. The quantale $\Qj(\I)$ shall be investigated further in
  Section~\ref{sec:quantaleLI}.
\end{example}

Recalling that the correspondences sending $f \in \Lj(I)$ to
$\radj{f} \in \Lm(I)$ and $g \in \Lm(I)$ to $\ladj{g} \in \Lj(I)$ are
inverse is antitone, let us observe the following:
\begin{proposition}
  \label{lemma:meetofjoinofTwo}
  For each $f \in \Lj(I)$, the
  relation $\joinof{(\radj{f})} = \ladj{(\meetof{f})}$ holds.
  Therefore, the function $\oppfun$ defined by
  \begin{align*}
    \opp{f} & := \joinof{(\radj{f})} = \ladj{(\meetof{f})}\,,
  \end{align*}
   is an involution of $\Lj(I)$.
\end{proposition}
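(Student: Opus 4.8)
The plan is to establish the identity $\joinof{(\radj{f})} = \ladj{(\meetof{f})}$ by showing that, for every $q \in I$, both sides satisfy one and the same adjunction-type equivalence, namely $g(q) \leq p \iff q \leq \meetof{f}(p)$ for all $p \in I$; since this equivalence characterizes an element of $I$ uniquely, the two functions must agree. The involution statement will then follow formally from the fact that $\meetof{}$ and $\joinof{}$ are mutually inverse (perfectness of $I$) and that $\radj{}$ and $\ladj{}$ are mutually inverse (and antitone).

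For the right-hand side there is nothing to prove: $\meetof{f}$ lies in $\Lm(I)$ by perfectness, hence has a left adjoint, and by definition $\ladj{(\meetof{f})}(q) \leq p$ is equivalent to $q \leq \meetof{f}(p)$. For the left-hand side I would unfold the definitions, using twice that $I$ is totally ordered. Since $\joinof{(\radj{f})}(q) = \bigvee_{q' < q} \radj{f}(q')$, the relation $\joinof{(\radj{f})}(q) \leq p$ holds iff $\radj{f}(q') \leq p$ for every $q' < q$. Writing $\radj{f}(q') = \bigvee \set{r \mid f(r) \leq q'}$ and using that in a chain $p$ is strictly below a join exactly when it is strictly below some member of the set, one gets that $\radj{f}(q') \leq p$ is equivalent to: $q' < f(r)$ for every $r > p$. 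Interchanging the two universal quantifiers and using once more that $I$ is a chain — so that $q' < f(r)$ for all $q' < q$ is the same as $q \leq f(r)$ — we obtain $\joinof{(\radj{f})}(q) \leq p$ iff $q \leq f(r)$ for every $r > p$, i.e.\ iff $q \leq \bigwedge_{p < r} f(r) = \meetof{f}(p)$. Comparing with the right-hand side yields $\joinof{(\radj{f})}(q) \leq p \iff \ladj{(\meetof{f})}(q) \leq p$ for all $p$, hence $\joinof{(\radj{f})} = \ladj{(\meetof{f})}$. (As a byproduct this re-proves that $\meetof{f}$ is \mc, with $\joinof{(\radj{f})}$ its left adjoint.)

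For the last assertion, write $\oppfun = \joinof{} \circ \radj{} = \ladj{} \circ \meetof{}$ as a composite of bijections $\Lj(I) \to \Lj(I)$, so $\oppfun$ is a bijection (and antitone, being the composite of an antitone and a monotone bijection). Its inverse is $(\joinof{} \circ \radj{})^{-1} = \radj{}^{-1} \circ \joinof{}^{-1} = \ladj{} \circ \meetof{}$, since $\radj{}$ and $\ladj{}$ are mutually inverse and, by perfectness, so are $\joinof{}$ and $\meetof{}$. But $\ladj{} \circ \meetof{} = \oppfun$ by the identity just proved, so $\oppfun^{-1} = \oppfun$, i.e.\ $\oppfun$ is an involution.

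The only genuinely delicate points are the two appeals to totality of $\leq$: turning $\radj{f}(q') \leq p$ into a statement quantified over $r > p$, and collapsing ``$q' < f(r)$ for all $q' < q$'' to ``$q \leq f(r)$''. Both are elementary, but I would isolate them as two preliminary observations about complete chains before running the main computation, so that the chain hypothesis is used cleanly and visibly.
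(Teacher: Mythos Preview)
Your proposal is correct and follows the same overall strategy as the paper: both arguments establish the identity by showing that $\joinof{(\radj{f})}$ is left adjoint to $\meetof{f}$, and then deduce the involution statement from the mutual inverseness of $\meetof{}/\joinof{}$ and of $\radj{}/\ladj{}$.

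The execution of the adjunction step differs slightly. The paper proves the two implications separately: the direction $x \leq \meetof{f}(y) \Rightarrow \joinof{(\radj{f})}(x) \leq y$ is done by contradiction (picking $w < x$ with $\radj{f}(w) \not\leq y$ and using the counit $f(\radj{f}(w)) \leq w$), and the converse is obtained by invoking the order-dual statement in $\Lm(I)$ and instantiating $g := \radj{f}$. You instead run a single chain of equivalences, unfolding $\joinof{}$ and $\radj{f}$ and twice exploiting that $I$ is a chain (to turn $p < \bigvee S$ into $\exists s\in S,\, p < s$, and to turn $\forall q'<q,\, q'<f(r)$ into $q\leq f(r)$). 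Your route is a bit more self-contained and makes the use of totality explicit; the paper's route has the virtue of reusing the counit and the built-in $\Lj/\Lm$ duality. Both are short and equally convincing; your suggestion to isolate the two chain facts as preliminary observations is a good one.
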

\begin{proof}
  Let $f \in \Qj(I)$; we shall argue that $\joinof{(\radj{f})}$ is left
  adjoint to $\meetof{f}$, namely that $x \leq \meetof{f}(y)$ if and
  only if $\joinof{(\radj{f})}(x) \leq y$, for each $x,y \in \I$.
  
  We begin by proving that $x \leq \meetof{f}(y)$ implies
  $\joinof{(\radj{f})}(x) \leq y$.  Suppose $x \leq \meetof{f}(y)$ so,
  for each $z$ with $y < z$, we have $x \leq f(z)$. Suppose that
  $\joinof{(\radj{f})}(x) \not\leq y$, thus there exists $w < x$ such
  that $\radj{f}(w) \not\leq y$. Then $y < \radj{f}(w)$ so, from
  $x \leq \meetof{f}(y) = \bigwedge_{y < y'} f(y')$, we deduce
  $x \leq f(\radj{f}(w))$. Considering that $f(\radj{f}(w)) \leq w$,
  we deduce $x \leq w$, contradicting $w < x$. Therefore,
  $\joinof{(\radj{f})}(x) \leq y$.
  
  Dually, we can argue that, for $g \in \Qm(I)$,
  $\joinof{g}(x) \leq y$ implies $x \leq \meetof{(\ladj{g})}(y)$, for
  each $g \in \LmI$.  Letting in this statement $g := \radj{f}$, we
  obtain the converse implication: $\joinof{(\radj{f})}(x) \leq y$
  implies $x \leq \meetof{(\ladj{(\radj{f})})}(y) = \meetof{f}(y)$.
  
  For the last statement, observe that the correspondence $\oppfun$ is
  order reversing since it is the composition of an order reversing
  function with a monotone one; it is an involution since
  $\oppopp{f}  = \ladj{(\JOINOF{(\meetof{(\radj{f})})})}
  =\ladj{(\radj{f})} = f$.
\end{proof}
\begin{lemma}
  \label{lemma:charOfStar}
  We have
    \begin{align}
      \label{eq:opp}
    \opp{f}(x) & = \bigvee \set{y \in I \mid f(y) < x}\,.
  \end{align}
\end{lemma}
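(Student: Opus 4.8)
The plan is to read off $\opp{f}$ from the definition $\opp{f} = \joinof{(\radj{f})}$ supplied by Proposition~\ref{lemma:meetofjoinofTwo}, combined with the explicit description of the right adjoint of a \jcont map recalled in \secRef{notation}, namely $\radj{f}(q) = \bigvee \set{p \in I \mid f(p) \leq q}$.

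Unfolding the definition of $\joinof{(-)}$ from \eqref{eq:meetofjoinof}, for each $x \in I$ I would write
\begin{align*}
  \opp{f}(x) & = \joinof{(\radj{f})}(x)
  = \bigvee_{x' < x} \radj{f}(x')
  = \bigvee_{x' < x} \; \bigvee \set{y \in I \mid f(y) \leq x'}
  = \bigvee \set{y \in I \mid f(y) \leq x'\ \text{for some}\ x' < x}\,,
\end{align*}
the last equality being just the associativity of joins. It then remains to identify the indexing set $\set{y \in I \mid f(y) \leq x'\ \text{for some}\ x' < x}$ with $\set{y \in I \mid f(y) < x}$. One inclusion is transitivity: if $f(y) \leq x' < x$ then $f(y) < x$. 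For the other, if $f(y) < x$ then $x' := f(y)$ itself witnesses the existential, since $f(y) \leq f(y) < x$. Hence the two sets coincide and \eqref{eq:opp} follows.

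I do not expect any genuine obstacle here: the statement is a direct bookkeeping consequence of the two formulas above, and the only place where a property of $I$ could intervene --- the equivalence $f(y) < x \iff \exists x'\,(f(y) \leq x' < x)$ --- is valid in any poset, so it does not even use that $I$ is a chain; perfectness of $I$ enters only upstream, to guarantee via Proposition~\ref{lemma:meetofjoinofTwo} that $\opp{f}$ is well defined. One could equally start from the other half of the definition, $\opp{f} = \ladj{(\meetof{f})}$, but then one must massage $\bigwedge \set{y \in I \mid x \leq \meetof{f}(y)}$ into the claimed join, which is less immediate, so the route through $\joinof{(\radj{f})}$ is the one I would take.
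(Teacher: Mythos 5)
Your proof is correct, and it takes a different route from the paper's. The paper works with the other half of the definition: it recalls $\opp{f} = \ladj{(\meetof{f})}$ and then verifies directly that the right-hand side of \eqref{eq:opp} is a \LADJ for $\meetof{f}$, via the chain of equivalences $\bigvee \set{y \in I \mid f(y) < x} \leq z$ iff $\forall y\,(f(y) < x \Rightarrow y \leq z)$ iff $\forall y\,(z < y \Rightarrow x \leq f(y))$ iff $x \leq \meetof{f}(z)$; the middle contraposition step uses that $I$ is totally ordered (negating $y \leq z$ to $z < y$ and $f(y) < x$ to $x \leq f(y)$), and the conclusion then follows from uniqueness of adjoints. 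You instead unfold $\opp{f} = \joinof{(\radj{f})}$, plug in the standard formula $\radj{f}(x') = \bigvee \set{y \mid f(y) \leq x'}$, collapse the iterated join, and identify the two index sets; as you note, this part is pure bookkeeping valid in any complete lattice, with no use of the chain structure beyond what Proposition~\ref{lemma:meetofjoinofTwo} already provides to make $\opp{f}$ well defined and equal to both expressions. The trade-off: the paper's argument exhibits the formula directly as the left adjoint of $\meetof{f}$ (which is the form in which it is typically used later), at the cost of an appeal to trichotomy in the chain, whereas yours is a mechanical computation from the right-adjoint formula that would generalize beyond chains but leans on the equality $\joinof{(\radj{f})} = \ladj{(\meetof{f})}$ to connect with the other description. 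Both are complete and of comparable length.
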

\begin{proof}
  Recall that $\opp{f}$ has been defined as $\ladj{(\meetof{f})}$. Let
  us show that the expression on the right of equation~\eqref{eq:opp}
  yields a \LADJ for $\meetof{f}$.  For each $x,z \in I$, we have
  \begin{align*}
    \bigvee \set{y \in I \mid f(y) < x} \leq z
    & \tiff \forall y (\,f(y) < x \timplies  y \leq z \,) \\
    & \tiff \forall y (\,z < y \timplies x \leq f(y) \,) \\
    & \tiff x \leq \bigwedge_{z < y} f(y) = \meetof{f}(z)\,.
    \tag*{\qedhere}
  \end{align*}
\end{proof}

For $f,g \in \Lj(I)$, let us define
\begin{align*}
  f \otimes g & := g \circ f\,, & 1& := id_{I}
  \intertext{and, using duality as in \eqref{eq:defopluszero},}
  f \oplus g & := \OPP{(\opp{g} \otimes \opp{f})} & 0 &:= \opp{1}\,.
\end{align*}
Let us remark that the operation $\oplus$ is obtained by transporting
composition in $\Lm(I)$ to $\Lj(I)$ via the isomorphism:
\begin{align*}
  f \oplus g & = \OPP{(\opp{g} \otimes \opp{f})}
  = \opprj{(\oppml{f} \circ \oppml{g})} =
  \opprj{(\ladj{(\meetof{g} \circ \meetof{f})})} =
  \Joinof{(\meetof{g} \circ \meetof{f})}\,.
\end{align*}
In a similar way, $0$ is the image via the isomorphism of the identity
of the chain $I$, as an element of $\Qm(I)$. Using
Lemma~\ref{lemma:charOfStar}, a useful expression for $0$ is the
following:
\begin{align}
  \label{eq:zero}
  0(x) & := \bigvee_{x' < x} x'\,.
\end{align}

\begin{proposition}
  For each $f,g,h \in \Qj(I)$, $f \otimes g \leq h$ if and only if $f
  \leq h \oplus \Opp{g}$.
\end{proposition}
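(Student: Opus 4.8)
The plan is to reduce the stated residuation law, $f \otimes g \leq h$ iff $f \leq h \oplus \opp{g}$, to the adjunction defining $\radj{(\cdot)}$ together with the isomorphism between $\Lj(I)$ and $\Lm(I)$ and the characterization of $\oplus$ worked out just above. First I would unfold both sides. On the left, since $f \otimes g = g \circ f$ and composition of \jcont functions is \jcont, the inequality $g \circ f \leq h$ is, by the adjunction $g \dashv \radj{g}$ applied pointwise, equivalent to $f \leq \radj{g} \circ h$ — more precisely, $g(f(x)) \leq h(x)$ for all $x$ iff $f(x) \leq \radj{g}(h(x))$ for all $x$, i.e.\ $f \leq \radj{g} \circ h = h \otimes \radj{g}$ if we temporarily allow the $\otimes$ notation with a \mcont second argument. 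The real content is then to identify $h \oplus \opp{g}$ with this $\radj{g} \circ h$.

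For that identification I would use the formula derived in the excerpt, $f \oplus g = \Joinof{(\meetof{g} \circ \meetof{f})}$, instantiated as $h \oplus \opp{g} = \Joinof{(\meetof{(\opp{g})} \circ \meetof{h})}$. Now $\opp{g} = \ladj{(\meetof{g})}$ by Proposition~\ref{lemma:meetofjoinofTwo}, so $\meetof{(\opp{g})}$ is, by the isomorphism $\meetof{(\cdot)}$ being inverse to $\ladj{(\cdot)}$, just $\meetof{g}$ — wait, more carefully: $\meetof{}$ and $\joinof{}$ are mutually inverse order isomorphisms $\Lj(I)\leftrightarrows\Lm(I)$, while $\radj{}$ and $\ladj{}$ are mutually inverse antitone bijections between the same two sets; the composite $\opp{f} = \joinof{(\radj{f})}$ lives on $\Lj(I)$. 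So $\meetof{(\opp{g})} = \meetof{(\joinof{(\radj{g})})} = \radj{g}$ since $\meetof{}\circ\joinof{} = \mathrm{id}$. Hence $h \oplus \opp{g} = \Joinof{(\radj{g} \circ \meetof{h})}$. It remains to see that $\joinof{}$ and $\meetof{}$ cancel appropriately so that $\Joinof{(\radj{g}\circ\meetof{h})}$ equals $\radj{g}\circ h$ in the relevant sense; here I would invoke that $\joinof{(\radj{k})} = \ladj{(\meetof{k})}$ (again Proposition~\ref{lemma:meetofjoinofTwo}) or, more directly, push everything through the adjunction characterization rather than the explicit $\joinof/\meetof$ formulas.

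In fact, I expect the cleanest route avoids juggling $\joinof{}$/$\meetof{}$ altogether: using \eqref{eq:residuationoplus} is circular here since that identity was derived \emph{in an \irl}, and this Proposition is exactly the step establishing that $\Qj(I)$ \emph{is} one. So I would argue directly: $f \otimes g \leq h$ iff $g\circ f \leq h$ iff (pointwise adjunction) $f \leq \radj{g}\circ h$; and separately show $h \oplus \opp{g} = \radj{g}\circ h$ by computing, for each $x$, $(h\oplus\opp{g})(x) = \Joinof{(\meetof{(\opp g)}\circ\meetof h)}(x) = \bigvee_{x'<x}\meetof{(\opp g)}(\meetof h(x'))$ and matching it with $\radj{g}(h(x)) = \radj g(\bigvee\dots)$, using \jcontinuity of $\radj{g}$... no, $\radj{g}$ is only \mcont. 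The main obstacle is precisely this endpoint/continuity subtlety: $h$ is \jcont but $\radj{g}$ is \mcont, so $\radj{g}\circ h$ need not be \jcont, which is why the correct right-hand side is its \jcont ``correction'' $h\oplus\opp{g}$, and making the equivalence $f \leq \radj{g}\circ h \iff f \leq h\oplus\opp g$ rigorous for \jcont $f$ requires showing $h \oplus \opp g$ is exactly the largest \jcont function below $\radj g\circ h$ — equivalently, $\Joinof{\meetof{(\radj g\circ h)}} $ applied correctly. I would handle this by showing both inequalities separately: $\leq$ is easy from mix/hemidistributivity already available for \lbm{s}, and $\geq$ from the defining property of $\joinof{}$ as recovering a \jcont function from a \mcont one, exploiting that $I$ is a perfect chain. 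This last perfectness-based argument is where I expect to spend the most care.
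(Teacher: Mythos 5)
Your setup is fine: unfolding $f\otimes g\le h$ to the pointwise condition $f(x)\le \radj{g}(h(x))$, and computing $h\oplus\opp{g}=\joinof{(\radj{g}\circ\meetof{h})}$ from $\meetof{(\opp{g})}=\meetof{(\joinof{(\radj{g})})}=\radj{g}$, are both correct, and you rightly identify the crux, namely that $\radj{g}\circ h$ is not \jc. But the proof stops exactly at that crux. The claim you reduce everything to --- that $h\oplus\opp{g}$ is the greatest \jc function below $\radj{g}\circ h$ --- is never proved, and the two tools you offer for it do not work as stated. The ``easy'' half is justified by mix/hemidistributivity of $\Qj(I)$, but those laws are only available once $\Qj(I)$ is known to be an \irl, which is precisely what this Proposition (together with Lemma~\ref{lemma:verifHalf}, see Corollary~\ref{cor:quantalesFromChains}) is establishing; the appeal is circular. (It is fixable by direct computation: for $x'<x$ one has $\meetof{h}(x')\le h(x)$, so $\joinof{(\radj{g}\circ\meetof{h})}(x)=\bigvee_{x'<x}\radj{g}(\meetof{h}(x'))\le\radj{g}(h(x))$.) The other half --- every \jc $f\le\radj{g}\circ h$ satisfies $f\le h\oplus\opp{g}$ --- is the actual content of the forward direction, and you defer it to ``the defining property of $\joinof{}$'' and perfectness without an argument. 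That property is not available here: the Proposition is stated for an arbitrary perfect chain, where perfectness only says that $\joinof{}$ and $\meetof{}$ are mutually inverse isomorphisms between $\Lj(I)$ and $\Lm(I)$; it says nothing about applying them to a merely monotone composite such as $\radj{g}\circ h$ (which is neither \jc nor \mc), and the statement that $\joinof{k}$ is the greatest \jc function below $k$ was proved only for $I=\I$ (Proposition~\ref{prop:meet-cont-closure}) and is in fact false for finite chains: in $\In$ the identity is \jc while $\joinof{id}(x)=x-1$.

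The gap can be closed along the paper's lines, which avoid the ``greatest \jc below'' characterization and never leave $\Lj(I)\cup\Lm(I)$, where perfectness applies: from $f(x)\le\radj{g}(h(x))$ for all $x$, use meet-continuity of $\radj{g}$ to get $\meetof{f}\le\radj{g}\circ\meetof{h}$ (both sides now \mc), then apply the order isomorphism $\joinof{}$ to obtain $f=\joinof{(\meetof{f})}\le\joinof{(\radj{g}\circ\meetof{h})}=h\oplus\opp{g}$. For the converse, your single-equivalence strategy gives nothing you have proved; the paper instead runs the dual argument in $\Qm(I)$ to show that $f\le h\oplus g$ implies $f\otimes\opp{g}\le h$, and then replaces $g$ by $\opp{g}$ using $\oppopp{g}=g$. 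As it stands, your proposal establishes neither implication.
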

\begin{proof}
  Suppose $f \otimes g \leq h$, that is, $g \circ f \leq h$. We aim at
  showing that $\meetof{f} \leq \radj{g} \circ \meetof{h}$, since
  then, by applying $\joinof{(\funFiller)}$ to this relation, we shall
  obtain
  $f \leq \joinof{(\radj{g} \circ \meetof{h})} =
  \joinof{(\MEETOF{\Joinof{\radj{g}}} \circ \meetof{h})} =
  \joinof{(\MEETOF{\opp{g}} \circ \meetof{h})} = h \oplus \opp{g}$.

  This is achieved as follows. From $g(f(x)) \leq h(x)$,  for all
  $x \in I$, deduce $f(x) \leq \radj{g}(h(x))$, for all
  $x \in I$, and therefore
  \begin{align*}
    \meetof{f}(x) & = \bigwedge_{x < y}
    f(y) \leq \bigwedge_{x < y}  \radj{g}(h(y))
    =   \radj{g}(\bigwedge_{x < y} h(y)) = \radj{g}(\meetof{h}(x))\,,
  \end{align*}
  for each $x \in I$, using the fact that $\radj{g}$ is \mc.

  A similar argument, shows that if $f,g,h \in \Qm(I)$ and
  $f \leq g \circ h$, then
  $\ladj{g} \circ \joinof{f} \leq \joinof{h}$. For $f,g,h \in \Qj(I)$,
  this yields that $f \leq h \oplus g$ implies
  $f \otimes \opp{g} \leq h$. Therefore, if $f \leq h \oplus \opp{g}$,
  then $f \otimes g =f \otimes \oppopp{g} \leq h$.
\end{proof}

\begin{corollary}
  \label{cor:quantalesFromChains}
  For each perfect chain $I$ the  \rl $\Qj(I)$ of \jc functions from
  $I$ to itself is a \msaq.
\end{corollary}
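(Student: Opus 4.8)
The plan is to read the corollary off the material already assembled in this section; the argument is essentially bookkeeping, with one trivial inequality at the very end. By Example~\ref{ex:jcfunctions}, $\Qj(I)$ is a unital quantale under $f \otimes g := g \circ f$ and $1 := id_{I}$, hence in particular a complete lattice and a complete \rl. To promote $\Qj(I)$ to a \saq, I would apply the recipe recorded in the text around equation~\eqref{eq:defopluszero}: it suffices to equip $\Qj(I)$ with an antitone involution $\oppfun$, to define $0 := \opp{1}$ and $f \oplus g$ by duality as in~\eqref{eq:defopluszero}, and to verify the residuation law in the form~\eqref{eq:residuationoplus}. This is exactly the point at which perfectness of $I$ is used: Proposition~\ref{lemma:meetofjoinofTwo} furnishes such an antitone involution $\oppfun$ on $\Qj(I)$. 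The Proposition immediately preceding this corollary supplies the first equivalence of~\eqref{eq:residuationoplus}, namely that $f \otimes g \leq h$ iff $f \leq h \oplus \opp{g}$, and Lemma~\ref{lemma:verifHalf} then yields the second equivalence for free. Since $\Qj(I)$ is complete, this establishes that $\Qj(I)$ is a \saq.

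It then remains to verify the mix rule~\eqref{eq:mix}. By the Lemma in \secRef{bisemigroups} asserting that, in the extended language of \lbm{s}, the mix rule is equivalent to $0 \leq 1$ --- and $\Qj(I)$, being an \irl, is an \lbm --- it is enough to prove $0 \leq id_{I}$ in $\Qj(I)$. For this I would use the explicit formula~\eqref{eq:zero}: since $x' \leq x$ whenever $x' < x$, we get $0(x) = \bigvee_{x' < x} x' \leq x$ for every $x \in I$, that is $0 \leq 1$. Hence $\Qj(I)$ satisfies the mix rule and is a \msaq, as claimed.

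I do not expect any genuine obstacle: the corollary merely packages Example~\ref{ex:jcfunctions}, Proposition~\ref{lemma:meetofjoinofTwo}, the preceding Proposition, Lemma~\ref{lemma:verifHalf} and identity~\eqref{eq:zero}. The only points meriting a moment's care are that the involution $\oppfun$ is at our disposal precisely because $I$ is assumed perfect, so the hypothesis of the corollary is genuinely used, and that the abstractly defined element $0 = \opp{1}$ coincides with the concrete function of~\eqref{eq:zero} --- a fact already noted in the discussion preceding that equation. Everything else is formal.
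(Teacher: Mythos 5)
Your proposal is correct and follows essentially the same route as the paper's own proof: the involution from Proposition~\ref{lemma:meetofjoinofTwo} (where perfectness enters), the residuation law obtained from the Proposition immediately preceding the corollary together with Lemma~\ref{lemma:verifHalf}, and the mix rule via $0 \leq 1$ read off from equation~\eqref{eq:zero}. Your version merely spells out a few steps the paper leaves implicit (the quantale structure from Example~\ref{ex:jcfunctions} and the equivalence of mix with $0 \leq 1$), so there is nothing to correct.
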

\begin{proof}
  By the previous Lemma and by Lemma~\ref{lemma:verifHalf}, the
  antitone involution $\oppfun$ yields the dual operation $\oplus$
  satisfying the residuation relations~\eqref{eq:residuationoplus}. By
  equation \eqref{eq:zero}, it is also clear that the relation
  $0 \leq 1$, so the mix rule holds in $\Qj(I)$.
\end{proof}

\begin{remark}
  The \saq structure on $\Qj(\In)$ is the unique possible one.  It was
  shown in \cite[\S 4.1]{MSRSDLL} using duality theory that dualizing
  objects of in $\Qj(\In)$ are in bijection with isomorphisms of the
  ordered set $\set{1,\ldots ,n}$. Obviously, there is just one such
  isomorphism.  On the other hand, the dualizing elements of an
  involutive residuated lattice such that $1 = 0$ are exactly the
  elements $f$ that are invertible (in particular, this is the case
  for the quantale $\QjI$). We sketch a proof of this.  If $f$ is
  dualizing, then
  $1 = f \rlimpl (1 \lrimpl f) = f \rlimpl f = f \oplus
  \opp{f}$. Similarly, $1 = \opp{f} \oplus f$ and, dually,
  $1 = f \otimes \opp{f} = \opp{f} \otimes f$.  Vice versa, if $f$ has
  an inverse $f^{-1}$, then
  $f \oplus g = f \otimes f^{-1} \otimes (f \oplus g) \leq f \otimes
  (((f^{-1} \otimes f) \oplus g)) = f \otimes g$, so
  $f \otimes g = f \oplus g$, for any $g$. Then $f^{-1} = \opp{f}$,
  since $1 \leq f \oplus \opp{f} = f \otimes \opp{f} \leq 0 =1$, and
  $f \rlimpl (g \lrimpl f) = f \oplus (\opp{f} \otimes g) = f \otimes
  \opp{f} \otimes g = g$.
\end{remark}

\section{Lattices from mix \Lbs{s}}
\label{sec:latticesFromQuantales}

In this section $d$ shall be a fixed integer greater than or equal to
$2$ (the case $d = 2$ being trivial).
Given an \lbs $Q$, consider the product
$\prodd{Q} := \prod_{1 \leq i < j \leq d} Q$.
We say that a tuple
$f = \langle f_{i,j} \mid 1 \leq i < j \leq d\rangle$ of this product
is \emph{closed} (\resp \emph{open}) if
\begin{align*}
  f_{i,j} \otimes f_{j,k} & \leq f_{i,k}
  \qquad
  (\resp   \,f_{i,k} \leq f_{i,j} \oplus f_{j,k}\,)
  \,.
\end{align*}
Recall that $\prodd{Q}$ has a lattice structure induced by the
coordinate-wise meets and joins.  It is then easily verified that
closed tuples are closed under arbitrary meets and open tuples are
closed under arbitrary joins.
\begin{remark}
  If $Q$ is an \irl, then $f$ is closed if and only if
  $\opp{f} := \langle \opp{(f_{\sigma(j),\sigma(i)})} \mid 1 \leq i <
  j \leq d\rangle$ is open, where $\sigma(i) := d - i + 1$, for each
  $i \in [d]$. In this case the correspondence sending $f$ to
  $\opp{f}$ is an antitone involution of $\PrdQ$, sending closed
  tuples to open ones, and vice versa.
\end{remark}

For $(i,j) \in \cd$, a subdivision of the interval $[i,j]$ is a subset
of this interval containing the endpoints $i$ and $j$.  We write
such a subdivision as sequence of the form
$i = \ell_{0} < \ell_{1} < \ldots \ell_{k -1}< \ell_{k} = j$ with
$i < \ell_{i} < j$, for $i = 1,\ldots ,k -1$.
We shall use then $S_{i,j}$ to denote the set of subdivisions of the
interval $[i,j]$.
\begin{lemma}
  \label{lemma:ClosureInterior}
  For each $f \in \prodd{Q}$, the tuple $\closure{f}$ defined by
  \begin{align*}
    \closure{f}_{i,j} & := \bigvee_{i < \ell_{1} < \ldots \ell_{k -1}<
      j \in S_{i,j}} f_{i,\ell_{1}}\otimes
    f_{\ell_{1},\ell_{2}}\otimes \ldots \otimes f_{\ell_{k-1},j}\,.
  \end{align*}
  is the least closed tuple $g$ such that $f \leq g$.
  Dually, if we set
  \begin{align*}
    \interior{f}_{i,j} & := \bigwedge_{i < \ell_{1} < \ldots \ell_{k
        -1}< j \in S_{i,j}} f_{i,\ell_{1}}\oplus
    f_{\ell_{1},\ell_{2}}\oplus \ldots \oplus
    f_{\ell_{k-1},j}\,.
  \end{align*}
  then 
  $\interior{f}$ is the greatest open tuple below $f$.
\end{lemma}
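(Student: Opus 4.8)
The statement is self-dual: once one establishes that $\closure{f}$ is the least closed tuple above $f$, the claim about $\interior{f}$ follows by applying this to the opposite \lbs (or, when $Q$ is an \irl, by conjugating with the involution $\oppfun$ of $\PrdQ$ from the Remark). So I would concentrate on $\closure{f}$ and split the work into three parts: (i) $f \leq \closure{f}$; (ii) $\closure{f}$ is closed; (iii) $\closure{f}$ is below any closed $g$ with $f \leq g$.

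Part (i) is immediate: the trivial subdivision $i = \ell_0 < \ell_1 = j$ contributes the term $f_{i,j}$ to the join defining $\closure{f}_{i,j}$, so $f_{i,j} \leq \closure{f}_{i,j}$. Part (iii) is an easy induction. If $g$ is closed and $f \leq g$, then for any subdivision $i = \ell_0 < \ell_1 < \ldots < \ell_k = j$ we get $f_{i,\ell_1} \otimes \cdots \otimes f_{\ell_{k-1},j} \leq g_{i,\ell_1} \otimes \cdots \otimes g_{\ell_{k-1},j} \leq g_{i,j}$, the last inequality by iterating the closure condition $g_{a,b} \otimes g_{b,c} \leq g_{a,c}$ and using associativity and monotonicity of $\otimes$; taking the join over all subdivisions gives $\closure{f}_{i,j} \leq g_{i,j}$.

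Part (ii), showing $\closure{f}_{i,j} \otimes \closure{f}_{j,k} \leq \closure{f}_{i,k}$, is the real content. Here I would expand both factors as joins over subdivisions of $[i,j]$ and of $[j,k]$, use that $\otimes$ distributes over (finite, hence here arbitrary, since the relevant index sets are finite) joins, so that the product is the join over pairs $(\sigma, \tau)$ of subdivisions of the product of the corresponding tensors. For each such pair, concatenating $\sigma$ with $\tau$ at $j$ yields a subdivision of $[i,k]$, and the product of the two tensors is exactly the tensor associated to that concatenated subdivision (again by associativity of $\otimes$). Hence each summand on the left is one of the summands on the right, and the inequality follows by taking joins. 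The point to be careful about — and what I expect to be the main obstacle — is purely bookkeeping: making the concatenation of subdivisions and the matching factorization of tensor products precise, and checking that $\otimes$ distributes over the joins in question (which is fine, as the joins range over finite sets of subdivisions when $Q$ is an arbitrary lattice-ordered structure; if one wants infinite alphabets one needs $\otimes$ to distribute over arbitrary joins, which holds in the quantale case). No use of the hemidistributive laws, the units, or the mix rule is needed for this lemma; only associativity of $\otimes$, its distribution over joins, and monotonicity. Finally, the dual statement for $\interior{f}$ is obtained verbatim with $\otimes$, $\vee$, ``closed'', ``below'' replaced by $\oplus$, $\wedge$, ``open'', ``above'', using that $\oplus$ is associative and distributes over meets.
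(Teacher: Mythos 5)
Your proposal is correct and follows essentially the same route as the paper's proof: the trivial subdivision gives $f \leq \closure{f}$, iterating closedness of $g$ gives minimality, and closedness of $\closure{f}$ is obtained by distributing $\otimes$ over the (finite) joins and concatenating a subdivision of $[i,j]$ with one of $[j,k]$ at $j$, with the interior statement handled by duality. Your observation that only associativity, monotonicity and distributivity of $\otimes$ over finite joins are needed (no hemidistributivity, units, or mix) matches the paper's argument exactly.
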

\begin{proof}
  It suffices to prove the first statement. Since
  $\set{i < j} \in S_{i,j}$, then $f_{i,j} \leq \closure{f}_{i,j}$,
  for each $(i,j) \in \cd$, thus $f \leq \closure{f}$.  Now, if $g$ is
  closed and $f \leq g$, then, for each subdivision
  $i < \ell_{1} < \ldots \ell_{k-1} < j$, we have
  \begin{align*}
    f_{i,\ell_{1}}\otimes \ldots \otimes f_{\ell_{k-1},j}
    & \leq g_{i,\ell_{1}}\otimes \ldots \otimes g_{\ell_{k-1},j}
    \leq g_{i,j}\,.
  \end{align*}
  We are left to prove that $\closure{f}$ is closed. 
  To the sake of being concise, if $\varsigma \in S_{i,j}$ is
  $i = \ell_{0} < \ell_{1} < \ldots \ell_{k -1}< \ell_{k} = j$, then
  we let $\pi(f,\varsigma)$ be
  $f_{i,\ell_{1}}\otimes f_{\ell_{1},\ell_{2}}\otimes \ldots \otimes
  f_{\ell_{k-1},j}$. Observe next that if $\varsigma \in S_{i,j}$ and
  $\varsigma' \in S_{j,k}$, then the set theoretic union
  $\varsigma \cup \varsigma'$ belongs to $S_{i,k}$ and, moreover,
  $\pi(\varsigma,f) \otimes \pi(\varsigma',f) = \pi(\varsigma \cup
  \varsigma',f)$. We have therefore
  \begin{align*}
    \closure{f}_{i,j} \otimes \closure{f}_{j,k} & =
    \bigvee_{\varsigma \in S_{i,j}} \pi(\varsigma,f)\otimes
    \bigvee_{\varsigma' \in S_{j,k}} \pi(\varsigma',f)
    = \bigvee_{\varsigma \in S_{i,j},
      \varsigma'  \in S_{j,k}
    }     \pi(\varsigma,f) \otimes \pi(\varsigma',f)
    \\
    &
    =
     \bigvee_{\varsigma \in S_{i,j},
      \varsigma'  \in S_{j,k}
    }     \pi(\varsigma \cup \varsigma',f)
    \leq
    \bigvee_{\varsigma'' \in S_{i,k}} \pi(\varsigma'',f)
    = \closure{f}_{i,k}\,.
    \tag*{\qedhere}
  \end{align*}
\end{proof}
We call the map $f \mapsto \closure{f}$ the \emph{closure}, and the
map $f \mapsto \interior{f}$ the \emph{interior}.  Then a tuple is
closed if and only of it is equal to its closure, and a tuple is open
if and only of it is equal to its interior.  We shall be interested in
tuples $f \in \PrdQ$ that are \emph{clopen}, that is, they are at the
same time closed and open.

\begin{proposition}
  \label{prop:interiorofclosed}
  Let $Q$ be a mix \lbs and let $f \in \PrLQ$. If $f$ is closed, then so
  is $\interior{f}$.
\end{proposition}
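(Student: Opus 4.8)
The plan is to show that $\interior{f}$ is closed, i.e.\ that $\interior{f}_{i,j} \otimes \interior{f}_{j,k} \le \interior{f}_{i,k}$ for each $i < j < k$. Since $\interior{f}_{i,k} = \bigwedge_{\varsigma'' \in S_{i,k}} (\text{$\oplus$-product of $f$ along $\varsigma''$})$ and $\otimes$ distributes over finite joins (but not necessarily over meets), I would instead fix an arbitrary subdivision $\varsigma'' \in S_{i,k}$ and prove $\interior{f}_{i,j} \otimes \interior{f}_{j,k} \le \oplus\text{-product of $f$ along }\varsigma''$; then take the meet over all $\varsigma'' \in S_{i,k}$ on the right. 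The subtlety is that $\varsigma''$ need not refine $\{i < j < k\}$: the point $j$ may not lie in $\varsigma''$. This is exactly where closedness of $f$ comes in, and I expect this to be the main obstacle.

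First I would handle the easy case: if $j \in \varsigma''$, then $\varsigma'' = \varsigma \cup \varsigma'$ for some $\varsigma \in S_{i,j}$ and $\varsigma' \in S_{j,k}$, and the $\oplus$-product of $f$ along $\varsigma''$ is (the $\oplus$-product along $\varsigma$) $\oplus$ (the $\oplus$-product along $\varsigma'$), which dominates $\interior{f}_{i,j} \oplus \interior{f}_{j,k}$; and by the mix rule \eqref{eq:mix}, $\interior{f}_{i,j} \otimes \interior{f}_{j,k} \le \interior{f}_{i,j} \oplus \interior{f}_{j,k}$. So for such $\varsigma''$ we are done. Now suppose $j \notin \varsigma''$, so there are consecutive points $\ell_{m} < j < \ell_{m+1}$ in $\varsigma''$. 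The idea is to insert $j$: consider $\varsigma''_{L} := (\varsigma'' \cap [i,\ell_m]) \cup \{j\} \in S_{i,j}$ and $\varsigma''_{R} := \{j\} \cup (\varsigma'' \cap [\ell_{m+1},k]) \in S_{j,k}$. Then the $\oplus$-product along $\varsigma''_{L}$ has, in the slot around $j$, the factor $f_{\ell_m, j}$, and along $\varsigma''_R$ the factor $f_{j,\ell_{m+1}}$, whereas $\varsigma''$ itself has the single factor $f_{\ell_m, \ell_{m+1}}$ there. Since $f$ is closed, $f_{\ell_m,j} \otimes f_{j,\ell_{m+1}} \le f_{\ell_m,\ell_{m+1}}$; I want to leverage this together with the hemidistributive laws to compare the $\oplus$-products.

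The technical heart is therefore the following estimate: writing $A := f_{i,\ell_1}\oplus\cdots\oplus f_{\ell_{m-1},\ell_m}$ for the left part of the $\oplus$-product along $\varsigma''$ (everything before the slot containing $j$) and $B := f_{\ell_{m+1},\ell_{m+2}}\oplus\cdots\oplus f_{\ell_{k-1},k}$ for the right part, I must show
\begin{align*}
  (A \oplus f_{\ell_m,j}) \otimes (f_{j,\ell_{m+1}} \oplus B) \le A \oplus f_{\ell_m,\ell_{m+1}} \oplus B\,.
\end{align*}
Using the generalized hemidistributive law \eqref{eq:hemidistr1}, the left side is $\le A \oplus (f_{\ell_m,j}\otimes f_{j,\ell_{m+1}}) \oplus B$, and then closedness of $f$ gives $f_{\ell_m,j}\otimes f_{j,\ell_{m+1}} \le f_{\ell_m,\ell_{m+1}}$, whence monotonicity of $\oplus$ finishes the estimate. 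Consequently $\interior{f}_{i,j}\otimes\interior{f}_{j,k} \le (A\oplus f_{\ell_m,j})\otimes(f_{j,\ell_{m+1}}\oplus B)$ (since $\interior{f}_{i,j}$ is a meet including the term for $\varsigma''_L$ and similarly for $\interior{f}_{j,k}$, and $\otimes$ is monotone) is $\le A \oplus f_{\ell_m,\ell_{m+1}} \oplus B$, which is precisely the $\oplus$-product of $f$ along $\varsigma''$. Taking the meet over all $\varsigma'' \in S_{i,k}$ (covering both the $j\in\varsigma''$ and $j\notin\varsigma''$ cases) yields $\interior{f}_{i,j}\otimes\interior{f}_{j,k}\le\interior{f}_{i,k}$, as desired. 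The main obstacle I anticipate is purely bookkeeping: making the "insert $j$ into $\varsigma''$" argument precise and checking that the generalized hemidistributive law applies in the form needed; there should be no conceptual difficulty once the right subdivisions are named.
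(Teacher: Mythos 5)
Your proposal is correct and follows essentially the same route as the paper: fix a subdivision $\varsigma''\in S_{i,k}$, insert $j$ into it, bound $\interior{f}_{i,j}\otimes\interior{f}_{j,k}$ by $(\alpha\oplus f_{\ell_m,j})\otimes(f_{j,\ell_{m+1}}\oplus\beta)$, apply the generalized hemidistributive law \eqref{eq:hemidistr1} and closedness of $f$, and dispose of the case $j\in\varsigma''$ with the mix rule \eqref{eq:mix}. The only bookkeeping you left implicit — when the left or right block $A$ or $B$ is empty (no units are assumed), one falls back on \eqref{eq:hemidistr1p} or \eqref{eq:hemidistr2p} — is exactly the remark the paper makes, so there is no gap of substance.
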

\begin{proof}
  Let $i,j,k \in [d]$ with $i < j < k$.  We need to show that
  \begin{align*}
    \interior{f}_{i,j} \otimes \interior{f}_{j,k} & \leq
    f_{i,\ell_{1}}\oplus \ldots \oplus f_{\ell_{n-1},k}
  \end{align*}
  whenever $i < \ell_{1} < \ldots \ell_{n -1}< k \in S_{i,k}$.  This
  is achieved as follows. Let $u \in \set{0,1,\ldots ,n-1}$ be such
  that $j \in [\ell_{u}, \ell_{u +1})$. Firstly suppose that $\ell_{u}
  < j$; put then
  \begin{align*}
    \alpha & := f_{i,\ell_{1}} \oplus \ldots \oplus f_{\ell_{u-1},\ell_{u}}\,, &
    \delta & := f_{\ell_{u+1}} \oplus \ldots \oplus f_{\ell_{n-1},k} \, \\
    \beta & := f_{\ell_{u},j} \, &
    \gamma & := f_{j,\ell_{u +1}}\,.
  \end{align*}
  Then
  \begin{align*}
    \int{f}_{i,j} \otimes \int{f}_{j,k} & \leq (\alpha \oplus \beta)
    \otimes (\gamma \oplus \delta)\,, \tag*{by definition of
      $ \int{f}_{i,j}$ and $\int{f}_{j,k}$,}
    \\
    & \leq \alpha \oplus (\beta \otimes \gamma) \oplus \delta\,,
    \tag*{by the inequation~\eqref{eq:hemidistr1},}
    \\
    & \leq \alpha \oplus f_{\ell_{u},\ell_{u+1}} \oplus \delta
    = f_{i,\ell_{1}}\oplus
    \ldots \oplus f_{\ell_{n-1},k}\,,
    \tag*{since $f$ is closed.}
  \end{align*}
  Notice that we might have that $\alpha$ defined above is an empty
  (co)product (e.g. when $u = 0$), in which case we can use the
  inclusion \eqref{eq:hemidistr1p} in place of
  \eqref{eq:hemidistr1}. A similar remark has to be raised when
  $\delta$ defined above is an empty (co)product (when $u = n-1$), in
  which case we use inclusion \eqref{eq:hemidistr2p}.  Finally, if 
  $j = \ell_{u}$, then let $\alpha,\gamma,\delta$ as above, we derive
  \begin{align*}
    \int{f}_{i,j} \otimes \int{f}_{j,k} &
    \leq \alpha \otimes (\gamma \oplus \delta)
    \leq \alpha \oplus (\gamma \oplus \delta)
    = f_{i,\ell_{1}}\oplus
    \ldots \oplus f_{\ell_{n-1},k}\,,
  \end{align*}
  using the mix rule \eqref{eq:mix}. 
\end{proof}

Since the definition of \lbs is auto-dual, we also have the following
statement:
\begin{proposition}
  \label{prop:closureofopen}
  Let $Q$ be a mix \lbs and let $f \in \PrLQ$. If $f$ is open, then so
  is $\closure{f}$.
\end{proposition}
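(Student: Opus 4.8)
The statement is literally the order-dual of Proposition~\ref{prop:interiorofclosed}, so the plan is to make that duality precise rather than to rerun the computation. First I would record that the class of mix \lbs{s} is closed under order-duality: given $Q = \langle Q,\bot,\vee,\top,\land,\otimes,\oplus\rangle$, let $Q^{\partial}$ be the structure on the same carrier obtained by reversing the order (so that $\vee$ and $\land$, as well as $\bot$ and $\top$, are interchanged) and by interchanging the two monoid operations, i.e.\ $\otimes^{\partial} := \oplus$ and $\oplus^{\partial} := \otimes$. A direct check shows that associativity and the relevant one-sided distributivities are preserved, that the hemidistributive law \eqref{eq:hemidistr1p} read in $Q^{\partial}$ is exactly \eqref{eq:hemidistr2p} read in $Q$ and vice versa, and that the mix rule \eqref{eq:mix} is self-dual; hence $Q^{\partial}$ is again a mix \lbs. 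Note that, unlike the involution $\opp{(-)}$ used in the Remark following the definition of closed and open tuples, this order-duality does \emph{not} transpose the arguments of $\otimes$, so no reindexing of $\cd$ by $\sigma$ is involved.

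Next I would observe how the constructions of Lemma~\ref{lemma:ClosureInterior} transform under this passage. Reading the formulas in $Q^{\partial}$, the interior operation $\interior{(-)}$ computed in $Q^{\partial}$ coincides, coordinatewise, with the closure operation $\closure{(-)}$ computed in $Q$: a meet over subdivisions in $Q^{\partial}$ is a join over subdivisions in $Q$, and each term $f_{i,\ell_{1}} \oplus^{\partial} \dots \oplus^{\partial} f_{\ell_{k-1},j}$ is the product $f_{i,\ell_{1}} \otimes \dots \otimes f_{\ell_{k-1},j}$ taken in $Q$. Likewise, a tuple $f \in \PrLQ$ is closed in $Q^{\partial}$ if and only if it is open in $Q$, and open in $Q^{\partial}$ if and only if it is closed in $Q$.

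With these translations the argument is immediate: if $f$ is open in $Q$, then $f$ is closed in $Q^{\partial}$, so Proposition~\ref{prop:interiorofclosed} applied to the mix \lbs $Q^{\partial}$ gives that the tuple $\interior{f}$, computed in $Q^{\partial}$, is closed in $Q^{\partial}$; but that tuple equals $\closure{f}$ computed in $Q$, and being closed in $Q^{\partial}$ means being open in $Q$. I do not expect a genuine obstacle: the only delicate point is the bookkeeping of the duality, namely that interchanging $\otimes$ and $\oplus$ \emph{swaps} the two one-sided hemidistributive laws rather than fixing each of them, which is precisely what keeps the class of mix \lbs{s} self-dual. Should one prefer to avoid the meta-argument altogether, one can instead repeat the proof of Proposition~\ref{prop:interiorofclosed} verbatim with the roles of $\otimes$ and $\oplus$, of $\vee$ and $\land$, and of $\leq$ and $\geq$ exchanged, using \eqref{eq:hemidistr2} (resp.\ \eqref{eq:hemidistr2p}) in place of \eqref{eq:hemidistr1} (resp.\ \eqref{eq:hemidistr1p}) and invoking the mix rule \eqref{eq:mix} in the case $j = \ell_{u}$.
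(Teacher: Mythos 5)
Your proof is correct and follows essentially the same route as the paper: the paper obtains this proposition from Proposition~\ref{prop:interiorofclosed} simply by noting that the definition of a mix \lbs{} is auto-dual, which is precisely the order-duality (swapping $\otimes$ with $\oplus$, joins with meets, closed with open, and the two hemidistributive laws with each other) that you spell out via $Q^{\partial}$. Your explicit bookkeeping, including the observation that this duality does not transpose the indices in $\cd$, is a faithful elaboration of the paper's one-line justification.
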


\begin{definition}
  For $Q$ a mix \lbs, $\Ld{Q}$ shall denote the set of clopen tuples
  of $\PrLQ$.
\end{definition}

\begin{theorem}
  The set $\Ld{Q}$  is, with the ordering inherited from $\PrdQ$, a lattice.
  \label{theo:meetsAndJoin}
\end{theorem}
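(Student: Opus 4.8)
The plan is to exhibit $\Ld{Q}$ as the image of a closure/interior interplay on the complete lattice $\PrdQ$ and to derive lattice structure from that. First I recall the basic facts already established: closed tuples are closed under arbitrary meets, open tuples under arbitrary joins; the closure $f \mapsto \closure{f}$ is a closure operator whose fixpoints are the closed tuples, and dually the interior $f \mapsto \interior{f}$ is an interior operator whose fixpoints are the open tuples (Lemma~\ref{lemma:ClosureInterior}). The two crucial ingredients supplied by the mix hypothesis are Proposition~\ref{prop:interiorofclosed} and Proposition~\ref{prop:closureofopen}: the interior of a closed tuple is again closed, and the closure of an open tuple is again open. Consequently both $\closure{(-)}$ and $\interior{(-)}$ restrict to operators on the relevant sets and, more to the point, the composite $f \mapsto \interior{\closure{f}}$ sends any tuple to a clopen tuple, and dually for $f \mapsto \closure{\interior{f}}$.

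Next I would define the candidate operations on $\Ld{Q}$. Given a family $\{f^{(t)}\}_{t \in T}$ of clopen tuples, its meet in $\PrdQ$ (the coordinatewise meet) is still closed, but need not be open; so I set the meet in $\Ld{Q}$ to be $\interior{\bigl(\bigwedge_t f^{(t)}\bigr)}$, which is open by construction and closed by Proposition~\ref{prop:interiorofclosed} since $\bigwedge_t f^{(t)}$ is closed. Dually, the join in $\Ld{Q}$ is $\closure{\bigl(\bigvee_t f^{(t)}\bigr)}$, which is open by Proposition~\ref{prop:closureofopen} and closed by construction. It then remains to check that these really are the meet and join in the induced order. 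For the meet: $\interior{\bigl(\bigwedge_t f^{(t)}\bigr)} \leq \bigwedge_t f^{(t)} \leq f^{(s)}$ for every $s$, so it is a lower bound in $\Ld{Q}$; and if $g \in \Ld{Q}$ satisfies $g \leq f^{(t)}$ for all $t$, then $g \leq \bigwedge_t f^{(t)}$, and since $g$ is open and the interior is the greatest open tuple below its argument, $g \leq \interior{\bigl(\bigwedge_t f^{(t)}\bigr)}$. The join case is dual, using that the closure is the least closed tuple above its argument. Since arbitrary meets and joins exist, $\Ld{Q}$ is in fact a complete lattice; for the theorem as stated it suffices to note the binary case.

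Finally I would record that this order is precisely the one inherited from $\PrdQ$: by definition the order on $\Ld{Q}$ is the restriction of the pointwise order, and the argument above shows exactly that the restricted order has all meets and joins, computed by the interior-of-meet and closure-of-join formulas. I expect the only real subtlety to be bookkeeping: verifying the universal property of the meet and join uses, essentially verbatim, the extremality statements in Lemma~\ref{lemma:ClosureInterior} (``the greatest open tuple below $f$'', ``the least closed tuple above $f$'') together with the fact that a clopen witness $g$ is in particular open (resp.\ closed). The genuinely nontrivial content — that $\interior{(-)}$ of a closed tuple stays closed and $\closure{(-)}$ of an open tuple stays open — has already been discharged by Propositions~\ref{prop:interiorofclosed} and~\ref{prop:closureofopen}, so the proof of the theorem itself is a short assembly of these pieces rather than a new computation. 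One small point to be careful about is the empty family: the meet of the empty family is $\top$ in $\PrdQ$, whose interior is the top clopen tuple, and dually for the bottom, so $\Ld{Q}$ indeed has a greatest and least element.
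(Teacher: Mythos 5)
Your proof is correct and takes essentially the same route as the paper: the join and meet of a family of clopen tuples are defined as the closure of the join, respectively the interior of the meet, computed in $\PrdQ$, with Propositions~\ref{prop:interiorofclosed} and~\ref{prop:closureofopen} guaranteeing clopenness and the universal properties following from the extremality clauses of Lemma~\ref{lemma:ClosureInterior}, exactly as in the paper (which leaves that last verification as ``easily seen''). The only slight overstatement is the claim that arbitrary meets and joins exist in $\PrdQ$ — that requires $Q$ complete, and the paper accordingly adds ``whenever the supremum exists'' — but, as you note, the finite case suffices for the theorem as stated.
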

\begin{proof}
  For a family $\set{f_{i} \mid i \in I}$, with each
  $f_{i}$ clopen, define 
  \begin{align}
    \bigvee_{\Ld{Q}} \set{ f_{i} \mid i \in I } & :=
    \cl{\bigvee \set{ f_{i} \mid i \in I } }\,, & 
    \bigwedge_{\Ld{Q}} \set{ f_{i} \mid i \in I } & :=
    \int{(\bigwedge \set{ f_{i} \mid i \in I } )}\,,
    \label{eq:defJoinAndMeet}
  \end{align}
  whenever the supremum $\bigvee \set{ f_{i} \mid i \in I }$ (\resp
  infimum $\bigwedge \set{ f_{i} \mid i \in I }$) exists in
  $\PrdQ$. Since this join (\resp meet) is open, its closure is clopen
  by Proposition~\ref{prop:closureofopen} (\resp
  Proposition~\ref{prop:interiorofclosed}) and therefore it belongs to
  $\Ld{Q}$.  Then It is easily seen that this is the supremum (\resp
  infimum) of the family $\set{f_{i} \mid i \in I}$ in the poset
  $\Ld{Q}$.
\end{proof}

\begin{example}
  Let $Q = \two$ be the two element Boolean algebra $\two$. We
  identify a tuple $\chi \in \prodd{\two}$ with the characteristic map
  of a subset $S_{\chi}$ of $\set{(i,j) \mid 1 \leq i < j \leq
    d}$. Think of this subset as a relation. Then $\chi$ is clopen if
  both $S_{\chi}$ and its complement in
  $\set{(i,j) \mid 1 \leq i < j \leq d}$ are transitive
  relations. These subsets are in bijection with permutations of the
  set $[d]$, see \cite{STA0}; the lattice $\Ld{\two}$ is therefore
  isomorphic to the well-known \Permutohedron, aka the \wBo.
\end{example}
\begin{example}
  On the other hand, if $Q$ is the Sugihara monoid on the
  three-element chain described in Example~\ref{ex:Sugihara}, then the
  lattice of clopen tuples is isomorphic to the lattice of
  pseudo-permutations, see \cite{KLNPS01,STA,facialWeakOrder}.
\end{example}

\begin{example}
  Let us consider a finite chain $\I_{n} =\set{0,\ldots ,n}$ and the
  quantale $\Qj(\I_{n})$. Let $d \cdot n$ be the integer vector of
  length $d$ whose all entries are equal to $n$. We claim that the
  lattice $\Ld{\Qj(\I_{n})}$ is isomorphic to the multinomial lattice
  $\L(d \cdot n)$ of \cite{BB}, see also \cite[\S 8-10]{STA}. It is
  argued in \cite{STA} that elements of these multinomial lattices are
  in bijection with some clopen tuples of the product
  $\prodd{\L(n,n)}$.  Considering that a binomial lattice $\L(n,n)$ is
  isomorphic (as a lattice) to the quantale $\Qj(\I_{n})$, we are left
  to verify that the two notions of closed/open tuple coincide via the
  bijection.

  For $x,y \in \setn$, let $\langle x,y\rangle$ denote the least \jc
  function $f \in \Qj(\I_{n})$ such that $y \leq f(x)$. Elements of
  the form $\langle x,y\rangle$ are the \jp elements of $\Qj(\I_{n})$.
  A tuple $f \in \prodd{\Qj(\I_{n})}$ is called closed in \cite{STA}
  if, for each $i,j,k$ with $1 \leq i < j < k \leq d$ and each triple
  $x,y,z \in \setn$, $\langle x,y\rangle \leq f_{i,j}$ and
  $\langle y,z\rangle \leq f_{j,k}$ imply
  $\langle x, z\rangle \leq f_{i,k}$.  Considering that, for
  $f \in \Qj(\I_{n})$, $\langle x,y\rangle \leq f$ if and only if
  $y \leq f(x)$, closedness is easily seen to be equivalent to the
  condition $f_{j,k} \circ f_{i,j} \leq f_{i,k}$, that is, to the
  notion of closedness introduced in this section.  Let us argue that
  a tuple is open as defined in \cite{STA} if and only if it is open
  as defined in this section.
  To this goal, for $x,y \in \setn$, let $\mirrd{x,y}$ be the greatest
  \jc function $f \in \Qj(\In)$ such that $f(x) \leq y -1$. Elements
  of the form $\mirrd{x,y}$ are the \mirr elements of $\Qj(\I_{n})$
  and, moreover, $\opp{\mirrd{x,y}} =\jirrd{y,x}$. In \cite{STA} a
  tuple $f \in \prodd{\Qj(\I_{n})}$ is said to be open if
  $f_{i,j} \leq \mirrd{x,y} $ and $f_{j,k} \leq \mirrd{y,z}$ imply
  $f_{i,k} \leq \mirrd{x,z}$, for each $x,y,z \in \setn$ and whenever
  $1 \leq i < j < k \leq d$.  This condition is equivalent to
  $\jirrd{y,x} = \opp{\mirrd{x,y}} \leq \opp{f_{i,j}}$ and
  $\jirrd{z,y} = \opp{\mirrd{y,z}} \leq \opp{f_{j,k}}$ imply
  $\jirrd{z,x} = \opp{\mirrd{x,z}} \leq \opp{f_{i,k}}$, for each
  $z,y,x \in \setn$ and $1\leq i < j < k \leq d$. As before, this is
  equivalent to
  $\opp{f_{j,k}} \otimes \opp{f_{i,j}} \leq \opp{f_{i,k}}$ and then to
  $f_{i,k} \leq \opp{(\opp{f_{j,k}} \otimes \opp{f_{i,j}})} =
  f_{i,j}\oplus f_{j,k}$, yielding the notion of openness as defined
  here.
 \end{example}

\begin{proposition}
  \label{prop:functor}
  $\Ld{-}$ is a limit-preserving functor from the category of
  mix \lbs{s} to the category of
  lattices. 
\end{proposition}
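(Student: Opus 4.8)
The plan is to show that $\Ld{-}$ is well-defined on morphisms, functorial, and preserves limits. First I would fix what a morphism of mix \lbs{s} is: a function $\phi : Q \rto Q'$ preserving the lattice operations $\bot,\vee,\top,\land$ and the two semigroup operations $\otimes,\oplus$ (but, as the remark after the definition of \lbm{s} warns, not necessarily the units). Given such a $\phi$, define $\Ld{\phi} : \PrdQ \rto \PrdQ[Q']$ coordinatewise, i.e. $(\Ld{\phi}(f))_{i,j} := \phi(f_{i,j})$. Since $\phi$ preserves $\otimes$ and is monotone, $f_{i,j}\otimes f_{j,k} \leq f_{i,k}$ implies $\phi(f_{i,j})\otimes\phi(f_{j,k}) = \phi(f_{i,j}\otimes f_{j,k}) \leq \phi(f_{i,k})$, so $\Ld{\phi}$ carries closed tuples to closed tuples; dually, since $\phi$ preserves $\oplus$, it carries open tuples to open tuples. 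Hence $\Ld{\phi}$ restricts to a map $\Ld{Q} \rto \Ld{Q'}$. That this restricted map is a lattice homomorphism is where the argument has content: meets and joins in $\Ld{Q}$ are \emph{not} the coordinatewise ones but are computed by applying the interior, respectively the closure, to the coordinatewise meet/join (Theorem~\ref{theo:meetsAndJoin}). So I must check $\Ld{\phi}(\cl{g}) = \cl{\Ld{\phi}(g)}$ and $\Ld{\phi}(\int{g}) = \int{\Ld{\phi}(g)}$ for $g$ in the image of the relevant coordinatewise operation. Using the explicit formulas of Lemma~\ref{lemma:ClosureInterior}, $\cl{g}_{i,j}$ is a join over subdivisions of products $g_{i,\ell_1}\otimes\cdots\otimes g_{\ell_{k-1},j}$; since $\phi$ preserves finite joins and $\otimes$, it commutes with this whole expression, giving $\phi(\cl{g}_{i,j}) = \cl{\Ld{\phi}(g)}_{i,j}$, and dually for the interior using preservation of finite meets and $\oplus$. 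Functoriality ($\Ld{\mathrm{id}} = \mathrm{id}$, $\Ld{\psi\circ\phi} = \Ld{\psi}\circ\Ld{\phi}$) is then immediate from the coordinatewise definition.

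For limit preservation, the strategy is to compute limits in both categories concretely. Limits in the category of lattices (and lattice homomorphisms) are created by the forgetful functor to $\Set$: a limit is carried by the appropriate subset of a product, with operations coordinatewise. The same is true in the category of mix \lbs{s}, since all the defining laws are (in)equalities between terms and so are inherited by substructures of products. Thus, given a diagram $D : \mathcal{J} \rto \mathbf{mix\text{-}\lbs}$ with limit $L = \lim D$, an element of $L$ is a compatible family $(q_j)_{j}$ with each $q_j \in D(j)$, and I want to exhibit $\Ld{L}$ as $\lim (\Ld{-}\circ D)$. The candidate cone is given by the maps $\Ld{\pi_j} : \Ld{L} \rto \Ld{D(j)}$. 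An element of $\lim(\Ld{-}\circ D)$ is a compatible family $(f^{(j)})_j$ with $f^{(j)} \in \Ld{D(j)}$; compatibility means $D(\alpha)(f^{(j)}_{i,k}) = f^{(j')}_{i,k}$ for each arrow $\alpha : j \rto j'$ and each coordinate $(i,k)$. For each fixed coordinate $(i,k)$, the family $(f^{(j)}_{i,k})_j$ is then a compatible family in the underlying diagram, hence determines a unique element $g_{i,k} \in L$. This assembles to a tuple $g \in \PrdL$, and $g$ is closed because closedness holds coordinatewise in $L$ iff it holds after projecting to each $D(j)$ — and it does there since each $f^{(j)}$ is closed — using that the $\pi_j$ jointly reflect the order and preserve $\otimes$; dually $g$ is open. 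So $g \in \Ld{L}$, and it is the unique element projecting to the given family. This gives the bijection underlying the universal property; that it is a lattice isomorphism follows once more from the fact that meets/joins in all the lattices $\Ld{D(j)}$, $\Ld{L}$ are computed by the same closure/interior formulas, which are built from operations the projections preserve.

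The main obstacle is the mismatch between the \emph{ambient} coordinatewise structure and the \emph{intrinsic} lattice structure of $\Ld{Q}$: because joins and meets in $\Ld{Q}$ are given by closure and interior rather than coordinatewise, one cannot simply say "everything is coordinatewise, done". The crux is therefore Lemma~\ref{lemma:ClosureInterior}'s formulas together with the observation that a morphism of mix \lbs{s} preserves \emph{arbitrary} joins appearing in the closure formula only insofar as those joins exist and are computed pointwise — here one should be slightly careful: the closure is a join over the (possibly infinite) set $S_{i,j}$ of subdivisions, so I should either restrict attention to the case where the relevant suprema exist and are preserved, or note that the functor is only claimed to be limit-preserving (not colimit- or arbitrary-join-preserving) and that for the limit computation above the tuples $f^{(j)}$ are already clopen, so their coordinatewise meet/join inside the limit lattice need only be corrected by closure/interior, which a morphism of mix \lbs{s} does commute with because those corrections are the finitary-join / finitary-meet expressions of Lemma~\ref{lemma:ClosureInterior} applied to a tuple already in the image. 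Pinning down exactly this finiteness point — that one never needs a morphism to preserve more than finite joins and finite meets — is the one place the argument requires care rather than routine verification.
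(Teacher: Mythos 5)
Your proposal is correct, and its first half coincides with the paper's argument: the coordinatewise map induced by an \lbs morphism sends closed tuples to closed ones and open to open, and it commutes with the closure and interior of Lemma~\ref{lemma:ClosureInterior}, whence clopens go to clopens and the joins and meets of Theorem~\ref{theo:meetsAndJoin} are preserved. Where you genuinely diverge is limit preservation. You verify the universal property by hand: limits in both varieties (lattices, mix \lbs{s}) are created by the forgetful functors to the category of sets, so an element of the limit of the lattices $\Ld{D(j)}$ is a compatible family of clopen tuples, which assembles coordinatewise into a tuple over the limit \lbs; that tuple is clopen because the order and the operations $\otimes,\oplus$ in the limit are computed (hence checked) coordinatewise, and the resulting bijection is compatible with the cone maps, which are lattice homomorphisms by the first part. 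The paper also reduces to the category of sets via creation of limits, but then argues more abstractly: as a set-valued functor, $\Ld{-}$ is the equalizer of the interior, the identity and the closure, viewed as endo-transformations of the $\cd$-fold power of the forgetful functor, and limit-preserving set-valued functors are closed under such limits because limits of functors are computed pointwise. Your route is more elementary and makes the comparison isomorphism explicit; the paper's is shorter and avoids re-deriving the universal property, at the price of invoking the closure of limit-preserving functors under limits.

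One clarification concerning the caution in your last paragraph: no care is actually needed there, because for each $(i,j) \in \cd$ the set $S_{i,j}$ is finite. A subdivision of $[i,j]$ consists of integer indices $i < \ell_{1} < \dots < \ell_{k-1} < j$ with $j \leq d$, so there are at most $2^{j-i-1}$ subdivisions. Consequently the closure and the interior are literally finite joins of finite $\otimes$-products and finite meets of finite $\oplus$-sums, and any \lbs morphism, preserving finite joins, finite meets, $\otimes$ and $\oplus$, commutes with them outright; there is no need to restrict to clopen tuples, to existing suprema, or to the limit situation, and the ``finiteness point'' you flag is automatic rather than delicate.
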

\begin{proof}
  Let $\psi : Q_{0} \rto Q_{1}$ be an \lbs morphism (that is, a
  lattice morphism which, moreover, preserves $\otimes$ and
  $\oplus$). The map $\prodd{\psi} : \prodd{Q_{0}} \rto \prodd{Q_{1}}$
  defined by $[\prodd{\psi}(f)]_{i,j} := \psi(f_{i,j})$ commutes both
  with the closure map and with the interior map, since these maps are
  defined by means of the operations preserved by $\psi$.
  Consequently, the image by $\prodd{\psi}$ of a clopen is clopen.
  Similarly, the lattice operations on clopens, defined in
  equation~\eqref{eq:defJoinAndMeet}, are preserved by $\prodd{\psi}$
  since (for example for the joins) this function preserves the joins
  of $\prodd{Q_{0}}$ and the closure.

  Since the forgetful functor from the category of lattices to the
  category of sets creates limits, in order to argue that the functor
  $\Ld{-}$ preserves limits, we can consider it as a functor form the
  category of mix \lbs{s} to the category of sets and functions and
  show that it preserves limits.

  Let $\mathcal{C}$ be the category of \lbs{s} and their morphisms and
  consider the category of limit preserving functors from
  $\mathcal{C}$ to the category $\mathcal{S}$ of sets and functions.
  This category contains the forgetful functor (that we note here $X$)
  and is closed under limits. This holds since limits in the category
  of functors from $\mathcal{C}$ to $\mathcal{S}$ are computed
  pointwise.
  It is then enough to observe that
  $\Ld{-} : \mathcal{C} \rto \mathcal{S}$ is the following equalizer:
  \begin{center}
    \hfill
    \begin{tikzcd}
      \Ld{X} \ar[rr, hook]
      &&
      \prodd{X} \ar[rr,shift left =4, "\interior{(-)}"]
      \ar[rr,"id"  description]
      \ar[rr, shift right =4, "\closure{(-)}"']
      & &\prodd{X} \\[6mm]
    \end{tikzcd}
    \qedhere
  \end{center}
\end{proof}
In particular, from the previous proposition we obtain the following
statement, that we shall use in Section~\ref{sec:embeddings}.
\begin{proposition}
  If $i : Q_{0} \rto Q_{1}$ is an injective homomorphism of mix
  \lbs{s}, then $\Ld{i} : \Ld{Q_{0}} \rto \Ld{Q_{1}}$ is an 
  embedding.
\end{proposition}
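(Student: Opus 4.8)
The plan is to show that if $i : Q_0 \rto Q_1$ is an injective \lbs morphism, then $\Ld{i} = \prodd{i}$ restricted to clopen tuples is both injective and an order-embedding, i.e. reflects the order. Injectivity is immediate: $\prodd{i}$ acts coordinatewise, so if $\prodd{i}(f) = \prodd{i}(g)$ then $i(f_{i,j}) = i(g_{i,j})$ for each $(i,j) \in \cd$, whence $f_{i,j} = g_{i,j}$ by injectivity of $i$, so $f = g$. Since $\Ld{i}$ is already known to be a lattice homomorphism by Proposition~\ref{prop:functor}, it is monotone; the only thing left is to check that it reflects the order, and for a lattice homomorphism that is injective this is automatic. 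Indeed, $f \leq g$ in $\Ld{Q_0}$ iff $f \vee g = g$ (the join computed in $\Ld{Q_0}$), and applying the homomorphism $\Ld{i}$ this is equivalent to $\Ld{i}(f) \vee \Ld{i}(g) = \Ld{i}(g)$; since $\Ld{i}$ is injective, the latter holds iff $\Ld{i}(f) \vee \Ld{i}(g) = \Ld{i}(g)$ in $\Ld{Q_1}$, i.e. iff $\Ld{i}(f) \leq \Ld{i}(g)$. Hence $\Ld{i}$ is an embedding of lattices.

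Alternatively, and perhaps more transparently, I would argue directly at the level of the ambient products. An injective morphism $i$ of \bd lattices need not reflect the order in general, but here we do not need that: by construction the order on $\Ld{Q_0}$ is the restriction of the coordinatewise order on $\prodd{Q_0}$, and likewise for $\Ld{Q_1}$ inside $\prodd{Q_1}$. So it suffices to observe that the coordinatewise map $\prodd{i} : \prodd{Q_0} \rto \prodd{Q_1}$ reflects the order, which follows once we know $i : Q_0 \rto Q_1$ reflects the order. For that, recall that $i$ is a lattice homomorphism, so $\alpha \leq \beta$ in $Q_0$ iff $\alpha \wedge \beta = \alpha$ iff $i(\alpha) \wedge i(\beta) = i(\alpha)$ iff $i(\alpha) \leq i(\beta)$, where the middle equivalence uses that $i$ preserves $\wedge$ and is injective. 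Thus $\prodd{i}$ restricts to an order-embedding on the sub-posets of clopen tuples, and combined with injectivity this gives that $\Ld{i}$ is an embedding.

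The only subtlety worth flagging is that ``embedding of lattices'' should be taken to mean an injective lattice homomorphism whose image carries the induced order — and this is exactly what the two paragraphs above establish, since a lattice homomorphism that is an order-embedding automatically has its lattice operations agree with those induced from the codomain. There is essentially no obstacle here: the content was already packaged into Proposition~\ref{prop:functor} (that $\Ld{i}$ is a lattice homomorphism defined coordinatewise) and the remaining step is the elementary fact that injective lattice homomorphisms reflect the order. One should just take care that the joins and meets of $\Ld{Q_0}$ and $\Ld{Q_1}$ are computed via closure and interior (equation~\eqref{eq:defJoinAndMeet}), so the verification that $\Ld{i}$ preserves them — already done in Proposition~\ref{prop:functor} using that $i$ commutes with closure and interior — is what licenses the homomorphism argument above.
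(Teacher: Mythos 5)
Your proof is correct, and it makes explicit what the paper leaves implicit. The paper offers no separate argument: it presents the statement as an immediate consequence of Proposition~\ref{prop:functor}, the point being that a limit-preserving functor preserves monomorphisms (a mono is characterized by a pullback square), and in these categories monos are exactly the injective morphisms, so $\Ld{i}$ is an injective lattice homomorphism, hence an embedding. You instead verify injectivity directly and coordinatewise from the injectivity of $i$, and then invoke the elementary fact that an injective lattice homomorphism reflects the order; like the paper, you still rely on Proposition~\ref{prop:functor} for the non-trivial content, namely that $\prodd{i}$ sends clopens to clopens and that $\Ld{i}$ preserves the joins and meets computed via closure and interior. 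The categorical route is shorter and uniform; yours is more self-contained and makes the order-reflection step visible, which is what ``embedding'' really requires. Two small blemishes, neither fatal: the sentence claiming that an injective morphism of bounded lattices ``need not reflect the order in general'' is mistaken (injective lattice homomorphisms always reflect the order, as you yourself prove two lines later via $i(\alpha)\wedge i(\beta)=i(\alpha)$); and in your first paragraph the chain of equivalences is worded circularly --- the clean version is: if $\Ld{i}(f)\leq \Ld{i}(g)$ then $\Ld{i}(f\vee g)=\Ld{i}(f)\vee\Ld{i}(g)=\Ld{i}(g)$, so $f\vee g=g$ by injectivity, i.e. $f\leq g$.
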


The goal of the rest of this section is to argue that clopen tuples
naturally arise as some sort of enrichment (in the sense of
\cite{lawvere,kelly,stubbe}) or metric of a set $X$. For the sake of
this discussion, we shall fix an \irl $Q$ with the property that
$0 = 1$. This equality holds in the quantale $\LjI$ studied in
Section~\ref{sec:quantaleLI}, but fails in other mix \irl{s}, e.g. in
the quantales $\Qj(\In)$.

A \emph{skew metric} of $X$ over $Q$ is a map
$\delta : X \times X \rto Q$ such that, for all $x,y,z \in X$,
\begin{align*}
  \delta(x,x) & \leq 0  \,,\\
  \delta(x,z) & \leq \delta(x,y)\oplus \delta(y,z)\,, \\
  \delta(x,y) & = \opp{\delta(y,x)}\,. 
\end{align*}
That is, a skew metric is a semi-metric (see e.g. \cite{weakmetrics})
with values in $Q$, where the symmetry condition has been replaced by
the last requirement, skewness. Similar kind of metrics have been
considered in the literature, for example in \cite{pouzet}.
Observe that (when $X \neq \emptyset$)
$1 = \opp{0} \leq \opp{\delta(x,x)} = \delta(x,x) \leq 0$, so if $Q$
is mix, then necessarily $1 =0$.

\begin{lemma}
  \label{lemma:defskewmetric}
  Suppose in $Q$ the equality $1 = 0$ holds.  By defining
  \begin{align*}
    \delta_{f}(i,j) & :=
    \begin{cases}
      f_{i,j}\,, & i < j\,, \\
      0\,, & i = j\,, \\
      \Opp{f_{j,i}}\,, & j < i\,,
    \end{cases}
  \end{align*}
  every clopen tuple $f$ of $\prodd{Q}$ yields a unique skew metric on
  the set $\setof{d}$. Every skew metric on the set $\setof{d}$ with
  values in $Q$ arises in this way.
\end{lemma}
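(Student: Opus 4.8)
The plan is to show that $f \mapsto \delta_{f}$ is a bijection between clopen tuples of $\prodd{Q}$ and skew metrics on $\setof{d}$ valued in $Q$, by exhibiting the inverse explicitly. Given a skew metric $\delta$, set $f_{i,j} := \delta(i,j)$ for $(i,j) \in \cd$; since the first case of the defining formula gives $f_{i,j} = \delta_{f}(i,j)$ for $i < j$, this candidate inverse is forced (in particular $f \mapsto \delta_{f}$ is injective), so it suffices to check that (a) $\delta_{f}$ is a skew metric for every clopen $f$, and (b) the tuple $f$ built from an arbitrary skew metric $\delta$ is clopen and satisfies $\delta_{f} = \delta$. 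Throughout, the hypothesis $1 = 0$ — equivalently $\opp{0} = 0$, since $0 = \opp{1}$ — is used only to handle diagonal entries.

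For (a): skewness $\delta_{f}(i,j) = \opp{\delta_{f}(j,i)}$ follows at once from the three-case definition together with $\opp{\opp{x}} = x$ and $\opp{0} = 0$, and $\delta_{f}(i,i) = 0 \leq 0$ is trivial. The triangle inequality $\delta_{f}(i,k) \leq \delta_{f}(i,j) \oplus \delta_{f}(j,k)$ is verified by cases on the linear order of $i, j, k$. If $i, j, k$ are pairwise distinct, then in each of the six orderings, after substituting the definition of $\delta_{f}$ and transposing by means of the residuation equivalences \eqref{eq:residuationoplus}, the required inequality becomes either the closedness inequality $f_{a,b} \otimes f_{b,c} \leq f_{a,c}$ or the openness inequality $f_{a,c} \leq f_{a,b} \oplus f_{b,c}$ for the appropriate $a < b < c$; in the ordering $k < i < j$ one first applies $\opp{(-)}$ to the openness inequality of $f$, using $\opp{(x \oplus y)} = \opp{y} \otimes \opp{x}$ from \eqref{def:oppoplus}, before transposing. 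The cases with a repeated index collapse, since $0$ is the unit of $\oplus$, either to a trivial inequality or to $0 \leq x \oplus \opp{x}$ (resp.\ $0 \leq \opp{x} \oplus x$), and these hold because $x \oplus \opp{x} = x \rlimpl x \geq 1 = 0$ (resp.\ $\opp{x} \oplus x = x \lrimpl x \geq 1 = 0$).

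For (b): with $f_{i,j} := \delta(i,j)$ for $i < j$, openness of $f$ is literally the triangle inequality $\delta(i,k) \leq \delta(i,j) \oplus \delta(j,k)$ for $i < j < k$; closedness is obtained by applying $\opp{(-)}$ to the triangle inequality $\delta(k,i) \leq \delta(k,j) \oplus \delta(j,i)$ and rewriting all three terms by skewness, which yields $\delta(i,j) \otimes \delta(j,k) \leq \delta(i,k)$, i.e.\ $f_{i,j} \otimes f_{j,k} \leq f_{i,k}$. Thus $f$ is clopen. Finally $\delta_{f} = \delta$: for $i < j$ this is the definition of $f$, for $i > j$ it is skewness, and for $i = j$ it is the identity $\delta(i,i) = 0$, which follows from $\delta(i,i) \leq 0$ together with $\delta(i,i) = \opp{\delta(i,i)} \geq \opp{0} = 0$. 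The only point requiring care is the bookkeeping of left/right order in the non-commutative operations $\otimes$ and $\oplus$ during the case analysis for the triangle inequality, so that each transposition via \eqref{eq:residuationoplus} lands exactly on a closedness or openness inequality of $f$ rather than a twisted variant; everything else is a direct unfolding of definitions.
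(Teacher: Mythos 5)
Your proposal is correct and follows essentially the same route as the paper: the paper reduces the statement to the equivalence ``$f$ clopen iff $\delta_f$ is a skew metric'' and, like you, obtains the nontrivial direction by turning triangle inequalities for non-increasing triples into the closedness/openness inequalities via the residuation laws \eqref{eq:residuationoplus} and the involution, while the degenerate cases with repeated indices use $0=1$ exactly as you do. The only cosmetic differences are that the paper covers the six orderings by a cyclic-rotation argument (if pattern $(ijk)$ holds then so does $(jki)$) instead of your direct six-case check, and it leaves the bijection bookkeeping you spell out as ``immediate.''
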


The Lemma is an immediate consequence of the following statement:
\begin{lemma}
  \label{lemma:enrichment}
  A tuple $f$ is clopen if and only if $\delta_{f}$ is a skew metric.
\end{lemma}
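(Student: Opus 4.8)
The plan is to notice that, by the very definition of $\delta_{f}$, two of the three skew-metric axioms are automatic, so that the whole statement is about the triangle inequality. Indeed $\delta_{f}(i,i) = 0 \leq 0$ trivially, and skewness $\delta_{f}(i,j) = \opp{\delta_{f}(j,i)}$ holds unconditionally: for $i < j$ it is the involutivity of $\opp{(-)}$, for $i = j$ it is $\opp{0} = 1 = 0$, and for $i > j$ it is again involutivity. The decisive reduction is that, using skewness $\delta_{f}(x,y) = \opp{\delta_{f}(y,x)}$ together with the residuation law~\eqref{eq:residuationoplus}, the triangle inequality $\delta_{f}(x,z) \leq \delta_{f}(x,y) \oplus \delta_{f}(y,z)$ is, for arbitrary $x,y,z \in \setof{d}$, equivalent to
\begin{align*}
  \delta_{f}(y,x) \otimes \delta_{f}(x,z) & \leq \delta_{f}(y,z)\,.
\end{align*}
Since $(x,y,z) \mapsto (y,x,z)$ is a bijection of triples, $\delta_{f}$ satisfies the triangle inequality for all triples if and only if the inequality $C(a,b,c)$, namely $\delta_{f}(a,b)\otimes \delta_{f}(b,c) \leq \delta_{f}(a,c)$, holds for all $a,b,c \in \setof{d}$.

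Next I would dispose of the case in which $a,b,c$ are not pairwise distinct: here $C(a,b,c)$ holds unconditionally. If $b \in \set{a,c}$, one factor of $\otimes$ equals $\delta_{f}(a,a) = 0$, and since $0 = 1$ is the unit of $\otimes$ the inequality reads $t \leq t$. If $a = c$ with $b$ arbitrary, the inequality becomes $\opp{t} \otimes t \leq 0$ with $t = \delta_{f}(x,b)$, and this is an instance of~\eqref{eq:residuationoplus} applied to the trivial $\opp{t} \leq 0 \oplus \opp{t}$. This is the only point where the hypothesis $0 = 1$ is genuinely used; the degenerate triples impose no constraint on $f$.

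Finally I would treat $C(a,b,c)$ when $\set{a,b,c}$ has three elements; write $i<j<k$ for these elements in increasing order. Expanding $\delta_{f}$ by its case definition and repeatedly applying~\eqref{eq:residuationoplus} together with the De Morgan identities $\opp{(u\otimes v)} = \opp{v}\oplus\opp{u}$ and $\opp{(u\oplus v)} = \opp{v}\otimes\opp{u}$ (immediate from~\eqref{def:oppoplus}), a one-line transposition in each case shows: the three cyclic rotations $(i,j,k)$, $(j,k,i)$, $(k,i,j)$ each make $C(a,b,c)$ equivalent to the closedness inequality $f_{i,j}\otimes f_{j,k} \leq f_{i,k}$ (for $(i,j,k)$ it is literally that inequality), while the three cyclic rotations of the reverse word, $(k,j,i)$, $(j,i,k)$, $(i,k,j)$, each make it equivalent to the openness inequality $f_{i,k} \leq f_{i,j}\oplus f_{j,k}$ (for instance $C(j,i,k)$ reads $\opp{f_{i,j}}\otimes f_{i,k}\leq f_{j,k}$, which transposes to openness). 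Hence $C(a,b,c)$ holds for all six orderings of $\set{i,j,k}$ iff the triple $(i,j,k)$ is both closed and open, and letting $\set{i,j,k}$ range over all three-element subsets of $\setof{d}$ we conclude that $\delta_{f}$ is a skew metric iff $f$ is clopen. The only real work is the bookkeeping of these six orderings; once the reformulation of the triangle inequality as a $\otimes$-inequality is in place, each verification is routine. I expect this bookkeeping — keeping track of which $\opp{(-)}$'s appear and which residuation step to apply in each of the six cases — to be the main, though mild, obstacle.
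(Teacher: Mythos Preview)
Your proposal is correct and follows essentially the same approach as the paper: both reduce the skew-metric condition to the six orderings of each triple $\{i,j,k\}$ and show these split into three instances equivalent to closedness and three equivalent to openness, after handling the degenerate cases using $0=1$. The only organisational difference is that you first transpose the triangle inequality to the $\otimes$-form $C(a,b,c)$ and then check each of the six orderings individually, whereas the paper stays in the $\oplus$-form and proves a single cyclic implication $(ijk)\Rightarrow(jki)$, iterating it to cover all cases; this is a matter of presentation, not substance.
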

\begin{proof}
  Suppose $\delta_{f}$ is a skew metric.  For
  $1 \leq i < j < k \leq d$, we have
  $f_{i,k} \leq f_{i,j}\oplus f_{j,k}$ (openness) and
  $f_{k,i} \leq f_{k,j}\oplus f_{j,i}$ which in turn is equivalent to
  $ f_{i,j} \otimes f_{j,k} \leq f_{i,k}$ (closedness).

  Conversely, suppose that $f$ is clopen. Say that the pattern $(ijk)$
  is satisfied by $f$ if $f_{i,k} \leq f_{i,j}\oplus f_{i,j}$. If
  $\card(\set{i,j,k}) \leq 2$, then $f$ satisfies the pattern $(ijk)$
  if $i=j$ or $j = k$, since then $f_{i,j} = 0$ or $f_{j,k} = 0$. If
  $i = k$, then $0 \leq f_{i,j} \oplus f_{j,i}$ is equivalent to
  $f_{i,j} \leq f_{i,j}$.

  Suppose therefore that
  $\card(\set{i,j,k}) = 3$.
  By assumption, $f$ satisfies $(ijk)$ and $(kji)$ whenever
  $i < j < k$.  Then it is possible to argue that all the patterns on
  the set $\set{i,j,k}$ are satisfied by observing that if $(ijk)$ is
  satisfied, then $(jki)$ is satisfied as well: from
  $f_{i,k} \leq f_{i,j}\oplus f_{j,k}$, derive
  $f_{i,k}\otimes f_{k,j} = f_{i,k}\otimes \Opp{f_{j,k}} \leq f_{i,j}$
  and then
  $f_{j,i} = \Opp{f_{i,j}}\leq \Opp{(f_{i,k}\otimes f_{k,j})} = f_{j,k}
  \oplus f_{k,i}$.
\end{proof}
\begin{remark}
  \label{rem:suffCondClopen}
  In the next sections we shall often need to verify that some tuple
  $f \in \prodd{Q}$ is clopen. A simple sufficient condition is that,
  for each $i,j,k \in \setof{d}$ with $i < j < k$, either
  $f_{i,k} = f_{i,j} \otimes f_{j,k}$, or
  $f_{i,k} = f_{i,j} \oplus f_{j,k}$.  Indeed, from
  $f_{i,k} = f_{i,j} \otimes f_{j,k}$ we derive
  $f_{i,j} \otimes f_{j,k} \leq f_{i,k} = f_{i,j} \otimes f_{j,k} \leq
  f_{i,j} \oplus f_{j,k}$, using the mix rule.  Similarly,
  $f_{i,k} = f_{i,j} \oplus f_{j,k}$ implies
  $f_{i,j} \otimes f_{j,k} \leq f_{i,k} \leq f_{i,j} \oplus f_{j,k}$.
\end{remark}

\section{The \msaq $\LjI$}
\label{sec:quantaleLI}

From this section onward $\I$ denotes the unit interval of the reals,
$\I := [0,1]$. Recall that we use $\LjI$ for the set of \jcont
functions from $\I$ to itself.  Notice that a monotone function
$f : \I \rTo \I$ is \jcont if and only if
\begin{align}
  \label{eq:joincontinuous}
  f(x) & = \bigvee_{y < x} f(y)\,, \quad \text{or
    even} \quad f(x) = \bigvee_{y < x, \,y \in \I \cap \Q} f(y)\,,
\end{align}
see Proposition~2.1, Chapter II of \cite{compendiumCL}.
According to Example~\ref{ex:jcfunctions},
we have:
\begin{lemma}
  Composition induces a quantale structure on $\LjI$.
\end{lemma}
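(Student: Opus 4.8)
The plan is to verify directly that $\LjI$, equipped with the operations described in Example~\ref{ex:jcfunctions} and Section~\ref{sec:quantalesFromChains}, satisfies the axioms of a quantale: namely that $\langle \LjI, \circ^{\mathrm{op}}, id_{\I}\rangle$ is a monoid and that $\otimes$ (defined by $f \otimes g = g \circ f$) distributes over arbitrary joins in each variable. In fact, almost all of this has already been done. Example~\ref{ex:jcfunctions} establishes, for an arbitrary complete lattice $X$, that $\Qj(X)$ is a quantale under composition; so the only thing genuinely specific to $\I$ that needs checking is that $\LjI$ is closed under composition and under arbitrary pointwise joins — that is, that the structure $\Qj(\I)$ of Example~\ref{ex:jcfunctions} really consists of the functions we are calling \jcont in the sense of \eqref{eq:joincontinuous}, and that this class is a complete lattice.

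First I would recall that, by the Lemma of Section~\ref{sec:notation}, a monotone map $f : \I \rto \I$ is \jcont (preserves all existing joins) if and only if it has a right adjoint; equivalently, by \eqref{eq:joincontinuous}, if and only if $f(x) = \bigvee_{y<x} f(y)$ for all $x$. Closure under composition is then immediate: if $f,g$ are \jcont and $X \subseteq \I$ with $\bigvee X$ existing, then $g(f(\bigvee X)) = g(\bigvee_{x\in X} f(x)) = \bigvee_{x\in X} g(f(x))$, using \jcontinuity of $f$ and then of $g$; and $id_{\I}$ is obviously \jcont. Hence $\langle \LjI, \otimes, id_{\I}\rangle$ is a monoid. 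Closure under arbitrary pointwise joins is the second point: given $\{f_i\}_{i\in I} \subseteq \LjI$, the pointwise supremum $f := \bigvee_i f_i$ exists in $\I^{\I}$ (as $\I$ is a complete lattice) and is monotone; to see $f \in \LjI$ one checks $f(\bigvee X) = \bigvee_i f_i(\bigvee X) = \bigvee_i \bigvee_{x\in X} f_i(x) = \bigvee_{x\in X}\bigvee_i f_i(x) = \bigvee_{x\in X} f(x)$, where the middle step uses \jcontinuity of each $f_i$ and the exchange of suprema is valid in any complete lattice. Thus $\LjI$ is a complete lattice, a sub-join-semilattice of $\I^\I$, and it has a bottom element ($x \mapsto 0$).

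Having these closure properties, the distributivity of $\otimes$ over arbitrary joins in $\LjI$ follows verbatim from the computation displayed in Example~\ref{ex:jcfunctions}: for the left variable, $((\bigvee_i f_i)\otimes g)(x) = g((\bigvee_i f_i)(x)) = g(\bigvee_i f_i(x)) = \bigvee_i g(f_i(x)) = (\bigvee_i (f_i \otimes g))(x)$, using \jcontinuity of $g$; for the right variable the same display gives $(f \otimes \bigvee_i g_i)(x) = (\bigvee_i g_i)(f(x)) = \bigvee_i g_i(f(x)) = (\bigvee_i (f\otimes g_i))(x)$, which needs nothing beyond the pointwise definition of the supremum. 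Therefore $\LjI$ is a unital quantale. There is essentially no obstacle here: the statement is a specialization of Example~\ref{ex:jcfunctions} to $X = \I$, and the only thing worth spelling out is the verification that the monotone \jcont maps $\I \rto \I$ form a complete lattice closed under composition, which is routine given the adjoint characterization. The subtler structure — the cyclic dualizing element making $\LjI$ a mix \saq — is not claimed in this lemma; it was already supplied in general for perfect chains by Corollary~\ref{cor:quantalesFromChains}, once $\I$ is shown perfect (Proposition~\ref{prop:meet-cont-closure}).
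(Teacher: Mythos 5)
Your proposal is correct and follows the paper's own route: the paper proves this lemma simply by invoking Example~\ref{ex:jcfunctions} (the quantale $\Qj(X)$ of \jcont endofunctions of a complete lattice $X$, with the displayed distributivity computation), specialized to $X=\I$. Your additional verifications — closure of $\LjI$ under composition and under arbitrary pointwise joins, hence completeness — are exactly the routine details the paper leaves implicit, so the two arguments are essentially the same.
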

Let now $\LmI$ denote the collection of \mcont functions from
$\I$ to itself.
By duality, we obtain:
\begin{lemma}
  Composition induces a dual quantale structure on $\LmI$.
\end{lemma}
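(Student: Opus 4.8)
The plan is to dualize, essentially word for word, the argument given for $\LjI$ in the previous lemma (itself the case $X = \I$ of Example~\ref{ex:jcfunctions}), turning every supremum into an infimum. Here a \emph{dual quantale} means a complete lattice equipped with an associative monoid operation that distributes over arbitrary meets in each variable --- equivalently, a structure whose order-dual is a quantale. So what I want to exhibit on $\LmI$ is the monoid $(id_{\I},\otimes)$ with $f \otimes g := g \circ f$, and to check that $\otimes$ distributes over arbitrary meets.

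First I would record that $\LmI$ is a complete lattice in which meets are computed pointwise. Indeed, if $\set{g_{i} \mid i \in I} \subseteq \LmI$ and $X \subseteq \I$ admits an infimum, then interchanging the two infima gives $(\bigwedge_{i} g_{i})(\bigwedge X) = \bigwedge_{i}\bigwedge_{x \in X} g_{i}(x) = \bigwedge_{x \in X}(\bigwedge_{i} g_{i})(x)$, so a pointwise meet of \mc functions is again \mc. Since $\LmI$ contains the constant function with value $1 = \top_{\I}$ it has a greatest element, hence it is a complete lattice; I would also note, for honesty, that its joins are \emph{not} pointwise in general, being the meet of all upper bounds that lie in $\LmI$. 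Composition restricts to $\LmI$ --- a composite of \mc functions is \mc --- and is associative with unit $id_{\I}$, exactly as for $\LjI$.

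Then it only remains to verify the two distributivity laws. For the left argument, the identity $(\bigwedge_{i} f_{i}) \otimes g = \bigwedge_{i}(f_{i} \otimes g)$ unfolds to $g \circ (\bigwedge_{i} f_{i}) = \bigwedge_{i}(g \circ f_{i})$, which is exactly the \mc condition on $g$ applied at each point of $\I$ to the family $\set{f_{i}(x) \mid i \in I}$. For the right argument, $f \otimes (\bigwedge_{i} g_{i}) = \bigwedge_{i}(f \otimes g_{i})$ unfolds to $(\bigwedge_{i} g_{i}) \circ f = \bigwedge_{i}(g_{i} \circ f)$, which holds at each point with no hypothesis on $f$, since the meet on the left-hand side is pointwise. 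This completes the verification.

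I do not expect any genuine obstacle, since the whole thing is the literal order-dual of the case of $\LjI$. The only two points deserving a moment's care are (i) that, unlike in $\LjI$ where everything is pointwise, joins in $\LmI$ are not pointwise --- harmless, as only pointwise meets enter the computation --- and (ii) keeping the convention $f \otimes g = g \circ f$ straight so that the distributivity falls on the correct side. If one prefers to make the phrase ``by duality'' literal, an alternative is to transport the quantale structure of $\LjI$ along the order-reversing bijection $f \mapsto \radj{f} \colon \LjI \rto \LmI$ with inverse $g \mapsto \ladj{g}$: since the right adjoint of a composite is the composite of the right adjoints taken in reverse order, composition is carried to composition, suprema are carried to infima, and the unital-quantale axioms for $\LjI$ become exactly the dual-quantale axioms for $\LmI$.
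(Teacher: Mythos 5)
Your proof is correct and matches the paper, which simply invokes duality with the preceding lemma (the case $X=\I$ of Example~\ref{ex:jcfunctions}); your explicit pointwise verification is exactly the order-dual of that computation. The only nitpick is in your optional ``transport along $f \mapsto \radj{f}$'' remark: since $\radj{(g \circ f)} = \radj{f} \circ \radj{g}$, composition is carried to composition in the \emph{opposite} order, which is harmless here because the dual-quantale axioms are required in both variables anyway.
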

With the next set of observations we shall see $\LjI$ and $\LmI$ are
order isomorphic. For a monotone function $f : \I \rto \I$, define
\begin{align*}
  \meetof{f}(x) & = \bigwedge_{x < x'} f(x')\,, &
  \joinof{f}(x) & = \bigvee_{x' < x} f(x')\,.
\end{align*}
\begin{lemma}
  \label{lemma:meetofbelowjoinof}
  Let $f : \I \rto \I$ be monotone. If $x < y$, then
  $\meetof{f}(x) \leq \joinof{f}(y)$.
\end{lemma}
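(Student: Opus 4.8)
The statement to prove is Lemma~\ref{lemma:meetofbelowjoinof}: for a monotone $f:\I\rto\I$ and $x<y$, one has $\meetof{f}(x)\leq\joinof{f}(y)$.

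\medskip

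The plan is to exploit the fact that the reals (indeed any dense chain, or even any chain with no gap between $x$ and $y$) admit an intermediate point, and then to squeeze $\meetof{f}(x)$ and $\joinof{f}(y)$ against the value of $f$ at that intermediate point using monotonicity alone. Concretely, since $x<y$ in $\I=[0,1]$ and the reals are dense, I would first pick some $z$ with $x<z<y$. Then I would compute directly from the definitions: on one hand $\meetof{f}(x)=\bigwedge_{x<x'}f(x')\leq f(z)$, because $z$ is one of the $x'$ appearing in that meet (as $x<z$); on the other hand $f(z)\leq\bigvee_{y'<y}f(y')=\joinof{f}(y)$, because $z$ is one of the $y'$ appearing in that join (as $z<y$). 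Chaining these two inequalities gives $\meetof{f}(x)\leq f(z)\leq\joinof{f}(y)$, which is exactly the claim.

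\medskip

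There is essentially no obstacle here; the only thing to be careful about is that the intermediate point $z$ genuinely exists and lies in the interval $\I$, but this is immediate since $0\leq x<y\leq 1$ forces any $z$ strictly between $x$ and $y$ to satisfy $0\leq z\leq 1$. No continuity or join-/meet-continuity of $f$ is needed — plain monotonicity suffices, and in fact the argument shows the lemma holds for any monotone self-map of any densely ordered chain. I would present it as the two-line display
\begin{align*}
  \meetof{f}(x) &= \bigwedge_{x < x'} f(x') \leq f(z) \leq \bigvee_{y' < y} f(y') = \joinof{f}(y)\,,
\end{align*}
preceded by the sentence fixing $z$ with $x<z<y$, and that is the whole proof.
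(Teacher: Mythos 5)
Your proposal is correct and is essentially the paper's own proof: both pick an intermediate point $z$ with $x<z<y$ and conclude via $\meetof{f}(x)\leq f(z)\leq\joinof{f}(y)$, using only monotonicity and the definitions of $\meetof{f}$ and $\joinof{f}$. You merely spell out in more detail the two inequalities that the paper states in one line.
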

\begin{proof}
  Pick $z \in \I$ such that $x < z < y$ and observe then that
  $\meetof{f}(x) \leq f(z) \leq \joinof{f}(y)$.
\end{proof}
\begin{proposition}
  \label{prop:meet-cont-closure}
  For a monotone $f : \I \rto \I$, the following statements hold:
  \begin{enumerate}
  \item   $\meetof{f}$ is the least \mc function above $f$
    and $\joinof{f}$ is the greatest \jcont function below $f$,
  \item the relations $\meetofjoinof{f} = \meetof{f}$ and
    $\joinofmeetof{f} = \joinof{f}$ hold,
  \item  the
    operations $\joinof{(\,\cdot\,)} : \LmI \rto \LjI$ and
    $\meetof{(\,\cdot\,)} : \LjI \rto \LmI$ are inverse order
    preserving bijections.
  \end{enumerate}
\end{proposition}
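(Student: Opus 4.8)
The plan is to establish the three statements in the order given, since each feeds the next. For (1), I would first check that $\meetof{f}$ is \mc and lies above $f$: monotonicity of $f$ gives $\meetof{f}(x) \geq f(x)$ directly (taking $x' \downarrow x$), and \mcontinuity follows from the characterization dual to \eqref{eq:joincontinuous}, namely that a monotone $g$ is \mc iff $g(x) = \bigwedge_{x<x'} g(x')$; one verifies $\bigwedge_{x < x'} \meetof{f}(x') = \bigwedge_{x < x'}\bigwedge_{x' < x''} f(x'') = \bigwedge_{x < x''} f(x'') = \meetof{f}(x)$ using cofinality of the pairs $x < x' < x''$ among the $x < x''$. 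Then, if $g$ is any \mc function with $f \leq g$, from $f(x') \leq g(x')$ we get $\meetof{f}(x) = \bigwedge_{x<x'} f(x') \leq \bigwedge_{x<x'} g(x') = g(x)$, the last equality by \mcontinuity of $g$; hence $\meetof{f}$ is the least such. The statement about $\joinof{f}$ is dual, using \eqref{eq:joincontinuous} and the pointwise argument already rehearsed in the excerpt for adjoints.

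For (2), the inclusion $\meetof{f} \leq \meetofjoinof{f}$ holds because $\meetof{(\cdot)}$ is monotone and $f \leq \meetof{f}$; for the reverse, I would use Lemma~\ref{lemma:meetofbelowjoinof}, which tells us $\meetof{f}(x) \leq \joinof{f}(y)$ whenever $x < y$. Fix $x$; then $\meetofjoinof{f}(x) = \bigwedge_{x < y} \joinof{f}(y)$, but by part~(1) $\joinof{f} \leq f$, and the key point is rather that we bound from above: $\meetofjoinof{f}(x) = \bigwedge_{x<y}\joinof{f}(y) \leq \bigwedge_{x<y} f(y) = \meetof{f}(x)$ — wait, that uses $\joinof{f}\le f$ and gives only $\le$ on the wrong side, so instead I would argue $\meetof{f}(x) \geq \meetofjoinof{f}(x)$ as follows: for any $y > x$ pick $z$ with $x < z < y$; then $\meetof{f}(x)$ need not relate cleanly, so the clean route is: $\meetof{f}$ being the least \mc function above $f$ and $\joinof{f} \leq f \leq \meetof{f}$ with $\meetof{f}$ \mc forces $\meetofjoinof{f} \leq \meetof{f}$ by minimality (since $\meetofjoinof{f}$ is the least \mc function above $\joinof{f}$, and $\meetof{f}$ is \mc and above $\joinof{f}$); combined with the monotonicity inclusion this gives equality. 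The relation $\joinofmeetof{f} = \joinof{f}$ is dual.

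For (3), once (2) is in hand, the two maps $\meetof{(\cdot)} : \LjI \rto \LmI$ and $\joinof{(\cdot)} : \LmI \rto \LjI$ are clearly monotone, and for $f \in \LjI$ we have $\joinofmeetof{f} = \joinof{f} = f$ — here the last equality holds because $f$ is already \jc, hence equal to its own greatest \jc approximant by part~(1). Symmetrically $\meetofjoinof{g} = g$ for $g \in \LmI$. Thus the two maps are mutually inverse order-preserving bijections. The main obstacle I anticipate is getting the direction of the inequalities straight in part~(2): both composites visibly lie on the ``monotone-function'' side, and the honest proof is the minimality argument (least \mc function above $f$ versus least \mc function above $\joinof{f}$, sandwiched by $\joinof{f}\le f\le\meetof{f}$) rather than a direct pointwise chase; Lemma~\ref{lemma:meetofbelowjoinof} is the lemma that makes the sandwich tight. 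Everything else is routine bookkeeping with suprema and infima over cofinal families in the chain $\I$.
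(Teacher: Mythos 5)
Parts (1) and (3) of your plan are fine and essentially the paper's own argument. The gap is in part (2), and it sits exactly on the inequality that carries the content of the statement, namely $\meetof{f} \leq \meetofjoinof{f}$. Your stated justification for it --- that it ``holds because $\meetof{(\cdot)}$ is monotone and $f \leq \meetof{f}$'' --- is a non sequitur: applying $\meetof{(\cdot)}$ to $f \leq \meetof{f}$ only yields $\meetof{f} \leq \meetof{(\meetof{f})}$ and says nothing about $\meetofjoinof{f}$, while applying it to the available inequality $\joinof{f} \leq f$ yields $\meetofjoinof{f} \leq \meetof{f}$, i.e.\ the \emph{other}, easy direction. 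Everything you then actually carry out --- the pointwise chase you retract (which is in fact a correct proof of the easy direction), and the final minimality argument ($\meetofjoinof{f}$ is the least \mc\ function above $\joinof{f}$, and $\meetof{f}$ is \mc\ and above $\joinof{f}$) --- again only gives $\meetofjoinof{f} \leq \meetof{f}$. So the easy inclusion ends up proved twice and the hard one not at all; your closing remark that the ``honest proof'' is the minimality argument rather than a pointwise chase has the roles reversed.

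The missing step is precisely where Lemma~\ref{lemma:meetofbelowjoinof} enters, and it is how the paper argues: for $x < y$ one has $\meetof{f}(x) \leq \joinof{f}(y)$, hence $\meetof{f}(x) \leq \bigwedge_{x<y}\joinof{f}(y) = \meetofjoinof{f}(x)$. Alternatively, you could salvage your minimality strategy by first checking $f \leq \meetofjoinof{f}$ directly (for each $y > x$, $f(x)$ is one of the joinands defining $\joinof{f}(y)$, so $f(x) \leq \joinof{f}(y)$; now take the meet over $y > x$) and then invoking part (1): $\meetofjoinof{f}$ is \mc\ and above $f$, while $\meetof{f}$ is the \emph{least} such function. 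Either repair closes the gap; with it, the dual relation $\joinofmeetof{f} = \joinof{f}$ and part (3) go through exactly as you describe and as in the paper.
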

\begin{proof}
  (1) We only prove the first statement. Let us show that $\meetof{f}$
  is \mc; to this goal, we use equation~\eqref{eq:joincontinuous}:
  \begin{align*}
    \bigwedge_{x < t} \meetof{f}(t)
    & = \bigwedge_{x < t} \bigwedge_{t < t'} f(t')
    = \bigwedge_{x < t} f(t') 
    = \meetof{f}(x)\,.
  \end{align*}
  We observe next that $f \leq \meetof{f}$, as if $x < t$, then
  $f(x) \leq f(t)$.  This implies that if $g \in \LmI$ and
  $\meetof{f} \leq g$, then $f \leq \meetof{f} \leq g$.
  Conversely, if $g \in \LmI$ and $f \leq g$, then
  \begin{align*}
    \meetof{f}(x)  & = \bigwedge_{x < t} f(t)
    \leq  \bigwedge_{x < t} g(t) = g(x)\,.
  \end{align*}
  Let us prove (2) and (3).  Clearly, both maps are order
  preserving. Let us show that $\meetofjoinof{f} = \meetof{f}$
  whenever $f$ is order preserving.
  We have $\meetofjoinof{f} \leq \meetof{f}$, since
  $\joinof{f} \leq f$ and $\meetof{(-)}$ is order preserves the
  pointwise ordering. For the converse inclusion, recall from the
  previous lemma that if $x < y$, then
  $\meetof{f}(x) \leq \joinof{f}(y)$, so
  \begin{align*}
    \meetof{f}(x) & \leq \bigwedge_{x < y} \joinof{f}(y) =  \meetofjoinof{f}(x)\,,
  \end{align*}
  for each $x \in \I$. Finally, to see that $\meetof{(-)}$ and
  $\joinof{(-)}$ are inverse to each other, observe that of
  $f \in \LmI$, then $\meetofjoinof{f} = \meetof{f} =f$.
  The equality $\joinofmeetof{f} =f$ for $f \in \LjI$ is derived similarly.
\end{proof}

\begin{corollary}
  \label{cor:distrlattice}
  $\LjI$ is a complete distributive lattice.
\end{corollary}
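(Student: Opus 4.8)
The plan is to reduce distributivity of $\LjI$ to that of the complete lattice $M$ of all monotone functions $\I \rto \I$ ordered pointwise, which is distributive because $\I$ is a chain. First I would recall that $\LjI$ is a complete lattice, which is immediate from Example~\ref{ex:jcfunctions}, a quantale being by definition a complete lattice. The point is to identify its operations. A pointwise join of \jcont functions is again \jcont, so joins in $\LjI$ coincide with pointwise joins and, in particular, $\LjI$ is closed under arbitrary joins of $M$. By Proposition~\ref{prop:meet-cont-closure}(1) the map $\joinof{(-)}$ sends a monotone function to the greatest \jcont function below it, hence it is the coreflector of the inclusion $\LjI \hookrightarrow M$; consequently the meet in $\LjI$ of a family $\{\,f_{i}\,\}$ is $\joinof{(\bigwedge_{i} f_{i})}$, where $\bigwedge_{i} f_{i}$ denotes the pointwise infimum. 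Note that this infimum is monotone but in general not \jcont, so meets in $\LjI$ are genuinely not the pointwise ones.

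Next I would isolate the one fact that makes everything work: $\joinof{(-)}$ preserves pointwise joins, which is immediate from the defining formula $\joinof{p}(x) = \bigvee_{x' < x} p(x')$ together with the rearrangement of suprema. With this the distributive law is a short computation. Fix $f,g,h \in \LjI$; since the join of $g$ and $h$ in $\LjI$ is their pointwise join and $M$ is distributive, we have pointwise $f \wedge (g \vee h) = (f \wedge g) \vee (f \wedge h)$. Applying $\joinof{(-)}$ and using the description of meets in $\LjI$ together with the preservation of pointwise joins gives
\[
  f \wedge_{\LjI} (g \vee_{\LjI} h)
  = \joinof{(f \wedge (g \vee h))}
  = \joinof{(f \wedge g)} \vee \joinof{(f \wedge h)}
  = (f \wedge_{\LjI} g) \vee_{\LjI} (f \wedge_{\LjI} h)\,,
\]
where the last equality again uses that joins in $\LjI$ are pointwise.

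I do not anticipate a genuine obstacle; the only pitfall is the tempting but wrong shortcut ``$\LjI$ is a sublattice of the distributive lattice $M$'', which fails precisely because $\LjI$-meets are not pointwise. The actual content of the proof is thus the observation that the coreflector $\joinof{(-)} : M \rto \LjI$ is a join-preserving interior operator on $M$, after which distributivity of $M$ does the rest. For the record, running the same computation with an arbitrary family $\{\,g_{i}\,\}$ in place of $\{g,h\}$ even shows that $\LjI$ is a frame; and one could equally transport the whole argument across the order isomorphism $\LjI \cong \LmI$ of Proposition~\ref{prop:meet-cont-closure}(3), working in $\LmI$, where meets are pointwise and joins pass through $\meetof{(-)}$.
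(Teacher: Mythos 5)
Your proof is correct, but it takes a different route from the paper. The paper's argument shows that \emph{finite} meets in $\LjI$ are in fact computed pointwise: for $f,g \in \LjI$ one has $(f \wedge g)(x) = \bigvee_{y<x}\min(f(y),g(y)) = \min(f(x),g(x))$, using that the set $\set{y \in \I \mid y < x}$ is upward directed and that $\joinof{f}=f$, $\joinof{g}=g$; since joins are also pointwise, $\LjI$ is then a (finitary) sublattice of the pointwise distributive lattice of monotone functions and inherits distributivity directly. You instead keep the meets in the form $\joinof{(\bigwedge_i f_i)}$ and transfer distributivity through the coreflector, the key observation being that $\joinof{(-)}$ preserves pointwise joins; this is a clean, valid argument, and as you note it gives the frame law for free (which the paper's route also yields once finite meets are known to be pointwise, but which is not stated there). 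One aside in your write-up is mistaken, however: the ``tempting but wrong shortcut'' you warn against is not wrong and is precisely what the paper does. The pointwise infimum of an \emph{infinite} family of \jc functions can indeed fail to be \jc, but the pointwise minimum of two \jc functions into the chain $\I$ is always \jc (any subset of a chain is directed, which is exactly what the paper's computation exploits), so binary meets in $\LjI$ \emph{are} pointwise and $\LjI$ is genuinely a sublattice of the lattice of monotone functions with respect to finite meets and joins. This does not affect the correctness of your proof, only the claim that the sublattice route fails.
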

\begin{proof}
  The interval $\I$ is a complete distributive lattice, whence the set
  $\I^{\I}$ of all functions from $\I$ to $\I$, is also a complete
  distributive lattice, under the pointwise ordering and the pointwise
  operations. The subset of monotone functions from $\I$ to $\I$ is
  closed under infs and sups from $\I^{\I}$. In view of
  Proposition~\ref{prop:meet-cont-closure}, \jc functions are the
  monotone functions that are fixed points of the interior operator
  $f \mapsto \joinof{f}$. As from standard theory, it follows that
  $\LjI$ is a complete lattice, that \jc functions are closed under
  pointwise suprema, and that infima in $\LjI$ are computed as
  follows:
  \begin{align*}
    (\bigwedge_{i \in I} f_{i})(x)
    & = \bigvee_{y < x} \inf \set{f_{i}(y)\mid i \in I }\,.
  \end{align*}
  Finally notice  that, in case $I = \set{1,2}$, then
  \begin{align*}
    (f_{1} \land f_{2})(x)
    & = \bigvee_{y < x} \min(f_{1}(y),f_{2}(y)) \\
    & = \bigvee_{y_{1} < x} \bigvee_{y_{2} < x}
    \min(f_{1}(y_{1}),f_{2}(y_{2}))\,, \tag*{since the set
      $\set{y \in \I \mid y < x}$ is upward directed,}
    \\
    &= \min(\bigvee_{y_{1} < x} f_{1}(y_{1}),\bigvee_{y_{2} <
      x}f_{2}(y_{2})) = \min(f_{1}(x),f_{2}(x))\,,
  \end{align*}
  where the last step follows from $\joinof{f_{i}} = f_{i}$,
  $i \in \set{1,2}$.  Therefore finite (non-empty) meets are computed
  pointwise, and this implies that $\LjI$ is a distributive lattice.
\end{proof}

Considering that $\I$ is a complete lattice,
Proposition~\ref{prop:meet-cont-closure} shows that it is also a
perfect chain and therefore. According to
Corollary~\ref{cor:quantalesFromChains}, we deduce the following
statement.
\begin{corollary}
  $\QjI$ is a \msaq.
\end{corollary}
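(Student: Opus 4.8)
The plan is to deduce this immediately from \corRef{quantalesFromChains}, which already tells us that $\Qj(I)$ is a mix \saq\ for \emph{every} perfect chain $I$. So the only thing left to verify is that the unit interval $\I = [0,1]$, which is certainly a complete chain, is perfect in the sense introduced in \secRef{quantalesFromChains}: that is, that the two transformations $\meetof{(-)}$ and $\joinof{(-)}$ defined in \eqref{eq:meetofjoinof} restrict to mutually inverse order isomorphisms between $\Lj(\I)$ and $\Lm(\I)$.

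But this is exactly the content of \propRef{meet-cont-closure}, which was established just above: part~(1) shows $\meetof{f}$ lands in $\LmI$ (it is the least \mcont\ function above $f$) and dually $\joinof{f}$ lands in $\LjI$; part~(3) shows that the two maps $\joinof{(-)} : \LmI \rto \LjI$ and $\meetof{(-)} : \LjI \rto \LmI$ are inverse order-preserving bijections. Hence $\I$ is a perfect chain, and \corRef{quantalesFromChains} applies verbatim, yielding that $\QjI$ is a mix \saq.

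There is essentially no obstacle here: the corollary is a one-line consequence of earlier work, the real content having been spent in proving \propRef{meet-cont-closure} (the perfectness of $\I$) and \corRef{quantalesFromChains} (the general statement that perfect chains give rise to mix \saq{s}). If one wanted to be completely explicit, one could additionally recall from \eqref{eq:zero} that the cyclic dualizing element $0$ of $\QjI$ is the function $0(x) = \bigvee_{x' < x} x'$, which satisfies $0 \leq 1 = id_{\I}$, so that the mix rule $\alpha \otimes \beta \leq \alpha \oplus \beta$ holds — but this is already subsumed in the statement of \corRef{quantalesFromChains} and need not be repeated.
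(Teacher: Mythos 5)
Your proposal is correct and follows exactly the paper's own route: the paper likewise observes that $\I$ is a complete chain which Proposition~\ref{prop:meet-cont-closure} shows to be perfect, and then invokes Corollary~\ref{cor:quantalesFromChains} to conclude that $\QjI$ is a \msaq. The extra remark about the element $0$ from~\eqref{eq:zero} and the mix rule is harmless but, as you note, already contained in that corollary.
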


\section{Paths}
\label{sec:paths}

Let in the following $d \geq 2$ be a fixed integer; we shall use
$\I^{d}$ to denote the $d$-fold product of $\I$ with itself. That is,
$\I^{d}$ is the usual geometric cube in dimension $d$. Let us recall
that $\I^{d}$, as a product of the poset $\I$, has itself the
structure of a poset (the order being coordinate-wise) and, moreover,
of a complete lattice.

\begin{definition}
  A \emph{path in $\I^{d}$} is a chain $C \subseteq \I^{d}$ with the
  following properties:
  \begin{enumerate}
  \item if $X \subseteq C$, then $\bigwedge X \in C$ and
    $\bigvee X \in C$,
  \item $C$ is dense as an ordered set: if $x,y \in C$ and $x < y$,
    then $x < z < y$ for some $z \in C$.
  \end{enumerate}
\end{definition}
That is, we have defined a path in $\I^{d}$ as a totally ordered dense
sub-complete-lattice of $\I^{d}$. We are going to see that paths in
$\Id$ can be characterized in many ways.

\begin{lemma}
  \label{lemma:pathmaximal}
  Paths in $\I^{d}$ are exactly the maximal chains of the poset
  $\I^{d}$.
\end{lemma}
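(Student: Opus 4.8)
The plan is to prove both inclusions: every path is a maximal chain, and every maximal chain is a path.

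For the first direction, suppose $C$ is a path and $C \subseteq D$ with $D$ a chain of $\I^{d}$; I want to show $C = D$. Take $x \in D$. Since $C$ is closed under arbitrary meets and joins, the elements $a := \bigvee\{c \in C \mid c \leq x\}$ and $b := \bigwedge\{c \in C \mid x \leq c\}$ both belong to $C$ (using the convention that empty joins/meets give $\vec 0$ and $\vec 1$, which lie in $C$ because $C$ is a sub-complete-lattice). Because $D$ is a chain containing $C \cup \{x\}$, every $c \in C$ is comparable to $x$, so $C$ splits as $\{c \mid c \leq x\} \cup \{c \mid x \leq c\}$ and hence $a \leq x \leq b$. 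If $a = x$ or $b = x$ we are done, so suppose $a < x < b$; then also $a < b$, and by density of $C$ there is $z \in C$ with $a < z < b$. But $z$ is comparable with $x$ in the chain $D$: if $z \leq x$ then $z$ contributes to the join defining $a$, so $z \leq a$, contradicting $a < z$; if $x \leq z$ then $z$ contributes to the meet defining $b$, so $b \leq z$, contradicting $z < b$. Hence $x \in \{a,b\} \subseteq C$, so $D = C$ and $C$ is maximal.

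For the converse, let $C$ be a maximal chain of $\I^{d}$. Maximal chains in any complete lattice are closed under existing meets and joins: if $X \subseteq C$ then $\bigvee X$ is comparable to every element of $C$ (it lies above each element of $X$, and for $c \in C$ not above some $x \in X$ we have, since $C$ is a chain, $c \leq x \leq \bigvee X$), so $C \cup \{\bigvee X\}$ is a chain, and maximality forces $\bigvee X \in C$; dually for $\bigwedge X$. This gives property (1). For density (2), suppose $x < y$ in $C$ with no element of $C$ strictly between them; I claim this contradicts maximality by exhibiting a point $z$ with $x < z < y$ that is comparable to everything in $C$. Since $x < y$ in the coordinatewise order on $\I^{d}$, there is a coordinate $i$ with $x_i < y_i$; pick $t$ with $x_i < t < y_i$ and set $z$ to agree with $x$ on coordinates $\neq i$ and have $z_i = t$ — but one must instead take $z := x \vee (x + t e_i)$ capped appropriately; more robustly, define $z$ by $z_k := x_k$ for $k \neq i$ and $z_i := t$, which satisfies $x \leq z \leq y$ hence $x < z < y$. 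Any $c \in C$ is comparable to both $x$ and $y$; if $c \leq x$ then $c \leq z$, if $y \leq c$ then $z \leq c$, and if $x < c < y$ this is excluded by assumption, while $c = x$ or $c = y$ is also comparable to $z$. So $C \cup \{z\}$ is a chain properly containing $C$, contradicting maximality. Hence $C$ is dense, and $C$ is a path.

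The main obstacle is the density step: one must be careful that the constructed intermediate point $z$ is genuinely comparable to all of $C$, which relies on $C$ being a chain and on having already ruled out points of $C$ strictly between $x$ and $y$. A secondary subtlety is handling the empty-set cases in property (1) correctly — the empty join and meet are $\vec 0$ and $\vec 1$, and these lie in any maximal chain since they are comparable to everything, so a maximal chain is automatically a sub-complete-lattice in the sense required by the definition of path.
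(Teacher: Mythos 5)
Your proof is correct and follows essentially the same route as the paper's: for one direction you use closure under joins/meets of the sets $\{c\in C\mid c\le x\}$ and $\{c\in C\mid x\le c\}$ together with density to trap a contradiction, and for the other you verify comparability of $\bigvee X$ (resp.\ $\bigwedge X$) with every element of a maximal chain and obtain density by perturbing a single coordinate of $x$ to produce a point strictly between $x$ and $y$ comparable to all of $C$, exactly as in the paper. The only blemish is the parenthetical false start about ``$z := x \vee (x + t e_i)$ capped appropriately,'' which you immediately replace with the correct (and original) coordinate-wise definition, so it does not affect the argument.
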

\begin{proof}
  We firstly argue that every path in $\I^{d}$ is a maximal chain of
  $\I^{d}$.

  Let $C \subseteq \I^{d}$ be a path and suppose that there exists
  $z \in \I^{d} \setminus C$ such that $C \cup \set{z}$ is a
  chain. Let $z^{-} = \set{c \in C \mid c < z}$ and
  $z^{+} = \set{c \in C \mid z < c}$. Since $z \not \in C$ and $C$ is
  closed under meets and joins, we have
  $\bigvee z^{-} < z < \bigwedge z^{+}$, with
  $\bigvee z^{-},\bigwedge z^{+} \in C$. By density, let $w \in C$ be
  such that $\bigvee z^{-} < w < \bigwedge z^{+}$. Since
  $w \in C \subseteq C \cup \set{z}$ and the latter is a chain, then
  $w < z$ or $z < w$. In the first case we obtain
  $w \leq \bigvee z^{-}$ and in the second case
  $\bigwedge z^{+} \leq w$ and, in both cases, we have a
  contradiction.

  Next, we argue that every maximal chain of $\Id$ is a path in $\Id$.
  Let $C$ be a maximal chain of $\Id$.  Take $X\subseteq C$ and let
  $a:=\bigwedge X\in \Id$. The maximality of $C$ implies that
  $0,1\in C$ and so $a\in C$ whenever $X=\emptyset$ or $X=C$. Suppose
  that $X\neq \emptyset$. We claim that $C\cup \set{a}$ is a chain and
  consequently $a\in C$ by the maximality of $C$. Let $c\in C$; if
  $c\nleqslant a$ then $c\nleqslant x$, for some $x\in X$, which
  implies $x<c$ and so $a<c$; if $a\nleqslant c$, then $x\nleqslant c$
  for every $x\in X$, which implies $c<x$ for every $x\in X$, and so
  $c\leqslant a$.  Thus $C\cup\set{a}$ is a chain as aimed.  Let us
  now prove that $C$ is dense.  Let $x<y$ in $C$. Suppose that for
  every $c\in C$ we have $y\leqslant c$ or $c\leqslant x$.  Since
  $x<y$, there exists $j\in [d]$ such that $x_j<y_j$. The density of
  $\I$ implies the existence of $z_j\in I$ such that
  $x_j<z_j<y_j$. Take $w\in \I^d$ to be defined by $w_j=z_j$ and
  $w_i=x_i$ for $i\neq j$. Clearly $x<w<y$. If $w\notin C$, then
  $C\cup\set{w}$ is not a chain and there exists $c\in C$ such that
  $w\nleqslant c$ and $c\nleqslant w$; consequently, $y\nleqslant c$
  and $c\nleqslant x$, which contradicts the assumption that
  $y\leqslant c$ or $c\leqslant x$, for each $c \in C$.  Thus there
  must be $c\in C$ such that $x<c<y$.
\end{proof}

We carry over with a characterization of maximal chains of $\I^{d}$
which justifies naming them paths.
\begin{lemma}
  A monotone function $\p : \I \rto \Id$ such that $\p(0) = \vec{0}$
  and $\p(1) = \vec{1}$ is topologically continuous if and only if it
  is \bcont. Consequently, its image in $\Id$ is a path.
\end{lemma}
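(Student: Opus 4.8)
The plan is to prove the two equivalences separately and then invoke Lemma~\ref{lemma:pathmaximal} together with the fact established below that a \bcont{} monotone function with the stated boundary values has as image a maximal chain. Recall that since $\I$ is a complete chain and $\Id$ a complete lattice, by Lemma in Section~\ref{sec:notation} a monotone $\p$ is \jcont{} iff it has a right adjoint and \mcont{} iff it has a left adjoint; equivalently, via \eqref{eq:joincontinuous} applied coordinate-wise, $\p$ is \jcont{} iff $\p(x) = \bigvee_{y<x}\p(y)$ for all $x$ and \mcont{} iff $\p(x) = \bigwedge_{y>x}\p(y)$ for all $x$. So \bcont{} means precisely that $\p$ has no jump discontinuities from either side.

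First I would show \emph{topologically continuous $\Rightarrow$ \bcont}. This is the easy direction: if $\p$ is continuous on the compact interval $\I$, then for any $x$ and any net $y \to x$ with $y < x$ we have $\p(y) \to \p(x)$; since $\p$ is monotone, $\bigvee_{y<x}\p(y)$ is the limit of $\p(y)$ as $y \uparrow x$ (the coordinates are monotone bounded real functions, so one-sided limits exist and equal the sup/inf), and continuity forces this to equal $\p(x)$. The symmetric argument handles the inf from above. Hence $\p$ is both \jcont{} and \mcont. Conversely, for \emph{\bcont $\Rightarrow$ topologically continuous}: work coordinate-wise, so it suffices to treat a monotone $f:\I\to\I$ that is both \jcont{} and \mcont. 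Monotonicity gives existence of one-sided limits $f(x^-) = \sup_{y<x} f(y) = \joinof{f}(x)$ and $f(x^+) = \inf_{y>x} f(y) = \meetof{f}(x)$ at every interior point (with the obvious one-sided conventions at the endpoints). \jcont{} (in the form \eqref{eq:joincontinuous}) says $f(x^-) = f(x)$ and \mcont{} says $f(x^+) = f(x)$, so both one-sided limits agree with the value, which is exactly topological continuity of $f$ at $x$. Since this holds for every coordinate and every point, $\p$ is continuous.

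For the ``consequently'' clause: assuming $\p : \I \to \Id$ is \bcont{} monotone with $\p(0)=\vec0$, $\p(1)=\vec1$, I want to show $C := \p(\I)$ is a path, i.e. (by Lemma~\ref{lemma:pathmaximal}) a maximal chain of $\Id$. Rather than checking the two defining conditions of a path directly, I would check maximality. First, $C$ is a chain: given $s < t$ in $\I$, monotonicity gives $\p(s) \le \p(t)$, so any two images are comparable (if $\p(s)=\p(t)$ they are equal; otherwise one is strictly below the other). Now suppose $z \in \Id$ is comparable with every element of $C$; set $A = \{s \in \I \mid \p(s) \le z\}$ and $B = \{s\in\I\mid z \le \p(s)\}$, so $A \cup B = \I$, $0 \in A$, $1 \in B$, and both are intervals (down-set, resp. up-set) by monotonicity. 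Let $a = \sup A$. Using \jcont, $\p(a) = \bigvee_{s<a}\p(s) \le z$ if $a = \sup A$ is not attained as a max-with-a-gap — more carefully: every $s<a$ lies in $A$, so $\p(s)\le z$, hence $\p(a)=\bigvee_{s<a}\p(s)\le z$, giving $a \in A$; and using \mcont, for $s > a$ we have $s \in B$ (since $s\notin A$), so $z \le \p(s)$ for all $s>a$, hence $z \le \bigwedge_{s>a}\p(s) = \meetof{\p}(a) = \p(a)$. Therefore $z = \p(a) \in C$, proving $C$ is maximal. The main obstacle, and the only place requiring care, is exactly this endpoint bookkeeping at $a = \sup A$: one must handle the cases $a=0$, $a=1$, and the possibility that $A$ is not closed on the right, which is precisely where both one-sided continuity hypotheses get used — and that is why \bcont, rather than mere \jcont{} or \mcont{} alone, is needed here.
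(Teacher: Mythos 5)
Your proof is correct, and the first half (topological continuity $\Leftrightarrow$ \bcont) follows essentially the same coordinate-wise one-sided-limit argument as the paper. For the ``consequently'' clause, however, you take a genuinely different route: the paper verifies the two defining conditions of a path directly --- closure of $\p(\I)$ under arbitrary meets and joins follows at once from \bcont[ness] ($\bigvee \p(Y)=\p(\bigvee Y)$), and density is obtained topologically, by observing that the continuous image of the connected interval $[x,y]$ cannot be the two-point disconnected set $\set{\p(x),\p(y)}$, so some intermediate value $\p(z)$ exists --- whereas you instead prove that $C=\p(\I)$ is a \emph{maximal} chain and appeal to Lemma~\ref{lemma:pathmaximal}. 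Your maximality argument (splitting $\I$ into the down-set $A=\set{s \mid \p(s)\leq z}$ and the up-set $B=\set{s \mid z\leq \p(s)}$ for any $z$ comparable with all of $C$, taking $a=\sup A$, and using \jcont[inuity] to get $\p(a)\leq z$ and \mcont[inuity] to get $z\leq \p(a)$, with the endpoint hypotheses $\p(0)=\vec{0}$, $\p(1)=\vec{1}$ guaranteeing $0\in A$, $1\in B$) is sound, including the boundary cases $a\in\set{0,1}$. What each approach buys: yours is purely order-theoretic for this step --- no connectedness argument is needed, only the adjoint-style characterizations of \jc and \mc --- and it reuses the already-proved equivalence of paths with maximal chains; the paper's is arguably more economical in that it checks the path axioms directly and the density check makes transparent exactly where topological continuity (as opposed to mere monotone completeness) enters.
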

\begin{proof}
  Let $\p$ be as in the statement of the Lemma.  For each
  $i \in \set{1,\ldots ,d}$, let $\pi_{i} : \Id \rto \I$ be the
  projection on the $i$-th coordinate, and set
  $f_{i } := \pi_{i} \circ f$, so each $f_{i}$ is monotone.  Recall
  the standard theorem on existence/characterization of left limits of
  monotone functions:
  $\lim_{y \to x^{-}}f_{i}(y) = \bigvee f_{i}([0,x))$.
  
  If $p$ is \tcont, then each $f_{i}$ is \tcont. Let $X \subseteq \I$
  and observe that $X$ is cofinal in $[0,\bigvee X)$ (that is, for
  each $y \in [0,\bigvee X)$ here exists $x \in X$ such that
  $y \leq x$. This implies that
  $\bigvee g([0,\bigvee X)) \leq \bigvee g(X)$, for each monotone
  function $g$.
  It follows that
  \begin{align*}
    f_{i}(\bigvee X) & = \lim_{y \to
      (\bigvee x)^{-}}f_{i}(y)\,, \tag*{since $f_{i}$ is \tcont,} \\
    &= \bigvee f_{i}( [0,\bigvee X)) 
    \leq \bigvee f_{i}(X)\,.
  \end{align*}
  Since the opposite inclusion holds by monotonicity, this shows that
  each $f_{i}$ is \jcont, so $f$ is \jcont. In a similar way, $f$ is
  \mcont.

  Conversely, let us suppose that $f$ is \bcont. Thus, for each $x \in
  \I$, we have
  \begin{align*}
    \lim_{y \to x^{-}}f_{i}(y) & = \bigvee f_{i}([0,x)) 
    = f(x) = \bigwedge f_{i}((x,1])
    =  \lim_{z \to x^{+}}f_{i}(z)\,,
  \end{align*}
  showing that each $f_{i}$ (and therefore $f$) is \tcont.

  For the last statement, let $C = \p(\I)$. Let $X \subseteq C$ and
  $Y \subseteq \I$ be such that $\p(Y) = X$. Then
  $\bigvee X = \bigvee \p(Y) = \p(\bigvee Y) \in C$; in a similar way,
  $\bigwedge Y \in C$. Let us show that $C$ is dense. Let $x,y\in \I$
  be such that $\p(x) < \p(y)$.  Since $\p$ is monotone, we also have
  $x < y$ (use Lemma~\ref{lemma:reflect}).  Consider then the image of
  the connected interval $[x,y]$. Since $\p$ is \tcont, its image
  cannot be the disconnected two points set
  $\set{\p(x),\p(y)}$. Therefore there exists $z \in (x,y)$ such that
  $\p(z) \not\in \set{\p(x),\p(y)}$; then, by monotonicity, we get
  $\p(x) < \p(z) < \p(y)$.
\end{proof}
Thus, if $\p : \I \rto \I^{d}$ is a monotone \tcont function with
$\p(0) = \vec{0}$ and $\p(1) = \vec{1}$, then
$\p(\I) \subseteq \I^{d}$ is a path.  We are going to show that every
path arises in this way.

\begin{lemma}
  \label{lemma:reflect}
  Consider a monotone function $f : C \rTo P$ where $C$ is a chain and
  $P$ is any poset.  Then $f$ reflects the strict order: $f(x) < f(y)$
  implies $x < y$.
\end{lemma}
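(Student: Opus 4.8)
The plan is to argue by trichotomy on the chain $C$. Suppose $x, y \in C$ satisfy $f(x) < f(y)$, and aim to conclude $x < y$. Since $C$ is totally ordered, exactly one of the three alternatives $x < y$, $x = y$, $y < x$ holds, so it suffices to rule out the last two.

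First I would dispose of the case $x = y$: it would force $f(x) = f(y)$, contradicting $f(x) < f(y)$ by irreflexivity of the strict order on $P$. Next I would dispose of the case $y < x$: monotonicity of $f$ gives $f(y) \leq f(x)$, and combining this with $f(x) < f(y)$ yields $f(x) < f(x)$ (using that $a \leq b$ together with $b < c$ implies $a < c$), again contradicting irreflexivity. The only remaining possibility is $x < y$, which is exactly what we want.

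There is no genuine obstacle here: the statement is a one-line consequence of the totality of $C$ and the antisymmetry of $P$. The single point worth making explicit is that $<$ on $P$ denotes the strict part of the partial order, so that it is irreflexive and composes with $\leq$ as used above; once that convention is fixed the argument is immediate, and it is the standard fact that a monotone map out of a chain is order-reflecting for the strict order.
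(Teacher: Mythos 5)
Your proof is correct and is essentially the paper's argument: the paper simply merges your two bad cases into one, noting that $y \leq x$ would give $f(y) \leq f(x)$, contradicting $f(x) < f(y)$, hence $x < y$ by totality of $C$.
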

\begin{proof}
  Suppose $f(x) < f(y)$. We have $y \leq x$ or $x < y$. However, if
  $y \leq x$, then $f(y) \leq f(x)$ as well, contradicting
  $f(x) < f(y)$. Whence $x < y$.
\end{proof}

\begin{lemma}
  \label{lemma:surjective}
  Any \bcont function $f : C \rTo \I$, where $C$ is a path, is
  surjective.
\end{lemma}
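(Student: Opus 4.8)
The plan is to exploit the fact that the domain $C$ is a path, hence a dense complete chain with both a least and a greatest element, and that $f$ is both join- and meet-continuous. First I would establish the two easy endpoints: since $C$ is a maximal chain of $\I^{d}$ (Lemma~\ref{lemma:pathmaximal}), its bottom is $\vec 0$ and its top is $\vec 1$, so by \jcontness $f(\bot_{C}) = f(\bigvee\emptyset) = \bot_{\I} = 0$ and dually $f(\top_{C}) = 1$. Thus $0$ and $1$ are in the image, and it remains to hit every $t \in (0,1)$.

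Fix such a $t$ and consider $A := \set{c \in C \mid f(c) \leq t}$ and $B := \set{c \in C \mid f(c) \geq t}$. I would set $a := \bigvee A$ and $b := \bigwedge B$; both lie in $C$ because $C$ is closed under arbitrary joins and meets, and since $f$ is monotone and $C$ is a chain, every element of $C$ lies in $A$ or in $B$, so $A \cup B = C$ and hence $a \le b$ is impossible to fail in a way that separates them — more precisely, $a$ and $b$ are comparable and I claim $f(a) \le t \le f(b)$. Indeed $f(a) = f(\bigvee A) = \bigvee_{c \in A} f(c) \le t$ by \jcontness, and dually $f(b) = f(\bigwedge B) = \bigwedge_{c \in B} f(c) \ge t$ by \mcontness. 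The remaining task is to show $f(a) = t$ or $f(b) = t$; equivalently, to rule out $f(a) < t < f(b)$.

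Suppose for contradiction $f(a) < t < f(b)$. Then in particular $f(a) < f(b)$, so $a < b$ by monotonicity (a strict inequality, using Lemma~\ref{lemma:reflect} applied on the chain $C$, or simply that $a \ne b$ since they have distinct images). By density of $C$ pick $z \in C$ with $a < z < b$. Now $z \notin A$ would force $f(z) > t$, making $z \in B$, hence $b \le z$, contradicting $z < b$; and $z \notin B$ forces $f(z) < t$, hence $z \in A$, hence $z \le a$, contradicting $a < z$. Since $C = A \cup B$, one of these cases must occur, and we reach a contradiction. Therefore $f(a) = t$ or $f(b) = t$, so $t$ is in the image of $f$. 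As $t \in (0,1)$ was arbitrary and the endpoints were handled above, $f$ is surjective.

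I expect the main obstacle to be purely bookkeeping: making sure the sets $A$ and $B$ genuinely partition $C$ up to their overlap (which needs monotonicity together with $C$ being a \emph{chain}), and correctly invoking \jcontness at a join that may not be attained and \mcontness at a meet that may not be attained — this is exactly where the hypothesis that $f$ is \bcont (not merely continuous in one direction) is used, and where the completeness of $C$ as a sub-lattice of $\I^{d}$ guarantees $a, b \in C$. No real analytic input is needed beyond density of the chain.
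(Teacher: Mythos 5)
Your proof is correct and is essentially the paper's argument unpacked: your $a=\bigvee A$ and $b=\bigwedge B$ are exactly the adjoint values $\radj{f}(t)$ and $\ladj{f}(t)$ that the paper works with, and your density argument ruling out $f(a)<t<f(b)$ is the same squeeze the paper performs when it excludes $\radj{f}(t)<\ladj{f}(t)$ by inserting an intermediate element of $C$. The only cosmetic difference is that you invoke join- and meet-continuity directly where the paper invokes the unit/counit inequalities of the adjunctions, so nothing is missing.
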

\begin{proof}
  Since $f$ is \bcont, it has left and right adjoints, say
  $\ell \dashv f \dashv \rho$. We shall show that $\ell \leq \rho$;
  from this and the unit/counit relations
  $h(\rho(t)) \leq t \leq h(\ell(t))$ it follows that both $\ell(t)$
  and $\rho(t)$ are preimages of $t \in \I$.
  
  Let $t \in \I$ be arbitrary; since $C$ is a chain, either
  $\ell(t) \leq \rho(t)$ holds, or $\rho(t) < \ell(t)$ holds.  In the
  latter case, let $c \in C$ be such that $\rho(t) < c < \ell(t)$. As
  $\I$ is a chain, either $f(c) \leq t$, or $t \leq f(c)$. If
  $f(c) \leq t$, then we have $c \leq \rho(t)$, contradicting
  $\rho(t) < c$; if $t \leq f(c)$, then $\ell(t) \leq c$,
  contradicting $c < \ell(t)$. Therefore the relation
  $\ell(t) \leq \rho(t)$ holds, for each $t \in C$.
\end{proof}

For a path $C \subseteq \I^{d}$ and $i=1,\ldots ,d$, let us define
$\pi_{i} : C \rto \I$ as the inclusion of $C$ into $\I^{d}$ followed
by the projection to the $i$-component.  Observe that $\pi_{i}$ is
\bcont (since it is the composition of two \bcont functions), thus it
is surjective by the previous Lemma.
\begin{proposition}
  Every path $C$ is order isomorphic to $\I$. In particular, there
  exists a monotone continuous function $\p : C \rto \I$ such that
  $\p(0) = \vec{0}$, $\p(1) = \vec{1}$, and $\p(\I) = C$.
\end{proposition}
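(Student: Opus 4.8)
The plan is to produce an explicit order isomorphism $\phi : C \rto \I$ and then read off the statement by taking $\p := \phi^{-1}$, regarded as a map into $\I^{d}$ through the inclusion $C \hookrightarrow \I^{d}$. For $i = 1,\ldots ,d$ let $\pi_{i} : C \rto \I$ be the projections introduced just above; each is \bcont. The candidate isomorphism is
\[
  \phi(c)\;:=\;\frac{1}{d}\sum_{i=1}^{d}\pi_{i}(c)\,,\qquad c\in C\,.
\]
Note that $\phi(\vec{0}) = 0$ and $\phi(\vec{1}) = 1$, and recall that $\vec{0} = \bigvee\emptyset$ and $\vec{1} = \bigwedge\emptyset$ belong to $C$ by the very definition of a path.

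First I would observe that $\phi$ is strictly monotone, hence injective: if $c < c'$ in the chain $C$, then $\pi_{i}(c) \leq \pi_{i}(c')$ for every $i$ and $\pi_{i}(c) < \pi_{i}(c')$ for at least one $i$ (otherwise $c = c'$), so $\phi(c) < \phi(c')$. The main point is that $\phi$ is \bcont. Since $C$ is a sub-complete-lattice of $\I^{d}$, for any $X \subseteq C$ the join $\bigvee X$ is computed coordinatewise, so $\pi_{i}(\bigvee X) = \bigvee_{x\in X}\pi_{i}(x)$; hence it only remains to check that a finite sum of monotone $\I$-valued functions commutes with suprema of upward-directed families and with infima of downward-directed ones. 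As $C$ is a chain, every subset of $C$ is both upward and downward directed, so this yields $\phi(\bigvee X) = \bigvee_{x\in X}\phi(x)$ and $\phi(\bigwedge X) = \bigwedge_{x\in X}\phi(x)$. The directedness is exactly what makes this work: given $\varepsilon > 0$, for each $i$ choose an element of $X$ whose $i$-th coordinate is within $\varepsilon/d$ of $\bigvee_{x\in X}\pi_{i}(x)$, then dominate all $d$ of these by a single element of $X$.

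Being \bcont with domain a path, $\phi$ is surjective by Lemma~\ref{lemma:surjective}, and therefore a monotone bijection between the chains $C$ and $\I$. Any monotone bijection of linear orders has a monotone inverse, so $\phi$ is an order isomorphism (in particular it preserves arbitrary meets and joins). Setting $\p := \phi^{-1}$ and composing with the inclusion $C \hookrightarrow \I^{d}$ gives a monotone map $\p : \I \rto \I^{d}$ which is \bcont (the inverse of an order isomorphism of complete lattices is \bcont, and the inclusion is \bcont), hence topologically continuous by the characterization of continuous monotone maps $\I \rto \I^{d}$ obtained above; moreover $\p(0) = \phi^{-1}(0) = \vec{0}$, $\p(1) = \vec{1}$, and $\p(\I) = \phi^{-1}(\I) = C$.

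The only step needing genuine care is the bicontinuity of $\phi$, and even there the content is just the elementary directedness argument above; everything else is a matter of combining Lemma~\ref{lemma:surjective} with standard facts about monotone bijections of chains. I note in passing that the density axiom in the definition of a path enters precisely through Lemma~\ref{lemma:surjective}.
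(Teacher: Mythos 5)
Your proof is correct, but it follows a genuinely different route from the paper's. The paper proves the proposition by extracting a countable subset $C_{\Q}$ of $C$ (preimages under the projections $\pi_{i}$ of the rationals in $(0,1)$, using surjectivity of the $\pi_{i}$), showing it is dense, without endpoints, and join/meet-generates $C$, then invoking Cantor's theorem that any such countable order is isomorphic to $\Q\cap(0,1)$ and identifying $C$ with the Dedekind--MacNeille completion of that order, namely $\I$. You instead exhibit an explicit isomorphism $\phi(c)=\tfrac1d\sum_{i}\pi_{i}(c)$: strict monotonicity gives injectivity, the directedness argument (every nonempty subset of the chain $C$ is up- and down-directed, and joins/meets in $C$ are computed coordinatewise, with the empty cases covered by $\phi(\vec0)=0$, $\phi(\vec1)=1$) gives \bcont{}ness, and Lemma~\ref{lemma:surjective} gives surjectivity; a monotone bijection of chains is an order isomorphism, and $\p:=\phi^{-1}$ followed by the inclusion $C\hookrightarrow\I^{d}$ is \bcont, hence topologically continuous by the earlier characterization. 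Both arguments ultimately rest on Lemma~\ref{lemma:surjective} (you apply it to $\phi$, the paper to the projections), but yours avoids Cantor's back-and-forth theorem and the completion argument, is more explicit and choice-free, and has the added benefit of producing the parametrizing map $\p$ concretely rather than only abstractly from the isomorphism $C\cong\I$.
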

\begin{proof}
  We shall show that $C$ has a dense countable subset $C_{\Q}$ without
  endpoints which generates $C$ both under infinite joins and under
  infinite meets. By a well known theorem by Cantor, see
  e.g. \cite[Proposition~1.4.2]{CK},
  $C_{\Q}$ is order isomorphic to
  $\IQ \setminus \set{0,1}$. Then $C$ is order isomorphic to the
  Dedekind-MacNeille completion of $\IQ \setminus \set{0,1}$, namely
  to $\I$.
  For each $i \in \set{1,\ldots ,d}$ and
  $q \in \IQ \setminus \set{0,1}$, pick $c_{i,q} \in C$ such that
  $\pi_{i}(c_{i,q}) = q$. Let
  \begin{align*}
    C_{\Q} & := \set{ c_{i,q} \mid i \in \set{1,\ldots ,d}, \;q \in
      \IQ\setminus \set{0,1}}\,,
  \end{align*}
  and observe that $C_{\Q}$ is countable. We firstly argue that
  $C_{\Q}$ is dense in $C$.  Let $c,c' \in C$ such that $c < c'$. By
  definition of the order on $\I^{d}$, $\pi_{i}(c) < \pi_{i}(c')$ for
  some $i \in \set{1,\ldots ,d}$.  Let $q \in \IQ$ be such that
  $\pi_{i}(c) < q < \pi_{i}(c')$. Then, by Lemma \ref{lemma:reflect},
  we deduce $c < c_{i,q} < c'$, with $c_{i,q} \in C_{\Q}$.

  Also $C_{\Q}$ has no endpoints. For example, if
  $c = c_{i,q} \in C_{\Q}$ and $q' \in \IQ$ is such that $q' < q$,
  then necessarily $c_{i,q'} < c_{i,q}$, so $C_{\Q}$ has no least
  element.

  Finally, we prove that $C_{\Q}$ generates $C$ under infinite joins.
  Let $c \in C$ and consider the set
  $D := \set{x \in C_{\Q} \mid x < c}$; suppose that $\bigvee D <
  c$. There exists $i \in \set{1,\ldots ,d}$ such that
  $\pi_{i}(\bigvee D) < \pi_{i}(c)$, and we can pick $q \in \Q$ such
  that $\pi_{i}(\bigvee D) < q < \pi_{i}(c)$. Let $c_{i,q}$ be such
  that $\pi_{i}(c_{i,q}) = q$, then, by Lemma~\ref{lemma:reflect}, we
  have $\bigvee D < c_{i,q} < c$. Yet, this is a contradiction, as
  $c_{i,q} \in C_{\Q}$ and $c_{i,q} < c$ imply $c_{i,q } \in D$, whence
  $c_{i,q} \leq \bigvee D$.  In a similar way, we can show that every
  element of $C$ is a meet of elements of $C_{\Q}$.
\end{proof}

\section{Paths in dimension 2}
\label{sec:pathsDimensionTwo}

We give next a further characterization of the notion of path, valid
in dimension $2$. The principal result of this Section,
Theorem~\ref{thm:pathsasjcont}, states that paths in dimension $2$
bijectively correspond to elements of the quantale $\LjI$.

\medskip

For a monotone function $f : \I \rto \I$ define
$C_{f} \subseteq \I^{2}$ by the formula
\begin{align}
  \label{def:CfDimTwo}
  C_{f} & := \bigcup_{x \in \I}\; \set{x}\times[\joinof{f}(x),\meetof{f}(x)]\,.
\end{align}
Notice that, by Proposition~\ref{prop:meet-cont-closure},
$C_{f} = C_{\joinof{f}} =C_{\meetof{f}}$. As suggested in
figure~\ref{fig:Cf}, when $f \in \LjI$, then $C_{f}$ is the graph of
$f$ (in blue in the figure) with the addition of the intervals
$(\joinof{f}(x),\meetof{f}(x)]$ (in red in the figure) when $x$ is a
discontinuity point of $f$.
\begin{figure}
  \centering
  \newcommand*{\xMin}{0}
  \newcommand*{\xMax}{6}
  \newcommand*{\yMin}{0}
  \newcommand*{\yMax}{6}
  \begin{tikzpicture}
    \foreach \i in {\xMin,...,\xMax} {
      \draw [very thin,gray] (\i,\yMin) -- (\i,\yMax)  
      ;
    }
    \foreach \i in {\yMin,...,\yMax} {
      \draw [very thin,gray] (\xMin,\i) -- (\xMax,\i) 
      ;
    }
    \lpoint{0,0} ;
    \draw [thick,red] (0,0.05) -- (0,0.5) ;
    \draw [thick,blue] (0,0.5) -- (2,1) ;
    \lpoint{2,1} ;

    \draw [thick,blue] (2,2) -- (5,3) ;  \lpoint{5,3} ;
    \draw [thick,blue] (5,4) -- (6,5) ; \lpoint{6,5} ;
    \draw [thick,red] (2,1.05) -- (2,2) ;
    \draw [thick,red] (5,3.05) -- (5,4) ;
    \draw [thick,red] (6,5.05) -- (6,6) ;

    \rpoint{0,0.5};
    \rpoint{2,2};
    \rpoint{5,4};
    \rpoint{6,6};

  \end{tikzpicture}
  \label{fig:Cf}
  \caption{The path $C_{f}$ of $f \in \LjI$}
\end{figure}

\begin{proposition}
  \label{lemma:pathd2}
  $C_{f}$ is a path in $\I^{2}$.
\end{proposition}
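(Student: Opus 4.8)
The plan is to verify directly the three conditions in the definition of a path in $\I^{2}$: that $C_{f}$ is a chain, that it is closed under arbitrary meets and joins, and that it is dense. The only inputs needed are Lemma~\ref{lemma:meetofbelowjoinof} (which gives $\meetof{f}(x) \leq \joinof{f}(y)$ for $x < y$) and the pointwise inequalities $\joinof{f} \leq f \leq \meetof{f}$ from Proposition~\ref{prop:meet-cont-closure}; together these ensure $\joinof{f}(x) \leq \meetof{f}(x)$ for every $x$, so each fibre $\set{x}\times[\joinof{f}(x),\meetof{f}(x)]$ is a nonempty subinterval of the chain $\I$, and they also give $\vec{0},\vec{1}\in C_{f}$ since $\joinof{f}(0)=0$ and $\meetof{f}(1)=1$.

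\emph{Chain and density.} If $(x,a),(y,b)\in C_{f}$ and $x=y$, then $a,b$ lie in the totally ordered set $\I$ and are comparable; if $x<y$, then $a\leq\meetof{f}(x)\leq\joinof{f}(y)\leq b$ by Lemma~\ref{lemma:meetofbelowjoinof}, so $(x,a)\leq(y,b)$; the case $y<x$ is symmetric. For density, suppose $(x,a)<(y,b)$ in $C_{f}$: if $x=y$, any $c$ with $a<c<b$ gives a point $(x,c)\in C_{f}$ strictly in between; if $x<y$, choose $z$ with $x<z<y$ by density of $\I$ and put $c:=\meetof{f}(z)$, so $(z,c)\in C_{f}$, and $a\leq\meetof{f}(x)\leq\joinof{f}(z)\leq c$ together with $x<z$ gives $(x,a)<(z,c)$, while $c=\meetof{f}(z)\leq\joinof{f}(y)\leq b$ together with $z<y$ gives $(z,c)<(y,b)$.

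\emph{Closure under joins --- the main obstacle.} Let $X=\set{(x_{i},a_{i})\mid i\in I}\subseteq C_{f}$; its join in $\I^{2}$ is $(x^{*},a^{*})$ with $x^{*}=\bigvee_{i}x_{i}$ and $a^{*}=\bigvee_{i}a_{i}$ (the empty case giving $\vec{0}\in C_{f}$). If $x_{i_{0}}=x^{*}$ for some $i_{0}$, then $a^{*}\geq a_{i_{0}}\geq\joinof{f}(x^{*})$, and every $a_{i}$ is below $\meetof{f}(x^{*})$ --- directly when $x_{i}=x^{*}$, and via $a_{i}\leq\meetof{f}(x_{i})\leq\joinof{f}(x^{*})\leq\meetof{f}(x^{*})$ when $x_{i}<x^{*}$ --- so $a^{*}\leq\meetof{f}(x^{*})$ and $(x^{*},a^{*})\in C_{f}$. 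The genuinely delicate case is when $x_{i}<x^{*}$ for all $i$: then $a_{i}\leq\meetof{f}(x_{i})\leq\joinof{f}(x^{*})$ yields $a^{*}\leq\joinof{f}(x^{*})$, while for the reverse inequality one uses that $\set{x_{i}}$ is cofinal in $[0,x^{*})$, so every $y<x^{*}$ satisfies $y<x_{i}$ for some $i$, whence $f(y)\leq\joinof{f}(x_{i})\leq a_{i}\leq a^{*}$; taking the join over such $y$ gives $\joinof{f}(x^{*})=\bigvee_{y<x^{*}}f(y)\leq a^{*}$. Hence $a^{*}=\joinof{f}(x^{*})$ and $(x^{*},a^{*})\in C_{f}$. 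Closure under arbitrary meets is order-dual: replace $\joinof{f}$ by $\meetof{f}$, $\bigvee$ by $\bigwedge$, and cofinality in $[0,x^{*})$ by coinitiality in $(x_{*},1]$, with $\vec{1}$ covering the empty meet.

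\emph{An alternative.} One may instead show that $C_{f}$ is a \emph{maximal} chain of $\I^{2}$ and invoke Lemma~\ref{lemma:pathmaximal}. If $(u,v)\notin C_{f}$, then either $v>\meetof{f}(u)$ or $v<\joinof{f}(u)$. In the first case, since $\meetof{f}(u)=\bigwedge_{u<u'}\joinof{f}(u')$ by Proposition~\ref{prop:meet-cont-closure}, there is some $u'>u$ with $\joinof{f}(u')<v$, so $(u',\joinof{f}(u'))\in C_{f}$ is incomparable with $(u,v)$; the second case is dual, using $\joinof{f}(u)=\bigvee_{u'<u}\meetof{f}(u')$. Either route proves that $C_{f}$ is a path.
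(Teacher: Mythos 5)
Your proof is correct and follows essentially the same route as the paper: a direct verification that $C_{f}$ is a chain (via Lemma~\ref{lemma:meetofbelowjoinof}), closed under arbitrary meets and joins, and dense, with the same choice of intermediate points for density. The only divergences are minor: where you split the join-closure step into cases and use cofinality of $\set{x_{i}}$ in $[0,\bigvee_{i} x_{i})$, the paper gets both bounds at once from monotonicity of $\meetof{f}$ and join-continuity of $\joinof{f}$ (namely $\bigvee_{i} y_{i} \leq \bigvee_{i}\meetof{f}(x_{i}) \leq \meetof{f}(\bigvee_{i} x_{i})$ and $\joinof{f}(\bigvee_{i} x_{i}) = \bigvee_{i}\joinof{f}(x_{i}) \leq \bigvee_{i} y_{i}$), and your sketched alternative via maximality of the chain and Lemma~\ref{lemma:pathmaximal} is a valid second route that the paper does not take.
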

\begin{proof}  
  We prove first that $C_{f}$, with the product ordering induced from
  $\I^{2}$, is a linear order. To this goal, we shall argue that, for
  $(x,y),(x',y') \in C_{f}$, we have $(x,y) < (x',y')$ iff either $x <
  x'$ or $x = x'$ and $y < y'$. That is, $C_{f}$ is a lexicographic
  product of linear orders, whence a linear order.
  Let us suppose that one of these two conditions holds: a) $x < x'$,
  b) $x = x'$ and $y < y'$.  If a), then
  $\meetof{f}(x) \leq \joinof{f}(x')$.  Considering that
  $y \in [\joinof{f}(x),\meetof{f}(x)]$ and
  $y' \in [\joinof{f}(x'),\meetof{f}(x')]$ we deduce $y \leq y'$. This
  proves that $(x,y) < (x',y')$ in the product ordering.  If b) then
  we also have $(x,y) < (x',y')$ in the product ordering.
  The converse implication, $(x,y) < (x',y')$ implies $x < x'$ or $x =
  x'$ and $y < y'$, trivially holds.

  We argue next that $C_{f}$ is closed under joins from $\I^{2}$.  Let
  $(x_{i},y_{i})$ be a collection of elements in $C_{f}$, we aim to
  show that $(\bigvee x_{i}, \bigvee y_{i}) \in C_{f}$, i.e.
  $\bigvee y_{i} \in
  [\joinof{f}(\bigvee x_{i}), \meetof{f}(\bigvee x_{i})]$.
  Clearly, as $y_{i} \leq \meetof{f}(x_{i})$, then
  $\bigvee y_{i} \leq \bigvee \meetof{f}(x_{i}) \leq
  \meetof{f}(\bigvee x_{i})$. Next, $\joinof{f}(x_{i}) \leq y_{i}$,
  whence
  $\joinof{f}(\bigvee x_{i}) = \bigvee \joinof{f}(x_{i}) \leq \bigvee
  y_{i}$.
  By a dual argument, we have that $(\bigwedge x_{i}, \bigwedge y_{i})
  \in C_{f}$.
  
  Finally, we show that $C_{f}$ is dense; to this goal let
  $(x,y),(x',y') \in C_{f}$ be such that $(x,y) < (x',y')$. If
  $x < x'$ then we can find a $z$ with $x < z < x'$; of course,
  $(z,f(z)) \in C_{f}$ and, by the previous characterisation of the
  order, $(x,y) < (z,f(z)) < (x',y')$ holds.  If $x = x'$ then
  $y < y'$ and we can find a $w$ with $y < w < y'$; as
  $ w \in [y,y'] \subseteq [\joinof{f}(x),\meetof{f}(x)]$, then
  $(x,w) \in C_{f}$; clearly, we have then
  $(x,y) < (x,w) < (x,y') = (x',y')$.
\end{proof}
For $C$ a path in $\I^{2}$, define
\begin{align}
  \label{eq:defvC}
  f_{C}^{-}(x) & := \bigwedge \set{y \mid (x,y) \in C}\,,
  &
  f_{C}^{+}(x) & := \bigvee \set{y \mid (x,y) \in C}\,.
\end{align}
Recall that a path $C \subseteq \I^{2}$ comes with \bcont surjective
projections $\pi_{1},\pi_{2} : C\rto \I$. Observe that
the following relations hold:
\begin{align}
  \label{eq:exprviaadjoints}
  f^{-}_{C} & = \pi_{2} \circ \ladj{(\pi_{1})}\,, & f^{+}_{C} & =
  \pi_{2} \circ \radj{(\pi_{1})}\,.
\end{align}
Indeed, we have
\begin{align*}
  \pi_{2}(\ladj{(\pi_{1})}(x))
  & = \pi_{2}(\bigwedge \set{(x',y) \in C \mid x = x' })
   \tag*{using  equation \eqref{eq:adjsurjective}}\,,
  \\
  & = \bigwedge \pi_{2}(\set{(x',y) \in C \mid x = x' }) = \bigwedge
  \set{ y \mid (x,y) \in C }\,.
\end{align*}
The other expression for $f_{C}^{+}$ is derived similarly. In particular,
the expressions in \eqref{eq:exprviaadjoints} show that
$f_{C}^{-} \in \LjI$ and $f_{C}^{+} \in \LmI$.

\begin{lemma}
  \label{lemma:firstInverse}
  We have
  \begin{align*}
    f_{C}^{-} & = \joinof{(f_{C}^{+})}\,, \quad f_{C}^{+}  =
    \meetof{(f_{C}^{-})}\,,
    \quad \text{and} \quad C = C_{f_{C}^{+}} = C_{f_{C}^{-}}\,.
  \end{align*}
\end{lemma}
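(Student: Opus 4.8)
The plan is to establish the three stated identities in the order $f_{C}^{+} = \meetof{(f_{C}^{-})}$, then $f_{C}^{-} = \joinof{(f_{C}^{+})}$, and finally $C = C_{f_{C}^{-}}$; the last equality $C = C_{f_{C}^{+}}$ is then immediate from the remark just after \eqref{def:CfDimTwo} that $C_{g} = C_{\joinof{g}} = C_{\meetof{g}}$. For the first identity I would start from the expressions in \eqref{eq:exprviaadjoints}, namely $f_{C}^{-} = \pi_{2} \circ \ladj{(\pi_{1})}$ and $f_{C}^{+} = \pi_{2} \circ \radj{(\pi_{1})}$, together with the fact (just recorded) that $f_{C}^{-} \in \LjI$ and $f_{C}^{+} \in \LmI$. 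By Proposition~\ref{prop:meet-cont-closure}(1), $\meetof{(f_{C}^{-})}$ is the least \mcont{} function above $f_{C}^{-}$, so it suffices to show that $f_{C}^{+}$ is \mcont{} (already known), that $f_{C}^{-} \leq f_{C}^{+}$ (clear from the definitions \eqref{eq:defvC}, since a meet of a set of reals is below their join), and that $f_{C}^{+}$ is the \emph{least} such; for the last point I would use Lemma~\ref{lemma:meetofbelowjoinof}-style reasoning, showing $\meetof{(f_{C}^{-})}(x) = \bigwedge_{x < x'} f_{C}^{-}(x') \geq f_{C}^{+}(x)$ by exhibiting, for each $x'$ with $x < x'$ and each $(x,y) \in C$, an element $(x', y')\in C$ with $y \leq y'$ — this follows since $C$ is a chain closed under joins, so $\bigvee\set{c \in C \mid \pi_1(c) \leq x}$ has first coordinate $\leq x$ and lies below any $c'\in C$ with $\pi_1(c') = x'$. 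Dually, the second identity $f_{C}^{-} = \joinof{(f_{C}^{+})}$ follows by the symmetric argument using that $\joinof{(\,\cdot\,)}$ is the greatest \jcont{} function below, or more cheaply by applying $\joinof{(\,\cdot\,)}$ to both sides of the first identity and invoking Proposition~\ref{prop:meet-cont-closure}(2), $\joinofmeetof{f} = \joinof{f}$, together with $\joinof{(f_{C}^{-})} = f_{C}^{-}$ since $f_{C}^{-} \in \LjI$.

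For the equality $C = C_{f_{C}^{-}}$, recall $C_{f_{C}^{-}} = \bigcup_{x \in \I} \set{x} \times [\joinof{(f_{C}^{-})}(x), \meetof{(f_{C}^{-})}(x)] = \bigcup_{x} \set{x}\times[f_{C}^{-}(x), f_{C}^{+}(x)]$ using the two identities just proved. The inclusion $C \subseteq C_{f_{C}^{-}}$ is then a tautology: if $(x,y) \in C$ then by definition \eqref{eq:defvC} we have $f_{C}^{-}(x) \leq y \leq f_{C}^{+}(x)$. For the reverse inclusion, take $(x,y)$ with $f_{C}^{-}(x) \leq y \leq f_{C}^{+}(x)$. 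Since $\pi_1 : C \rto \I$ is surjective there is some $c \in C$ with $\pi_1(c) = x$; the set $C_x := \set{c \in C \mid \pi_1(c) = x}$ is a chain closed under joins and meets (it is the intersection of $C$ with the preimage of $x$ under a \bcont{} map), hence $C_x = \set{x}\times[f_{C}^{-}(x), f_{C}^{+}(x)]$ by the very definitions of $f_{C}^{\pm}$, provided I check that this interval is actually contained in $C$. That containment is where density of $C$ enters: the endpoints $(x, f_{C}^{-}(x))$ and $(x, f_{C}^{+}(x))$ lie in $C$ (as meet, resp. join, of $C_x$, which is nonempty and closed under both), and if some $(x,y_0)$ strictly between them were missing, then $C \cup \set{(x,y_0)}$ would still be a chain (comparability with any $c \in C$ is decided by comparing first coordinates, and for $\pi_1(c) = x$ by comparing second coordinates inside the interval), contradicting maximality of $C$ via Lemma~\ref{lemma:pathmaximal}.

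The main obstacle I anticipate is the bookkeeping in the "least \mcont{} function" step of the first identity — precisely, verifying $\meetof{(f_{C}^{-})}(x) \geq f_{C}^{+}(x)$, i.e. that taking the pointwise meet of $f_{C}^{-}$ over all $x' > x$ does not drop below $f_{C}^{+}(x)$. The delicate case is when $x$ is a "vertical segment" of $C$ (so $f_{C}^{-}(x) < f_{C}^{+}(x)$) which is an isolated discontinuity; one must use that $C$ is closed under arbitrary joins to see that $(x, f_{C}^{+}(x)) = \bigvee C_x \in C$ and that this element is $\leq c'$ for every $c' \in C$ with $\pi_1(c') > x$, whence $f_{C}^{+}(x) \leq \pi_2(c') $ and therefore $f_{C}^{+}(x) \leq \bigwedge_{x < x'} f_{C}^{-}(x')$ after also using surjectivity of $\pi_1$ to realize enough such $c'$. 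Everything else is either a direct appeal to Proposition~\ref{prop:meet-cont-closure}, to the adjoint formulas \eqref{eq:exprviaadjoints}, or to the maximal-chain characterization of paths.
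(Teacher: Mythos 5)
Your proposal is correct and takes essentially the same route as the paper: the chain property gives $f_{C}^{+}(x)\leq f_{C}^{-}(x')$ whenever $x<x'$, whence $f_{C}^{+}=\meetof{(f_{C}^{-})}$ via the least-\mcont-function characterization, and any $(x,y)$ with $f_{C}^{-}(x)\leq y\leq f_{C}^{+}(x)$ is placed in $C$ by checking comparability with every element of $C$ and invoking maximality (Lemma~\ref{lemma:pathmaximal}), exactly as in the paper's proof. Two cosmetic remarks only: the inequality $f_{C}^{-}\leq f_{C}^{+}$ requires the fibers of $\pi_{1}$ to be nonempty (surjectivity, Lemma~\ref{lemma:surjective}), which you do invoke elsewhere, and the last step really rests on maximality rather than on density, despite your aside attributing it to density.
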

\begin{proof}
  Firstly, let us argue that $f_{C}^{+} = \meetof{(f_{C}^{-})}$; we
  do this by showing that $f_{C}^{+}$ is the least \mc function above
  $f_{C}^{-}$.  We have $f_{C}^{-}(x) \leq f_{C}^{+}(x)$ for each
  $x \in \I$, since $\pi_{1}$ is surjective so the fibers
  $\pi_{1}^{-1}(x) = \set{(x',y) \in C \mid x' = x}$ are non empty.
  Suppose now that $f_{C}^{-} \leq g \in \LmI$. In order to prove that
  $f_{C}^{+} \leq g$ it will be enough to prove that
  $f_{C}^{+}(x) \leq g(x')$ whenever $x < x'$.  Observe that if
  $x < x'$ then $f_{C}^{+}(x) \leq f_{C}^{-}(x')$: this is because if
  $(x,y),(x',y') \in C$, then $x < x'$ and $C$ a chain imply
  $y \leq y'$. We deduce therefore
  $f_{C}^{+}(x) \leq f_{C}^{-}(x') \leq g(x')$.
  The relation $f_{C}^{-} = \joinof{(f_{C}^{+})}$ is proved similarly.
  
  Next we argue that $(x,y) \in C$ if and only if
  $f_{C}^{-}(x) \leq y \leq f_{C}^{+}(x)$. The direction from left to
  right is obvious. Conversely, we claim that if
  $f_{C}^{-}(x) \leq y \leq f_{C}^{+}(y)$, then the pair $(x,y)$ is
  comparable with all the elements of $C$. It follows then that
  $(x,y) \in C$, since $C$ is a maximal chain.  Let us verify the
  claim. Let $(x',y') \in C$, if $x = x'$ then our claim is obvious,
  and if $x' < x$, then
  $y' \leq f_{C}^{+}(x') \leq f_{C}^{-}(x) \leq y$, so
  $(x',y') \leq (x,y)$; the case $x < x'$ is similar.
\end{proof}

\begin{lemma}
  \label{lemma:secondInverse}
  Let $f : \I \rto \I$ be monotone and consider the path $C_{f}$.
  Then $\joinof{f} = f_{C_{f}}^{-}$ and $\meetof{f} = f_{C_{f}}^{+}$.
\end{lemma}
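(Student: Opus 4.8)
The plan is to reduce both equalities to a pointwise computation: fix $x \in \I$ and compute the meet and the join of the fibre $\{\,y \mid (x,y) \in C_f\,\}$, which by the very definition~\eqref{def:CfDimTwo} of $C_f$ is a closed subinterval of $\I$. Note first that $f_{C_f}^{-}$ and $f_{C_f}^{+}$ are legitimately defined via~\eqref{eq:defvC}, because $C_f$ is a path by Proposition~\ref{lemma:pathd2}; beyond this, no property of paths is needed.

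First I would record that $\joinof{f}(x) \leq \meetof{f}(x)$ for every $x \in \I$: since $f$ is monotone, $\joinof{f}(x) = \bigvee_{x' < x} f(x') \leq f(x) \leq \bigwedge_{x < x'} f(x') = \meetof{f}(x)$ (the edge cases $x = 0$ and $x = 1$ being handled by $\bigvee \emptyset = 0$ and $\bigwedge \emptyset = 1$). Hence the interval $[\joinof{f}(x),\meetof{f}(x)]$ occurring in~\eqref{def:CfDimTwo} is nonempty for every $x$ (it contains $f(x)$), and from the definition of $C_f$ as a union of such fibres we get
\[
\{\, y \in \I \mid (x,y) \in C_f \,\} = [\joinof{f}(x), \meetof{f}(x)]\,.
\]
Since $\I$ is a complete chain, the meet of a nonempty closed interval $[a,b]$ is $a$ and its join is $b$; applying~\eqref{eq:defvC} with $C = C_f$ therefore yields
\[
f_{C_f}^{-}(x) = \bigwedge [\joinof{f}(x),\meetof{f}(x)] = \joinof{f}(x)\,, \qquad
f_{C_f}^{+}(x) = \bigvee [\joinof{f}(x),\meetof{f}(x)] = \meetof{f}(x)\,,
\]
for every $x \in \I$, which is exactly the assertion of the lemma.

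There is essentially no obstacle here: the statement is just a matter of unwinding the two definitions~\eqref{def:CfDimTwo} and~\eqref{eq:defvC} against one another, the only point requiring a word being that the defining interval of $C_f$ is never empty, so that the meet and join of its fibres are genuinely $\joinof{f}(x)$ and $\meetof{f}(x)$ rather than the top and bottom of $\I$. In the degenerate case $\joinof{f}(x) = \meetof{f}(x)$ the fibre is the singleton $\{f(x)\}$ and the computation above remains trivially correct.
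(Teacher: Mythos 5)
Your proof is correct, but it takes a different route from the paper's. You unwind the two definitions directly: from \eqref{def:CfDimTwo} the fibre of $C_f$ over $x$ is exactly the interval $[\joinof{f}(x),\meetof{f}(x)]$, which is nonempty because $\joinof{f}(x)\leq f(x)\leq \meetof{f}(x)$ by monotonicity, so the pointwise meet and join prescribed by \eqref{eq:defvC} are $\joinof{f}(x)$ and $\meetof{f}(x)$; this is a complete argument, and your attention to nonemptiness of the fibre (so that the empty meet/join does not produce $1$ or $0$) is exactly the right point to flag. The paper instead argues via adjunctions: it shows that $\langle id_{\I},\joinof{f}\rangle$ is the left adjoint of the projection $\pi_{1}\colon C_{f}\rto\I$ and then invokes the expression $f^{-}_{C_{f}}=\pi_{2}\circ\ladj{(\pi_{1})}$ from \eqref{eq:exprviaadjoints} (with the dual argument for $\meetof{f}$). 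Your computation is more elementary and arguably shorter; the paper's adjoint formulation has the advantage of matching the machinery used afterwards in higher dimensions, where $v(C)_{i,j}:=\pi_{j}\circ\ladj{(\pi_{i})}$ and where identifying explicit left adjoints to projections (as in Lemma~\ref{lemma:Crightadj} and Lemma~\ref{lemma:vCfeqf}) is the technique that generalizes, whereas the fibre-interval description of $C_f$ is specific to dimension $2$.
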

\begin{proof}
  For a monotone $f : \I \rto \I$, let $f' : \I \rto C_{f}$ by
  $f' := \langle id_{\I},\joinof{f}\rangle$, so
  $\joinof{f} = \pi_{2} \circ f'$, as in the diagram below:
  \begin{center}
    \newcommand{\laone}{\langle id,\joinof{f}\rangle}
    \newcommand{\latwo}{\ladj{(\pi_{1})}}
    \begin{tikzcd}
      \I
      \ar[rrr, bend left=30, "f^{-}_{C_{f}}", shift left=0.4em]
      \ar[rrr, bend right=30, "\joinof{f}"', shift right=0.4em]
      \ar[rr,"\laone"', shift right=0.5em]
      \ar[rr, "\latwo",shift left=0.5em]
      & & C_{f} \ar[ll,"\pi_{1}" description] \ar[r,"\pi_{2}"] & \I \\
    \end{tikzcd}
  \end{center}
  Recall
  that $f^{-}_{C_{f}} = \pi_{2} \circ \ladj{(\pi_{1})}$. Therefore, in
  order to prove the relation
  $\joinof{f} = f^{-}_{C_{f}} = \pi_{2} \circ \ladj{(\pi_{1})}$ it
  shall be enough to prove that $\langle id, \joinof{f}\rangle$ is
  left adjoint to the first projection (that is, we prove that
  $\langle id, \joinof{f}\rangle = \ladj{(\pi_{1})}$, from which it
  follows that
  $\joinof{f} = \pi_{1}\circ \langle id, \joinof{f}\rangle = \pi_{2}
  \circ \ladj{(\pi_{1})}$).  This amounts to verify that, for
  $x \in \I$ and $(x',y) \in C_{f}$ we have $x \leq \pi_{1}(x',y)$ if
  and only if $(x,\joinof{f}(x)) \leq (x',y)$. To achieve this goal,
  the only non trivial observation is that if $x \leq x'$, then
  $\joinof{f}(x) \leq \joinof{f}(x') \leq y$.
  The relation $\meetof{f} =
  \pi_{2} \circ \radj{(\pi_{1})}$ is proved similarly.
\end{proof}

\begin{theorem}
  \label{thm:pathsasjcont}
  There is a bijective correspondence between the following data:
  \begin{enumerate}
  \item paths in $\I^{2}$,
  \item \jc functions in $\LjI$,
  \item \mc functions in $\LmI$.
  \end{enumerate}
\end{theorem}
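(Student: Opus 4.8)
The plan is to assemble the bijections that the preceding lemmas have already prepared. The correspondence between (2) and (3) is handed to us directly by Proposition~\ref{prop:meet-cont-closure}(3): the operations $\joinof{(-)} : \LmI \rto \LjI$ and $\meetof{(-)} : \LjI \rto \LmI$ are mutually inverse order isomorphisms. So the only work is to produce a bijection between (1) and (2), after which the bijection between (1) and (3) is obtained by composing with $\meetof{(-)}$. For the bijection between paths and \jc functions I would take the assignments $C \mapsto f^{-}_{C}$ and $f \mapsto C_{f}$. These are well defined: $f^{-}_{C} \in \LjI$ because of the expression $f^{-}_{C} = \pi_{2} \circ \ladj{(\pi_{1})}$ from~\eqref{eq:exprviaadjoints}, and $C_{f}$ is a path by Proposition~\ref{lemma:pathd2}.

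Next I would check that these two maps are mutually inverse. Starting from a path $C$, Lemma~\ref{lemma:firstInverse} gives $C_{f^{-}_{C}} = C$. Starting from $f \in \LjI$, Lemma~\ref{lemma:secondInverse} gives $f^{-}_{C_{f}} = \joinof{f}$, and since $f$ is \jcont we have $\joinof{f} = f$ (a \jc function is a fixed point of the interior operator $f \mapsto \joinof{f}$, as noted in the proof of Corollary~\ref{cor:distrlattice}); hence $f^{-}_{C_{f}} = f$. This establishes the bijection (1)$\leftrightarrow$(2). Composing with the isomorphism $\meetof{(-)} : \LjI \rto \LmI$ yields the bijection (1)$\leftrightarrow$(3) explicitly as $C \mapsto \meetof{(f^{-}_{C})} = f^{+}_{C}$, the last equality being the second identity of Lemma~\ref{lemma:firstInverse}, with inverse $g \mapsto C_{g}$; here one uses $C_{g} = C_{\joinof{g}}$, the remark following~\eqref{def:CfDimTwo}, to see that this is genuinely the inverse of $C \mapsto f^{+}_{C}$. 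Finally I would record that the three pairwise bijections are compatible, i.e.\ the triangle they form commutes: this is exactly the content of the identities $f^{+}_{C} = \meetof{(f^{-}_{C})}$, $f^{-}_{C} = \joinof{(f^{+}_{C})}$ from Lemma~\ref{lemma:firstInverse} together with $C_{f} = C_{\joinof{f}} = C_{\meetof{f}}$.

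I do not expect a genuine obstacle in this theorem: all the substantive work has been carried out in the earlier results (surjectivity of the projections $\pi_{i} : C \rto \I$ from a path, the adjoint formulas for $f^{\pm}_{C}$, the fact that $C_{f}$ is a path, and the $\joinof{}/\meetof{}$ isomorphism between $\LjI$ and $\LmI$). The only point requiring a little attention is the compatibility check just mentioned, so that what we obtain is a single three-way correspondence rather than two a priori unrelated bijections; but this is a short diagram chase from Lemmas~\ref{lemma:firstInverse} and~\ref{lemma:secondInverse}.
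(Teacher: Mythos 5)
Your proposal is correct and takes essentially the same route as the paper: both rest on Lemmas~\ref{lemma:firstInverse} and~\ref{lemma:secondInverse} to show that $C \mapsto f^{-}_{C}$ (\resp $C \mapsto f^{+}_{C}$) and $f \mapsto C_{f}$ are mutually inverse, with your extra remarks on the $\joinof{}/\meetof{}$ isomorphism and the commuting triangle being exactly the compatibility the paper leaves implicit.
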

\begin{proof}
  According to Lemmas~\ref{lemma:firstInverse} and
  \ref{lemma:secondInverse}, the correspondence sending a path $C$ to
  $f_{C}^{-} \in \LjI$ has the mapping sending $f$ to $C_{f}$ as an
  inverse. Similarly, the correspondence
  $C \mapsto f_{C}^{+} \in \LmI$ has $f \mapsto C_{f}$ as
  inverse.
\end{proof}

\section{Paths in higher dimensions}
\label{sec:pathsDimensionMore}

We show in this section that paths in dimension $d$, as defined in
Section~\ref{sec:paths}, are in bijective correspondence with
clopen tuples of $\PrLI$, as defined in
Section~\ref{sec:latticesFromQuantales}; therefore, as established in
that Section, there is a lattice $\Ld{\LjI}$ whose underlying set can
be identified with the set of paths in dimension $d$.

\medskip

Let $f \in \PrLI$, so $f = \set{ f_{i,j} \mid 1 \leq i < j \leq
  d}$. We define then, for $1 \leq i < j \leq d$,
\begin{align*}
  f_{j,i} & := \opp{(\,f_{i,j}\,)} = \joinof{(\radj{(f_{i,j})})} \,.
\end{align*}
Moreover, for $i \in [d]$, we let $f_{i,i} := id$.  We say shall say
that a tuple $f \in \PrLI$ is \emph{compatible} if
$f_{j,k} \circ f_{i,j} \leq f_{i,k}$,
for each triple of elements $i,j,k \in [d]$. It is readily seen that a
tuple is compatible if and only if $\delta_{f} $, defined in
Lemma~\ref{lemma:defskewmetric}, is a skew metric on
$\setof{d}$. Therefore, according to Lemma~\ref{lemma:enrichment},
\emph{a tuple is compatible if and only if it is clopen.}

If $C \subseteq \I^{d}$ is a path, then we shall use
$\pi_{i} : C \rto \I$ to denote the projection onto the $i$-th
coordinate. Then
$\pi_{i,j} := \langle \pi_{i},\pi_{j}\rangle : C \rto \I \times \I$.
\begin{definition}
  \label{def:defvCij}
  For a path $C$ in $\I^{d}$, let us define $v(C) \in \PrLI[d]$ by the
  formula:
  \begin{align}
    \label{eq:defvCij}
    v(C)_{i,j} & := \pi_{j}\circ \ladj{(\pi_{i})}\,, \quad (i,j) \in \couples{d}. 
  \end{align}
\end{definition}
\begin{remark}
  An explicit formula for $v(C)_{i,j}(x)$ is as follows:
  \begin{align}
    \label{eq:explicitvC}
    v(C)_{i,j}(x)
    & = \bigwedge \set{\pi_{j}(y) \in C\mid \pi_{i}(y) = x}\,.
  \end{align}
  Let $C_{i,j}$ be the image of $C$ via the projection
  $\pi_{i,j}$. Then $C_{i,j}$ is a path, since it is the image of a
  bi-continuous function from $\I$ to $ \I \times \I$.  Some simple
  diagram chasing (or the formula in~\eqref{eq:explicitvC}) shows that
  $v(C)_{i,j} = f^{-}_{C_{i,j}}$ as defined in \eqref{eq:defvC}.
\end{remark}

\begin{definition}
  For a compatible $f \in
  \PrLI$, 
  define
  \begin{align*}
    C_{f} & := \set{(x_{1},\ldots ,x_{d}) \mid f_{i,j}(x_{i}) \leq
      x_{j}, \text{ for all } i,j \in [d] }\,.
  \end{align*}
\end{definition}
\begin{remark}
  \label{remark:adj}
  Notice that the condition $f_{i,j}(x) \leq y$ is equivalent (by
  definition of $f_{i,j}$ or $f_{j,i}$) to the condition
  $x \leq \Meetof{f_{j,i}}(y)$.  Thus, there are in principle many
  different ways to define $C_{f}$; in particular, when $d = 2$ (so
  any tuple $\PrLI$ is compatible), the definition given above is
  equivalent to the one given in \eqref{def:CfDimTwo}.
\end{remark}

\begin{proposition}
  $C_{f}$ is a path.
\end{proposition}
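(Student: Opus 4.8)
The plan is to verify the three defining properties of a path for $C_f$: that it is a chain, that it is closed under arbitrary meets and joins in $\I^d$, and that it is dense. I expect the chain property to be the technical heart of the argument, so I would tackle it carefully and leave the closure and density properties, which are more routine, for the end.

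\emph{Closure under meets and joins.} This is the easiest part. Suppose $\{(x^{(s)}_1,\ldots,x^{(s)}_d) \mid s \in S\}$ is a family of points of $C_f$, and set $x_j := \bigvee_s x^{(s)}_j$ for each $j$. For each $i,j$ and each $s$ we have $f_{i,j}(x^{(s)}_i) \leq x^{(s)}_j \leq x_j$; since $f_{i,j}$ is \jcont, $f_{i,j}(x_i) = f_{i,j}(\bigvee_s x^{(s)}_i) = \bigvee_s f_{i,j}(x^{(s)}_i) \leq x_j$, so $\bigvee_s$ of the family lies in $C_f$. For meets, set $y_j := \bigwedge_s x^{(s)}_j$; here I would use Remark~\ref{remark:adj}, rewriting $f_{i,j}(x^{(s)}_i) \leq x^{(s)}_j$ as $x^{(s)}_i \leq \Meetof{f_{j,i}}(x^{(s)}_j)$, and exploit that $\Meetof{f_{j,i}}$ is \mcont, getting $y_i \leq \bigwedge_s \Meetof{f_{j,i}}(x^{(s)}_j) = \Meetof{f_{j,i}}(y_j)$, i.e.\ $f_{i,j}(y_i) \leq y_j$. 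Thus $\bigwedge_s$ of the family lies in $C_f$ as well. Note that $\vec 0$ and $\vec 1$ lie in $C_f$ (the former because $f_{i,j}$ is \jcont hence $f_{i,j}(0)=0$, the latter trivially), so these closure statements cover the empty family too.

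\emph{Chain property.} This is the main obstacle. Take $x = (x_1,\ldots,x_d)$ and $y = (y_1,\ldots,y_d)$ in $C_f$; I must show $x \leq y$ or $y \leq x$ coordinatewise. Suppose not: then there are indices $i, k$ with $x_i < y_i$ and $y_k < x_k$. From $y \in C_f$ we have $f_{k,i}(y_k) \leq y_i$; from $x \in C_f$ we have $f_{i,k}(x_i) \leq x_k$. Here I would use compatibility. Since $f$ is compatible (hence clopen, by Lemma~\ref{lemma:enrichment}) the associated $\delta_f$ is a skew metric on $\setof d$; in particular $\delta_f(i,k) = \opp{\delta_f(k,i)}$, i.e.\ $f_{i,k} = \opp{f_{k,i}} = \joinof{(\radj{f_{k,i}})}$, equivalently $f_{i,k}$ and $f_{k,i}$ are related by the adjunction/involution machinery of Section~\ref{sec:quantalesFromChains}. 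The clean way to exploit this is the equivalence from Remark~\ref{remark:adj}: $f_{k,i}(y_k) \leq y_i$ is equivalent to $y_k \leq \Meetof{f_{i,k}}(y_i)$. Now I combine: $f_{i,k}(x_i) \leq x_k$ but $y_k \leq \Meetof{f_{i,k}}(y_i)$ and $x_i < y_i$; using Lemma~\ref{lemma:meetofbelowjoinof} (with $x_i < y_i$) we get $\Meetof{f_{i,k}}(x_i) \le \Joinof{f_{i,k}}(y_i)$ — wait, more directly: from $x_i < y_i$ and monotonicity of $f_{i,k}$, together with the relation $f_{i,k}(x_i) = \bigvee_{z < x_i} f_{i,k}(z) \le \bigwedge_{x_i < z} f_{i,k}(z) = \Meetof{f_{i,k}}(x_i)$ (which holds whenever there is room below $x_i$; if $x_i = 0$ handle separately using $f_{i,k}(0)=0$), I obtain $x_k \geq f_{i,k}(x_i)$ while simultaneously $f_{i,k}(x_i) \le \Meetof{f_{i,k}}(x_i) \le \Meetof{f_{i,k}}(y_i)$ is \emph{not} immediately $\geq x_k$. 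Let me instead argue symmetrically: from $x_i < y_i$, pick $z$ with $x_i < z < y_i$; then $x_k \geq f_{i,k}(x_i)$ is not yet enough, so I use the other inequality $f_{k,i}(y_k) \le y_i < $ — hmm. The cleanest route, which I would write out, is: $x_i < y_i$ gives (picking an intermediate $z$) $f_{i,k}(x_i) \le f_{i,k}(z) \le \Meetof{f_{i,k}}(x_i)$ and $\Joinof{f_{i,k}}(y_i) \ge f_{i,k}(z) \ge$ everything below, and then $x_k \le \Meetof{f_{i,k}}(x_i)$? No — $x \in C_f$ only gives the lower bound $f_{i,k}(x_i) \le x_k$. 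So I must get an \emph{upper} bound on $x_k$ from the hypothesis $y_k < x_k$, and that is exactly where the skew-symmetric partner $f_{k,i}$ and the point $y$ enter: $y_k \ge f_{k,i}(y_i)$? no, $f_{k,i}(y_k) \le y_i$. Converting via Remark~\ref{remark:adj}: $f_{k,i}(y_k)\le y_i \iff y_k \le \Meetof{f_{i,k}}(y_i)$. So $x_k > y_k$ gives nothing about an upper bound on $x_k$ directly either. The actual contradiction comes from combining $f_{i,k}(x_i) \le x_k$ with $x_k > y_k$ and then using compatibility $f_{i,k} = \Joinof{(\radj{f_{k,i}})}$ together with $x_i < y_i$: I would show $x_k \le y_k$, contradiction. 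Concretely, from $x_i < y_i$ there is $z$ with $x_i < z < y_i$, and then $f_{i,k}(x_i) \le f_{i,k}(z)$; on the other side, $f_{k,i}(y_k) \le y_i$ means $y_i \ge f_{k,i}(y_k)$, and since $z < y_i$, by the explicit form $f_{k,i}(y_k) = \joinof{(\radj{f_{i,k}})}(y_k) = \bigvee_{w < y_k}\radj{f_{i,k}}(w)$... This is getting intricate; the honest summary is that \textbf{the hard step is to show that for $x,y\in C_f$ with $x_i<y_i$ one necessarily has $x_k\le y_k$ for every other coordinate $k$}, and the tool is precisely the skew-metric (equivalently clopen) condition, unwound through Remark~\ref{remark:adj} and Lemma~\ref{lemma:meetofbelowjoinof} in the two-coordinate projection $C_{i,k}$ — which by the Remark before Definition~\ref{def:defvCij} is itself a path, so that $C_f$ being a chain reduces, coordinate-pair by coordinate-pair, to the already-established two-dimensional case (Proposition~\ref{lemma:pathd2} and Lemma~\ref{lemma:firstInverse}). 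I would organize the proof exactly that way: show $C_f$ projects into each two-dimensional path $C_{f_{i,k}}$, deduce linearity from the $d=2$ case, and conclude.

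\emph{Density.} Given $x < y$ in $C_f$, fix a coordinate $j$ with $x_j < y_j$ and pick $t$ with $x_j < t < y_j$. I would then produce a point of $C_f$ strictly between $x$ and $y$ by "running the metric from coordinate $j$": define $z_k := \max(x_k,\, \Meetof{f_{k,j}}\text{-type lower bound forced by } t)$ for each $k$ — more precisely, set the $j$-th coordinate to $t$ and, for each other $k$, let $z_k$ be the appropriate value squeezed between what $x$ forces from below and what $y$ forces from above, using compatibility to check all the inequalities $f_{k,\ell}(z_k) \le z_\ell$ hold. (This mirrors the density argument in Proposition~\ref{lemma:pathd2}, applied to each projection $C_{i,k}$, together with the fact that $C_f$ is the "intersection" of these projected paths.) That $x < z < y$ then follows from $z_j = t$ strictly between $x_j$ and $y_j$, plus $x \le z \le y$ in the remaining coordinates by construction. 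This completes the verification that $C_f$ is a path.
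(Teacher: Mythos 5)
Your treatment of closure under arbitrary meets and joins is correct and matches the paper's. The problems are in the other two items. For the chain property, your direct attempt stalls because you extract the wrong pair of inequalities from membership in $C_{f}$: given $x,y\in C_{f}$ with $x_{i}<y_{i}$, the bound you need on $x_{k}$ is an \emph{upper} bound, and it is already there — $x\in C_{f}$ also gives $f_{k,i}(x_{k})\leq x_{i}$, which by Remark~\ref{remark:adj} is $x_{k}\leq \Meetof{f_{i,k}}(x_{i})$ — while the bound you need on $y_{k}$ is the \emph{lower} bound $f_{i,k}(y_{i})\leq y_{k}$ coming from $y$'s membership. Bridging with Lemma~\ref{lemma:meetofbelowjoinof} at $x_{i}<y_{i}$ gives $x_{k}\leq \Meetof{f_{i,k}}(x_{i})\leq \Joinof{f_{i,k}}(y_{i})\leq f_{i,k}(y_{i})\leq y_{k}$ for every $k$, and comparability follows; note that no compatibility is needed here, only the definition of $C_{f}$ (this is exactly the paper's argument). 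You instead used $f_{i,k}(x_{i})\leq x_{k}$ and $f_{k,i}(y_{k})\leq y_{i}$, which, as you yourself observed, lead nowhere. Your fallback — reduce to the two-dimensional case by projecting — could be made to work, but its key step is only asserted: you must verify that $(x_{i},x_{k})\in C_{f_{i,k}}$ for each $x\in C_{f}$ (which again is precisely the two constraints above, one of them converted via Remark~\ref{remark:adj}); the remark you cite about images of paths under $\pi_{i,j}$ does not apply, since at this stage $C_{f}$ is not yet known to be a path.

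The density argument has a genuine gap: the intermediate point is never constructed. Saying that each coordinate is ``squeezed between what $x$ forces from below and what $y$ forces from above'' does not define a point of $C_{f}$, and per-pair two-dimensional density does not assemble into a $d$-dimensional statement, because a candidate $z$ must satisfy \emph{all} pairwise constraints simultaneously, including those among the coordinates other than the distinguished one — this is exactly where compatibility is indispensable. The paper's proof picks $i_{0}$ with $x_{i_{0}}<y_{i_{0}}$, takes $z_{0}$ strictly in between, and sets $z_{i}:=f_{i_{0},i}(z_{0})$; compatibility gives $z\in C_{f}$ (Lemma~\ref{lemma:Crightadj}), monotonicity gives $z\leq y$, and Remark~\ref{remark:adj} together with Lemma~\ref{lemma:meetofbelowjoinof} gives $x\leq z$, with strictness at $i_{0}$. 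Without this (or an equivalent explicit) construction and verification, the density claim — and hence the proposition — remains unproved in your write-up.
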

The proposition is an immediate consequence of the following
Lemmas~\ref{lemma:Cftotal}, \ref{lemma:Cfcomplete} and
\ref{lemma:Cfdense}.

\begin{lemma}
  \label{lemma:Cftotal}
  $C_{f}$ is a total order.
\end{lemma}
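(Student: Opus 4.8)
The plan is to show that $C_{f}$ is totally ordered, i.e.\ that any two of its members are comparable in the coordinatewise order on $\I^{d}$. So fix $x = (x_{1},\dots,x_{d})$ and $y = (y_{1},\dots,y_{d})$ in $C_{f}$ and suppose, for contradiction, that they are incomparable. Because $\I$ is a chain, $x \not\leq y$ produces an index $i$ with $y_{i} < x_{i}$, and $y \not\leq x$ produces an index $j$ with $x_{j} < y_{j}$; necessarily $i \neq j$.

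I would then feed these strict inequalities through the defining relations of $C_{f}$, using that $f_{j,i} = \opp{(f_{i,j})}$ together with the explicit description of $\opp{(-)}$ from Lemma~\ref{lemma:charOfStar}. From $x \in C_{f}$ applied to the ordered pair $(i,j)$ we get $f_{i,j}(x_{i}) \leq x_{j}$, and since $x_{j} < y_{j}$ this gives $f_{i,j}(x_{i}) < y_{j}$. By Lemma~\ref{lemma:charOfStar} (with the perfect chain equal to $\I$),
\begin{align*}
  f_{j,i}(y_{j}) & = \opp{(f_{i,j})}(y_{j}) = \bigvee \set{ v \in \I \mid f_{i,j}(v) < y_{j} } \geq x_{i}\,,
\end{align*}
the last inequality holding because $x_{i}$ belongs to the set over which the join is taken. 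On the other hand, $y \in C_{f}$ applied to the ordered pair $(j,i)$ gives $f_{j,i}(y_{j}) \leq y_{i}$. Combining, $x_{i} \leq f_{j,i}(y_{j}) \leq y_{i}$, which contradicts $y_{i} < x_{i}$. Hence $x$ and $y$ are comparable, and $C_{f}$ is a total order.

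Two small points to keep straight. First, the definition of $C_{f}$ imposes $f_{a,b}(x_{a}) \leq x_{b}$ for \emph{every} ordered pair $(a,b)$, not merely for $a<b$, and the argument above uses this both for $(i,j)$ and for $(j,i)$; likewise $f_{j,i} = \opp{(f_{i,j})}$ for any pair of distinct indices, since $\opp{(-)}$ is an involution of $\LjI$ (Proposition~\ref{lemma:meetofjoinofTwo}) and all the entries $f_{a,b}$ live in $\LjI$. Second, there is no genuinely hard step: the only delicate move is converting $f_{i,j}(x_{i}) < y_{j}$ into $x_{i} \leq f_{j,i}(y_{j})$ via the formula for $\opp{(-)}$. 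Note in particular that compatibility of $f$ is \emph{not} needed for this lemma; it will instead be used for completeness and density of $C_{f}$, which are handled separately.
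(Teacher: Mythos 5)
Your proof is correct and rests on the same mechanism as the paper's: the relation $f_{j,i} = \opp{(f_{i,j})}$ is used to transfer a strict inequality in one coordinate into a non-strict one in another, and compatibility is indeed not needed. The only difference is packaging — the paper argues directly that $x \not\leq y$ forces $y_{i} \leq \Meetof{f_{1,i}}(y_{1}) \leq f_{1,i}(x_{1}) \leq x_{i}$ in every coordinate via Remark~\ref{remark:adj}, while you derive a contradiction from two witness coordinates using the explicit join formula of Lemma~\ref{lemma:charOfStar}; these are two faces of the same adjunction, so the argument is essentially the paper's.
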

\begin{proof}
  Let $x,y \in C_{f}$ and suppose that $x\not\leq y$, so there exists
  $i \in [d]$ such that $x_{i} \not\leq y_{i}$. W.l.o.g. we can
  suppose that $i = 1$, so $y_{1} < x_{1}$ and then, for $i > 1$, we have 
  $\Meetof{f_{1,i}}(y_{1}) \leq f_{1,i}(x_{1})$, whence $y_{i} \leq
  \Meetof{f_{1,i}}(y_{1}) \leq f_{1,i}(x_{1}) \leq x_{1}$.
  This shows that $y < x$.
\end{proof}

\begin{lemma}
  \label{lemma:Cfcomplete}
  $C_{f}$ is closed under arbitrary meets and joins.
\end{lemma}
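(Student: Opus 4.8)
The plan is to verify closure under arbitrary joins and closure under arbitrary meets by two dual computations, both carried out coordinatewise since $\I^{d}$ is a product of copies of the complete lattice $\I$. Fix a subset $\set{x^{(a)} \mid a \in A} \subseteq C_{f}$.

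For joins, put $x := \bigvee_{a} x^{(a)}$, so that $x_{j} = \bigvee_{a} x^{(a)}_{j}$ for each $j$. First I would record that every component map $f_{i,j}$ belongs to $\LjI$: this holds by hypothesis when $i < j$, trivially when $i = j$, and when $i > j$ it follows from $f_{i,j} = \opp{(f_{j,i})}$ together with the fact that $\oppfun$ is an involution of $\LjI$ (Proposition~\ref{lemma:meetofjoinofTwo}). Then, for all $i,j \in [d]$, join-continuity gives $f_{i,j}(x_{i}) = \bigvee_{a} f_{i,j}(x^{(a)}_{i}) \leq \bigvee_{a} x^{(a)}_{j} = x_{j}$, where the inequality is the defining condition of each $x^{(a)} \in C_{f}$; hence $x \in C_{f}$.

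For meets, put $x := \bigwedge_{a} x^{(a)}$, so $x_{j} = \bigwedge_{a} x^{(a)}_{j}$. Here the $f_{i,j}$ need not be \mcont, so I would switch to the adjoint form of the defining inequalities supplied by Remark~\ref{remark:adj}: $f_{i,j}(x_{i}) \leq x_{j}$ is equivalent to $x_{i} \leq \Meetof{f_{j,i}}(x_{j})$, and this time $\Meetof{f_{j,i}} \in \LmI$ really is \mcont. Since $x^{(a)} \in C_{f}$ gives $x^{(a)}_{i} \leq \Meetof{f_{j,i}}(x^{(a)}_{j})$ for each $a$, meet-continuity yields $x_{i} = \bigwedge_{a} x^{(a)}_{i} \leq \bigwedge_{a} \Meetof{f_{j,i}}(x^{(a)}_{j}) = \Meetof{f_{j,i}}(x_{j})$, which, reading Remark~\ref{remark:adj} backwards, is exactly $f_{i,j}(x_{i}) \leq x_{j}$. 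Thus $x \in C_{f}$. The empty-family instances $\vec{0} = \bigvee\emptyset \in C_{f}$ and $\vec{1} = \bigwedge\emptyset \in C_{f}$ are subsumed by these computations, using that \jcont maps preserve $\bot$.

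I do not expect a genuine obstacle; the one point worth stating carefully is the asymmetry between the two cases — joins exploit join-continuity of the $f_{i,j}$ directly, whereas meets must be routed through the meet-continuous adjoints $\Meetof{f_{j,i}}$ — together with the small accompanying observation that $f_{i,j}$ is join-continuous for \emph{every} ordered pair $(i,j)$, not merely for $i < j$. Note that compatibility of $f$ plays no role in this particular argument.
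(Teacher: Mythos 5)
Your proof is correct, and its join half coincides with the paper's: join-continuity of every $f_{i,j}$ (including the cases $i>j$ and $i=j$, which you rightly make explicit) gives $f_{i,j}(\bigvee_{a} x^{(a)}_{i}) = \bigvee_{a} f_{i,j}(x^{(a)}_{i}) \leq \bigvee_{a} x^{(a)}_{j}$. Where you diverge is the meet half: you pass through the adjoint form of the defining inequalities (Remark~\ref{remark:adj}) and invoke meet-continuity of $\Meetof{f_{j,i}}$, whereas the paper's argument needs only monotonicity of $f_{i,j}$ itself: from $\bigwedge_{a} x^{(a)}_{i} \leq x^{(a)}_{i}$ one gets $f_{i,j}(\bigwedge_{a} x^{(a)}_{i}) \leq f_{i,j}(x^{(a)}_{i}) \leq x^{(a)}_{j}$ for every $a$, and then one takes the infimum on the right. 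Your detour is sound (the adjoints exist and lie in $\LmI$, so the meet-continuity step is justified by Proposition~\ref{prop:meet-cont-closure}), but it is heavier than necessary; it does have the mild virtue of exhibiting the meet case as the exact order-dual of the join case via the adjoint functions, and your closing observations — that the empty families are covered because \jcont (resp.\ \mcont) maps preserve $\bot$ (resp.\ $\top$), and that compatibility of $f$ is nowhere used in this lemma — are accurate and match the paper.
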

\begin{proof}
  Let $\set{x^{\ell} \mid \ell \in I}$ be a family of tuples in
  $C_{f}$.  For all $i,j \in \setof{d}$ and $\ell \in I$, we have
  $f_{i,j}(\bigwedge_{\ell \in I} x^{\ell}_{i})
   \leq f_{i,j}(x^{\ell}_{i}) \leq
   x^{\ell}_{j} $,
  and therefore
  $f_{i,j}(\bigwedge_{\ell \in I} x^{\ell}_{i}) \leq \bigwedge_{\ell
    \in I}x^{\ell}_{j}$. Since meets in $\I^{d}$ are computed
  coordinate-wise, this shows that $C_{f}$ is closed under arbitrary
  meets.  Similarly,
  $ f_{i,j}(x^{\ell}_{i}) \leq \bigvee_{\ell \in I}x^{\ell}_{j}$ and
  \begin{align*}
    f_{i,j}(\bigvee_{\ell \in I} x^{\ell}_{i}) & = \bigvee_{\ell \in
      I} f_{i,j}(x^{\ell}_{i}) \leq \bigvee_{\ell \in I}x^{\ell}_{j}
    \,,
  \end{align*}
  so $C_{f}$ is also closed under arbitrary joins.
\end{proof}
\begin{lemma}
  \label{lemma:Crightadj}
  Let $f \in \PrLI$ be compatible.  Let $i_{0} \in \setof{d}$ and
  $x_{0} \in \I$; define $x \in \I^{d}$ by setting
  $x_{i} := f_{i_{0},i}(x_{0})$ for each $i \in \setof{d}$.
  Then $x \in C_{f}$ and $x = \bigwedge \set{y \in C_{f} \mid
    \pi_{i_{0}}(y) = x_{0} }$.
\end{lemma}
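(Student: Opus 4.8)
The plan is a direct verification: the candidate tuple $x$ is given explicitly, so everything reduces to a couple of applications of the compatibility inequality together with the convention $f_{i_{0},i_{0}} = id$.

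First I would record that $x_{i_{0}} = f_{i_{0},i_{0}}(x_{0}) = x_{0}$, so that $\pi_{i_{0}}(x) = x_{0}$ as required. To see that $x \in C_{f}$, fix $i,j \in \setof{d}$ and compute
\[
  f_{i,j}(x_{i}) = f_{i,j}\bigl(f_{i_{0},i}(x_{0})\bigr)
  = \bigl(f_{i,j} \circ f_{i_{0},i}\bigr)(x_{0})
  \leq f_{i_{0},j}(x_{0}) = x_{j}\,,
\]
the inequality being the compatibility condition $f_{i,j} \circ f_{i_{0},i} \leq f_{i_{0},j}$ instantiated at the triple $(i_{0},i,j)$; note that, since compatibility is equivalent to clopenness by Lemma~\ref{lemma:enrichment}, it is available for \emph{every} triple of indices, not only strictly increasing ones, and in particular for triples with repeated entries. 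This shows $x \in C_{f}$, hence $x$ lies in the set $S := \set{\, y \in C_{f} \mid \pi_{i_{0}}(y) = x_{0} \,}$.

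It then remains to show that $x$ is a lower bound of $S$; combined with $x \in S$ this gives $x = \bigwedge S$ (the meet being computed in $\I^{d}$, or equivalently in $C_{f}$ by Lemma~\ref{lemma:Cfcomplete}, since $x$ already belongs to $C_{f}$). So let $y \in S$. Since $y_{i_{0}} = x_{0}$ and $y \in C_{f}$, the defining condition of $C_{f}$ yields $x_{i} = f_{i_{0},i}(x_{0}) = f_{i_{0},i}(y_{i_{0}}) \leq y_{i}$ for each $i \in \setof{d}$, i.e. $x \leq y$, as wanted.

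The argument is short and I do not expect a genuine obstacle; the only point demanding care is respecting the tensor/composition convention $f \otimes g = g \circ f$, so that the compatibility inequality is applied with its indices in the correct order, and remembering that this inequality holds for arbitrary triples (which is precisely what lets us take $i$ or $j$ equal to $i_{0}$).
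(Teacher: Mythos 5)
Your proposal is correct and follows essentially the same route as the paper: verify $x_{i_0}=x_0$ via $f_{i_0,i_0}=id$, check $x\in C_f$ by instantiating compatibility at the triple $(i_0,i,j)$, and show $x$ is a lower bound of the fibre using the defining inequalities of $C_f$. The only cosmetic difference is your detour through Lemma~\ref{lemma:enrichment}, which is unnecessary since compatibility is already stated for arbitrary triples of indices.
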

\begin{proof}
  Since $f$ is compatible, $ f_{i,j} \circ f_{i_{0},i} \leq
  f_{i_{0},j}$, for each $i, j \in\setof{d}$, so
  \begin{align*}
     f_{i,j}(x_{i}) & = f_{i,j}(f_{i_{0},i}(x_{0})) \leq 
     f_{i_{0},j}(x_{0})  = x_{j}\,.
  \end{align*}
  Therefore, $x \in C_{f}$. Observe that since $f_{i_{0},i_{0}} =
  id$, we have $x_{i_{0}} = x_{0}$ and
  $x$ so defined is such that $\pi_{i_{0}}(x) = x_{0}$.
  On the other hand, if $y \in C_{f}$ and $x_{0} \leq \pi_{i_{0}}(y)
  =y_{i_{0}}$, then $x_{i} = f_{i,i_{0}}(x_{0}) \leq
  f_{i,i_{0}}(y_{i_{0}}) \leq y_{i}$, for all $i
  \in\setof{d}$.  Thus $x = \bigwedge \set{y \in C_{f} \mid
    \pi_{i_{0}}(y) = x_{0} }$. 
\end{proof}
\begin{lemma}
  \label{lemma:Cfdense}
  $C_{f}$ is dense.
\end{lemma}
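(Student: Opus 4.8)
The plan is to verify density straight from the definition of $C_{f}$. Let $x,y \in C_{f}$ with $x < y$; I must exhibit $z \in C_{f}$ with $x < z < y$. Since the order on $C_{f}$ is the one induced coordinate-wise from $\I^{d}$, the relation $x < y$ means $x_{i} \leq y_{i}$ for all $i \in \setof{d}$ with $x_{i_{0}} < y_{i_{0}}$ for at least one coordinate, which I fix and call $i_{0}$. By density of the real interval $\I$, choose $t$ with $x_{i_{0}} < t < y_{i_{0}}$, and define $z \in \I^{d}$ by $z_{i} := f_{i_{0},i}(t)$ for each $i \in \setof{d}$. By Lemma~\ref{lemma:Crightadj}, applied with this $i_{0}$ and $x_{0} := t$, the tuple $z$ belongs to $C_{f}$ (indeed $z = \bigwedge \set{w \in C_{f} \mid \pi_{i_{0}}(w) = t}$), and $z_{i_{0}} = f_{i_{0},i_{0}}(t) = t$ because $f_{i_{0},i_{0}} = id$.

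First I would check $z \leq y$: for each $i$, monotonicity of $f_{i_{0},i}$ together with $t < y_{i_{0}}$ gives $z_{i} = f_{i_{0},i}(t) \leq f_{i_{0},i}(y_{i_{0}})$, while $f_{i_{0},i}(y_{i_{0}}) \leq y_{i}$ because $y \in C_{f}$. Since moreover $z_{i_{0}} = t < y_{i_{0}}$, this gives $z < y$.

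The only step needing a little care is $x \leq z$. Membership of $x$ in $C_{f}$ bounds $x_{i}$ from above not through the entry $f_{i_{0},i}$ but through the \emph{reverse} entry $f_{i,i_{0}}$: from $f_{i,i_{0}}(x_{i}) \leq x_{i_{0}}$, which holds as $x \in C_{f}$, Remark~\ref{remark:adj} gives $x_{i} \leq \meetof{f_{i_{0},i}}(x_{i_{0}})$. Now the strict inequality $x_{i_{0}} < t$ enters decisively: $\meetof{f_{i_{0},i}}(x_{i_{0}}) = \bigwedge_{x_{i_{0}} < s} f_{i_{0},i}(s) \leq f_{i_{0},i}(t) = z_{i}$, so $x_{i} \leq z_{i}$ for every $i$. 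Together with $x_{i_{0}} < t = z_{i_{0}}$ this yields $x < z$, hence $x < z < y$, proving density. Thus the main (and essentially only) obstacle is recognizing that the inequality $x \leq z$ has to be obtained through the adjoint, i.e. ``$\meetof{(-)}$'', description of the off-diagonal entries $f_{i,i_{0}}$, combined with the strictness of the splitting point $t$; the rest is routine coordinate-wise bookkeeping.
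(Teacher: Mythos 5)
Your proof is correct and follows essentially the same route as the paper's: choose $t$ strictly between $x_{i_{0}}$ and $y_{i_{0}}$, set $z_{i} := f_{i_{0},i}(t)$, use Lemma~\ref{lemma:Crightadj} for membership in $C_{f}$, get $z \leq y$ by monotonicity and compatibility, and get $x \leq z$ via Remark~\ref{remark:adj} together with $\meetof{f_{i_{0},i}}(x_{i_{0}}) \leq f_{i_{0},i}(t)$ from the strict inequality $x_{i_{0}} < t$. Nothing to correct.
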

\begin{proof}
  Let $x,y \in C_{f}$ and suppose that $x < y$, so there exists $i_{0}
  \in \setof{d}$ such that $x_{i_{0}}< y_{i_{0}}$. Pick $z_{0} \in
  \I$ such that $x_{i_{0}} < z_{0} < y_{i_{0}}$ and define
  $z \in  C_{f}$ as in  Lemma~\ref{lemma:Crightadj}, 
  $z_{i} := f_{i_{0},i}(z_{0})$, for
  all $i \in \setof{d}$.
  We claim that $x_{i} \leq z_{i} \leq y_{i}$, for each $i \in
  \setof{d}$. From this and $x_{i_{0}} < z_{i_{0}} <
  y_{0}$ it follows that $x < z < y$. Indeed, we have
  $z_{i}  = f_{i_{0},i}(z_{0}) \leq f_{i_{0},i}(y_{i_{0}}) \leq
    y_{i}$.
    Moreover, $x_{i_{0}} <
    z_{0}$ implies $\Meetof{f_{i_{0},i}}(x_{i_{0}}) \leq
    f_{i_{0},i}(z_{0})$; by Remark~\ref{remark:adj}, we have $x_{i }
    \leq
    \Meetof{f_{i_{0},i}}(x_{i_{0}})$. Therefore, we also have $x_{i}
    \leq \Meetof{f_{i_{0},i}}(x_{i_{0}}) \leq f_{i_{0},i}(z_{0}) =
    z_{i}$.
\end{proof}

\begin{lemma}
  \label{lemma:vCfeqf}
  If $f \in \PrLI$ is compatible, then $v(C_{f}) = f$.
\end{lemma}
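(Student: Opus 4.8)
The plan is to reduce the claim directly to Lemma~\ref{lemma:Crightadj}: that lemma already identifies, for each index $i_{0}$ and each $x_{0}\in\I$, the least element of the fibre $\set{y\in C_{f} \mid \pi_{i_{0}}(y)=x_{0}}$ as the ``canonical'' tuple $k\mapsto f_{i_{0},k}(x_{0})$, while $v(C_{f})_{i,j}$ is by construction obtained by reading off the $j$-th coordinate of exactly such least elements. So essentially no new work is needed; it is a matter of assembling the pieces.

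Fix $(i,j)\in\cd$ and $x\in\I$. Since $C_{f}$ is a path (Lemmas~\ref{lemma:Cftotal}, \ref{lemma:Cfcomplete} and \ref{lemma:Cfdense}), the projection $\pi_{i}:C_{f}\rto\I$ is bicontinuous and, by Lemma~\ref{lemma:surjective}, surjective, so \eqref{eq:adjsurjective} gives
\begin{align*}
  \ladj{(\pi_{i})}(x) & = \bigwedge\set{y\in C_{f} \mid \pi_{i}(y)=x}\,,
\end{align*}
and this meet coincides with the coordinate-wise meet in $\I^{d}$ because $C_{f}$ is closed under meets (Lemma~\ref{lemma:Cfcomplete}). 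Applying Lemma~\ref{lemma:Crightadj} with $i_{0}:=i$ and $x_{0}:=x$, the tuple $z$ with $z_{k}:=f_{i,k}(x)$ lies in $C_{f}$, satisfies $\pi_{i}(z)=z_{i}=f_{i,i}(x)=x$ because $f_{i,i}=id$, and is equal to $\bigwedge\set{y\in C_{f} \mid \pi_{i}(y)=x}$; hence $\ladj{(\pi_{i})}(x)=z$. Therefore
\begin{align*}
  v(C_{f})_{i,j}(x) & = \pi_{j}\bigl(\ladj{(\pi_{i})}(x)\bigr) = \pi_{j}(z) = z_{j} = f_{i,j}(x)\,.
\end{align*}
As $(i,j)\in\cd$ and $x\in\I$ were arbitrary, $v(C_{f})=f$.

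I do not expect any genuine obstacle here: the substantive point --- that the canonical tuple $k\mapsto f_{i_{0},k}(x_{0})$ is simultaneously a member of $C_{f}$ and the bottom of its fibre, which is where the compatibility hypothesis on $f$ is spent --- has already been handled in Lemma~\ref{lemma:Crightadj}, and the rest is the routine adjoint-and-meet bookkeeping for the projections of a path that was set up right before Definition~\ref{def:defvCij}. The two small things to keep in mind are that $\ladj{(\pi_{i})}(x)$ is computed by a coordinate-wise meet in $\I^{d}$ (so that $\pi_{j}$ of it is literally $z_{j}$), and that $f_{i,i}=id$, which both makes $z$ land exactly on the fibre over $x$ and shows that fibre is non-empty, so that the meet above is not vacuous.
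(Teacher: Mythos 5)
Your proof is correct and follows essentially the same route as the paper: both reduce the claim to Lemma~\ref{lemma:Crightadj}, which identifies the canonical tuple $k\mapsto f_{i,k}(x)$ as the value of $\ladj{(\pi_{i})}$ at $x$, and then read off $v(C_{f})_{i,j}(x)=\pi_{j}(\ladj{(\pi_{i})}(x))=f_{i,j}(x)$. Your version merely spells out the intermediate bookkeeping (surjectivity of $\pi_{i}$, the fibre-meet formula~\eqref{eq:adjsurjective}, and closure of $C_{f}$ under coordinate-wise meets) that the paper leaves implicit.
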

\begin{proof}
  By Lemma~\ref{lemma:Crightadj}, the correspondence sending
  $x$ to $(f_{i,1}(x),\ldots
  ,f_{d,1}(x))$ is left adjoint to the projection $\pi_{i} : C_{f}
  \rto
  \I$. 
  In turn, this gives that $v(C_{f})_{i,j}(x) =
  \pi_{j}(\ladj{(\pi_{i})}(x)) = f_{i,j}(x)$, for any $i,j \in
  \setof{d}$. It follows that $v(C_{f}) = f$.  
\end{proof}
\begin{lemma}
  \label{lemma:CvCeqC}
  For $C$ a path in $\I^{d}$,  we have $C_{v(C)} = C$.
\end{lemma}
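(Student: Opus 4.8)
The plan is to reduce the whole statement to establishing the formula $v(C)_{i,j}=\pi_j\circ\ladj{(\pi_i)}$ for \emph{all} $i,j\in\setof{d}$, where $\pi_i:C\rto\I$ denotes the $i$-th projection. By Lemma~\ref{lemma:surjective} and the discussion of Section~\ref{sec:paths} each $\pi_i$ is a bicontinuous surjection, so it has adjoints $\ladj{(\pi_i)}\dashv\pi_i\dashv\radj{(\pi_i)}$. For $i<j$ the formula is the definition of $v(C)_{i,j}$; for $i=j$ one checks $\pi_i\circ\ladj{(\pi_i)}=id_\I$, using that $\pi_i$ is meet-continuous and onto; and for $i>j$, where $v(C)_{i,j}=\opp{(v(C)_{j,i})}=\opp{(\pi_i\circ\ladj{(\pi_j)})}$, I would compute $\radj{(\pi_i\circ\ladj{(\pi_j)})}=\radj{(\ladj{(\pi_j)})}\circ\radj{(\pi_i)}=\pi_j\circ\radj{(\pi_i)}$, invoke Proposition~\ref{lemma:meetofjoinofTwo} to rewrite $\opp{(\pi_i\circ\ladj{(\pi_j)})}=\joinof{(\pi_j\circ\radj{(\pi_i)})}$, and finally verify that $\pi_j\circ\ladj{(\pi_i)}$ is the greatest join-continuous function below $\pi_j\circ\radj{(\pi_i)}$ — arguing just as in the proof of Lemma~\ref{lemma:firstInverse}, which is where the hypothesis that $C$ is a chain enters (for $x<x'$ in $\I$ one has $\radj{(\pi_i)}(x)\leq\ladj{(\pi_i)}(x')$ in $C$, by Lemma~\ref{lemma:reflect}).

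Granting this formula, compatibility of $v(C)$ — so that $C_{v(C)}$ is defined, and, being of the form $C_f$, is a path (as shown above) — is immediate: for every triple $i,j,k$, $v(C)_{j,k}\circ v(C)_{i,j}=\pi_k\circ\ladj{(\pi_j)}\circ\pi_j\circ\ladj{(\pi_i)}\leq\pi_k\circ\ladj{(\pi_i)}=v(C)_{i,k}$ by the counit $\ladj{(\pi_j)}\circ\pi_j\leq id_C$. Likewise $C\subseteq C_{v(C)}$: for $c\in C$ the counit gives $\ladj{(\pi_i)}(\pi_i(c))\leq c$, whence $v(C)_{i,j}(c_i)=\pi_j\bigl(\ladj{(\pi_i)}(\pi_i(c))\bigr)\leq\pi_j(c)=c_j$ for all $i,j\in\setof{d}$.

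For the reverse inclusion I use that $C$ is a maximal chain of $\Id$ (Lemma~\ref{lemma:pathmaximal}): it suffices to show that every $x\in C_{v(C)}$ is comparable with every $c\in C$. Suppose not; then there are indices $p\neq q$ with $c_p<x_p$ and $x_q<c_q$. Put $c':=\ladj{(\pi_p)}(x_p)\in C$. The unit of $\ladj{(\pi_p)}\dashv\pi_p$ gives $c'_p=\pi_p(c')\geq x_p>c_p$, so $c'\not\leq c$, hence $c<c'$ because $C$ is a chain; therefore $c_q\leq c'_q=\pi_q\bigl(\ladj{(\pi_p)}(x_p)\bigr)=v(C)_{p,q}(x_p)\leq x_q$ — the last step because $x\in C_{v(C)}$ — contradicting $x_q<c_q$. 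Hence $x$ is comparable with all of $C$, so $C\cup\set{x}$ is a chain and $x\in C$ by maximality. Combining the two inclusions gives $C_{v(C)}=C$.

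The main obstacle is the first step: extending $v(C)_{i,j}=\pi_j\circ\ladj{(\pi_i)}$ to the \emph{reversed} indices $i>j$, whose entries are built from the involution $\oppfun$ rather than directly from the projections and their adjoints; this forces one to unwind $\opp{(-)}$ through Proposition~\ref{lemma:meetofjoinofTwo} and rerun the chain-dependent part of Lemma~\ref{lemma:firstInverse}. Once this is done, both inclusions and the compatibility of $v(C)$ are short, case-free computations with the units and counits of the adjunctions $\ladj{(\pi_i)}\dashv\pi_i$. If one prefers not to prove the extended formula, the reversed entries can instead be handled inline in each argument using the explicit expression $\opp{f}(x)=\bigvee\set{y\mid f(y)<x}$ of Lemma~\ref{lemma:charOfStar} together with Lemma~\ref{lemma:reflect}, at the cost of a case split in the comparability argument.
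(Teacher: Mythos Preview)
Your proof is correct and follows the paper's strategy, though you are considerably more thorough. The paper's own argument is two lines: it asserts $v(C)_{i,j}(c_i)=\pi_j(\ladj{(\pi_i)}(c_i))\leq c_j$ for \emph{all} $i,j\in\setof{d}$ (tacitly using the extended formula you take pains to justify) to obtain $C\subseteq C_{v(C)}$, and then simply observes that $C$ is a maximal chain contained in the chain $C_{v(C)}$, whence equality. Your explicit treatment of the reversed indices $i>j$ and of the compatibility of $v(C)$ fills a gap the paper glosses over; conversely, your direct comparability argument for $C_{v(C)}\subseteq C$ is more than you need --- once compatibility is established, $C_{v(C)}$ is a chain by Lemma~\ref{lemma:Cftotal}, and maximality of $C$ finishes in one stroke.
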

\begin{proof}
  Let us show that $C \subseteq C_{v(C)}$. Let $c \in C$;  notice that
  for each $i, j \in\setof{d}$, we have 
  \begin{align*}
    v(C)_{i,j}(c_{i}) & = \pi_{j}(\ladj{(\pi_{i})}(c_{i}))
    = \pi_{j}(\ladj{(\pi_{i})}(\pi_{i}(c)) \leq \pi_{j}(c) = c_{j}\,,
  \end{align*}
  so $c \in C_{v(C)}$. For the converse inclusion, notice that
  $C \subseteq C_{v(C)}$ implies $C = C_{v(C)}$,  since every path is
  a maximal chain.
\end{proof}

Putting together Lemmas~\ref{lemma:vCfeqf} and \ref{lemma:CvCeqC} we
obtain:
\begin{theorem}
  The correspondences, sending a path $C$ in $\I^{d}$ to the tuple
  $v(C)$, and a compatible tuple $f$ to the path $C_{f}$, are inverse
  bijections.
\end{theorem}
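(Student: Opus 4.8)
The plan is to obtain the bijection by combining the two fixed-point identities already established, after a short check that the two maps have matching domain and codomain. First I would note that, for any path $C$, the tuple $v(C)$ is a genuinely \emph{compatible} element of $\PrLI$: each component $v(C)_{i,j} = \pi_{j}\circ\ladj{(\pi_{i})}$ is a composite of \jcont maps (the projections $\pi_{i}$ are \bcont, hence \jcont, and left adjoints are \jcont), so $v(C)_{i,j}\in\LjI$; and, since $\ladj{(\pi_{j})}$ is left adjoint to $\pi_{j}$ we have $\ladj{(\pi_{j})}\circ\pi_{j}\leq \mathrm{id}_{C}$, whence, by monotonicity of pre- and post-composition,
\[
  v(C)_{j,k}\circ v(C)_{i,j}
  = \pi_{k}\circ\bigl(\ladj{(\pi_{j})}\circ\pi_{j}\bigr)\circ\ladj{(\pi_{i})}
  \leq \pi_{k}\circ\ladj{(\pi_{i})} = v(C)_{i,k}
\]
for all $i,j,k\in[d]$, which is exactly compatibility. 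In the other direction, for a compatible tuple $f$ (equivalently, by Lemma~\ref{lemma:enrichment}, a clopen tuple) the set $C_{f}$ is a path by the Proposition preceding the theorem, i.e.\ by Lemmas~\ref{lemma:Cftotal}, \ref{lemma:Cfcomplete} and \ref{lemma:Cfdense}. So $v(\,\cdot\,)$ sends paths to clopen tuples and $C_{(\,\cdot\,)}$ sends clopen tuples back to paths.

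With the two families of maps correctly typed, the theorem is then immediate: Lemma~\ref{lemma:vCfeqf} gives $v(C_{f}) = f$ for every compatible $f$, and Lemma~\ref{lemma:CvCeqC} gives $C_{v(C)} = C$ for every path $C$. Hence $v(\,\cdot\,)$ and $C_{(\,\cdot\,)}$ are mutually inverse bijections between the set of paths in $\I^{d}$ and the set $\Ld{\LjI}$ of clopen tuples of $\PrLI$, which is what the theorem asserts.

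I do not expect a real obstacle, since all the substance has been pushed into the cited lemmas; the only delicate point is the one already inside the proof of Lemma~\ref{lemma:CvCeqC}, namely that the inclusion $C\subseteq C_{v(C)}$ is upgraded to equality not by a direct computation but by invoking Lemma~\ref{lemma:pathmaximal} (paths are precisely the maximal chains of $\I^{d}$): $C_{v(C)}$ is again a chain containing the maximal chain $C$, so it must coincide with $C$. One should also keep Remark~\ref{remark:adj} in mind, as it guarantees that the definition of $C_{f}$ used here (via the conditions $f_{i,j}(x_{i})\leq x_{j}$) reduces, when $d = 2$, to the earlier definition~\eqref{def:CfDimTwo}, so that this theorem genuinely extends Theorem~\ref{thm:pathsasjcont}.
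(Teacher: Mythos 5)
Your argument is essentially the paper's own proof: the theorem is obtained there precisely by combining Lemma~\ref{lemma:vCfeqf} ($v(C_{f})=f$) and Lemma~\ref{lemma:CvCeqC} ($C_{v(C)}=C$), with the fact that $C_{f}$ is a path supplied by the proposition preceding them, so the substance of your proposal matches the paper's route exactly.

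One caveat concerns your added well-typedness check (which the paper leaves implicit and is worth making): as written it is not quite ``exactly compatibility''. Compatibility quantifies over \emph{all} triples with the conventions $v(C)_{j,i} := \opp{(v(C)_{i,j})}$ and $v(C)_{i,i} := id$, whereas your displayed computation uses the formula $v(C)_{i,j}=\pi_{j}\circ\ladj{(\pi_{i})}$ uniformly for every pair of indices; taken literally with the paper's definitions it only yields the closedness half of clopen-ness (Lemma~\ref{lemma:enrichment}). To finish, either check openness dually — the unit $id_{C}\leq \radj{(\pi_{j})}\circ\pi_{j}$ gives $\Meetof{v(C)_{i,k}}\leq \Meetof{v(C)_{j,k}}\circ\Meetof{v(C)_{i,j}}$, using $\Meetof{v(C)_{i,j}}=\pi_{j}\circ\radj{(\pi_{i})}$ (the analogue of equation~\eqref{eq:exprviaadjoints} for the projected path $C_{i,j}$), and applying $\joinof{(\,\cdot\,)}$ then gives $v(C)_{i,k}\leq v(C)_{i,j}\oplus v(C)_{j,k}$ — or justify the identification $\opp{(\pi_{j}\circ\ladj{(\pi_{i})})}=\pi_{i}\circ\ladj{(\pi_{j})}$, e.g.\ from $\radj{(\pi_{j}\circ\ladj{(\pi_{i})})}=\pi_{i}\circ\radj{(\pi_{j})}$, the definition $\opp{f}=\joinof{(\radj{f})}$, and Lemma~\ref{lemma:firstInverse} applied to the two-dimensional path $\langle\pi_{j},\pi_{i}\rangle(C)$; after that your uniform formula, and hence the all-triples inequality, is legitimate. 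This is a one-line repair rather than a flaw in the approach; everything else, including the appeal to maximality (Lemma~\ref{lemma:pathmaximal}) inside Lemma~\ref{lemma:CvCeqC} and the role of Remark~\ref{remark:adj}, is consistent with the paper.
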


\section{Structure of the \cwo{s}}
\label{sec:weakBruhat}

As established in Section~\ref{sec:latticesFromQuantales}, there is a
lattice structure $\Ld{\LjI}$ whose underlying set is the set of
clopen tuples of the product $\PrLI$. By the results in the previous
section, these tuples can be identified with paths in dimension $d$.
We give in this section a minimum of structural theory of these 
lattices by characterizing their \jirr elements.

\subsection{Join-prime elements of $\LjI$}
Recall from Corollary~\ref{cor:distrlattice} that $\LjI$ is a complete
distributive lattice and that, in distributive lattices, join-prime
and join-irreducible elements coincide.  We determine therefore the
join-prime elements of $\LjI$.  For $x,y \in \I$, let us put
\begin{align}
  \ji{x,y}(t)
  & :=
  \begin{cases}
    \,0\,, & 0\leq t \leq x\,, \\
    \,y\,, &  x < t \,,
  \end{cases}
  &
  \JI{x,y}(t)
  & :=
  \begin{cases}
    \,0\,, & 0\leq t < x\,, \\
    \,y\,, & x \leq t < 1 \,, \\
    \,1\,, & t = 1\,,
  \end{cases}
\end{align}
so $\ji{x,y} \in \LjI$, $\JI{x,y} \in \LmI$ and $\JI{x,y} = \Ji{x,y}$. 
\begin{definition}
   A \emph{\osf} is a function of the form $\ji{x,y}$ 
   where $x,y \in \I$.
   We say that $\ji{x,y}$ is \emph{prime} if 
   $\ji{x,y}\neq \bot$.
   We say that $\ji{x,y}$ is \emph{rational} if $x,y \in \IQ$.
 \end{definition}
 \begin{lemma}
   For each $x,y \in \I$, $\ji{x,y} = \bot$ if and only of $x = 1$ or
   $y = 0$.
 \end{lemma}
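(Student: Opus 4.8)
The plan is to unwind definitions. Since $\bot$ is the least element of the lattice $\LjI$ and, by Corollary~\ref{cor:distrlattice}, joins in $\LjI$ are computed pointwise, the first step is to identify $\bot$ explicitly as the constant function with value $0$: this function is \jcont (it sends every join to $0 = \bigvee_{x} 0$) and it lies pointwise below every $f \in \LjI$, so it is indeed the bottom. Consequently $\ji{x,y} = \bot$ is equivalent to the pointwise statement that $\ji{x,y}(t) = 0$ for every $t \in \I$, and the lemma reduces to checking when this happens.

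For the direction ``$x = 1$ or $y = 0$ implies $\ji{x,y} = \bot$'' I would argue by cases on which disjunct holds. If $x = 1$, the clause $0 \leq t \leq x$ of the definition covers the whole interval, so $\ji{x,y}(t) = 0$ for all $t$. If $y = 0$, then both clauses of the definition return $0$, so again $\ji{x,y}$ is the constant $0$ function. In either case $\ji{x,y} = \bot$.

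For the converse I would use the contrapositive: assume $x \neq 1$ and $y \neq 0$, that is $x < 1$ and $0 < y$. Then $t = 1$ satisfies $x < t$, so $\ji{x,y}(1) = y > 0 = \bot(1)$, which shows $\ji{x,y} \neq \bot$.

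I do not expect any genuine obstacle here; the statement is essentially a direct inspection of the defining formula for $\ji{x,y}$. The only point that deserves to be recorded explicitly — and the one place a careless reader might stumble — is the identification of $\bot$ in $\LjI$ with the constant-zero function, which is why I would make that observation the opening move of the proof.
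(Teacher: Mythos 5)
Your proof is correct and follows essentially the same route as the paper: both directions are handled by direct inspection of the defining formula, with the converse established by evaluating $\ji{x,y}(1) = y \neq 0$ when $x < 1$ and $0 < y$. The explicit identification of $\bot$ with the constant-zero function is a harmless (and reasonable) elaboration that the paper leaves implicit.
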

 \begin{proof}
   If $x = 1$ or $y = 0$, then $\ji{x,y}$ is the constant
   function that takes $0$ as its unique value, \ie $\ji{x,y} = \bot$.
   Conversely, if $x < 1$ and $0 < y$, then,
   $\ji{x,y}(1) = y \neq 0$, so $\ji{x,y} \neq \bot$.
 \end{proof}
 From the lemma it also follows that $\ji{x,y} \neq \bot$ if and only
 if $x < 1$ and $0 < y$. Notice therefore that $\ji{x,y} \neq \bot$ if
 and only if the point $(x,y) \in \I^{2}$ does not lie on the path
 $\set{(x,0) \mid x \in \I} \cup \set{(1,y) \mid y \in \I}$.

 \begin{lemma}
   \label{lemma:lessjif}
  For $f \in \LjI$ and $x,y \in \I$, $\ji{x,y} \leq f$ if and only if
  $y \leq \MeetOf{f}(x)$.
\end{lemma}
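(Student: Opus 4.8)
The plan is to unfold the pointwise order directly. By definition of the pointwise ordering on $\LjI$, the relation $\ji{x,y}\leq f$ holds precisely when $\ji{x,y}(t)\leq f(t)$ for every $t\in\I$. Now split on the value of $t$: for $0\leq t\leq x$ we have $\ji{x,y}(t)=0\leq f(t)$, so these instances impose no constraint; consequently $\ji{x,y}\leq f$ is equivalent to the family of inequalities $y=\ji{x,y}(t)\leq f(t)$ ranging over all $t$ with $x<t$. Finally, such a family of lower bounds holds simultaneously if and only if $y$ lies below their infimum, that is, if and only if $y\leq\bigwedge_{x<t}f(t)=\MeetOf{f}(x)$, which is exactly the asserted condition.

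I do not expect any genuine obstacle here: the argument uses neither the join-continuity nor even the monotonicity of $f$, only the definition of the operator $\MeetOf{(-)}$ recalled in \eqref{eq:meetofjoinof}. The one degenerate situation worth a word is $x=1$, where $\{t\mid x<t\}=\emptyset$; then $\MeetOf{f}(1)=\top_{\I}=1$ and both sides of the equivalence hold trivially (indeed $\ji{1,y}=\bot$ by the preceding lemma), so the statement remains valid in this boundary case as well.
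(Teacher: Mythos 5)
Your proof is correct and follows essentially the same route as the paper: both directions reduce to the pointwise comparison of $\ji{x,y}$ with $f$ and the definition of $\MeetOf{f}(x)$ as the infimum $\bigwedge_{x<t}f(t)$, which you package neatly as a single chain of equivalences via the universal property of the infimum. The only cosmetic difference is that the paper phrases the forward direction through monotonicity of the operator $\MeetOf{(-)}$ and cites Lemma~\ref{lemma:meetofbelowjoinof} for the bound $\MeetOf{f}(x)\leq f(t)$, which in your version is absorbed into the definition of the infimum.
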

\begin{proof}
  If $\ji{x,y} \leq f$ then
  $y = \MeetOf{\ji{x,y}}(x) \leq \MeetOf{f}(x)$. Conversely, suppose
  that $y \leq \MeetOf{f}(x)$. If $t \leq x$, then
  $\ji{x,y}(t) = 0 \leq f(t)$. If $x < t \leq 1$, then
  $\ji{x,y}(t) = y \leq \MeetOf{f}(x) \leq f(t)$, where the last
  inequality follows from Lemma~\ref{lemma:meetofbelowjoinof}.
\end{proof}
\begin{corollary}
  \label{cor:orderonjp}
  Let $x,y,z,w \in \I$ and suppose that $\ji{x,y},\ji{z,w}\neq \bot$.
  Then $\ji{x,y} \leq \ji{z,w}$ if and
  only if $z \leq x$ and $y \leq w$.
\end{corollary}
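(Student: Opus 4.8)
The plan is to use Lemma~\ref{lemma:lessjif} twice, once in each direction, together with the explicit description of the meet-closure operator $\MeetOf{(-)}$ applied to the step functions $\ji{z,w}$. First I would compute $\MeetOf{\ji{z,w}}$ directly from the definitions: since $\ji{z,w}(t) = 0$ for $t \leq z$ and $\ji{z,w}(t) = w$ for $t > z$, the formula $\MeetOf{g}(x) = \bigwedge_{x < x'} g(x')$ gives $\MeetOf{\ji{z,w}}(x) = 0$ for $x < z$ and $\MeetOf{\ji{z,w}}(x) = w$ for $x \geq z$; in other words $\MeetOf{\ji{z,w}} = \JI{z,w}$ away from the top point, which is consistent with the remark $\JI{x,y} = \Ji{x,y}$ recorded just after the definition of these functions. (One should be slightly careful at $x = 1$, but since $\ji{z,w} \neq \bot$ forces $z < 1$, we have $1 > z$ and hence $\MeetOf{\ji{z,w}}(1) = w$, so no pathology arises.)

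Next I would apply Lemma~\ref{lemma:lessjif} with $f = \ji{z,w}$: it says $\ji{x,y} \leq \ji{z,w}$ if and only if $y \leq \MeetOf{\ji{z,w}}(x)$. Combining this with the computation above, the inequality $\ji{x,y} \leq \ji{z,w}$ holds precisely when $y \leq \MeetOf{\ji{z,w}}(x)$, and this value is $w$ if $z \leq x$ and $0$ if $x < z$. So the condition becomes: either $z \leq x$ and $y \leq w$, or $x < z$ and $y \leq 0$, i.e. $y = 0$. But $\ji{x,y} \neq \bot$ rules out $y = 0$ (by the lemma characterizing when $\ji{x,y} = \bot$), so the second disjunct is impossible. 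Hence $\ji{x,y} \leq \ji{z,w}$ is equivalent to $z \leq x$ and $y \leq w$, which is exactly the claim.

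I do not foresee a genuine obstacle here; the only point requiring a little care is the boundary behaviour of $\MeetOf{(-)}$ at the endpoints of $\I$, and this is handled cleanly by invoking the hypothesis $\ji{x,y}, \ji{z,w} \neq \bot$ to exclude the degenerate cases $z = 1$ and $w = 0$. An alternative, essentially equivalent route would bypass the meet-closure computation entirely: for the ``if'' direction, assuming $z \leq x$ and $y \leq w$, check $\ji{x,y}(t) \leq \ji{z,w}(t)$ pointwise by splitting on $t \leq z$, $z < t \leq x$, and $x < t$; for the ``only if'' direction, evaluate both functions at a point $t$ with $z \leq t$ (respectively use that $\ji{x,y}(1) = y \neq 0$ to force $x < 1$ and then evaluate near $x$) to extract the two inequalities. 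Either way the argument is short; I would present the version through Lemma~\ref{lemma:lessjif} since the meet-closure formula for step functions is reused and it keeps the bookkeeping minimal.
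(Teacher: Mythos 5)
Your proposal is correct and is essentially the paper's own argument: both reduce $\ji{x,y} \leq \ji{z,w}$ to $y \leq \Ji{z,w}(x)$ via Lemma~\ref{lemma:lessjif}, then use $0 < y$ to rule out $x < z$ and $x < 1$ to get $\Ji{z,w}(x) = w$, with the converse direction read off the same equivalence. One small inaccuracy in your side remark: $\MeetOf{\ji{z,w}}(1)$ is the empty meet, hence equal to $1$ (as in the displayed definition of $\JI{z,w}$), not $w$; this is harmless, since $\ji{x,y} \neq \bot$ forces $x < 1$ and the value at $1$ is never needed.
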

\begin{proof}
  If $\ji{x,y} \leq \ji{z,w}$, then $y \leq \Ji{z,w}(x)$. Since
  $0 < y $, we derive then $z \leq x$. Since $x < 1$ we also have
  $\Ji{z,w}(x) = w$, so $y \leq \Ji{z,w}(x) = w$.
  Conversely, suppose $z \leq x$ and $y \leq w$. From $z \leq x < 1$
  we deduce $\Ji{z,w}(x) = w$, so $y \leq w = \Ji{z,w}(x)$ yields,
  according to the previous lemma, $\ji{x,y} \leq \ji{z,w}$.
\end{proof}

For $f \in \LjI$ and $x_{0},x_{1} \in \I$ with $x_{0} \leq x_{1}$, we
define $f_{(x_{0},x_{1}]} \in \LjI$ as follows:
\begin{align*}
  f_{(x_{0},x_{1}]}(t) & :=
  \begin{cases}
    0 \,, & 0 \leq t \leq x_{0}\,, \\
    f(t)\,, & x_{0} < t \leq x_{1}\,, \\
    \MeetOf{f}(x_{1})\,, & x_{1} < t \,.
  \end{cases}
\end{align*}
In particular, for any $x \in \I$, we have
\begin{align*}
  f_{(0,x]}(t)
  & = 
  \begin{cases}
    f(t)\,, & 0 \leq t \leq x\,, \\
    \MeetOf{f}(x)\,, & x < t \,,
  \end{cases}
  &
  f_{(x,1]}(t)
  & = 
  \begin{cases}
    0 \,, & 0 \leq t \leq x\,, \\
    f(t)\,, & x < t \leq 1 \,,
  \end{cases}
\end{align*}
so
\begin{align*}
  f & = f_{(0,x]} \vee f_{(x,1]}\,.
\end{align*}
\begin{proposition}
  \label{prop:join-prime}
  Prime \osf{s} are exactly the join-prime elements of
  $\LjI$.
\end{proposition}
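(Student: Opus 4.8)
The plan is to prove the two inclusions separately, using only the description of $\LjI$ already obtained: that $\ji{x,y}\le f$ if and only if $y\le\MeetOf{f}(x)$ (Lemma~\ref{lemma:lessjif}), that $\LjI$ is a distributive lattice in which finite joins are computed pointwise (Corollary~\ref{cor:distrlattice}), and the decomposition $f=f_{(0,x]}\vee f_{(x,1]}$ recorded just above the statement.

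First I would check that every prime open step function $p=\ji{x,y}$ (so $x<1$, $y>0$, and $p\ne\bot$) is join-prime. Suppose $p\le f\vee g$. Computing the join pointwise, for every $t\in(x,1]$ we get $y=p(t)\le\max(f(t),g(t))$, hence $f(t)\ge y$ or $g(t)\ge y$; let $A,B\subseteq(x,1]$ be the two sets so defined. They are up-sets of the chain $(x,1]$ by monotonicity of $f$ and $g$, and $A\cup B=(x,1]$. If neither $A$ nor $B$ were all of $(x,1]$ we could pick $t_A\in(x,1]\setminus A$, $t_B\in(x,1]\setminus B$ and any real $t^\ast$ with $x<t^\ast<\min(t_A,t_B)$; then $t^\ast\in(x,1]\setminus(A\cup B)$, a contradiction. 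So, say, $A=(x,1]$, i.e.\ $f(t)\ge y$ for all $t>x$; then $\MeetOf{f}(x)=\bigwedge_{x<t}f(t)\ge y$, whence $p\le f$ by Lemma~\ref{lemma:lessjif}, and symmetrically $B=(x,1]$ gives $p\le g$. (Alternatively one could first verify $p$ is join-irreducible and invoke distributivity, but the direct argument is no longer.)

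Conversely I would show a join-prime $p$ is a prime open step function. Join-primeness gives $p\ne\bot$ and, immediately, that $p$ is join-irreducible. Put $x_0:=\bigvee\{t\in\I\mid p(t)=0\}$; this set is a down-set, and since $p$ is \jc\ its supremum $x_0$ belongs to it, so it equals $[0,x_0]$; moreover $p\ne\bot$ forces $x_0<1$, hence $c:=p(1)>0$. The crux is the decomposition: for each $x$ with $x_0<x<1$ we have $p(x)>0$, so $p_{(x,1]}\ne p$, hence $p_{(x,1]}<p$ strictly (it is clearly $\le p$); join-irreducibility applied to $p=p_{(0,x]}\vee p_{(x,1]}$ then forces $p_{(0,x]}=p$, which by the very definition of $p_{(0,x]}$ says exactly that $p$ is constant, with value $\MeetOf{p}(x)=p(1)=c$, on $(x,1]$. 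Since every $t\in(x_0,1]$ lies in some such interval $(x,1]$ with $x_0<x<1$ (and $(x_0,1)\ne\emptyset$ because $x_0<1$), we obtain $p\equiv c$ on $(x_0,1]$ and $p\equiv0$ on $[0,x_0]$; that is, $p=\ji{x_0,c}$ with $x_0<1$ and $c>0$, a prime open step function.

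The argument is fairly routine once the pointwise description of joins and Lemma~\ref{lemma:lessjif} are available; the only point requiring a little care is the order-theoretic step in the first inclusion — choosing a real strictly between $x$ and $\min(t_A,t_B)$ — which is legitimate precisely because $(x,1]$ is a densely ordered interval of reals with $t_A,t_B>x$.
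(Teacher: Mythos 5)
Your proof is correct and follows essentially the same route as the paper: Lemma~\ref{lemma:lessjif} plus pointwise finite joins for showing prime one-step functions are join-prime, and the decomposition $f = f_{(0,x]} \vee f_{(x,1]}$ together with join-irreducibility for the converse. The differences are only in bookkeeping: you check the key step $\MeetOf{f\vee g}(x)\geq y \Rightarrow \MeetOf{f}(x)\geq y$ or $\MeetOf{g}(x)\geq y$ via an explicit up-set argument on the chain $(x,1]$, and you locate $x_0$ as the supremum of the zero-set of $p$ and let $x$ decrease to $x_0$, where the paper instead works with the sets $I_f$ and $F_f$ and shows $\bigvee I_f=\bigwedge F_f$.
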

\begin{proof}
  Consider $\ji{x,y}$ and suppose that $\ji{x,y} \leq f \vee g$. This
  relation holds if and only if
  $y \leq \max (\MeetOf{f}(x),\MeetOf{g}(x))$, if and only if
  $y \leq \MeetOf{f}(x)$ or $y \leq \MeetOf{g}(x)$, that is
  $\ji{x,y} \leq f$ or $\ji{x,y} \leq g$. Thus every function of the
  form $\ji{x,y}$ which is different from $\bot$ is \jp.
  
  Conversely, let $f \in \LjI$ be \jp (so $f$ is \jirr) and recall
  that, for any $x \in \I$, $f = f_{(0,x]} \vee f_{(x,1]}$. Therefore,
  for each $x\in \I$, $f = f_{(0,x]}$ or $f = f_{(x,1]}$.
  Observe also
  that if $f = f_{(0,x]}$ and $f = f_{(x,1]}$, then
  $f = \ji{x,\MeetOf{f}(x)}$.

  Let now $I_{f} := \set{x \in \I \mid f = f_{(x,1]}}$ and
  $F_{f} := \set{x \in \I \mid f = f_{(0,x]}}$, so
  $I_{f} \cup F_{f} = \I$.  Notice that $x \in I_{f}$ if and only if
  $f(x) = 0$ and $x \in F_{f}$ if and only if the restriction of $f$
  to the interval $(x,1]$ 
  is constant.  From these considerations it immediately follows that
  $I_{f}$ is a downset and $F_{f}$ is an upset; moreover, $I_{f}$ is
  closed under joins (since $f$ is \jcont) and $F_{f}$ is closed under
  meets.  If $x \in I_{f}$, $y \in F_{f}$, and $y < x$, then $f$ is
  constant with value $0$, which contradicts $f$ being \jirr (thus
  distinct from $\bot$).  Therefore, if $x \in I_{f}$ and
  $y \in F_{f}$, then $x \leq y$.  Then
  $x_{0} = \bigvee I_{f} = \bigwedge F_{f} \in I_{f} \cap F_{f}$ and
  $f = \ji{x_{0},\MeetOf{f}(x_{0})}$.
\end{proof}

\begin{proposition}
  \label{prop:generationDim2}
  Every $f \in \LjI$ is a (possibly infinite) join of prime \osf{s}.
\end{proposition}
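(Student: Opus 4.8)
The plan is to prove that every $f\in\LjI$ equals the join, formed in $\LjI$, of the prime \osf{s} that lie below it; by Proposition~\ref{prop:join-prime} these are precisely the join-prime elements $\leq f$, so this gives the asserted representation (when $f=\bot$ this family is empty and the empty join is $\bot$). By Corollary~\ref{cor:distrlattice} joins in $\LjI$ are computed pointwise, so it suffices to show that for each $t\in\I$,
\[
\bigvee\,\set{\,\ji{x,y}(t)\mid \ji{x,y}\neq\bot,\ \ji{x,y}\leq f\,}\;=\;f(t)\,.
\]

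The inequality $\leq$ is immediate, since $\ji{x,y}\leq f$ forces $\ji{x,y}(t)\leq f(t)$. For the reverse inequality, fix $t\in\I$; we may assume $f(t)>0$, the case $f(t)=0$ (which includes $t=0$) being trivial. For each $x<t$ with $f(x)>0$, set $g_{x}:=\ji{x,f(x)}$. Monotonicity of $f$ gives $f(x)\leq\meetof{f}(x)$, hence $g_{x}\leq f$ by Lemma~\ref{lemma:lessjif}; moreover $x<t\leq 1$ forces $x<1$, so, recalling that $\ji{x',y'}\neq\bot$ exactly when $x'<1$ and $y'>0$, $g_{x}$ is a prime \osf{} and therefore belongs to the family over which the join above is taken. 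Since $x<t$, we have $g_{x}(t)=f(x)$. Consequently the left-hand side above is bounded below by
\[
\bigvee\,\set{\,f(x)\mid x<t,\ f(x)>0\,}\;=\;\bigvee_{x<t}f(x)\;=\;f(t)\,,
\]
where the last equality is the join-continuity of $f$, equation~\eqref{eq:joincontinuous}. (Indices $x<t$ with $f(x)=0$ contribute nothing to the supremum and may be discarded; and join-continuity guarantees that the remaining index set is nonempty whenever $f(t)>0$.)

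I do not expect a genuine obstacle here: the proof is a direct pointwise computation resting on Lemma~\ref{lemma:lessjif}, the trivial inequality $f(x)\leq\meetof{f}(x)$, and join-continuity. The only points needing a little care are the boundary bookkeeping — verifying that the one-step functions $g_{x}$ actually used are non-$\bot$ (guaranteed by $x<t\leq 1$ together with the restriction $f(x)>0$) and dispatching the degenerate cases $f=\bot$ and $f(t)=0$.
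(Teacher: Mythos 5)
Your proof is correct, but it follows a genuinely different route from the paper's. The paper does not compute the join pointwise: it takes an arbitrary upper bound $g \in \LjI$ of the set of \osf{s} below $f$, notes via Lemma~\ref{lemma:lessjif} that $\ji{x,\meetof{f}(x)} \leq f$ and hence $\ji{x,\meetof{f}(x)} \leq g$ for every $x$, deduces $\meetof{f} \leq \meetof{g}$ in $\LmI$, and then transfers this back through the isomorphism of Proposition~\ref{prop:meet-cont-closure} ($f = \joinof{(\meetof{f})} \leq \joinof{(\meetof{g})} = g$); this verifies the supremum by its universal property and never needs to know how joins in $\LjI$ are computed. You instead use the witnesses $\ji{x,f(x)}$ for $x < t$, the fact that suprema in $\LjI$ are pointwise (which, note, is established in the \emph{proof} of Corollary~\ref{cor:distrlattice} — ``\jc functions are closed under pointwise suprema'' — rather than in its statement, so your citation is slightly loose but the fact is available), and join-continuity of $f$ via equation~\eqref{eq:joincontinuous} to get $\bigvee_{x<t} f(x) = f(t)$. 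What your version buys is a more elementary, purely pointwise computation and a cleaner treatment of primality, since your witnesses $\ji{x,f(x)}$ with $x<t$ and $f(x)>0$ are visibly prime, whereas the paper's witnesses $\ji{x,\meetof{f}(x)}$ may equal $\bot$ (harmlessly, as $\bot$ contributes nothing to the join); what the paper's version buys is independence from the pointwise description of joins and a slicker use of the $\LjI \cong \LmI$ machinery already in place. Your boundary bookkeeping ($f=\bot$, $f(t)=0$, nonemptiness of the index set when $f(t)>0$) is accurate.
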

\begin{proof}
  Clearly we have
  $\bigvee \set{\ji{x,y} \mid \ji{x,y} \leq f} \leq f$, so let us
  argue that this inclusion is an equality. Let $g$ be such that
  $\ji{x,y} \leq g$ whenever $\ji{x,y} \leq f$. In particular, for $x$
  arbitrary and $y = \MeetOf{f}(x)$, we have $\ji{x,y} \leq g$, that
  is $\MeetOf{f}(x) \leq \MeetOf{g}(x)$. We argued therefore that,
  within $\LmI$, $\MeetOf{f} \leq \MeetOf{g}$. We have, therefore,
  $f = \JOINOF{\MeetOf{f}} \leq \JOINOF{\MeetOf{g}} = g$.
\end{proof}

\begin{remark}
  \label{remark:QjIcompletion}
  Proposition~\ref{prop:generationDim2} implies that $\QjI$ is the
  Dedekind-MacNeille completion of the sublattice generated by the
  prime \osf{s}.  The statement of the Proposition can be further
  strengthened as follows: every $f \in \QjI$ is a (possibly infinite)
  join of prime \emph{rational} \osf{s}, implying that $\QjI$ is the
  Dedekind-MacNeille completion of the sublattice generated by the
  rational \osf{s}. To see why this is the case, observe that every
  \osf is the the join of the rational \osf{s} below it.
\end{remark}

\medskip

Finally, we verify the following relations, that we shall need to
understand the structure of \jirr elements in higher dimensions.
\begin{lemma}
  \label{lemma:compJI}
  For each $x,y,y',z \in \I$,
  \begin{align*}
    \ji{y',z} \circ \ji{x,y}
    & =
    \begin{cases}
      \bot\,, & y \leq y'\,, \\
      \ji{x,z}\,, & \ttoth\,.
    \end{cases}
  \end{align*}
  In
  particular, $\ji{y,z} \circ \ji{x,y} = \bot$. 
\end{lemma}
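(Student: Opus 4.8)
The plan is to prove the identity by simply unfolding the definition of the one-step functions and evaluating the composite pointwise. Recall that in $\LjI$ the symbol $\circ$ denotes ordinary function composition, so for each $t \in \I$ we have $(\ji{y',z} \circ \ji{x,y})(t) = \ji{y',z}\bigl(\ji{x,y}(t)\bigr)$, and it suffices to show that this agrees, as a function of $t$, with $\bot$ when $y \leq y'$ and with $\ji{x,z}$ otherwise. (We do not even need to invoke that the composite lies in $\LjI$, although it does, being a composite of two members of the quantale.)

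First I would record that $\ji{x,y}$ assumes only the two values $0$ and $y$: it equals $0$ on the interval $[0,x]$ and equals $y$ on $(x,1]$. Consequently the composite is completely determined by the two values $\ji{y',z}(0)$ and $\ji{y',z}(y)$. Since $0 \leq y'$, the definition of $\ji{y',z}$ gives $\ji{y',z}(0) = 0$. For the second value, the definition yields $\ji{y',z}(y) = 0$ precisely when $y \leq y'$, and $\ji{y',z}(y) = z$ when $y' < y$.

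Combining these observations finishes the argument. If $y \leq y'$, then $(\ji{y',z} \circ \ji{x,y})(t) = 0$ for every $t \in \I$, so the composite equals $\bot$. If $y' < y$, then $(\ji{y',z} \circ \ji{x,y})(t) = 0$ for $t \in [0,x]$ and $= z$ for $t \in (x,1]$, which is exactly the defining formula for $\ji{x,z}$. The final sentence of the statement is then the special case $y' = y$, since $y \leq y$ places us in the first branch. There is no genuine obstacle in this proof; the only minor point of care is that the boundary value $t = x$ falls into the "$0$" clause on both sides, so the two piecewise descriptions match there as well.
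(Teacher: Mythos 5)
Your proof is correct and follows essentially the same route as the paper's: evaluate the composite pointwise from the definition, note that the outcome is governed by whether $\ji{x,y}(t)$ exceeds $y'$, and split into the cases $y \leq y'$ and $y' < y$, with the final claim being the instance $y' = y$. No gaps.
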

\begin{proof}
  Let us study the formula for the composition:
  \begin{align*}
    \ji{y',z}(\ji{x,y}(t))
    & =
    \begin{cases}
      0\,, & \ji{x,y}(t) \leq y'\,, \\
      z\,, & y' < \ji{x,y}(t)\,. 
    \end{cases}
  \end{align*}
  Now, if $y \leq y'$, then $\ji{x,y}(t) \leq y'$, for each
  $t \in \I$, so $\ji{y',z}\circ \ji{x,y} = \bot$.  If
  $y' < y$, then $y' < \ji{x,y}(t)$ if and only if
  $\ji{x,y}(t) = y$, i.e. iff $x < t$. This yields
  $\ji{y',z}\circ \ji{x,y} = \ji{x,z}$.  
\end{proof}
The following Lemma is verified in a similar way.
\begin{lemma}
  \label{lemma:compJIbis}
  For each $x,y,z \in \I$, 
  \begin{align*}
    \Ji{y,z} \circ \Ji{x,y} & =
    \begin{cases}
      \Ji{y,z}\,, & y = 0\,, \\
      \Ji{x,z} \,, & 0 < y < 1\,,\\
      \Ji{x,y}\,, & y = 1\,.
    \end{cases}
  \end{align*}
\end{lemma}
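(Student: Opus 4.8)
The plan is to establish the identity by a direct pointwise evaluation of the composite $\Ji{y,z}\circ\Ji{x,y}$, split into cases according to the value of $y$, in complete analogy with the proof of Lemma~\ref{lemma:compJI} (recall that $\Ji{x,y}=\JI{x,y}$). First I would use that $\JI{x,y}$ takes only the three values $0$ on $[0,x)$, $y$ on $[x,1)$ and $1$ at $t=1$, so that for every $t\in\I$
\begin{align*}
  (\JI{y,z}\circ\JI{x,y})(t) & =
  \begin{cases}
    \JI{y,z}(0)\,, & 0\leq t<x\,,\\
    \JI{y,z}(y)\,, & x\leq t<1\,,\\
    \JI{y,z}(1)\,, & t=1\,.
  \end{cases}
\end{align*}
This reduces the whole problem to computing the three numbers $\JI{y,z}(0)$, $\JI{y,z}(y)$ and $\JI{y,z}(1)$, whose values are dictated solely by whether $y=0$, $0<y<1$, or $y=1$.

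Then I would run the three cases. If $y=0$, the definition of $\JI{0,z}$ gives $\JI{y,z}(0)=\JI{y,z}(y)=z$ and $\JI{y,z}(1)=1$, so the composite equals $z$ on $[0,1)$ and $1$ at $t=1$, which is exactly $\JI{0,z}=\JI{y,z}$. If $0<y<1$, then $0<y$ forces $\JI{y,z}(0)=0$, while $y\leq y<1$ gives $\JI{y,z}(y)=z$ and $\JI{y,z}(1)=1$; hence the composite is $0$ on $[0,x)$, $z$ on $[x,1)$ and $1$ at $t=1$, i.e.\ $\JI{x,z}$. Finally, if $y=1$, the middle clause in the definition of $\JI{1,z}$ concerns the empty interval $[1,1)$, so $\JI{1,z}$ equals $0$ on $[0,1)$ and $1$ at $t=1$ regardless of $z$; thus $\JI{y,z}(0)=0$ and $\JI{y,z}(y)=\JI{y,z}(1)=1$, and the composite is $0$ on $[0,x)$ and $1$ on $[x,1]$, which is $\JI{x,1}=\Ji{x,y}$.

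I do not expect any genuine obstacle: the argument is the same elementary case split already carried out for Lemma~\ref{lemma:compJI}, only with the meet-continuous one-step functions in place of the join-continuous ones. The two points that require a little attention are the boundary value at $t=1$, where $\JI{x,y}$ jumps to $1$, and the degenerate shape of $\JI{1,z}$, whose middle clause is vacuous; keeping track of the latter is precisely what accounts for $z$ no longer appearing in the answer in the case $y=1$.
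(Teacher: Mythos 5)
Your proof is correct and follows exactly the route the paper intends: the paper omits the details, stating only that the lemma "is verified in a similar way" as Lemma~\ref{lemma:compJI}, i.e.\ by the same direct pointwise case analysis on the three values $0$, $y$, $1$ taken by $\Ji{x,y}$, which is precisely what you carry out (including the degenerate middle clause when $y=1$ and the boundary value at $t=1$).
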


\subsection{Join-irreducible elements of $\LId$}
\label{sec:jirrs}
We study next \jirr elements of the lattice $\Ld{\LjI}$, for
$d \geq 3$. To ease reading, we shall use the notation $\LId$ for
$\Ld{\LjI}$.

\medskip

For $p \in \I^{d}$, let $\ji{p} \in \PrLI$ be the tuple defined as
follows:
\begin{align*}
  \ji{p} & := \Fam{\ji{p_{i},p_{j}} \mid (i,j) \in \cd}\,.
\end{align*}
Let also define 
\begin{align*}
  \mji{p} &:= \min \set{i \in [d] \mid p_{i} < 1}\,,
  & \Mji{p} := \max \set{j \in [d] \mid 0 < p_{j}}\,,
\intertext{(where we let in  these formulas $\min \emptyset = d+1$ and
$\max \emptyset = 0$) and}
  \dimji(p) & := \Mji{p} - \mji{p}\,. 
\end{align*}
Therefore, for each $p \in \I^{d}$, $p_{i} =1$ if $i < \mji{p}$ and
$p_{j} =0$ if $j > \Mji{p}$. In particular, we cannot have
$\Mji{p} < \mji{p} -1$, so $\dimji(p) \geq -1$.

\begin{lemma}
  For each $p \in \I^{d}$, the relation $\ji{p} \neq \bot$ holds
  if and only if $\dimji(p) > 0$.
\end{lemma}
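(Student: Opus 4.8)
The plan is to compute when the tuple $\ji{p}$ fails to be the bottom element of $\PrLI$, and then relate this to the quantities $\mji{p}$, $\Mji{p}$, and $\dimji(p)$. Since meets and joins in $\PrLI$ are computed coordinate-wise, $\ji{p} = \bot$ holds precisely when $\ji{p_i,p_j} = \bot$ in $\LjI$ for every pair $(i,j) \in \cd$. By the Lemma preceding Proposition~\ref{prop:join-prime}, $\ji{p_i,p_j} = \bot$ if and only if $p_i = 1$ or $p_j = 0$. Hence $\ji{p} \neq \bot$ if and only if there exists a pair $i < j$ with $p_i < 1$ and $p_j > 0$ simultaneously.

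First I would reformulate the existence of such a pair using $\mji{p}$ and $\Mji{p}$. By definition, $\mji{p}$ is the least index $i$ with $p_i < 1$, and $\Mji{p}$ is the greatest index $j$ with $p_j > 0$. If there is a pair $i < j$ with $p_i < 1$ and $p_j > 0$, then $\mji{p} \leq i < j \leq \Mji{p}$, so $\dimji(p) = \Mji{p} - \mji{p} > 0$. Conversely, suppose $\dimji(p) > 0$, i.e. $\mji{p} < \Mji{p}$. Then in particular $\mji{p} \leq d$ (so the set $\set{i \mid p_i < 1}$ is nonempty and $p_{\mji{p}} < 1$) and $\Mji{p} \geq 1$ (so $p_{\Mji{p}} > 0$); since $\mji{p} < \Mji{p}$, the pair $(\mji{p}, \Mji{p}) \in \cd$ witnesses $\ji{p} \neq \bot$. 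This gives both directions.

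The only point requiring a little care — and the one I expect to be the mildest of obstacles rather than a genuine difficulty — is handling the degenerate conventions $\min \emptyset = d+1$ and $\max \emptyset = 0$ correctly, so that the equivalence also holds in the edge cases. If $\set{i \mid p_i < 1} = \emptyset$ then $p = \vec{1}$, so every $\ji{p_i,p_j} = \bot$ and indeed $\mji{p} = d+1 > \Mji{p}$, giving $\dimji(p) < 0$; symmetrically if $\set{j \mid p_j > 0} = \emptyset$ then $p = \vec{0}$ and $\Mji{p} = 0 < \mji{p}$. The remaining borderline case is $\dimji(p) = 0$, i.e. $\mji{p} = \Mji{p} =: m$: then $p_i = 1$ for $i < m$ and $p_j = 0$ for $j > m$, so for any pair $i < j$ at least one of $i < m$ or $j > m$ holds (they cannot both equal $m$), forcing $p_i = 1$ or $p_j = 0$, hence $\ji{p_i,p_j} = \bot$; thus $\ji{p} = \bot$, consistent with the claim. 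Assembling these observations yields the equivalence $\ji{p} \neq \bot \iff \dimji(p) > 0$.
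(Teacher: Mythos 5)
Your proof is correct and follows essentially the same route as the paper's: both reduce to the fact that $\ji{x,y}=\bot$ iff $x=1$ or $y=0$, and then observe that a pair $(i,j)\in\cd$ with $p_i<1$ and $p_j>0$ exists precisely when $\mji{p}<\Mji{p}$, the pair $(\mji{p},\Mji{p})$ serving as the witness in the converse direction. Your explicit treatment of the degenerate conventions is a harmless elaboration of what the paper handles implicitly in its $\dimji(p)\leq 0$ case.
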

\begin{proof}
  Recall that $\ji{p_{i},p_{j}} = \bot$, if $p_{i} = 1$ or
  $p_{j} = 0$.
  Suppose that $\dimji(p) \leq 0$, so $\Mji{p} \leq \mji{p}$ and
  consider $(i,j) \in \cd$: we have then $p_{i} = 1$ or $p_{j} =
  0$. Therefore, $\ji{p_{i},p_{j}} = \bot$ for each $(i,j) \in \cd$,
  and $\ji{p} = \bot$.

  Suppose next that $\dimji(p) > 0$, so $\mji{p} < \Mji{p}$.  To
  ease reading, let $\mu = \mji{p}$ and $M = \Mji{p} $. Since
  $1 \leq \mu$ and $M \leq d$, we have $(\mu,M) \in \cd$ and since
  $p_{\mu} \neq 1$ and $p_{M} \neq 0$, we have
  $\ji{p_{\mu},p_{M}} \neq \bot$ and therefore $\ji{p} \neq \bot$.
\end{proof}

\begin{proposition}
  \label{prop:clopen}
  For each $p \in \I^{d}$, $\ji{p} $ is a clopen tuple of
  $\PrLI$. That is, $\ji{p} \in \LId$.
\end{proposition}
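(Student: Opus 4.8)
The plan is to verify the two defining conditions of a clopen tuple directly, and the first thing to notice is that closedness comes for free. Indeed, for $i<j<k$ in $[d]$ the definition of $\ji{p}$ gives $\ji{p}_{i,j}\otimes\ji{p}_{j,k}=\ji{p_j,p_k}\circ\ji{p_i,p_j}$, and by the final clause of Lemma~\ref{lemma:compJI} this composite is $\bot$; hence $\ji{p}_{i,j}\otimes\ji{p}_{j,k}=\bot\leq\ji{p_i,p_k}=\ji{p}_{i,k}$ trivially. So essentially all of the proof is about openness.

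For openness I would rewrite $\ji{p}_{i,j}\oplus\ji{p}_{j,k}$ using the formula $f\oplus g=\joinof{(\meetof{g}\circ\meetof{f})}$ together with the identity $\meetof{\ji{x,y}}=\JI{x,y}$, obtaining $\ji{p}_{i,j}\oplus\ji{p}_{j,k}=\joinof{(\JI{p_j,p_k}\circ\JI{p_i,p_j})}$. Now Lemma~\ref{lemma:compJIbis} evaluates the composite $\JI{p_j,p_k}\circ\JI{p_i,p_j}$ according to the value of $p_j$: it is $\JI{0,p_k}$ when $p_j=0$, it is $\JI{p_i,p_k}$ when $0<p_j<1$, and it is $\JI{p_i,1}$ when $p_j=1$. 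Applying $\joinof{(\,\cdot\,)}$ and using that $\joinof{\JI{x,y}}=\ji{x,y}$ — which holds since $\joinof{\meetof{\ji{x,y}}}=\joinof{\ji{x,y}}=\ji{x,y}$ by Proposition~\ref{prop:meet-cont-closure}, $\ji{x,y}$ being join-continuous — the right-hand side equals, in the three cases respectively, $\ji{0,p_k}$, $\ji{p_i,p_k}$, and $\ji{p_i,1}$.

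It then remains to check that $\ji{p_i,p_k}$ lies below this element in each case. If $\ji{p_i,p_k}=\bot$ there is nothing to prove, so assume $p_i<1$ and $p_k>0$; the middle case is then an equality. In the two boundary cases I would invoke Corollary~\ref{cor:orderonjp}: $\ji{p_i,p_k}\leq\ji{0,p_k}$ since $0\leq p_i$ and $p_k\leq p_k$, and $\ji{p_i,p_k}\leq\ji{p_i,1}$ since $p_i\leq p_i$ and $p_k\leq 1$. This establishes openness, and with the trivial closedness above it follows that $\ji{p}\in\LId$.

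The only genuinely delicate point — and the reason the sufficient condition of Remark~\ref{rem:suffCondClopen} cannot simply be quoted — is precisely the boundary behaviour $p_j\in\{0,1\}$: there $\ji{p}_{i,j}\oplus\ji{p}_{j,k}$ is in general strictly larger than $\ji{p}_{i,k}$, so one must use the order characterization of join-prime one-step functions rather than any equality between $\ji{p}_{i,k}$ and $\ji{p}_{i,j}\oplus\ji{p}_{j,k}$ (or $\ji{p}_{i,j}\otimes\ji{p}_{j,k}$). Everything else is a routine unwinding of Lemmas~\ref{lemma:compJI} and~\ref{lemma:compJIbis}.
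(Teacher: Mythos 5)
Your proof is correct and follows essentially the same route as the paper's: both arguments reduce the verification to Lemmas~\ref{lemma:compJI} and~\ref{lemma:compJIbis}, treat the generic case $0<p_j<1$ via the equality $\ji{p_i,p_k}=\ji{p_i,p_j}\oplus\ji{p_j,p_k}$ (which is where Remark~\ref{rem:suffCondClopen} applies), and handle the boundary cases $p_j\in\{0,1\}$ by a direct order comparison, which the paper does at the level of the functions $\Ji{\cdot,\cdot}$ while you make the $\joinof{(\,\cdot\,)}$ step explicit and invoke Corollary~\ref{cor:orderonjp}. Your closing observation about the boundary cases matches what the paper actually does, so there is nothing to correct.
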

\begin{proof}
  We use Remark~\ref{rem:suffCondClopen} to establish that $\ji{p}$ is
  compatible and, to this goal, we use the relations established with
  Lemmas~\ref{lemma:compJI} and \ref{lemma:compJIbis}. The relation
  $\Ji{p_{i},p_{k}} = \Ji{p_{j},p_{k}}\circ \Ji{p_{i},p_{j}}$ holds
  unless $p_{j} \in \set{0,1}$. If $p_{j } = 0$, then
  \begin{align*}
    \ji{p_{j},p_{k}} \circ \ji{p_{i},p_{j}} & = \bot \leq
    \ji{p_{i},p_{k}} \leq \Ji{p_{i},p_{k}} \leq \Ji{0,p_{k}} =
    \Ji{p_{j},p_{k}} \circ \Ji{p_{i},p_{j}}\,.
  \end{align*}
  If $p_{j} =1$, then
  \begin{align*}
    \ji{p_{j},p_{k}} \circ \ji{p_{i},p_{j}} & = \bot \leq
    \ji{p_{i},p_{k}} \leq \Ji{p_{i},p_{k}} \leq
    \Ji{p_{i},1} = \Ji{p_{j},p_{k}} \circ
    \Ji{p_{i},p_{j}}\,.
    \tag*{\qedhere}
  \end{align*}
\end{proof}

Notice that $p \in \I^{d}$ has $\dimji(p) \leq 0$ if and only if it
lies on the path
\begin{align*}
  \bigcup_{i \in \setof{d}} \set{(\underbrace{1,\ldots
      ,1}_{i-1},x,\underbrace{0,\ldots,0}_{d -i}) \mid x \in
  \I}\,.
\end{align*}
It is readily seen that this path corresponds to the tuple that is the
bottom of the lattice $\LId$ (as well as of the lattice $\PrLI$).

\begin{lemma}
  \label{lemma:approx}
  For each $f \in \LId$, $x \in \I$, and $(m,M) \in \cd$,
  there exists $p(f,x,m,M) \in \I^{d}$ such that $\ji{p(f,x,m,M)} \leq
  f$, $p(f,x,m,M)_{m} = x$ and $p(f,x,m,M)_{M} = \MeetOf{f_{m,M}}(x)$.
\end{lemma}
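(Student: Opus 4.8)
The plan is to exhibit $p := p(f,x,m,M)$ explicitly: set, for every $i \in \setof{d}$,
\[
  p_{i} := \MeetOf{f_{m,i}}(x)\,,
\]
using the conventions of Section~\ref{sec:pathsDimensionMore} that $f_{m,m} = id$ and $f_{m,i} = \opp{(f_{i,m})}$ for $i < m$. Since $\MeetOf{id} = id$ on $\I$, this gives $p_{m} = x$, and by construction $p_{M} = \MeetOf{f_{m,M}}(x)$, so two of the three required properties are immediate. It then remains to check that $p \in C_{f}$ and that $\ji{p} \leq f$. Geometrically $p$ is the top of the fibre of the path $C_{f}$ over $x$ in the $m$-th coordinate, i.e. $p = \radj{(\pi_{m})}(x)$ for the projection $\pi_{m} : C_{f} \rto \I$; I would mention this as motivation but carry out the verification directly from the displayed formula.

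To see that $p \in C_{f}$ one checks $f_{a,b}(p_{a}) \leq p_{b}$ for every ordered pair $a,b \in \setof{d}$. Since $f$ is clopen it is compatible (Lemma~\ref{lemma:enrichment}), so $f_{a,b} \circ f_{m,a} \leq f_{m,b}$; using that $f_{a,b}$ is monotone (so that $f_{a,b}$ of an infimum lies below the infimum of the images) and that $\MeetOf{(\,\cdot\,)}$ is monotone, we obtain
\[
  f_{a,b}(p_{a}) = f_{a,b}\bigl(\MeetOf{f_{m,a}}(x)\bigr) \leq \MeetOf{(f_{a,b}\circ f_{m,a})}(x) \leq \MeetOf{f_{m,b}}(x) = p_{b}\,.
\]
This inequality is the one genuinely computational point; everything else is formal.

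Finally, for $\ji{p} \leq f$, fix $(i,j) \in \cd$. Applying the defining condition of $C_{f}$ to the ordered pair $(j,i)$ gives $f_{j,i}(p_{j}) \leq p_{i}$, that is $\opp{(f_{i,j})}(p_{j}) \leq p_{i}$; since $\opp{(f_{i,j})} = \ladj{(\MeetOf{f_{i,j}})}$ by Proposition~\ref{lemma:meetofjoinofTwo}, this is equivalent by adjunction to $p_{j} \leq \MeetOf{f_{i,j}}(p_{i})$, which by Lemma~\ref{lemma:lessjif} says exactly $\ji{p_{i},p_{j}} \leq f_{i,j}$. As $(i,j) \in \cd$ was arbitrary, $\ji{p} \leq f$, and $p$ meets all three requirements. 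The only pitfall to watch is orientation: one must use the ``lower'' inequalities $f_{j,i}(p_{j}) \leq p_{i}$ for $j > i$, not the ``upper'' ones $f_{i,j}(p_{i}) \leq p_{j}$ for $i < j$ — reflecting the same sandwiching that in dimension $2$ expresses membership in $C_{g}$ as $\joinof{g} \leq (\,\cdot\,) \leq \MeetOf{g}$.
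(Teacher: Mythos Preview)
Your proof is correct. The key computation
\[
  f_{a,b}\bigl(\MeetOf{f_{m,a}}(x)\bigr) \leq \MeetOf{(f_{a,b}\circ f_{m,a})}(x) \leq \MeetOf{f_{m,b}}(x)
\]
is sound, and the passage from $f_{j,i}(p_{j}) \leq p_{i}$ to $p_{j} \leq \MeetOf{f_{i,j}}(p_{i})$ via $\opp{(f_{i,j})} = \ladj{(\MeetOf{f_{i,j}})}$ is exactly right.

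Your route differs from the paper's in two ways. First, the paper truncates the tuple, setting $p_{i} = 1$ for $i < m$ and $p_{i} = 0$ for $i > M$, so that $\ji{p_{i},p_{j}} = \bot$ trivially outside the window $[m,M]$; inside the window it checks $p_{j} \leq \MeetOf{f_{i,j}}(p_{i})$ directly from openness $\MeetOf{f_{m,j}} \leq \MeetOf{f_{i,j}} \circ \MeetOf{f_{m,i}}$. Second, and relatedly, the paper never mentions $C_{f}$: it stays entirely within the tuple calculus and does a small case split. Your argument instead defines $p_{i} = \MeetOf{f_{m,i}}(x)$ uniformly for all $i$, identifies $p$ as the point $\radj{(\pi_{m})}(x) \in C_{f}$ (the dual of Lemma~\ref{lemma:Crightadj}), and reads off the inequalities from path membership. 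Your approach is more geometric and avoids case analysis; the paper's is more self-contained and makes explicit the later-useful fact that $\mji{p} \geq m$ and $\Mji{p} \leq M$, which your $p$ need not satisfy (though this is not required by the statement of the Lemma).
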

\begin{proof}
  We construct $p = p(f,x,m,M)$ as follows.  We let $p_{m} = x$ and,
  for $i$ with $m < i \leq M$, we let $p_{i} = \MeetOf{f_{m,i}}(x)$.  If $i <
  m$ then we let $p_{i} = 1$, and if $M < i$, then we let $p_{i} = 0$.
  
  Let us verify that $\ji{p} \leq f$, that is
  $\ji{p_{i},p_{j}} \leq f_{i,j}$ for each $(i,j) \in \cd$.
  If $i < m$, then $\ji{p_{i},p_{j}} = \bot \leq f_{i.j}$.
  Similarly, if $M < j$, then $\ji{p_{i},p_{j}} = \bot \leq f_{i,j}$.
  Therefore we can assume that $m \leq i < j \leq M$.  We verify that
  $e_{p_{i},p_{j}} \leq f_{i,j}$ using Lemma~\ref{lemma:lessjif}. If
  $i = m$, $p_{j} = \MeetOf{f_{m,j}}(x) \leq \MeetOf{f_{m,j}}(p_{m})$,
  simply because $p_{m} = x$; if $m < i$, then
  $p_{j} = \MeetOf{f_{m,j}}(x) \leq
  \MeetOf{f_{i,j}}(\MeetOf{f_{m,i}}(x)) = \MeetOf{f_{i,j}}(p_{i})$,
  recalling that $p_{i} = \MeetOf{f_{m,i}}(x)$ and using openedness of
  $f$.
\end{proof}

\begin{corollary}
\label{cor:joinofeps}
For each $f \in \LI[d]$ the relation
  \begin{align}
    \label{equ:joinofjps}
    f & = \bigvee \set{\ji{p} \mid p \in \I^{d} \;\;\tand\;\; \ji{p} \leq f}\,.
  \end{align}
  holds in $\LI[d]$.
\end{corollary}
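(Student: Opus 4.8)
The plan is to prove that the \emph{pointwise} join of the family $\set{\ji{p} \mid p \in \I^{d} \tand \ji{p} \le f}$, formed in $\PrLI$, already equals $f$; since $f$ is clopen, no closure correction is then needed when the join is recomputed inside $\LId$. First I would note that, by Proposition~\ref{prop:clopen}, every $\ji{p}$ with $\ji{p}\le f$ is a clopen tuple, hence lies in $\LId$, so the right-hand side of \eqref{equ:joinofjps} is a legitimate join in $\LId$. Writing $g := \bigvee\set{\ji{p}\mid \ji{p}\le f}$ for the pointwise join in $\PrLI$, the tuple $f$ is an upper bound of the family, so $g \le f$; and by Theorem~\ref{theo:meetsAndJoin} the join of the family in $\LId$ is $\cl{g}$. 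Thus it suffices to show $g = f$, for then $g$ is clopen and $\cl{g} = g = f$.

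To get $g = f$ I would fix $(m,M) \in \cd$ and prove $g_{m,M} = f_{m,M}$ in $\LjI$; this is enough because joins in $\LjI$ are computed pointwise (Corollary~\ref{cor:distrlattice}) and meets and joins in $\PrLI$ are computed coordinatewise. The inequality $g_{m,M} \le f_{m,M}$ is immediate from $\ji{p_{m},p_{M}}\le f_{m,M}$ for every $p$ in the family. For the reverse inequality, the key tool is Lemma~\ref{lemma:approx}: given $x \in \I$, it yields a point $p(f,x,m,M)\in\I^{d}$ with $\ji{p(f,x,m,M)}\le f$ whose $m$-th entry is $x$ and whose $M$-th entry is $\MeetOf{f_{m,M}}(x)$. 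Hence $\ji{x,\MeetOf{f_{m,M}}(x)}$, the $(m,M)$-component of this tuple, is among the \osf{s} joined to produce $g_{m,M}$, so $\ji{x,\MeetOf{f_{m,M}}(x)}\le g_{m,M}$ for every $x$. I would then combine this with Proposition~\ref{prop:generationDim2} and Lemma~\ref{lemma:lessjif}, which together give $f_{m,M} = \bigvee\set{\ji{x,y}\mid \ji{x,y}\le f_{m,M}} = \bigvee_{x\in\I}\ji{x,\MeetOf{f_{m,M}}(x)}$ (the last equality because $\ji{x,y}\le \ji{x,\MeetOf{f_{m,M}}(x)}$ as soon as $y\le \MeetOf{f_{m,M}}(x)$, by Corollary~\ref{cor:orderonjp}), so that $f_{m,M}\le g_{m,M}$ and hence $f_{m,M} = g_{m,M}$.

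Putting the two halves together yields $\bigvee_{\LId}\set{\ji{p}\mid\ji{p}\le f} = \cl{f} = f$, which is \eqref{equ:joinofjps}. The main obstacle is precisely the verification that the pointwise join of the $\ji{p}$'s exhausts each coordinate $f_{m,M}$: this is where Lemma~\ref{lemma:approx} (and, through it, the openness of $f$) is indispensable, since it lets one realize, for every coordinate $(m,M)$ and every argument $x$, a tuple $\ji{p}\le f$ whose $(m,M)$-entry is the largest \osf $\ji{x,\MeetOf{f_{m,M}}(x)}$ still below $f_{m,M}$. Once this is in hand the remainder is just the coordinatewise reprise of the argument already used for Proposition~\ref{prop:generationDim2}.
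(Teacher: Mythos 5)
Your proof is correct and follows essentially the same route as the paper: Lemma~\ref{lemma:approx} shows that the join already equals $f$ coordinatewise in $\PrLI$, and since $f$ is clopen this join passes to $\LId$ without any closure correction. You have merely spelled out the details (the coordinatewise use of Proposition~\ref{prop:generationDim2} and Lemma~\ref{lemma:lessjif}) that the paper's two-line proof leaves implicit.
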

\begin{proof}
  Using Lemma \ref{lemma:approx}, we see that relation
  \eqref{equ:joinofjps} holds in $\PrLI$, for any $f \in \LId$. A
  fortiori, the same relation holds in $\LId$.  
\end{proof}

\begin{proposition}
  \label{prop:jipji}
  For each $p \in \I^{d}$, if $\ji{p} \neq \bot$, then $\ji{p}$ is
  \jirr within $\LId$.
\end{proposition}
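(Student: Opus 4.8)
The plan is to localise the whole argument at the single coordinate $(\mu,M):=(\mji{p},\Mji{p})$ and then propagate. Since $\ji{p}\neq\bot$, the lemma preceding Proposition~\ref{prop:clopen} gives $\mu<M$, and from its proof $p_{\mu}<1$ and $p_{M}>0$; hence $e:=\ji{p_{\mu},p_{M}}\neq\bot$, and by Proposition~\ref{prop:join-prime} together with the distributivity of $\LjI$ (Corollary~\ref{cor:distrlattice}), $e$ is join-irreducible in $\LjI$. Now suppose $\ji{p}=f\vee_{\LId}g$ with $f,g\in\LId$. By Theorem~\ref{theo:meetsAndJoin} this join is $\closure{f\vee g}$, where $f\vee g$ denotes the coordinatewise join in $\PrLI$; in particular $f\leq\ji{p}$ and $g\leq\ji{p}$.

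The first step is to observe that the closure is \emph{trivial at the coordinate} $(\mu,M)$: in the join defining $\closure{f\vee g}_{\mu,M}$, which ranges over subdivisions $\mu=\ell_{0}<\ell_{1}<\dots<\ell_{k}=M$, any subdivision with $k\geq 2$ contributes a product of factors whose first two already satisfy $(f\vee g)_{\mu,\ell_{1}}\otimes(f\vee g)_{\ell_{1},\ell_{2}}\leq\ji{p_{\mu},p_{\ell_{1}}}\otimes\ji{p_{\ell_{1}},p_{\ell_{2}}}=\bot$, by Lemma~\ref{lemma:compJI} applied with $y=y'=p_{\ell_{1}}$; since $\bot\otimes h=\bot$ for all $h$, the whole product is $\bot$. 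Hence $\closure{f\vee g}_{\mu,M}=(f\vee g)_{\mu,M}=f_{\mu,M}\vee g_{\mu,M}$, so $f_{\mu,M}\vee g_{\mu,M}=e$, and join-irreducibility of $e$ in $\LjI$ lets us assume $f_{\mu,M}=e$. It then suffices to prove the \emph{propagation statement}: if $f\in\LId$ satisfies $f\leq\ji{p}$ and $f_{\mu,M}=\ji{p_{\mu},p_{M}}$, then $\ji{p}\leq f$, hence $f=\ji{p}$; this gives $\ji{p}\in\{f,g\}$ (symmetrically from $g_{\mu,M}=e$), which is join-irreducibility. Concretely one must show $\ji{p_{i},p_{j}}\leq f_{i,j}$ for every $(i,j)\in\cd$, which is automatic when $\ji{p_{i},p_{j}}=\bot$, so it suffices to treat $\mu\leq i<j\leq M$ with $p_{i}<1$ and $p_{j}>0$.

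For the propagation I would use repeatedly that $\meetof{(-)}$ is order preserving, that $\meetof{\ji{x,y}}=\JI{x,y}$ with $\JI{x,y}(x)=y$ when $x<1$ and $\JI{x,y}$ vanishing on $[0,x)$, that $\meetof{(a\oplus b)}=\meetof{b}\circ\meetof{a}$ (which follows from the description of $\oplus$ in Section~\ref{sec:quantalesFromChains}), and Lemma~\ref{lemma:lessjif}. \textbf{Step A} (last column): for $\mu<i<M$ with $p_{i}<1$, openness of $f$ gives $e=f_{\mu,M}\leq f_{\mu,i}\oplus f_{i,M}$; applying $\meetof{(-)}$, evaluating at $p_{\mu}$, and using $\meetof{f_{\mu,i}}(p_{\mu})\leq\JI{p_{\mu},p_{i}}(p_{\mu})=p_{i}$ with monotonicity, one obtains $p_{M}=\meetof{e}(p_{\mu})\leq\meetof{f_{i,M}}\bigl(\meetof{f_{\mu,i}}(p_{\mu})\bigr)\leq\meetof{f_{i,M}}(p_{i})$, hence $\ji{p_{i},p_{M}}\leq f_{i,M}$ by Lemma~\ref{lemma:lessjif}; combined with $f\leq\ji{p}$ this gives $f_{i,M}=\ji{p_{i},p_{M}}$. \textbf{Step B} (general coordinate): for $\mu\leq i<j<M$ with $p_{i}<1$, we have $f_{i,M}=\ji{p_{i},p_{M}}$ (Step A, or the hypothesis if $i=\mu$), and openness gives $\ji{p_{i},p_{M}}=f_{i,M}\leq f_{i,j}\oplus f_{j,M}$; applying $\meetof{(-)}$ at $p_{i}$ yields $p_{M}\leq\meetof{f_{j,M}}\bigl(\meetof{f_{i,j}}(p_{i})\bigr)\leq\JI{p_{j},p_{M}}\bigl(\meetof{f_{i,j}}(p_{i})\bigr)$, and since $p_{M}>0$ while $\JI{p_{j},p_{M}}$ vanishes on $[0,p_{j})$ this forces $\meetof{f_{i,j}}(p_{i})\geq p_{j}$, i.e.\ $\ji{p_{i},p_{j}}\leq f_{i,j}$ by Lemma~\ref{lemma:lessjif}. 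Together the two steps give $\ji{p}\leq f$.

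The main obstacle is exactly this propagation step: the reduction anchors the problem at $(\mu,M)$ but pins down only that one coordinate, and one must convert it into control of every coordinate by feeding the openness inequalities $f_{i,k}\leq f_{i,j}\oplus f_{j,k}$ through the precise shape of the meet-continuous step functions $\JI{x,y}$. The roles of $\mu=\mji{p}$ and $M=\Mji{p}$ are precisely to guarantee $p_{\mu}<1$ and $p_{M}>0$, which is what makes both the join-irreducibility of $e$ and the forcing arguments in Steps A and~B go through; anchoring at any other coordinate would not propagate.
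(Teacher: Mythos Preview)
Your proof is correct and follows essentially the same strategy as the paper: anchor at the coordinate $(\mu,M)$, use join-irreducibility of $\ji{p_{\mu},p_{M}}$ in $\LjI$ to pin down one of $f,g$ there, then propagate via openness through the last column and finally to all coordinates. The one noteworthy difference is your first step: you compute directly that $\closure{f\vee g}_{\mu,M}=(f\vee g)_{\mu,M}$ by observing that every nontrivial subdivision contributes a product with a factor $\ji{p_{\mu},p_{\ell_{1}}}\otimes\ji{p_{\ell_{1}},p_{\ell_{2}}}=\bot$, whereas the paper argues by contradiction, building an auxiliary closed tuple that replaces only the $(\mu,M)$-coordinate; your route is a bit more transparent but relies on the same underlying fact (Lemma~\ref{lemma:compJI}). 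The propagation in your Steps~A and~B is the same argument as the paper's, rewritten in direct rather than contrapositive form.
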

\begin{proof}

  Assume that the relation $\ji{p} = \alpha \vee \beta$ holds in
  $\LId$.  Let $m := \mji{p}$ and $M := \Mji{p}$.  Observe that, for
  $(i,j) \in \cd$, if $i < m$ or $M < j$, then
  $\ji{i,j} = \alpha_{i,j} = \beta_{i,j} = \bot$; so we only need to
  show that either $\ji{p_{i,j}} = \alpha_{i,j}$ whenever
  $m \leq i < j \leq M$, or $\ji{p_{i,j}} = \beta_{i,j}$ whenever
  $m \leq i < j \leq M$. Said otherwise, we can assume that $m = 1$
  and $M = d$ (so $p_{1} \neq 1$ and $p_{d} \neq 0$).

  Firstly, we claim that
  $\ji{p_{1},p_{d}} = \alpha_{1,d} \vee \beta_{1,d}$. If not, then we
  have $\alpha_{1,d} \vee \beta_{1,d} < \ji{p_{1},p_{d}}$; consider
  then the tuple $f \in \PrLI$ such that $f_{i,j} = \ji{p_{i},p_{j}}$
  if $i \neq 1$ or $j \neq d$, and
  $f_{1,d} = \alpha_{1,d} \vee \beta_{1,d}$; trivially, $f$ is closed,
  since for $i < j < k$,
  $\ji{p_{j},p_{k}} \circ \ji{p_{i},p_{j}} = \bot \leq
  \ji{p_{i},p_{k}}$.
  We obtain then the following contradiction:
  \begin{align*}
    \ji{p} & = \alpha \vee_{\LI[d]} \beta
    = \closure{\alpha \vee_{\PrLI} \beta}
    \leq \closure{f} = f < \ji{p}\,.
  \end{align*}
  Thus we have $\ji{p_{1},p_{d}} = \alpha_{1,d} \vee \beta_{1,d}$ in
  $\LjI$, and therefore $\ji{p_{1},p_{d}} = \alpha_{1,d}$ or
  $\ji{p_{1},p_{d}} = \beta_{1,d}$. Let us suppose that
  $\ji{p_{1},p_{d}} = \alpha_{1,d}$, we shall prove that
  $\ji{p} = \alpha$. (A similar argument proves that
  $\ji{p} = \beta$ if $\ji{p_{1},p_{d}} = \beta_{1,d}$).
  
  Notice first that $\ji{p} = \alpha \vee \beta$ implies
  $\alpha_{i,j} \leq \ji{p_{i},p_{j}}$ for each $(i,j) \in \cd$.  On
  the other hand, if $1 < i < d$, then
  \begin{align*}
    p_{d} & = \Ji{p_{1},p_{d}}(p_{1}) = \MeetOf{\alpha_{1,d}}(p_{1}) \leq
    \MeetOf{\alpha_{i,d}}(\MeetOf{\alpha_{1,i}}(p_{1})) 
    \leq \MeetOf{\alpha_{i,d}}(\Ji{p_{1},p_{i}}(p_{1})) = \MeetOf{\alpha_{i,d}}(p_{i})\,,
  \end{align*}
  showing that $\ji{p_{i},p_{d}} \leq \alpha_{i,d}$ and, consequently,
  $\ji{p_{i},p_{d}} = \alpha_{i,d}$, for $i = 1,\ldots ,d-1$.

  Suppose now that $\ji{p_{i},p_{j}} \not\leq \alpha_{i,j}$ for some
  $(i,j) \in \cd$ with $j < d$. This relation  amounts to
  $p_{j} \not\leq \MeetOf{\alpha_{i,j}}(p_{i})$, thus to $\MeetOf{\alpha_{i,j}}(p_{i}) <
  p_{j}$. Then
  \begin{align*}
    p_{d} = \Ji{p_{i},p_{d}}(p_{i}) = \MeetOf{\alpha_{i,d}}(p_{i})
    \leq  \MeetOf{\alpha_{j,d}}( \MeetOf{\alpha_{i,j}}(p_{i}))
    = \Ji{p_{j},p_{d}}(\MeetOf{\alpha_{i,j}}(p_{i}))
    = 0\,,
  \end{align*}
  against the hypothesis. Thus we have
  $\alpha_{i,j} = \ji{p_{i},p_{j}}$ for each $(i,j) \in \cd$, that is
  $\ji{p} = \alpha$.
\end{proof}

In the rest of this section we aim shall prove the converse of
Proposition \ref{prop:jipji}: if $\alpha$ is join-irreducible, then
$\alpha = \ji{p}$ for some $p \in \I^{d}$.  Let $p,q \in \I^{d}$ with
$p \leq q$; as usual, $[p,q]$ denotes the set
$\set{r \in \I^{d} \mid p \leq r \leq q}$; define then $\ji{[p,q]}$ by
\begin{align*}
  \ji{[p,q]} & := \bigvee \set{\ji{r} \mid r \in [p,q]}
\end{align*}
where this infinite join is computed in $\PrLI$ (we shall argue few
lines below that $\ji{[p,q]}$ is clopen). We notice in the meantime
the following expression of $\ji{[p,q]}$. For each $(i,j) \in \cd$,
\begin{align*}
    (\,\ji{[p,q]}\,)_{i,j} & = (\,\bigvee \set{\ji{r} \mid r \in
      [p,q]}\,)_{i,j}
    \\
    &
    = \bigvee \set{ \ji{r_{i},r_{j}}
        \mid p_{i} \leq r_{i} \leq q_{i}\,,
        \;p_{j} \leq r_{j} \leq q_{j}
      }  = \ji{p_{i},q_{j}}\,.
\end{align*}
\begin{lemma}
  \label{lemma:cube}
  For each $p,q \in \I^{d}$ with $p \leq q$,
  $\ji{[p,q]}$ is clopen. Moreover, for $r \in \I^{d}$ such that
  $\bot < \ji{r}$, the relation $\ji{r} \leq \ji{[p,q]}$ holds if and
  only
  \begin{enumerate}
  \item $p_{\mji{r}} \leq r_{\mji{r}}$ and
    $r_{\Mji{r}} \leq q_{\Mji{r}}$, 
  \item $r_{i} \in [p_{i},q_{i}]$, for each $i$ such that
    $\mji{r} < i < \Mji{r}$.
  \end{enumerate}
\end{lemma}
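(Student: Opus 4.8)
The plan is to handle the two assertions in turn, relying throughout on the identity $(\ji{[p,q]})_{i,j}=\ji{p_i,q_j}$ recorded just before the statement. For clopenness, first note that each $\ji{r}$ with $r\in[p,q]$ is clopen by Proposition~\ref{prop:clopen}, hence open; since open tuples of $\PrLI$ are closed under arbitrary joins (Section~\ref{sec:latticesFromQuantales}), $\ji{[p,q]}$ is open. To see that it is also closed I would compute, for $i<j<k$,
\[
(\ji{[p,q]})_{i,j}\otimes(\ji{[p,q]})_{j,k}=\ji{p_j,q_k}\circ\ji{p_i,q_j},
\]
and invoke Lemma~\ref{lemma:compJI}: the composite is $\bot$ if $q_j\le p_j$ and $\ji{p_i,q_k}$ if $p_j<q_j$, so in either case — using $p_j\le q_j$ — it lies below $\ji{p_i,q_k}=(\ji{[p,q]})_{i,k}$. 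Hence $\ji{[p,q]}$ is closed, and being also open it is clopen.

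Next I would reduce the comparison $\ji{r}\le\ji{[p,q]}$ coordinatewise. Since $\ji{x,y}=\bot$ precisely when $x=1$ or $y=0$, and since Corollary~\ref{cor:orderonjp} gives $\ji{a,b}\le\ji{c,d}$ iff $c\le a$ and $b\le d$ whenever both sides differ from $\bot$, one checks that for any $(i,j)\in\cd$ the inequality $\ji{r_i,r_j}\le\ji{p_i,q_j}$ holds iff either $r_i=1$ or $r_j=0$, or else $p_i\le r_i$ and $r_j\le q_j$ (these last inequalities forcing $\ji{p_i,q_j}\neq\bot$ automatically, as $p_i\le r_i<1$ and $q_j\ge r_j>0$). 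Therefore $\ji{r}\le\ji{[p,q]}$ is equivalent to: for every $(i,j)\in\cd$ with $r_i<1$ and $r_j>0$ one has $p_i\le r_i$ and $r_j\le q_j$. Because $r_i=1$ for $i<\mji{r}$ and $r_j=0$ for $j>\Mji{r}$, only pairs with $\mji{r}\le i<j\le\Mji{r}$ impose a constraint, and the hypothesis $\bot<\ji{r}$ says $\dimji(r)>0$, i.e.\ $\mji{r}<\Mji{r}$, so this range is nonempty.

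It then remains to match this family of constraints with conditions (1) and (2). Instantiating at $(i,j)=(\mji{r},\Mji{r})$, which is permitted since $r_{\mji{r}}<1$ and $r_{\Mji{r}}>0$ by definition, yields (1). For (2), fix $i$ with $\mji{r}<i<\Mji{r}$: if $r_i<1$ the constraint at $(i,\Mji{r})$ forces $p_i\le r_i$, and if $r_i=1$ this is trivial; dually the constraint at $(\mji{r},i)$, or triviality when $r_i=0$, forces $r_i\le q_i$. Conversely, granting (1) and (2), any pair $(i,j)$ with $\mji{r}\le i<j\le\Mji{r}$, $r_i<1$, $r_j>0$ satisfies $p_i\le r_i$ (from (1) if $i=\mji{r}$, from (2) otherwise) and $r_j\le q_j$ (from (1) if $j=\Mji{r}$, from (2) otherwise). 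The only delicate point is this boundary bookkeeping — verifying that (1) and (2) capture exactly the nonvacuous constraints and conversely suffice to recover all of them — everything else being a mechanical application of Lemma~\ref{lemma:compJI} and Corollary~\ref{cor:orderonjp}.
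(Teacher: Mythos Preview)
Your proof is correct and follows essentially the same approach as the paper: openness as a join of opens, closedness via Lemma~\ref{lemma:compJI}, and the coordinatewise analysis using Corollary~\ref{cor:orderonjp} with the pairs $(\mji{r},\Mji{r})$, $(\mji{r},i)$, and $(i,\Mji{r})$. If anything, you are slightly more careful than the paper in handling the boundary cases $r_i=0$ and $r_i=1$ for $\mji{r}<i<\Mji{r}$, which the paper glosses over.
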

\begin{proof}
  Clearly $\ji{[p,q]}$ is open since it is a join of open tuples;
  it is also closed since the relations $\ji{p_{j},q_{k}} \circ \ji{p_{i},q_{j}}
  \leq \ji{p_{i},q_{k}}$
  holds by Lemma~\ref{lemma:compJI}.

  Let now $r \in \I^{d}$ be such that $\bot < \ji{r}$; to ease the
  reading, let also $m:= \mji{r}$ and $M := \Mji{r}$. Suppose that
  $\ji{r}\leq \ji{[p,q]}$, that is $ \ji{r_{i},r_{j}} \leq \ji{p_{i},q_{j}}$,
  for each $(i,j) \in \cd$.  Since
  $\bot < \ji{r_{m},r_{M}} \leq \ji{p_{m},q_{M}}$, we have
  $p_{m}\leq r_{m}$ and $r_{M} \leq q_{M}$.
  Also, for $m < i < M$, $\ji{r_{m},r_{i}} \leq \ji{p_{m},q_{i}}$ with
  $r_{m} < 1$ yields $r_{i} \leq q_{i}$, and
  $\ji{r_{i},r_{M}} \leq \ji{p_{i},q_{M}}$ with $0 < r_{M}$ yields
  $p_{i} \leq r_{i}$. Thus, for such an $i$,
  $p_{i} \leq r_{i} \leq q_{i}$.
  
  Let us verify that (1) and (2) imply $\ji{r} \leq \ji{[p,q]}$.
  Consider $(i,j) \in \cd$: if $i < m$ or $M < j$, then
  $\ji{r_{i},r_{j}} = \bot$, so
  $\ji{r_{i},r_{j}} \leq \ji{p_{i},q_{j}}$ trivially holds; otherwise
  $m \leq i < j \leq M$, and conditions (1) and (2) imply that
  $p_{i} \leq r_{i}$ and $r_{j} \leq q_{j}$.
\end{proof}

As a particular instance of the previous Lemma (i.e. when $p = q$ in
the statement of the Lemma) we deduce the following statement:
\begin{proposition}
  \label{prop:eqleqep}
  Let $r,p\in \I^{d}$ be such that $\bot < \ji{r}$. Then
  $\ji{r} \leq \ji{p}$ if, and only if,
  \begin{enumerate}
  \item $p_{\mji{r}} \leq
    r_{\mji{r}}$, $r_{\Mji{r}} \leq p_{\Mji{r}}$,
  \item  $r_{i} = p_{i}$ for all
    $i \in \setof{d}$ with $\mji{r} < i < \Mji{r}$.
  \end{enumerate}
\end{proposition}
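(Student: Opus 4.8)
The statement is displayed in the text as the special case $p = q$ of Lemma~\ref{lemma:cube}, and the plan is to exploit exactly that. First I would record that, inside $\I^{d}$, the interval $[p,p]$ is the singleton $\set{p}$, so that $\ji{[p,p]} = \bigvee \set{\ji{r} \mid r \in [p,p]} = \ji{p}$. Substituting $q := p$ into Lemma~\ref{lemma:cube} then yields the asserted equivalence: the two endpoint inequalities $p_{\mji{r}} \leq r_{\mji{r}}$ and $r_{\Mji{r}} \leq q_{\Mji{r}}$ are transcribed verbatim (with $q$ replaced by $p$), giving item~(1), while the membership condition ``$r_{i} \in [p_{i},q_{i}]$ for each $i$ with $\mji{r} < i < \Mji{r}$'' collapses, since $[p_{i},p_{i}] = \set{p_{i}}$, to the equalities $r_{i} = p_{i}$ of item~(2). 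Since the hypothesis $\bot < \ji{r}$ is precisely the hypothesis of Lemma~\ref{lemma:cube}, nothing further is needed.

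If one wishes an argument independent of Lemma~\ref{lemma:cube}, the same conclusion can be reproved directly and briefly. Unfolding $\ji{r} \leq \ji{p}$ coordinatewise gives $\ji{r_{i},r_{j}} \leq \ji{p_{i},p_{j}}$ for every $(i,j) \in \cd$. Writing $m := \mji{r}$ and $M := \Mji{r}$, the pairs with $i < m$ or $j > M$ contribute nothing, since there $r_{i} = 1$ or $r_{j} = 0$ and the left-hand side is $\bot$; the content lies in the pairs with $m \leq i < j \leq M$. For these I would use Lemma~\ref{lemma:lessjif}: because $\ji{r} \neq \bot$ forces $r_{m} < 1$ and $r_{M} > 0$, testing the inequality on the pairs $(m,M)$, $(m,i)$ and $(i,M)$ for $m < i < M$ extracts $p_{m} \leq r_{m}$, $r_{M} \leq p_{M}$, $r_{i} \leq p_{i}$ and $p_{i} \leq r_{i}$, which together are exactly conditions~(1) and~(2). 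Conversely, granting~(1) and~(2), Lemma~\ref{lemma:lessjif} makes each required inequality $\ji{r_{i},r_{j}} \leq \ji{p_{i},p_{j}}$ immediate, again disposing of the $\bot$ cases separately.

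There is essentially no obstacle: the delicate bookkeeping about degenerate coordinates of $r$ (entries equal to $0$ or $1$, and the value of the meet-closure of a one-step function at the top) has already been carried out in the proof of Lemma~\ref{lemma:cube}, so the proof here is a one-line specialization. The only point worth a remark is the collapse of the interval $[p_{i},q_{i}]$ to the singleton $\set{p_{i}}$, which is what turns the membership clause of the cube lemma into the equality clause of the present proposition.
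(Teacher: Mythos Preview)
Your proposal is correct and matches the paper's own approach exactly: the paper simply states that the proposition is the particular instance of Lemma~\ref{lemma:cube} obtained by setting $p = q$, which is precisely your first paragraph. Your additional direct argument is sound but redundant, since the paper does not give one.
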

Notice that the relation $\bot < \ji{r} \leq \ji{p}$ also implies that
\begin{align*}
  \mji{p} & \leq \mji{r} < \Mji{r} \leq \Mji{p}\,.
\end{align*}
Suppose for example that $\mji{r} <\mji{p}$, so $p$ has in position
$m := \mji{r}$ a $1$. This implies that $\ji{p_m,p_{j}} = \bot$ for
each $j > m$.  Therefore, $\ji{r} \leq \ji{p}$ implies that
$\ji{r_m,r_{j}} = \bot$ for each $j > m$. Since by definition
$r_{m} < 1$, the relations $\ji{r_m,r_{j}} = \bot$ imply that
$r_{j} = 0$ for each $r_{j} > m$. Yet this means that
$\dimji(r) \leq 0$, so $\ji{r} = \bot$, against the assumption.

\begin{proposition}
  \label{prop:ji2}
  If $\alpha \in \LI[d]$ is join-irreducible, then $\alpha = \ji{p}$
  for some $p \in \I^{d}$.
\end{proposition}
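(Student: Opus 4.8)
The plan is to write $\alpha$ as a join of the tuples $\ji p$ via Corollary~\ref{cor:joinofeps} and then whittle this join down to a single term using join-irreducibility. Since a join-irreducible element is nonzero, $\alpha \neq \bot$, so the set $D := \{p \in \I^{d} : \bot < \ji p \leq \alpha\}$ is nonempty and $\alpha = \bigvee\{\ji p : p \in D\}$ in $\LId$. As $p$ ranges over $D$ the pair $(\mji p, \Mji p)$ takes only finitely many values; grouping $D$ by this pair exhibits $\alpha$ as a \emph{finite} join of the corresponding sub-joins, so join-irreducibility lets us assume $(\mji p, \Mji p)$ is constant on $D$, say equal to $(m,M)$. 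Arguing exactly as in the proof of Proposition~\ref{prop:jipji} (if $i < m$ or $j > M$ then $p_{i} = 1$ or $p_{j} = 0$, hence $\ji{p_{i},p_{j}} = \bot$, and the same survives taking closures), all components of $\alpha$ outside the triangle $\{(i,j) : m \leq i < j \leq M\}$ vanish, and after relabelling coordinates we may assume $m = 1$ and $M = d$, so $p_{1} < 1$ and $p_{d} > 0$ for every $p \in D$; if this relabelling forces $d = 2$ we are already done by Proposition~\ref{prop:join-prime}.

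The heart of the argument — and the step I expect to be the main obstacle — is to reduce further to the case where all $p \in D$ share the same interior coordinates $(p_{2},\ldots,p_{d-1}) = (v_{2},\ldots,v_{d-1})$. By Proposition~\ref{prop:eqleqep}, two generators $\ji p, \ji q$ are comparable only when $p$ and $q$ agree on these positions, so $D$ splits according to the interior tuple, and if two distinct interior tuples occurred one would like to write $\alpha$ as a join of two strictly smaller clopen tuples. The difficulty is that this partition may be infinite, so a single use of binary join-irreducibility is not enough: one must control how the generators $\ji p$ can cover one another coordinate by coordinate. The tools for this should be the observation that for an adjacent pair the projection $\beta \mapsto \beta_{j,j+1}$ is a lattice homomorphism $\LId \to \LjI$ (closure and interior being trivial on adjacent components), together with the composition identities of Lemmas~\ref{lemma:compJI} and~\ref{lemma:compJIbis}.

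Once all $p \in D$ have common interior $(v_{2},\ldots,v_{d-1})$ — necessarily with $v_{\ell} < 1$ for $1 < \ell < d$, since openness of $\alpha$ together with Lemma~\ref{lemma:compJIbis} would otherwise force $\alpha_{1,d} = \bot$ — the computation closes. Every composite $\ji{v_{\ell},\cdot}\circ\ji{\cdot,v_{\ell}}$ is $\bot$ by Lemma~\ref{lemma:compJI}, so every nontrivial subdivision contributes $\bot$ to the relevant closures; hence $\alpha_{i,j} = \ji{v_{i},v_{j}}$ for $1 < i < j < d$, $\alpha_{1,j} = \bigvee_{D}\ji{p_{1},v_{j}} = \ji{a,v_{j}}$ with $a := \bigwedge_{D} p_{1}$, $\alpha_{j,d} = \bigvee_{D}\ji{v_{j},p_{d}} = \ji{v_{j},b}$ with $b := \bigvee_{D} p_{d}$, and $\alpha_{1,d} = \bigvee_{D}\ji{p_{1},p_{d}}$. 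In particular $\alpha_{1,\ell}\otimes\alpha_{\ell,d} = \bot$ for each $1 < \ell < d$, which forces $\alpha_{1,d}$ to be join-irreducible — hence join-prime, as $\LjI$ is distributive by Corollary~\ref{cor:distrlattice} — in $\LjI$: if $\alpha_{1,d} = g \vee h$ in $\LjI$, then replacing the $(1,d)$-component of $\alpha$ by $g$, respectively $h$, yields clopen tuples $\beta,\gamma \in \LId$ with $\beta \vee \gamma = \alpha$ and $\beta,\gamma < \alpha$, against join-irreducibility of $\alpha$. By Proposition~\ref{prop:join-prime}, $\alpha_{1,d} = \ji{q_{1},q_{d}}$ with $q_{1} < 1$ and $q_{d} > 0$; comparing this with the displayed formulas and with the relations $\ji{p_{1},p_{d}} \leq \ji{q_{1},q_{d}}$ ($p \in D$), $\alpha_{1,d} \leq \alpha_{1,\ell}\oplus\alpha_{\ell,d}$ (evaluated via Lemma~\ref{lemma:compJIbis}) and Corollary~\ref{cor:orderonjp}, one gets $q_{d} = b$ and, whenever some interior $v_{\ell}$ lies strictly between $0$ and $1$, also $q_{1} = a$. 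Setting $q := (q_{1}, v_{2},\ldots,v_{d-1}, q_{d})$, a routine check of each coordinate gives $\ji q = \alpha$, which completes the proof.
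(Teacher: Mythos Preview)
Your sketch has the right overall shape, and your final computation (once the interior coordinates are pinned down) is close to what the paper does.  The genuine gap is exactly where you flag it: reducing to the case where all generators $\ji{r}\leq\alpha$ with $\mji{r}=1$, $\Mji{r}=d$ share a common interior tuple $(r_{2},\ldots,r_{d-1})$.  This step cannot be finessed with the tools you suggest.  The partition of $D$ by interior tuples is a priori uncountable, so binary join-irreducibility does not apply, and the ``adjacent projection $\beta\mapsto\beta_{j,j+1}$ is a lattice homomorphism'' idea, while correct, does not buy you a splitting of $\alpha$: given $\alpha_{j,j+1}=g\vee h$, the tuple obtained from $\alpha$ by replacing its $(j,j+1)$-component by $g$ is still closed, but the openness inequality $\alpha_{i,j+1}\leq\alpha_{i,j}\oplus g$ has no reason to hold (you only know it with $g\vee h$ in place of $g$).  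So you cannot manufacture the two clopen pieces needed to invoke join-irreducibility.

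The paper's way around this is a nested-interval argument that produces a single point $p\in\I^{d}$ with $\alpha\leq\ji{p}$.  One builds a decreasing sequence of cubes $I_{n}=[p^{n},q^{n}]\subseteq\I^{d}$ of side $2^{-n}$ such that $\alpha=\bigvee\{\ji{r}:\ji{r}\leq\alpha,\ r\in I_{n}\}$; at each stage $I_{n}$ is cut into $2^{d}$ congruent sub-cubes, and since this exhibits $\alpha$ as a \emph{finite} join, join-irreducibility selects one sub-cube $I_{n+1}$.  The unique point $p^{\omega}\in\bigcap_{n}I_{n}$ then satisfies $\alpha\leq\ji{p^{\omega}}$; the key tool here is Lemma~\ref{lemma:cube}, which describes the clopen $\ji{[p^{n},q^{n}]}=\bigvee\{\ji{r}:r\in I_{n}\}$ and characterises which $\ji{r}$ lie below it.  Once $\alpha\leq\ji{p^{\omega}}$ is in hand, Proposition~\ref{prop:eqleqep} forces every generator in $J(\alpha,m,M)$ to have interior coordinates equal to those of $p^{\omega}$, and the argument finishes essentially as you outline: the join over the remaining two free coordinates $r_{m},r_{M}$ is $\ji{q}$ with $q_{m}=\bigwedge r_{m}$, $q_{M}=\bigvee r_{M}$.
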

 \begin{proof}
   We claim first that there exists $p \in \I^{d}$ such that $\alpha \leq \ji{p}$.
  To prove the claim, we define an infinite sequence of intervals
  $I_{n} : = [p^{n},q^{n}]$, $n \geq 0$, with the following
  properties:
  \begin{enumerate}
  \item $I_{n +1} \subseteq I_{n}$, for each  $n \geq 0$,
  \item  $q^{n}_{i} - p^{n}_{i} =
    \frac{1}{2^{n}}$, for each $i \in \setof{d}$,
  \item $\alpha = \bigvee \set{\ji{r} \mid \ji{r} \leq \alpha,\,r \in I_{n}}$.
  \end{enumerate}
  Notice that the last condition implies that
  $\alpha \leq \bigvee I_{n}$.

  \smallskip
  
  We let $I_{0} :=\I$, so, for example, (3) holds by
  Corollary~\ref{cor:joinofeps}.

  Given $I_{n}$, we define $I_{n+ 1}$ as follows. For
  each $i \in \setof{d}$, let $I_{i,0} :=
  [p^{n}_{i},\frac{q^{n}_{i}+p^{n}_{i}}{2}]$ and $I_{i,1} :=
  [\frac{q^{n}_{i}+p^{n}_{i}}{2},q^{n}_{i}]$;  given a function $f :
  \setof{d} \rto \set{0,1}$, we let
  \begin{align*}
    I_{f} & := I_{1,f(1)}\times \ldots \times I_{d,f(d)} \,.
  \end{align*}
  Since
  \begin{align*}
    I_{n} & = \bigcup_{f : \setof{d} \rto \set{0,1} } I_{f} 
    \intertext{then} \alpha & = \bigvee \set{\ji{r} \mid \ji{r} \leq
      \alpha, r \in I_{n} } = \bigvee_{f : \setof{d} \rto \set{l,r} }
    \bigvee \set{\ji{r} \mid \ji{r} \leq \alpha, r \in I_{f}}\,,
  \end{align*}
  so, since $\alpha$ is \jirr, there exists $f$ such that
  \begin{align*}
    \alpha & = \bigvee \set{\ji{r} \mid
      \ji{r} \leq \alpha, r \in I_{f}}\,,
  \end{align*}
  We let then $I_{n+1} := I_{f}$.

  Let $\beta_{n} = \bigvee_{n} I_{n}$ and let $p^{\omega}$ be the
  unique element of $\bigcap_{n \geq 0} I_{n}$.  Observe that, since
  the sequences $\set{p^{n}_{i}}_{n\geq 0}$, are increasing while the
  sequences $\set{q^{n}_{i}}_{n\geq 0}$, are decreasing,
  $p^{\omega}_{i} = \bigvee_{n \geq 0} p^{\omega}_{i} = \bigwedge_{n
    \geq 0} q^{n}_{i}$.
  We verify next that
  $\bigwedge_{n \geq 0} \beta_{n} = \ji{p^{\omega}}$.
  Let $r \in \I^{d}$ be such that $r \leq \beta_{n}$ for each
  $n \geq 0$, and put $m := \mji{r}$ and $M := \Mji{r}$.  Then, for
  each $n \geq 0$, $p^{n}_{m} \leq r_{m}$,
  $r_{i} \in [p^{n}_{i},q^{n}_{i}]$, $r_{M} \leq q^{n}_{M}$.  It
  follows that $p^{\omega}_{m} \leq r_{m}$, $r_{i} = p^{\omega}_{i}$ for $i$ such that
  $m < i < M$, that is $\ji{r} \leq \ji{p^{\omega}}$.
  Since $\alpha \leq \beta_{n}$ for each $n\geq 0$ and
  $\alpha = \bigvee \set{\ji{r} \mid \ji{r} \leq \alpha}$, then
  $\alpha \leq \ji{p^{\omega}}$.  This proves our claim.

  \medskip
  Observe now that
  \begin{align*}
    \alpha & = \bigvee_{1 \leq m < M \leq d} J(\alpha,m,M)
    \intertext{where}
    J(\alpha,m,M)
    & = \set{\ji{r} \mid \bot < \ji{r}, \,\mji{r} = m, \,\Mji{r} = M}\,,
  \end{align*}
  so, since $\alpha$ is \jirr, then for some $m,M \in \set{d}$ with $m
  < M$,
  \begin{align*}
    \alpha & = \bigvee J(\alpha,m,M)\,.
  \end{align*}
  
  Observe now that if $r \in \I^{d}$ is such that $\bot < \ji{r}$ and
  $\ji{r} \in J(\alpha,m,M)$, then $\ji{r} \leq \alpha \leq \ji{p}$,
  whence by Lemma~\ref{prop:eqleqep} $r$ is of the form
  \begin{align*}
    r & = (1,\ldots ,1,r_{m},p_{m+1},\ldots ,p_{M-1},r_{M},0,\ldots,0)\,. 
  \end{align*}
  The join $\alpha = \bigvee J(\alpha,m,M)$ is then $\ji{q}$ with
  \begin{align*}
    q & = (1,\ldots ,1,\bigwedge_{e_{r} \in J(\alpha,m,M)}
    r_{m},p_{m+1},\ldots ,p_{M-1},\bigvee_{\ji{r} \in J(\alpha,m,M)}
    r_{M},0,\ldots,0)\,.
    \tag*{\qedhere}
  \end{align*}
\end{proof}

\medskip

\subsection{Lack of compact elements}
\label{subsec:lackspatials}
Let $L$ be a complete lattice. An element $j$ of $L$ is \emph{\cjirr}
if, for any $X \subseteq L$, $j = \bigvee X$ implies $j \in X$; it is
\emph{\cjp} if, for any $X \subseteq L$, $j \leq \bigvee X$ implies
$j \in x$, for some $x \in X$. Every \cjp element is also \cjirr. If
$L$ is a frame, that is, if
$x \land \bigvee Y = \bigvee_{y \in Y} x \land y$ for each $x \in L$
and $Y \subseteq L$, then the converse holds as well.

A family $\cF \subseteq L$ is
\emph{directed} if every finite (possibly empty) subset of $\cF$ has
an upper bound in $\cF$.  An element $c \in L$ is \emph{compact} if,
for every directed family $\cF \subseteq L$, $c \leq \bigvee \cF$
implies $c \leq f$ for some $f \in \cF$.

Let us remark that there are no \cjp (equivalently, \cjirr) elements
in $\QjI$. Indeed, for every prime \osf $\ji{x,y}$, we can write
$\ji{x,y} = \bigvee_{\ell \in L} \ji{x_{\ell},y_{\ell}}$ where the set
$\set{\ji{x_{\ell},y_{\ell}} \mid \ell \in L}$ is a chain and
$\ji{x_{\ell},y_{\ell}} < \ji{x,y}$, for each $\ell \in L$.
Similarly, there are no compact elements in $\LjI$. Indeed, if $f$ is
compact, then Proposition~\ref{prop:generationDim2} implies that $f$
is a finite join of \jirr elements below it, say
$f = \bigvee_{i=1,\ldots ,n}\ji{x_{i},y_{i}}$. We can assume that
$\set{\ji{x_{i},y_{i}} \mid i=1,\ldots ,n}$ is an antichain.  Now, if
$\set{ \ji{x_{1,\ell},y_{1,\ell}} \mid \ell \in L}$ is a chain
approximating strictly from below $\ji{x_{1},y_{1}}$, then
$f = \bigvee_{\ell \in L} \ji{x_{1,\ell}y_{1,\ell}} \vee
\bigvee_{i=2,\ldots ,n}\ji{x_{i},y_{i}}$, so
$f = \ji{x_{1,\ell}y_{1,\ell}} \vee \bigvee_{i=2,\ldots
  ,n}\ji{x_{i},y_{i}}$ for some $\ell \in L$. It follows that
$\ji{x_{1},y_{1}} \leq f = \ji{x_{1,\ell}y_{1,\ell}} \vee
\bigvee_{i=2,\ldots ,n}\ji{x_{i},y_{i}}$, so either
$\ji{x_{1},y_{1}} \leq \ji{x_{1,\ell}y_{1,\ell}}$, or
$\ji{x_{1},y_{1}} \leq \ji{x_{i},y_{i}}$ for some $i = 2,\ldots
,n$. In all the cases we obtain a contradiction.

For a similar reason, the lattices $\LId$ have no \cjirr
elements. Indeed, given $p \in \Id$ such that $ \bot < \ji{p}$, it is
easy to construct (using Proposition~\ref{prop:eqleqep}) a chain of
\jirr elements strictly below $\ji{p}$ whose join is $\ji{p}$.

In the rest of this section we argue that the lattices $\LId$ do not
have any compact element.

\begin{lemma}
  \label{lemma:algebraic}
  Let $\cF \subseteq \QjI^{\cd}$ be a directed family of closed
  elements. Then then join $\bigvee_{\QjI^{\cd}} \cF \in \QjI^{\cd}$ is closed.
\end{lemma}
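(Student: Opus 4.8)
The statement follows almost directly from the fact that composition distributes over arbitrary joins. First I would unfold the data: joins in $\QjI^{\cd}$ are computed coordinatewise, and joins in $\QjI$ itself are pointwise suprema (Corollary~\ref{cor:distrlattice}), so writing $g := \bigvee_{\QjI^{\cd}}\cF$ we have $g_{i,j} = \bigvee_{f \in \cF} f_{i,j}$ for each $(i,j) \in \cd$. The goal is to check that $g$ is closed, i.e. $g_{i,j} \otimes g_{j,k} \leq g_{i,k}$ for each $i < j < k$.

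Fix such a triple $i<j<k$. Since $\QjI$ is a quantale (Example~\ref{ex:jcfunctions}), $\otimes$ distributes over arbitrary joins in both variables, so
\begin{align*}
  g_{i,j} \otimes g_{j,k}
  & = \Bigl(\bigvee_{f \in \cF} f_{i,j}\Bigr) \otimes \Bigl(\bigvee_{f' \in \cF} f'_{j,k}\Bigr)
  = \bigvee_{f, f' \in \cF} f_{i,j} \otimes f'_{j,k}\,.
\end{align*}
Now I would invoke directedness: given $f,f' \in \cF$, pick $h \in \cF$ with $f \leq h$ and $f' \leq h$ (in the coordinatewise order of $\QjI^{\cd}$). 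Then, using monotonicity of $\otimes$ and the fact that $h$ is closed,
\begin{align*}
  f_{i,j} \otimes f'_{j,k} \leq h_{i,j} \otimes h_{j,k} \leq h_{i,k} \leq g_{i,k}\,.
\end{align*}
Taking the join over all $f,f' \in \cF$ yields $g_{i,j} \otimes g_{j,k} \leq g_{i,k}$, as required.

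There is no real obstacle here; the only point to be careful about is that the displayed distributivity is exactly the quantale axiom already verified in Example~\ref{ex:jcfunctions}, and that directedness is genuinely needed: it is precisely what lets us replace the ``double-indexed'' join $\bigvee_{f,f'} f_{i,j} \otimes f'_{j,k}$, which a priori involves incomparable pairs, by a join bounded coordinatewise by a single closed member of $\cF$. (The dual statement for open tuples, meets, and $\oplus$ holds by the auto-duality of the setting, though it is not needed here.)
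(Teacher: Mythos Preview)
Your proof is correct and follows essentially the same argument as the paper's own proof: expand $g_{i,j}\otimes g_{j,k}$ as a double join using distributivity of $\otimes$ over arbitrary joins, then use directedness of $\cF$ to dominate each mixed term $f_{i,j}\otimes f'_{j,k}$ by $h_{i,j}\otimes h_{j,k}\leq h_{i,k}$ for a common upper bound $h\in\cF$. The paper presents this as a single displayed chain of (in)equalities, but the content is identical.
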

\begin{proof}
  A straightforward verification:
  \begin{align*}
    (\bigvee_{f \in \cF} f)_{j,k} \circ (\bigvee_{f \in \cF} f)_{i,j}
    & = \bigvee_{f,g \in \cF} g_{j,k} \circ f_{i,j}
    \\
    & \leq \bigvee_{h \in \cF} h_{j,k} \circ h_{i,j}\,,
    \tag*{using the fact that $\cF$ is directed,} \\
    & \leq \bigvee_{h \in \cF} h_{i,k} = (\,{\bigvee\textstyle{{\!\!}_{\QjI^{\cd}}}} \cF\,)_{i,k}\,. 
    \tag*{\qedhere}
  \end{align*}
\end{proof}
\begin{proposition}
  The lattice $\LId$ has no compact elements.
\end{proposition}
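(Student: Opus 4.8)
The plan is to show that every compact $c\in\LId$ equals $\bot$. First I would reduce to a finite join of \jirr{s}: by Corollary~\ref{cor:joinofeps}, $c=\bigvee_{\LId}\set{\ji{p}\mid \ji{p}\leq c}$, and closing this family under finite joins in $\LId$ gives a directed family with join $c$, so compactness yields an \emph{irredundant} representation $c=\ji{p^{1}}\vee_{\LId}\cdots\vee_{\LId}\ji{p^{n}}$ with all $\ji{p^{i}}\neq\bot$ --- and if no $\ji{p}\leq c$ is nonzero, then already $c=\bot$. Here irredundancy gives in particular $\ji{p^{1}}\not\leq \gamma'$, where $\gamma':=\bigvee_{\LId}\set{\ji{p^{i}}\mid i\geq 2}$. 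Alongside this I would record, from Lemma~\ref{lemma:algebraic} together with the fact that a join of open tuples is open, that a directed join of clopen tuples is clopen and is computed coordinate-wise in $\PrLI$; hence directed joins in $\LId$ agree with those of $\PrLI$, so the compactness of $c$ may be tested against directed families of clopen tuples ordered pointwise.

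Next I would locate a \emph{bad coordinate}, i.e.\ a pair $(i,j)\in\cd$ with $\ji{p^{1}_{i},p^{1}_{j}}\not\leq \gamma'_{i,j}$; irredundancy forces at least one to exist (otherwise $\ji{p^{1}}\leq\gamma'$ in $\PrLI$, hence in $\LId$). Put $\mu:=\mji{p^{1}}$. Using the elementary identity $\opp{\ji{a,b}}\otimes\ji{a,c}=\ji{b,c}$ in $\QjI$ (valid whenever $a<1$), the residuation~\eqref{eq:residuationoplus}, and openness of $\gamma'$, I claim that if $(i^{*},j^{*})$ is a bad coordinate with $i^{*}$ minimal then $i^{*}=\mu$: if $\mu<i^{*}$ then $(\mu,i^{*})$ and $(\mu,j^{*})$ are not bad, openness of $\gamma'$ at $(\mu,i^{*},j^{*})$ gives $\opp{\gamma'_{\mu,i^{*}}}\otimes\gamma'_{\mu,j^{*}}\leq\gamma'_{i^{*},j^{*}}$, and monotonicity of $\otimes$ combined with the identity yields $\ji{p^{1}_{i^{*}},p^{1}_{j^{*}}}=\opp{\ji{p^{1}_{\mu},p^{1}_{i^{*}}}}\otimes\ji{p^{1}_{\mu},p^{1}_{j^{*}}}\leq\gamma'_{i^{*},j^{*}}$, contradicting badness. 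So fix a bad coordinate $(\mu,j^{*})$. For $s\in(p^{1}_{\mu},1)$ let $r^{(s)}\in\I^{d}$ agree with $p^{1}$ except that $r^{(s)}_{\mu}=s$; by Propositions~\ref{prop:clopen}, \ref{prop:eqleqep}, \ref{prop:jipji} and Corollary~\ref{cor:orderonjp}, each $\ji{r^{(s)}}$ is a nonzero \jirr strictly below $\ji{p^{1}}$, $\set{\ji{r^{(s)}}}_{s}$ is a chain, and its join in $\LId$ is $\ji{p^{1}}$ (the pointwise join is already clopen). Then $\set{\ji{r^{(s)}}\vee_{\LId}\gamma'}_{s}$ is directed with join $\ji{p^{1}}\vee_{\LId}\gamma'=c$, so compactness forces $\ji{p^{1}}\leq \ji{r^{(s_{0})}}\vee_{\LId}\gamma'$ for some $s_{0}$; when $n=1$ this already contradicts $\ji{r^{(s_{0})}}<\ji{p^{1}}$.

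For $n\geq 2$ I would evaluate this inequality at the coordinate $(\mu,j^{*})$: expanding $\closure{\bigl(\ji{r^{(s_{0})}}\vee\bigvee_{i\geq 2}\ji{p^{i}}\bigr)}_{\mu,j^{*}}$ via Lemma~\ref{lemma:ClosureInterior}, distributivity of composition over joins, and Lemma~\ref{lemma:compJI}, it is a finite join of step functions $\ji{q^{a}_{\mu},q^{b}_{j^{*}}}$ (writing $q^{0}=r^{(s_{0})}$ and $q^{i}=p^{i}$), so by join-primeness of $\ji{p^{1}_{\mu},p^{1}_{j^{*}}}$ in $\QjI$ (Proposition~\ref{prop:join-prime}) it lies below one such nonzero term; this forces $q^{a}_{\mu}\leq p^{1}_{\mu}<s_{0}=r^{(s_{0})}_{\mu}$, hence $a\neq 0$, i.e.\ the first edge of the witnessing subdivision path $\mu=\ell_{0}<\cdots<\ell_{r}=j^{*}$ is supplied by some $\ji{p^{a}}$ with $a\geq 2$. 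The hard part --- which I expect to be the main obstacle --- is turning this into a contradiction with irredundancy. Since $\LId$ is \emph{not} distributive, $\ji{p^{1}}$ need not be join-prime in $\LId$ (for instance, when $d=3$, $\ji{(0,\tfrac12,1)}\leq\ji{(0,0,1)}\vee_{\LId}\ji{(0,\tfrac12,0)}$ although these three \jirr{s} are pairwise incomparable), so one cannot simply split $\ji{p^{1}}\leq\ji{r^{(s_{0})}}\vee_{\LId}\gamma'$ into $\ji{p^{1}}\leq\ji{r^{(s_{0})}}$ or $\ji{p^{1}}\leq\ji{p^{i}}$. Instead one must analyse the whole witnessing path: its edges that use the index $0$ lie at coordinates strictly to the right of $\mu$ (only $p^{1}_{\mu}$ was perturbed) and hence are edges of $\ji{p^{1}}$ itself, and --- choosing the bad coordinate $(\mu,j^{*})$ with minimal span and re-routing each such edge through $\gamma'$ whenever it sits at a non-bad coordinate, splicing in $\gamma'$-paths by join-primeness in $\QjI$ and Proposition~\ref{prop:eqleqep} to control the matching conditions --- one should rewrite the path using only indices $\geq 2$, obtaining $\ji{p^{1}_{\mu},p^{1}_{j^{*}}}\leq\gamma'_{\mu,j^{*}}$ in contradiction with the badness of $(\mu,j^{*})$.
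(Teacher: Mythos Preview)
Your overall architecture is the paper's: write $c$ as a finite join of $\ji{p^{i}}$'s, perturb one of them along a single coordinate to produce a directed family with join $c$, and derive a contradiction by analysing the closure formula coordinate-wise using Lemma~\ref{lemma:compJI} and join-primeness in $\QjI$.  You also correctly isolate (via Lemma~\ref{lemma:algebraic}) that directed joins of clopens are computed pointwise, and you correctly compute the identity $\opp{\ji{a,b}}\otimes\ji{a,c}=\ji{b,c}$.

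The gap is the ``re-routing'' step, which you yourself flag as the main obstacle.  From $\ji{p^{1}_{\mu},p^{1}_{j^{*}}}\leq \ji{q^{i_{0}}_{\mu},q^{i_{r-1}}_{j^{*}}}$ you obtain $i_{0}\neq 0$, but intermediate edges of the witnessing subdivision may still carry the index $0$.  Your splicing idea works at an edge $(\ell_{u},\ell_{u+1})$ only if that coordinate is \emph{not} bad; your minimality hypotheses (minimal first index, then minimal span among those starting at $\mu$) do not exclude a bad coordinate of the form $(\ell_{u},j^{*})$ with $\mu<\ell_{u}$, and in that case the splice is unavailable.  The sketch ``re-route whenever non-bad, control matching conditions via Proposition~\ref{prop:eqleqep}'' does not close this case, so the argument as written is incomplete.

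The paper avoids this difficulty by a different choice of the finite family.  Instead of an arbitrary irredundant decomposition, it uses Corollary~\ref{cor:joinofeps} in the refined form coming from Lemma~\ref{lemma:approx}: the $\ji{p^{i}}$ are taken from the canonical set $\set{\ji{p(c,x,m,M)}}$, where $p(c,x,m,M)_{m}=x$ and $p(c,x,m,M)_{i}=\Meetof{c_{m,i}}(x)$ for $m<i\leq M$.  One then picks $p^{1}$ with $m:=\mji{p^{1}}$ minimal and, subject to that, $p^{1}_{m}$ minimal; the coordinate inspected is simply $(m,M)$ with $M:=\Mji{p^{1}}$.  The same closure analysis yields $q^{i_{0}}_{m}\leq p^{1}_{m}$ with $i_{0}\neq 1$; the two minimalities force $\mji{p^{i_{0}}}=m$ and $p^{i_{0}}_{m}=p^{1}_{m}$, and then the rigidity of the family $p(c,x,m,M)$ gives $p^{i_{0}}_{i}=\Meetof{c_{m,i}}(p^{i_{0}}_{m})=\Meetof{c_{m,i}}(p^{1}_{m})=p^{1}_{i}$ on the common range, so $p^{1}$ and $p^{i_{0}}$ are comparable---contradicting the antichain hypothesis.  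No re-routing or bad-coordinate bookkeeping is needed; the whole burden is shifted to the specific shape of the generators provided by Lemma~\ref{lemma:approx}.
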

\begin{proof}
  Suppose $c \in \LId$ is a compact element.
  Recall from  Lemma~\ref{lemma:approx} and
  Corollary~\ref{cor:joinofeps} that we can write
  \begin{align*}
    c & = \bigvee_{x \in \I} \bigvee_{1 \leq m < M \leq d}
    \ji{p(c,x,m,M)}\,,
  \end{align*}
  where $p:=p(c,x,m,M)$ is such that $m = \mji{p}$, $M =
  \Mji{p}$, $p_{m} = x$ and $p_{i} = \Meetof{c_{m,i}}(x)$, for $i =
  m+1,\ldots ,M$.  Since $c$ is compact, there exists a finite set $P
  \subseteq \set{p(c,x,m,M) \mid x \in \I, 1 \leq m < M \leq
    d}$ such that $c = \bigvee \set{\ji{p} \mid p\in
    P}$ and we can suppose that $P$ is an antichain.  Let $p^{1} \in{
    P} $ be such that $m := \mji{p^{1}}$ is minimal in $\set{\mji{p}
    \mid p \in P}$ and such that $p^{1}_{m}$ is minimal in $\set{p_{m}
    \mid p \in P, \mji{p} = m}$.  Let therefore $\set{p_{1},\ldots
    ,p_{n}} := P$.

  \begin{claim}
    For each $x \in \I$ such that $p^{1}_{m} < x \leq 1$, define
    $p^{1}_{x}$ by $p^{1}_{x,m} := x$ and $p^{1}_{x,i} := p^{1}_{i}$
    for $i \neq m$. Then
    $\ji{p^{1}_{x}} \vee \bigvee_{i=2,\ldots ,n} \ji{p^{i}} <
    \bigvee_{i=1,\ldots ,n} \ji{p^{i}}$.
  \end{claim}
  To ease reading of the proof of the claim, let $q_{1} := p^{1}_{x}$
  and let also $q^{i} := p^{i}$, for $i = 2,\ldots ,n$; notice that
  $q^{1} < p^{1}$.  
  Suppose
  $\bigvee_{i=1,\ldots ,n} \ji{q^{i}} < \bigvee_{i=1,\ldots ,n}
  \ji{p^{i}}$ does not hold. Then
  $\ji{p^{1}} \leq c = \bigvee_{i=1,\ldots ,n} \ji{q^{i}}$; by the
  formula for the join in equation \eqref{eq:defJoinAndMeet} and by
  Lemma~\ref{lemma:ClosureInterior}, there exists a subdivision
  $m = \ell_{0} < \ell_{1} < \ldots < \ell_{k} = M$ of the interval
  $[m,M]$ such that
  \begin{align*}
    \ji{p^{1}_{m},p^{1}_{M}}& \leq (\bigvee_{i=1,\ldots, n}
    \ji{q^{i}_{\ell_{k-1}},q^{i}_{\ell_{k}}}) \circ \ldots \circ
      (\bigvee_{i=1,\ldots, n}
      \ji{q^{i}_{\ell_{0}},q^{i}_{\ell_{1}}})\,.
    \end{align*}
    Considering that composition distributes over joins and that
    $\ji{p_{m},p_{M}}$ is \jirr in $\QjI$, for each
    $u \in \set{0,\ldots ,k-1}$, there exists
    $i_{u} \in \set{1,\ldots ,n}$ such that
    \begin{align*}
      \ji{p^{1}_{m},p^{1}_{M}}& \leq
      \ji{q^{i_{k-1}}_{\ell_{k-1}},q^{i_{k-1}}_{\ell_{k}}} \circ
      \ldots \circ \ji{q^{i_{0}}_{\ell_{0}},q^{i_{0}}_{\ell_{1}}}\,.
    \end{align*}
    By Lemma~\ref{lemma:compJI} and since
    $\bot \neq \ji{p_{m},p_{M}}$, the expression above on the right equals
    to
    $\ji{q^{i_{0}}_{\ell_{0}},q^{i_{k-1}}_{\ell_{k}}} =
    \ji{q^{i_{0}}_{m},q^{i_{k-1}}_{M}}$, so
    \begin{align*}
      \ji{p^{1}_{m},p^{1}_{M}}& \leq  \ji{q^{i_{0}}_{m},q^{i_{k-1}}_{M}}\,,
    \end{align*}
    which, by Corollary~\ref{cor:orderonjp}, amounts to
    $q^{i_{0}}_{m} \leq p^{1}_{m}$ and
    $p^{1}_{M} \leq q^{i_{k-1}}_{M}$. Since
    $p^{1}_{m} < x = q^{1}_{m}$, $i_{0} \neq 1$.  It also implies that
    $\mji{p^{i_{0}}} = \mji{q^{i_{0}}} \leq \mji{p^{1}} = m$ and,
    considering the minimality of $\mji{p^{1}}$,
    $\mji{p^{i_{0}}} = \mji{p^{1}} = m$. Since $p^{1}_{m}$ is minimal
    among elements of the form $p^{i}_{m}$, $i = 1,\ldots ,n$, we also
    infer that $p^{i_{0}}_{m} = p^{1}_{m}$.  Yet, this implies that,
    for $i = m+1,\ldots , \min(\Mji{p^{1}},\Mji{p^{i_{0}}})$,
    \begin{align*}
      p^{1}_{i} &  = \MeetOf{c_{m,i}}(p^{1}_{m}) = \MeetOf{c_{m,i}}(p^{1}_{m}) =
      p^{i_{0}}_{i}\,.
    \end{align*}
    Using Proposition~\ref{prop:eqleqep}, it immediately follows that
    $p^{1}$ and $p^{i_{0}}$ are comparable, contradicting the
    assumption that $P$ is an antichain. This ends the proof of the
    Claim.

    \medskip
    Clearly, the following relations holds in $\LId$:
    \begin{align*}
      \bigvee_{1 \geq x > p^{1}_{m}} (\,\ji{p^{1}_{x}} \vee
      \bigvee_{i=2,\ldots ,n} \ji{p^{i}}\,) & \leq c \,.
    \end{align*}
    Let us argue that also the converse inclusion holds.  Within
    $\QjI^{\cd}$, the following relation holds:
    \begin{align*}
      \bigvee_{i = 1,\ldots ,n} \ji{p^{i}} & \leq \bigvee_{1 \geq x >
        p^{1}_{m}} (\,\ji{p^{1}_{x}} \vee \bigvee_{i=2,\ldots ,n} \ji{p^{i}}\,)\,.
    \end{align*}
    Taking the closure, we have
    \begin{align*}
      \closure{\bigvee_{i = 1,\ldots ,n} \ji{p^{i}}} \ & \leq \closure{\bigvee_{1 \geq x >
        p^{1}_{m}} (\,\ji{p^{1}_{x}} \vee \bigvee_{i=2,\ldots ,n}
      \ji{p^{i}}\,)}
    = \bigvee_{1 \geq x >
        p^{1}_{m}} \closure{(\,\ji{p^{1}_{x}} \vee \bigvee_{i=2,\ldots ,n}
      \ji{p^{i}}\,)}
    \,.
  \end{align*}
  where in the last equality we have used the fact that
  $\set{\closure{\ji{p^{1}_{x}} \vee \bigvee_{i=2,\ldots ,n}
      \ji{p^{i}}} \mid 1 \geq x > p_{m}}$ is a directed set and
  Lemma~\ref{lemma:algebraic}.

  Thus $c$ is, within $\LId$, an infinite join of a chain of elements
  that are strictly below it. This contradicts $c$ being compact.
\end{proof}

\section{Embeddings from multinomial lattices}
\label{sec:embeddings}

The goal of this section is study how multinomial lattices
\cite{BB,ORDER-24-3} embed into the lattices $\LId$. We proceed by
arguing that these lattices are of the form $\Ld{Q}$ for some \msaq
$Q$ such that $Q$ embeds, as an \lbs, into $\Qj(\I)$. By functoriality
(Proposition~\ref{prop:functor}), it follows that $\Ld{Q}$ embeds into
$\LId = \Ld{\Qj(\I)}$.

\medskip

In the following let $I_{0},I_{1}$ be two complete chains and let
$\iota : I_{0} \rto I_{1}$ be a \bc 
embedding (thus we ask that $\iota$ preserves all joins and all meets;
in particular, $\iota$ preserves the bounds of the chains). Then
$\iota$ has both a \LADJ $\ell : I_{1} \rto I_{0}$ and a \RADJ
$\rho : I_{1} \rto I_{0}$. It useful, e.g. when $I_{0}$ is finite, to
think of $\ell$ as the ceiling function and of $\rho$ as the floor
function.
\begin{lemma}
  \label{lemma:iotalellrho}
  The following statements hold:
  \begin{itemize}
  \item $\ell \circ \iota = \rho \circ \iota = id_{I_{0}}$,
  \item $\rho \leq \ell$,
  \item if $y \in I_{1}$ is such that $\rho(y) = \ell(y)$, then
    $y = \iota(x)$, with $x = \ell(y) \in I_{0}$.
  \end{itemize}
\end{lemma}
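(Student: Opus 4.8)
The plan is to work purely from the two adjunctions $\ell \dashv \iota$ and $\iota \dashv \rho$, that is, from the defining equivalences
\begin{align*}
  \ell(y) \leq x \quad & \Longleftrightarrow \quad y \leq \iota(x)\,, \\
  \iota(x) \leq y \quad & \Longleftrightarrow \quad x \leq \rho(y)\,,
\end{align*}
together with the one extra fact that $\iota$, being an embedding, reflects the order: $\iota(x) \leq \iota(x')$ implies $x \leq x'$. Recall that $\ell$ and $\rho$ exist precisely because $\iota$ is \mcont, resp. \jcont, by the lemma characterizing \jcont/\mcont maps via the existence of adjoints.

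For the first item I would fix $x \in I_{0}$ and prove $\ell(\iota(x)) = x$ by antisymmetry. From $\iota(x) \leq \iota(x)$ and $\ell \dashv \iota$ one gets $\ell(\iota(x)) \leq x$; from $\ell(\iota(x)) \leq \ell(\iota(x))$ and $\ell \dashv \iota$ one gets $\iota(x) \leq \iota(\ell(\iota(x)))$, and reflecting the order along $\iota$ gives $x \leq \ell(\iota(x))$. The identity $\rho(\iota(x)) = x$ follows by the mirror argument using $\iota \dashv \rho$: $\iota(x) \leq \iota(x)$ yields $x \leq \rho(\iota(x))$, while the counit inequality $\iota(\rho(\iota(x))) \leq \iota(x)$ reflects to $\rho(\iota(x)) \leq x$.

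For the remaining two items the key observation is a single chain of inequalities, valid for every $y \in I_{1}$:
\begin{equation*}
  \iota(\rho(y)) \leq y \leq \iota(\ell(y))\,,
\end{equation*}
where the left inequality is the counit of $\iota \dashv \rho$ (instantiate the second equivalence at $x = \rho(y)$) and the right inequality is the unit of $\ell \dashv \iota$ (instantiate the first equivalence at $x = \ell(y)$). Since $\iota$ reflects the order, $\iota(\rho(y)) \leq \iota(\ell(y))$ forces $\rho(y) \leq \ell(y)$, which is the second item. For the third, if $\rho(y) = \ell(y) =: x$, then the above chain collapses to $\iota(x) \leq y \leq \iota(x)$, hence $y = \iota(x)$ with $x = \ell(y) \in I_{0}$, as claimed.

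There is no genuine obstacle here: everything is forced by the two adjunctions and by order-reflection of $\iota$. The only thing that needs care is the bookkeeping — remembering that $\ell$ is the \emph{left} and $\rho$ the \emph{right} adjoint of $\iota$, and correspondingly which of the composite inequalities $\iota\rho \leq \mathrm{id}$ and $\mathrm{id} \leq \iota\ell$ is a counit and which is a unit. The completeness and chain hypotheses on $I_{0}, I_{1}$ play no role beyond guaranteeing the existence of $\ell$ and $\rho$.
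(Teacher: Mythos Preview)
Your proof is correct and uses the same underlying ingredients as the paper --- the unit/counit inequalities of the two adjunctions together with the fact that $\iota$, being an embedding, reflects the order. The organization, however, is genuinely cleaner than the paper's for items two and three. The paper argues by showing that $\ell(y) \leq \rho(y)$ forces $\ell(y) = \rho(y)$ and $y = \iota(\ell(y))$, and then concludes $\rho \leq \ell$ by a case split: either $\rho(y) = \ell(y)$, or $\rho(y) \neq \ell(y)$, in which case $\ell(y) \leq \rho(y)$ is impossible, so (since $I_{0}$ is a chain) $\rho(y) < \ell(y)$. Your route --- deriving $\iota(\rho(y)) \leq y \leq \iota(\ell(y))$ once and then reflecting --- avoids this detour entirely and, as you note, does not use the chain hypothesis on $I_{0}$ at all. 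So your argument is both shorter and slightly more general; the paper's version leans on the total order where it is not needed.
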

\begin{proof}
  By standard laws of adjunctions, $\iota(x)= \iota(\ell(\iota(x))$,
  for each $x \in I_{0}$. Since $\iota$ is an embedding, we deduce
  $x = \ell(\iota(x))$. The equality $x = \rho(\iota(x))$ is proved
  similarly.
  
  Let now $y \in I_{1}$ and suppose that $\ell(y) \leq \rho(y)$, then
  we have $y \leq \iota(\ell(y))$ as unit of the adjunction,
  $\iota(\ell(y)) \leq \iota(\rho(y))$ and $\iota(\rho(y)) \leq y$ as
  counit of the adjunction. Therefore
  $y = \iota(\ell(y)) = \iota(\rho(y))$ and $\ell(y) = \rho(x)$, since
  $\iota$ is an embedding.

  From this it follows that, for $y \in I_{1}$, then either
  $\rho(y) = \ell(y)$, in which case $y = \iota(\ell(y))$, or
  $\rho(y) \neq \ell(y)$, in which case we cannot have
  $\ell(y) \leq \rho(y)$, so $\rho(y) < \ell(y)$.
\end{proof}
\begin{lemma}
  If
  $\ell(y) < x$, then $y < \iota(x)$.
\end{lemma}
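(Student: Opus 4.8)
The plan is to derive this directly from the adjunction $\ell \dashv \iota$ together with the first bullet of Lemma~\ref{lemma:iotalellrho}. Since the statement only asserts a strict inequality, the work splits into two parts: first getting the non-strict inequality $y \leq \iota(x)$, then ruling out equality.

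First I would observe that $\ell(y) < x$ implies in particular $\ell(y) \leq x$, and by transposing along the adjunction $\ell \dashv \iota$ (the defining property $\ell(y) \leq x \iff y \leq \iota(x)$), this gives $y \leq \iota(x)$. Then, to upgrade this to a strict inequality, I would argue by contradiction: suppose $y = \iota(x)$. Applying $\ell$ and using $\ell \circ \iota = id_{I_{0}}$ from Lemma~\ref{lemma:iotalellrho}, we would get $\ell(y) = \ell(\iota(x)) = x$, contradicting the hypothesis $\ell(y) < x$. Hence $y \neq \iota(x)$, and together with $y \leq \iota(x)$ this yields $y < \iota(x)$.

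There is essentially no obstacle here; the only point requiring a little care is remembering that $\iota$ being an order embedding (equivalently, $\ell \circ \iota = id$) is exactly what forbids $y = \iota(x)$ when $\ell(y)$ is strictly below $x$. The dual statement for $\rho$ would be proved symmetrically, but is not needed for this lemma.
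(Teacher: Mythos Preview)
Your proof is correct and follows essentially the same approach as the paper: both first obtain $y \leq \iota(x)$ from the adjunction $\ell \dashv \iota$, then rule out equality by contradiction. The only minor difference is in how the contradiction is derived: you apply $\ell$ to $y = \iota(x)$ and use $\ell \circ \iota = id$ to get $\ell(y) = x$, whereas the paper instead transposes $\iota(x) \leq y$ along $\iota \dashv \rho$ to get $x \leq \rho(y)$, whence $\ell(y) < \rho(y)$, contradicting $\rho \leq \ell$. Your route is slightly more direct, needing only the first bullet of Lemma~\ref{lemma:iotalellrho} rather than the second.
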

\begin{proof}
  Assume $\ell(y) < x$. From $\ell(y) \leq x$ we deduce
  $y \leq \iota(x)$. If the latter inclusion is not strict, then
  $\iota(x) \leq y$ and $x \leq \rho(y)$, so
  $\ell(y) < x \leq \rho(y)$ yields the relation $\ell(y) < \rho(y)$,
  which contradicts $\rho \leq \ell$ established in
  Lemma~\ref{lemma:iotalellrho}.
\end{proof}

For each monotone $f : I_{0} \rto I_{0}$, define
$\Ri(f) : I_{1} \rto I_{1}$ by the formula
\begin{align*}
  \Ri(f) & := \iota \circ f \circ \ell \,. 
\end{align*}
Figure~\ref{fig:KanExt} gives some hints on the geometric meaning of
the correspondence $f \mapsto \Ri(f)$.  In the figure we have
$I_{0} = \I_{4} = \set{0,1,2,3,4,5}$, $I_{1} = \I$, and $f(0) = 0$,
$f(1) = f(2) = 2$, $f(3) = f(4) = 3$. In some sense, this
correspondence the responsible for representing \jc functions from
some $\I_{n}$ to itself as discrete paths in the plane.  In the
figure, the graph of the function $\Ri(f)$ (in blue) is completed with
the vertical intervals (in red), so to yield the path $C_{f}$,
similarly to what we have done in Figure~\ref{fig:Cf}. From the figure
it should also be clarified the recipe
$\Ri(f)(x) = (\iota \circ f \circ \ell)(x)$: give to $x$ the same
value of its ceiling $\ell(x)$ and then inject back this value back
into $\I$ using $\iota$.
\begin{figure}
  \centering
  \begin{tikzpicture}
    \draw[step=1cm,gray,very thin] (0,0) grid (4,4);
    \bpoint{0,0};
    \bpoint{1,2};
    \bpoint{2,2};
    \bpoint{3,3};
    \bpoint{4,3};
  \end{tikzpicture}
  \raisebox{2cm}{$\qquad\mapsto\qquad$}
  \begin{tikzpicture}
    \draw[step=1cm,gray,very thin] (0,0) grid (4,4);
    \lpoint{0,0};
    \lpoint{1,2};
    \lpoint{2,2};
    \lpoint{3,3};
    \lpoint{4,3};
    \draw [thin,red] (0,0.05) -- (0,2) ; 
    \draw [thick,blue] (0.05,2) -- (2,2) ;  
    \draw [thin,red] (2,2.05) -- (2,3) ;
    \draw [thick,blue] (2.05,3) -- (4,3) ;  
    \draw [thin,red] (4,3.05) -- (4,4) ;
  \end{tikzpicture}
  \caption{The correspondence sending $f$ to $\Ri(f)$}
  \label{fig:KanExt}
\end{figure}
\begin{lemma}
  \label{lemma:Kanextension}
  For each monotone $h : I_{1} \rto I_{1}$,
  $h \circ i \leq \iota \circ f$ if and only if $h \leq \Ri(f)$.  That
  is, $\Ri(f)$ is the right Kan extension of
  $\iota \circ f : I_{0} \rto I_{1}$ along $\iota : I_{0} \rto I_{1}$.
\end{lemma}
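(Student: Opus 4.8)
The plan is to verify the two inequalities directly, using only the basic facts about the adjunction $\ell \dashv \iota$ recorded in Lemma~\ref{lemma:iotalellrho}; no further structure of the chains $I_{0},I_{1}$ is needed, so the statement is essentially formal.

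For the implication $h \leq \Ri(f) \Rightarrow h \circ \iota \leq \iota \circ f$, I would evaluate at an arbitrary $x \in I_{0}$: from $h \leq \iota \circ f \circ \ell$ we get $h(\iota(x)) \leq \iota(f(\ell(\iota(x))))$, and since $\ell \circ \iota = id_{I_{0}}$ by Lemma~\ref{lemma:iotalellrho}, the right-hand side equals $\iota(f(x))$. Hence $h \circ \iota \leq \iota \circ f$.

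For the converse, assume $h \circ \iota \leq \iota \circ f$ and fix $y \in I_{1}$. Using the unit $y \leq \iota(\ell(y))$ of the adjunction $\ell \dashv \iota$ together with the monotonicity of $h$, we obtain $h(y) \leq h(\iota(\ell(y)))$; applying the hypothesis at the point $\ell(y) \in I_{0}$ gives $h(\iota(\ell(y))) \leq \iota(f(\ell(y))) = \Ri(f)(y)$. Combining the two, $h(y) \leq \Ri(f)(y)$ for every $y \in I_{1}$, i.e. $h \leq \Ri(f)$.

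Having both inequalities, the defining universal property of the right Kan extension of $\iota \circ f$ along $\iota$ holds, which justifies the terminology. There is no genuine obstacle here — the only point requiring care is to use the \emph{left} adjoint $\ell$ (and the unit $y \leq \iota(\ell(y))$, the counit $\ell \circ \iota = id_{I_{0}}$) in the right places, both of which are supplied by the preceding lemma.
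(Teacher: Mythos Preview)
Your proof is correct and follows essentially the same approach as the paper: both directions use the adjunction $\ell \dashv \iota$ in the same way, namely $\ell \circ \iota = id_{I_{0}}$ for the forward implication and the unit $id_{I_{1}} \leq \iota \circ \ell$ together with monotonicity of $h$ for the converse. The only difference is cosmetic: you argue pointwise while the paper writes the same inequalities compositionally as $\Ri(f) \circ \iota = \iota \circ f$ and $h \leq h \circ \iota \circ \ell \leq \iota \circ f \circ \ell$.
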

\begin{proof}
  Indeed, observe that
  $\Ri(f) \circ \iota = \iota \circ f \circ \ell \circ \iota = \iota
  \circ f$. 
  Next, if $h \circ \iota \leq \iota \circ f$, then
  $h \leq h \circ \iota \circ \ell \leq \iota \circ f \circ \ell = \Ri(f)$.
\end{proof}

\begin{proposition}
  $\Ri$ is injective and restricts to a map from $\Lj(I_{0})$ to
  $\Lj(I_{1})$.
\end{proposition}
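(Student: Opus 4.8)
The plan is to treat the two assertions separately; both are short and rest on facts already in hand. For injectivity, I would exploit the identity $\ell \circ \iota = id_{I_{0}}$ from Lemma~\ref{lemma:iotalellrho}: if $\Ri(f) = \Ri(g)$, that is $\iota \circ f \circ \ell = \iota \circ g \circ \ell$, then precomposing both sides with $\iota$ gives $\iota \circ f = \iota \circ (f \circ \ell \circ \iota) = \iota \circ (g \circ \ell \circ \iota) = \iota \circ g$, and since $\iota$ is an embedding (hence injective) we conclude $f = g$.

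For the second assertion, I would argue that $\Ri(f)$ is \jcont whenever $f$ is, by exhibiting it as a composite of \jcont maps. Indeed $\ell$, which is \LADJ to $\iota$, preserves all joins and is therefore \jcont by the characterization of \jcont functions between complete posets; $\iota$ is \jcont by hypothesis (it is even \bcont); and $f \in \Lj(I_{0})$ is \jcont by assumption. Since $I_{0}$ and $I_{1}$ are complete and a composite of \jcont maps between complete posets is again \jcont, it follows that $\Ri(f) = \iota \circ f \circ \ell$ lies in $\Lj(I_{1})$. If an explicit verification is preferred: for $X \subseteq I_{1}$ one computes $\Ri(f)(\bigvee X) = \iota\bigl(f\bigl(\ell(\bigvee X)\bigr)\bigr) = \iota\bigl(f\bigl(\bigvee_{x \in X}\ell(x)\bigr)\bigr) = \iota\bigl(\bigvee_{x \in X} f(\ell(x))\bigr) = \bigvee_{x \in X}\Ri(f)(x)$, the three non-trivial equalities using join-continuity of $\ell$, of $f$, and of $\iota$ respectively, and monotonicity of $\Ri(f)$ being clear since all three factors are monotone.

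I do not expect a genuine obstacle here; the statement is essentially bookkeeping around the adjunction. The single point that needs attention is that $\Ri$ is built from the \emph{left} adjoint $\ell$, not the right adjoint $\rho$: this is precisely what makes all three factors of $\iota \circ f \circ \ell$ join-continuous, so that the composite remains in $\Lj(I_{1})$. (Dually, replacing $\ell$ by $\rho$ would produce the corresponding map on \mcont functions, which is the symmetric construction one would pair with it.)
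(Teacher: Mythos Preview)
Your proof is correct and follows essentially the same approach as the paper's: injectivity because $\iota$ is monic and $\ell$ is (split) epic, and preservation of $\Lj$ because $\Ri(f)$ is a composite of three \jcont maps. Your version simply spells out in detail what the paper states in a single sentence each.
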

\begin{proof}
  $\Ri$ is injective since $\iota$ is monic and $\ell$ is epic. For
  the second statement, notice that if $f \in \Lj(I_{0})$, then
  $\Ri(f) \in \Lj(I_{1})$, since $\Ri(f)$ is the composition of three
  \jc maps.
\end{proof}
We shall observe next that $\Ri$ preserves part of the structure of
$\Lj(I_{0})$, $\otimes,\oppfun,\oplus$, as well as finite meets and
infinite joins. On the other hand, it is easily seen that units are
only semi-preserved.

\begin{proposition}
  For each $f,g \in \Qj(I_{0})$,
  the following relation holds
  \label{prop:tensorpreserved}
  \begin{align*}
    \Ri(f \otimes g) & = \Ri(f) \otimes \Ri(g)\,, 
    \qquad \Opp{\Ri(f)}  = \Ri(\Opp{f})\,,
    \intertext{and, consequently,}
    \Ri(f \oplus g) & = \Ri(f) \oplus \Ri(g)\,.
  \end{align*}
\end{proposition}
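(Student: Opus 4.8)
The plan is to treat the three identities in order. The identity for $\otimes$ is immediate: since $f \otimes g = g \circ f$ in $\Qj(I_{0})$,
\begin{align*}
  \Ri(f) \otimes \Ri(g) & = (\iota \circ g \circ \ell) \circ (\iota \circ f \circ \ell)
  = \iota \circ g \circ (\ell \circ \iota) \circ f \circ \ell
  = \iota \circ g \circ f \circ \ell = \Ri(f \otimes g)\,,
\end{align*}
the third equality being the relation $\ell \circ \iota = id_{I_{0}}$ of Lemma~\ref{lemma:iotalellrho}. Granting the identity $\Opp{\Ri(h)} = \Ri(\Opp{h})$, the identity for $\oplus$ follows at once from the definition $f \oplus g := \Opp{(\Opp{g} \otimes \Opp{f})}$, by applying $\oppfun$-preservation twice and $\otimes$-preservation once. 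So the content is in the identity for $\oppfun$.

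For this I would use the closed form $\opp{h}(x) = \bigvee \set{y \mid h(y) < x}$ of Lemma~\ref{lemma:charOfStar}, available in $\Lj(I_{0})$ and $\Lj(I_{1})$ alike. Two auxiliary facts do the work. (a) For $w \in I_{0}$ and $x \in I_{1}$, $\iota(w) < x$ if and only if $w < \ell(x)$: the forward implication follows from $\iota(w) < x \leq \iota(\ell(x))$ together with $\iota$ being an order-embedding; the backward one because $w < \ell(x)$ gives $\ell(x) \not\leq w$, hence $x \not\leq \iota(w)$ via the adjunction ($\ell(y) \leq z$ if and only if $y \leq \iota(z)$), hence $\iota(w) < x$ since $I_{1}$ is a chain. (b) For any $S \subseteq I_{0}$, $\bigvee \set{y \in I_{1} \mid \ell(y) \in S} = \iota(\bigvee S)$: for $\leq$, each $y$ with $\ell(y) \in S$ satisfies $y \leq \iota(\ell(y)) \leq \iota(\bigvee S)$; for $\geq$, each $z \in S$ satisfies $\ell(\iota(z)) = z \in S$, so $\set{\iota(z) \mid z \in S}$ sits inside the left-hand set, and $\iota$ preserves joins. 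Chaining Lemma~\ref{lemma:charOfStar} in $\Lj(I_{1})$, fact (a) with $w = f(\ell(y))$, fact (b) with $S = \set{z \in I_{0} \mid f(z) < \ell(x)}$, and Lemma~\ref{lemma:charOfStar} in $\Lj(I_{0})$, one gets
\begin{align*}
  \Opp{\Ri(f)}(x)
  & = \bigvee \set{y \mid \iota(f(\ell(y))) < x}
  = \bigvee \set{y \mid f(\ell(y)) < \ell(x)} \\
  & = \iota\bigl(\bigvee \set{z \in I_{0} \mid f(z) < \ell(x)}\bigr)
  = \iota(\Opp{f}(\ell(x))) = \Ri(\Opp{f})(x)\,.
\end{align*}

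The main obstacle is exactly this $\oppfun$ step: $\Opp{\Ri(f)}$ is a supremum computed in the large chain $I_{1}$ while $\Ri(\Opp{f})$ repackages a supremum computed in the small chain $I_{0}$, and facts (a) and (b) — that the ceiling map $\ell$ respects strict inequalities and that $\ell^{-1}$ carries joins to joins along $\iota$ — are precisely what reconciles the two. The $\otimes$ and $\oplus$ clauses, by contrast, are purely formal.
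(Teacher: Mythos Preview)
Your proof is correct, and the $\otimes$ and $\oplus$ clauses are handled exactly as in the paper. The treatment of $\oppfun$, however, is genuinely different.

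The paper splits the $\oppfun$ identity into two inequalities. For $\Opp{\Ri(f)} \leq \Ri(\Opp{f})$ it invokes the Kan-extension characterization of $\Ri$ (Lemma~\ref{lemma:Kanextension}), reducing to $\Opp{\Ri(f)}\circ\iota \leq \iota\circ\opp{f}$, which it then checks by a computation at points of the form $\iota(x)$. For the reverse inequality it argues algebraically: it first establishes $\Ri(0)\leq 0$, then uses $\otimes$-preservation to get $\Ri(\opp{f})\otimes\Ri(f)=\Ri(\opp{f}\otimes f)\leq\Ri(0)\leq 0$, and transposes via residuation. Your argument instead computes $\Opp{\Ri(f)}(x)$ directly at an \emph{arbitrary} $x\in I_1$ using the closed form of Lemma~\ref{lemma:charOfStar}, the strict-order correspondence $\iota(w)<x\iff w<\ell(x)$, and the identity $\bigvee\ell^{-1}(S)=\iota(\bigvee S)$; this yields the equality in one stroke. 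Your route is more elementary and self-contained---it needs neither the Kan-extension lemma nor the separate verification of $\Ri(0)\leq 0$. The paper's route, on the other hand, isolates a reusable principle: any order-preserving $\otimes$-homomorphism with $\Ri(0)\leq 0$ automatically satisfies $\Ri(\opp{f})\leq\opp{\Ri(f)}$, a fact that survives in contexts where no pointwise formula for $\oppfun$ is available.
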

\begin{proof}
  For the first relation we compute as follows:
  \begin{align*}
    \Ri(f) \otimes \Ri(g) & = \Ri(g) \circ \Ri(f) =
    \iota \circ g \circ \ell
    \circ \iota \circ f \circ \ell = \iota \circ g \circ f \circ \ell \\
     &= \Ri(g \circ f) =
    \Ri(f \otimes g) \,.  
  \end{align*}
  For the second relation, we first establish that  $\Opp{\Ri(f)} \leq
  \Ri(\Opp{f})$.
  In view of Lemma~\ref{lemma:Kanextension},
  it is enough to prove
  $\Opp{\Ri(f)} \circ \iota \leq \iota \circ \opp{f}$. This is
  accomplished as follows:
  \begin{align*}
    \Opp{\Ri(f)}(\iota(x)) & = \bigvee \set{y \in I_{1}\mid (\iota
      \circ f)(\ell(y))
      < \iota(x)}\,,
    \tag*{by equation~\eqref{eq:opp},}\\
    & = \bigvee \set{y \in I_{1}\mid f(\ell(y)) < x}\,, \tag*{since
      $\iota$ is an embedding,}
    \\
    & \leq \bigvee \set{\iota(x') \mid x'\in I_{0}\,,\;f(x') < x}\,,
    \intertext{
      since if $y \in I_{1}$ is such that $f(\ell(y)) < x$, then, by
      letting $x' := \ell(y)$, $f(x') < x$ and
      $y \leq \iota(\ell(y)) = \iota(x')$,} & = \iota (\bigvee \set{x'
      \in I_{0}\mid f(x') < x}) = (\iota \circ \opp{f})(x)\,.
   \end{align*}
   Next we establish that $\Ri(0) \leq 0$.
  Let us recall that, for each $y \in I_{1}$,
  \begin{align*}
    \Ri(0)(y) & = \bigvee_{x < \ell(y)} \iota(x)\,,
    \quad
    0(y)  = \bigvee_{z < y} z\,.
  \end{align*}
  Therefore, to prove $\Ri(0) \leq 0$, it is enough to argue that
  $x < \ell(y)$ implies $\iota(x) < y$.  Now, if $x < \ell(y)$, then
  $\ell(y) \not\leq x$, so $y \not\leq \iota(x)$, that is,
  $\iota(x) < y$.

  We can now argue that $\Ri(\opp{f}) \leq \opp{\Ri(f)}$.
  This relation is equivalent to
  $\Ri(\opp{f})\otimes \Ri(f) \leq 0$ which can be derived as follows:
  \begin{align*}
    \Ri(\opp{f})\otimes \Ri(f)
    & = \Ri(\opp{f} \otimes f) 
    \leq \Ri(0) \leq 0\,.
  \end{align*}
  Therefore $\opp{\Ri(f)} = \Ri(\opp{f})$.
  For the last statement, recall that
  $f \oplus g = \opp{(\opp{g} \otimes \opp{f})}$, so preservation of
  $\oplus$ follows from preservation of $\otimes$ and $\oppfun$.
\end{proof}
\begin{proposition}
  We have
  \begin{align*}
    \Ri(\bigvee_{i \in I} f_{i})
    & = \bigvee_{i \in I} \Ri(f_{i})\,,
    &
    \Ri(\bigwedge_{i = 1,..,n}f_{i}) & = \bigwedge_{i = 1,..,n}\Ri(f_{i})\,.
  \end{align*}
\end{proposition}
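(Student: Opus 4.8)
The plan is to reduce both identities to a pointwise computation, exploiting the decomposition $\Ri(f) = \iota \circ f \circ \ell$ together with the continuity properties of $\iota$ recorded above.

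First recall that arbitrary joins in $\Qj(I_{0})$ and in $\Qj(I_{1})$ are computed pointwise, the pointwise join of a family of \jc maps being again \jc. As $\iota$ is \jcont, evaluating at an arbitrary $y \in I_{1}$ gives
\begin{align*}
  \Ri(\bigvee_{i} f_{i})(y)
  & = \iota\bigl((\bigvee_{i} f_{i})(\ell(y))\bigr)
  = \iota\bigl(\bigvee_{i} f_{i}(\ell(y))\bigr)
  = \bigvee_{i} \iota(f_{i}(\ell(y))) \\
  & = \bigvee_{i} \Ri(f_{i})(y) = \bigl(\bigvee_{i} \Ri(f_{i})\bigr)(y)\,,
\end{align*}
which is the first identity.

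For the second, the preliminary observation is that \emph{finite} meets in $\Qj(I_{0})$ and in $\Qj(I_{1})$ are computed pointwise as well: since $I_{0}$ and $I_{1}$ are complete chains, the pointwise infimum of finitely many \jc maps still preserves directed joins and preserves the bottom element, hence is \jcont --- this is the phenomenon already exploited at the end of the proof of Corollary~\ref{cor:distrlattice}. Granting this and using that $\iota$ is \mcont, for every $y \in I_{1}$ one gets
\begin{align*}
  \Ri(\bigwedge_{i} f_{i})(y)
  & = \iota\bigl((\bigwedge_{i} f_{i})(\ell(y))\bigr)
  = \iota\bigl(\bigwedge_{i} f_{i}(\ell(y))\bigr)
  = \bigwedge_{i} \iota(f_{i}(\ell(y))) \\
  & = \bigwedge_{i} \Ri(f_{i})(y) = \bigl(\bigwedge_{i} \Ri(f_{i})\bigr)(y)\,,
\end{align*}
the meet ranging over $i = 1, \ldots, n$.

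The only step that is not a routine rewriting is the claim that finite meets of \jc maps on a complete chain are computed pointwise; everything else is formal. An alternative for the meet identity would be to obtain ``$\leq$'' from the monotonicity of $\Ri$ and ``$\geq$'' from the Kan-extension characterisation of Lemma~\ref{lemma:Kanextension}, reducing it to the verification of $\bigl(\bigwedge_{i} \Ri(f_{i})\bigr) \circ \iota \leq \iota \circ \bigwedge_{i} f_{i}$; but this route still rests on the same pointwise-meet fact, so it offers no genuine economy.
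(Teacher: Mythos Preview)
Your proof is correct and follows essentially the same route as the paper: both arguments unfold $\Ri(f) = \iota \circ f \circ \ell$, evaluate pointwise, and use that joins (respectively finite meets) in $\Qj(I_{0})$ and $\Qj(I_{1})$ are computed pointwise together with the \bcontinuity of $\iota$. Your version is slightly more explicit in flagging the one nontrivial ingredient (that finite meets of \jc maps on a complete chain are pointwise, cf.\ Corollary~\ref{cor:distrlattice}), but the structure of the computation is identical.
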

\begin{proof}
  \begin{align*}
    \Ri(\bigvee_{i \in I} f_{i})(x)
    & = \iota( (\bigvee_{i \in I} f_{i})(\ell(x)))
    = \iota( \bigvee_{i \in I} (f_{i}(\ell(x))) \,)
    = \bigvee_{i \in I}\iota(f_{i}(\ell(x))) =
    (\bigvee_{i \in I} \Ri(f_{i}))(x)\,.
  \end{align*}
  In a similar way, considering that finite meets in $\Lj(I)$ are
  computed pointwise, we have
  \begin{align*}
    \Ri(\bigwedge_{i \in I} f_{i})(x)
    & = \iota( (\bigwedge_{i \in I} f_{i})(\ell(x)))
    = \iota( \bigwedge_{i \in I} (f_{i}(\ell(x))) \,)
    = \bigwedge_{i \in I}\iota(f_{i}(\ell(x))) =
    (\bigwedge_{i \in I} \Ri(f_{i}))(x)\,.
    \tag*{\qedhere}
  \end{align*}
\end{proof}
We can state now our main result.
\begin{theorem}
  \label{thm:main:sec:embeddings}
  For each pair of perfect chains $I_{0},I_{1}$ and each \bc embedding
  $\iota : I_{0} \rto I_{1}$, the map
  $\Ri : \Qj(I_{0}) \rto \Qj(I_{1})$ is an \lbs embedding.
  Together with $\Ri[(\_)]$, $\Qj{(\_)}$ is a functor from the category of 
  perfect chains and \bc embeddings to the category of  \lbs{s}.
\end{theorem}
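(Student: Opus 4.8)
The plan is to collect the already-established facts into the two claims that make up the theorem: first, that $\Ri$ is an \lbs embedding, and second, that $\Qj(\_)$ together with $\Ri[(\_)]$ is a functor. For the first claim, the relevant preservation properties have essentially all been proved. By the preceding propositions, $\Ri$ restricts to a map $\Qj(I_{0}) \rto \Qj(I_{1})$, it preserves $\otimes$, $\oppfun$, and $\oplus$, it preserves arbitrary joins and finite meets, and it is injective. Since an \lbs morphism is by definition a lattice morphism preserving $\otimes$ and $\oplus$, the only thing not yet literally on the page is that $\Ri$ preserves the \emph{bottom} and \emph{top} elements, i.e. the empty meet and empty join. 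But $\bot = \bigvee \emptyset$ in $\Qj(I_{0})$ is the constant-$\bot$ function, and $\Ri(\bot) = \iota \circ \bot \circ \ell$ sends $y$ to $\iota(\bot_{I_{0}}) = \bot_{I_{1}}$ since $\iota$ preserves bounds; similarly $\Ri$ preserves $\top$ because $\iota$ and $\ell$ preserve $\top$. (Alternatively, $\Ri$ preserving arbitrary joins gives preservation of $\bot$ directly, and a short dual computation handles $\top$.) Hence $\Ri$ is a homomorphism of bounded lattices that preserves $\otimes$ and $\oplus$, i.e. an \lbs morphism, and being injective it is an \lbs embedding. Note the theorem deliberately does \emph{not} claim preservation of the units $1$ and $0$ — consistent with the Remark in Section~\ref{sec:bisemigroups} that the morphisms considered need not preserve units — and indeed $\Ri(1) = \iota \circ \ell \neq id_{I_{1}}$ in general.

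For the functoriality claim, I would check the two functor axioms directly from the formula $\Ri(f) = \iota \circ f \circ \ell$. For identities: if $\iota = id_{I_{0}}$ (so $I_{0} = I_{1}$), then $\ell = id_{I_{0}}$ as well, hence $\Ri[id](f) = f$, so $\Qj(\_)$ sends the identity embedding to the identity morphism of $\Qj(I_{0})$. For composition: given \bc embeddings $\iota_{1} : I_{0} \rto I_{1}$ and $\iota_{2} : I_{1} \rto I_{2}$, with left adjoints $\ell_{1} \dashv \iota_{1}$ on the nose wrong side — more precisely $\iota_{1} \dashv \ell_{1}$ is wrong too, so let me just say: with $\ell_{1}$ the left adjoint of $\iota_{1}$ and $\ell_{2}$ the left adjoint of $\iota_{2}$ in the sense used in the text — one has that $\ell_{1} \circ \ell_{2}$ is the left adjoint of $\iota_{2} \circ \iota_{1}$, because adjoints compose (contravariantly): from $\iota_{2}\circ\iota_{1}(x) \le z \iff \iota_{1}(x) \le \ell_{2}(z) \iff x \le \ell_{1}(\ell_{2}(z))$. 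Consequently
\[
  \Ri[\iota_{2}\circ\iota_{1}](f) = (\iota_{2}\circ\iota_{1}) \circ f \circ (\ell_{1}\circ\ell_{2})
  = \iota_{2} \circ (\iota_{1} \circ f \circ \ell_{1}) \circ \ell_{2}
  = \Ri[\iota_{2}](\Ri[\iota_{1}](f))\,,
\]
which is exactly the composition law $\Ri[\iota_{2}\circ\iota_{1}] = \Ri[\iota_{2}] \circ \Ri[\iota_{1}]$. Since each $\Ri$ is an \lbs morphism by the first part, and $\Qj(I)$ is a \msaq for every perfect chain $I$ by Corollary~\ref{cor:quantalesFromChains}, this exhibits $\Qj(\_)$ as a functor from the category of perfect chains with \bc embeddings to the category of \lbs{s}.

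I do not expect a genuine obstacle here: the theorem is a packaging statement, and every nontrivial ingredient — preservation of $\otimes$, $\oppfun$, $\oplus$, joins, finite meets, and injectivity — is already isolated in the preceding propositions. The only points requiring a word of care are (a) checking preservation of the lattice bounds explicitly, since "lattice morphism" in the \lbs setting means \emph{bounded} lattice morphism, and this is where one briefly invokes that $\iota$ preserves bounds; and (b) the observation that left adjoints compose contravariantly, so that the $\ell$'s multiply in the correct order to match the $\iota$'s — a routine adjunction computation that is nonetheless the one place where one must be attentive to variance. If one prefers, (a) can be absorbed into the already-proved statement that $\Ri$ preserves arbitrary joins (for $\bot$) together with the dual argument using that finite — hence in particular empty — meets are computed pointwise, but the direct verification from $\Ri(f) = \iota\circ f\circ\ell$ and "$\iota$ preserves bounds" is shortest.
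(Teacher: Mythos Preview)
Your proposal is correct and follows essentially the same approach as the paper: the first claim is a summary of the preceding propositions, and functoriality is checked directly from $\Ri(f) = \iota \circ f \circ \ell$ together with the contravariant composition of left adjoints. One small slip: in your adjunction chain the inequalities are reversed---you want $z \le \iota_{2}\circ\iota_{1}(x) \iff \ell_{2}(z) \le \iota_{1}(x) \iff \ell_{1}(\ell_{2}(z)) \le x$---but your conclusion $\ladj{(\iota_{2}\circ\iota_{1})} = \ell_{1}\circ\ell_{2}$ and the ensuing computation of $\Ri[\iota_{2}\circ\iota_{1}]$ are correct and match the paper verbatim.
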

\begin{proof}
  The first statement of the Theorem just summarizes the observations
  made up to now.
  The expression $\Ri$ is functorial in $\iota$, since if
  $\iota = \iota_{2} \circ \iota_{1}$, then
  $\ladj{\iota} = \ladj{(\iota_{1})} \circ
  \ladj{(\iota_{2})}$. Therefore
  \begin{align*}
    \Ri[\iota_{2} \circ \iota_{1}](f)
    & = \iota_{2} \circ \iota_{1} \circ f \circ \ladj{(\iota_{1})}
    \circ \ladj{(\iota_{2})} = \iota_{2} \circ \Ri[\iota_{1}](f) \circ
      \ladj{(\iota_{2})}
        = \Ri[\iota_{2}](\Ri[\iota_{1}](f))\,.
      \end{align*}
      In a similar way, $\Ri[id_{I_{0}}] = id_{\Lj(I_{0})}$.
\end{proof}

\begin{definition}
  For each $n \geq 1$ and each $x \in \In$, define
  $j_{n}(x) := \frac{x}{n} \in \I$. For each $n, m \geq 1$ and each
  $x \in \In$, let $j_{n,m}(x) := mx \in \In[nm]$.
\end{definition}
Clearly, $j_{n}$ and $j_{n,m}$ are complete embeddings; observe also
that
\begin{align*}
  j_{mn}(j_{n,m}(x)) & = \frac{mx}{nm} = j_{n}(x)\,,
\end{align*}
and that
\begin{fact}
  The diagram $j_{n,m} : \In \rto \Im$ is directed and
  $j_{n} : \In \rto \I$ is a cocone.
\end{fact}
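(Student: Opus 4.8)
The plan is to unwind the statement into its categorical content. Reading ``directed diagram'' as a functor $D$ from the poset $(\mathbb{N}_{\geq 1},\mid)$ of positive integers ordered by divisibility into the category of perfect chains and \bc embeddings, with $D(n) := \In$ and $D(n \mid nm) := j_{n,m}$, and reading ``cocone'' as a family $(j_n : \In \rto \I)_{n \geq 1}$ compatible with these transition maps, the claim reduces to three checks: that $(\mathbb{N}_{\geq 1},\mid)$ is a directed poset, that $D$ is a well-defined functor, and that the $j_n$ form a cocone over $D$. Directedness of the index poset is immediate: given $n_1,n_2 \geq 1$, both divide $n_1 n_2$ (equivalently $\operatorname{lcm}(n_1,n_2)$), and the empty family has the upper bound $1$, so every finite subset has an upper bound.

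For functoriality of $D$, first recall that each $j_{n,m}$ is a complete embedding of $\In$ into $\In[nm]$ (as already noted) and that finite chains are perfect chains (by the Example following Corollary~\ref{cor:quantalesFromChains}); hence each $j_{n,m}$ is a morphism in the target category. It then remains to verify the two functor laws. The identity law holds because $j_{n,1}(x) = 1\cdot x = x$, i.e. $j_{n,1} = id_{\In}$. For composites, suppose $n \mid nm \mid nmk$; then for every $x \in \In$ one computes
\begin{align*}
  j_{nm,k}(j_{n,m}(x)) & = k\,(mx) = (km)\,x = j_{n,mk}(x)\,,
\end{align*}
so $j_{nm,k} \circ j_{n,m} = j_{n,mk}$, which is precisely $D$ applied to the composite divisibility $n \mid nmk$.

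Finally, each $j_n : \In \rto \I$ is a complete embedding into the perfect chain $\I$, and the cocone condition $j_{nm} \circ j_{n,m} = j_n$ is exactly the identity $j_{mn}(j_{n,m}(x)) = \tfrac{mx}{nm} = \tfrac{x}{n} = j_n(x)$ displayed just above the statement. This exhausts the required compatibilities. There is no genuine obstacle here: the only point worth an explicit word is the directedness of $(\mathbb{N}_{\geq 1},\mid)$, which is trivial, while everything else is a one-line arithmetic check. Applying the functors $\Qj(\_)$ (Theorem~\ref{thm:main:sec:embeddings}) and then $\Ld{\_}$ (Proposition~\ref{prop:functor}) transports this directed diagram and cocone to $\Lj(\I)$ and to the lattices $\LId$, which is the use we shall make of the Fact.
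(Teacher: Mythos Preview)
Your proposal is correct. The paper states this Fact without proof, treating it as evident from the definitions and the identity $j_{mn}(j_{n,m}(x)) = \tfrac{mx}{nm} = j_n(x)$ displayed just before it; your argument simply makes explicit the categorical unpacking (directedness of $(\mathbb{N}_{\geq 1},\mid)$, the functor laws for $n \mapsto \In$, and the cocone compatibility), which is exactly the intended content. One minor nit: the example showing that finite chains are perfect appears at the beginning of Section~\ref{sec:quantalesFromChains}, \emph{before} Corollary~\ref{cor:quantalesFromChains}, not after it.
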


The following statement is a consequence of functoriality of the
constructions $\Ri[(\_)]$ and $\Ld{\_}$, see
Proposition~\ref{prop:functor} and
Theorem~\ref{thm:main:sec:embeddings}.
\begin{proposition}
  The diagram $\Ri[j_{n,m}] : \Lj(\In) \rto \Lj(\Im), m \geq n \geq 1$
  is directed and $\Ri[j_{n}] : \Lj(\In) \rto \Lj(\I)$ is a cocone.
  For each $d \geq 2$, there is a directed diagram in the category of
  lattices
  $\Ld{\Ri[j_{n,m}]} : \Ld{\Lj(\In)} \rto \Ld{\Lj(\Im)}, m \geq n \geq
  1$ is directed and
  $\Ld{\Ri[j_{n}]} : \Ld{\Lj(\In)} \rto \Ld{\Lj(\I)}$ is a cocone.
\end{proposition}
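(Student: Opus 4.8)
The plan is to obtain every clause of the statement by transport along the two functors already constructed, so that the proof reduces to the observation that any functor carries directed diagrams to directed diagrams and cocones to cocones. First I would recall the data packaged in the preceding Fact: the finite chains $\In$, $n \geq 1$, together with the \bc embeddings $j_{n,m} : \In \rto \In[nm]$ (equivalently, the maps $\In \rto \Im$, one for each pair with $n$ dividing $m$) form a directed diagram in the category of perfect chains and \bc embeddings --- the index category being the positive integers ordered by divisibility, which is a directed poset since any two are bounded above by their least common multiple --- and the embeddings $j_{n} : \In \rto \I$ form a cocone over this diagram with apex $\I$, the relevant cocone identity being precisely the equality $j_{mn}(j_{n,m}(x)) = \frac{mx}{nm} = \frac{x}{n} = j_{n}(x)$ recorded above. (Recall that finite chains and $\I$ are perfect chains, and that $j_{n}$, $j_{n,m}$ are complete, hence \bc, embeddings, as already noted.)

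Next I would apply the functor of Theorem~\ref{thm:main:sec:embeddings}, which sends a perfect chain $I$ to $\Lj(I)$ and acts on an arrow $\iota$ by $\iota \mapsto \Ri[\iota]$, landing in the category of \lbs{s}; since each $\Lj(\In)$ and $\LjI$ is a \msaq by Corollary~\ref{cor:quantalesFromChains}, this functor in fact takes values in the full subcategory of mix \lbs{s}. A functor sends a diagram shaped over a directed poset to a diagram of the same shape, and it preserves the triangle identities that define a cocone; hence $\Ri[j_{n,m}] : \Lj(\In) \rto \Lj(\In[nm])$ is a directed diagram of mix \lbs{s} and $\Ri[j_{n}] : \Lj(\In) \rto \LjI$ is a cocone over it.

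Now I would apply the limit-preserving functor $\Ld{\_}$ of Proposition~\ref{prop:functor} from mix \lbs{s} to lattices. By exactly the same abstract reasoning, $\Ld{\Ri[j_{n,m}]} : \Ld{\Lj(\In)} \rto \Ld{\Lj(\In[nm])}$ is a directed diagram in the category of lattices and $\Ld{\Ri[j_{n}]} : \Ld{\Lj(\In)} \rto \Ld{\LjI} = \LId$ is a cocone over it, for each fixed $d \geq 2$; this is exactly the assertion of the proposition. As a complement I would add that each $\Ri[j_{n,m}]$ and each $\Ri[j_{n}]$ is injective (Theorem~\ref{thm:main:sec:embeddings} states that $\Ri[\iota]$ is an \lbs \emph{embedding}), so, since $\Ld{\_}$ carries injective homomorphisms of mix \lbs{s} to lattice embeddings (the Proposition immediately following Proposition~\ref{prop:functor}), each $\Ld{\Ri[j_{n,m}]}$ and each $\Ld{\Ri[j_{n}]}$ is itself a lattice embedding --- which is the form in which this will be used in the next section when comparing $\LId$ with the colimit of the $\Ld{\Lj(\In)}$.

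There is no genuine obstacle here: the only point that needs care is the categorical bookkeeping --- making explicit that a ``directed diagram'' is a functor out of a directed poset and a ``cocone'' a natural transformation to a constant functor, and remarking that both notions are stable under post-composition with an arbitrary functor --- after which every clause is merely the instantiation of this principle, first with the functor $\Lj(\_)$ of Theorem~\ref{thm:main:sec:embeddings} and then with the functor $\Ld{\_}$ of Proposition~\ref{prop:functor}.
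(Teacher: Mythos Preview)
Your proposal is correct and follows exactly the route the paper takes: the paper introduces this proposition with the single sentence ``The following statement is a consequence of functoriality of the constructions $\Ri[(\_)]$ and $\Ld{\_}$, see Proposition~\ref{prop:functor} and Theorem~\ref{thm:main:sec:embeddings}'' and gives no further argument. Your write-up simply unpacks that sentence carefully, and the additional remark that the resulting maps are embeddings is both correct and in the spirit of how the paper uses this material immediately afterwards.
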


\begin{definition}
  \label{def:LRId}
  We let $\LRId$ be the image of all the mappings
  $\Ld{\Ri[j_{n}]} : \Ld{\Lj(\In)} \rto \Ld{\Lj(\I)}$. 
\end{definition}
By general facts, $\LRId$ yields an explicit representation of the
colimit of the directed diagram
$\Ld{\Ri[j_{n}]} : \Ld{\Lj(\In)} \rto \Ld{\Lj(\I)}$; in particular it
is a sublattice of $\LId$.  Observe that $f \in \LRId$ if and only if
$f$ is clopen and, for each $(i,j) \in \cd$, $f_{i,j}$ is a finite
join of rational \osf{s}.

\section{Generation from rational one step functions}
\label{sec:generation}
As a first application of the characterization of \jirr elements and
of their order, we show that if $d \geq 3$ $\LId$ is not the
Dedekind-MacNeille completion of $\LRId$, see
definition~\ref{def:LRId}.  This is the sublattice if $\LId$ of those
$f \in \LId$ such that each $f_{i,j}$ is a finite join of rational
one-step functions.  This contrasts with the case where $d = 2$, when
$\LId = \QjI$, cf. Remark~\ref{remark:QjIcompletion}.

\begin{theorem}
  For $d \geq 3$, the lattice $\LI[d]$ is not (isomorphic to) the
  Dedekind-MacNeille completion of $\LRId$.
\end{theorem}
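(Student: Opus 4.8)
The plan is to show that the natural inclusion $\LRId \hookrightarrow \LId$ fails to be join-dense; since the Dedekind-MacNeille completion of $\LRId$ is the unique complete lattice admitting a join- and meet-dense embedding of $\LRId$, this is enough. So it suffices to exhibit an $f \in \LId$ that is not the supremum, computed in $\LId$, of the elements of $\LRId$ lying below it (if $f = \bigvee_{\LId} X$ with $X \subseteq \LRId$, then every member of $X$ lies below $f$, so $f$ would equal that supremum). I will take $f = \ji{p}$, where $p := (a,b,c,0,\ldots,0) \in \I^{d}$ for some $a,c \in (0,1)$ and some \emph{irrational} $b \in (0,1)$. Then $\mji{p} = 1$, $\Mji{p} = 3$ and $\dimji(p) = 2$, so by Proposition~\ref{prop:clopen} and Proposition~\ref{prop:jipji} the tuple $\ji{p}$ is a join-irreducible element of $\LId$.

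The core of the argument is the claim that \emph{every $h \in \LRId$ with $h \leq \ji{p}$ has $h_{1,3} = \bot$}. First, $(\ji{p})_{i,j} = \ji{p_{i},p_{j}} = \bot$ whenever $j \geq 4$ (since $p_{j} = 0$), so such an $h$ can be non-trivial only in the coordinates $h_{1,2} \leq \ji{a,b}$, $h_{1,3} \leq \ji{a,c}$, $h_{2,3} \leq \ji{b,c}$. Because $h \in \LRId$, the function $h_{1,2}$ is a \emph{finite} join $\bigvee_{k} \ji{u_{k},v_{k}}$ of rational one-step functions; each nonzero $\ji{u_{k},v_{k}} \leq h_{1,2} \leq \ji{a,b}$ has $v_{k} \leq b$ by Corollary~\ref{cor:orderonjp}, hence $v_{k} < b$ as $v_{k}$ is rational and $b$ is not; since there are finitely many $k$, the values of $h_{1,2}$ are all $\leq \max_{k} v_{k} < b$, so $\meetof{h_{1,2}}(x) < b$ for every $x < 1$. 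On the other hand $h_{2,3} \leq \ji{b,c}$ forces $h_{2,3}(z) = 0$ for $z \leq b$, hence $\meetof{h_{2,3}}(y) = 0$ for every $y < b$. Recalling from Section~\ref{sec:quantalesFromChains} that $h_{1,2} \oplus h_{2,3} = \joinof{(\meetof{h_{2,3}} \circ \meetof{h_{1,2}})}$: for $x < 1$ we have $\meetof{h_{1,2}}(x) < b$ and therefore $\meetof{h_{2,3}}(\meetof{h_{1,2}}(x)) = 0$, so $\meetof{h_{2,3}} \circ \meetof{h_{1,2}}$ vanishes on $[0,1)$ and its image under $\joinof{(\,\cdot\,)}$ is identically $0$. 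Thus $h_{1,2} \oplus h_{2,3} = \bot$, and the open condition for $h$ at the triple $1 < 2 < 3$ yields $h_{1,3} \leq h_{1,2} \oplus h_{2,3} = \bot$.

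Granting the claim, set $g := \bigvee \set{h \in \LRId \mid h \leq \ji{p}}$, the supremum computed coordinatewise in $\PrLI$; then $g_{1,3} = \bot$, while $g_{1,2} \leq \ji{a,b}$ and $g_{2,3} \leq \ji{b,c}$. By Theorem~\ref{theo:meetsAndJoin} the supremum of these $h$'s in $\LId$ equals $\cl{g}$, and by Lemma~\ref{lemma:ClosureInterior} — the subdivisions of the interval $[1,3]$ being only $\set{1,3}$ and $\set{1,2,3}$ — we get $\cl{g}_{1,3} = g_{1,3} \vee (g_{2,3} \circ g_{1,2})$. Since $g_{1,2}(t) \leq b$ for all $t$ and $g_{2,3}$ vanishes on $[0,b]$, the composite $g_{2,3} \circ g_{1,2}$ is $\bot$, whence $\cl{g}_{1,3} = \bot$; but $(\ji{p})_{1,3} = \ji{a,c} \neq \bot$ because $a < 1$ and $c > 0$. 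Hence $\cl{g} < \ji{p}$, so $\ji{p}$ is not a join of elements of $\LRId$, and the theorem follows. (If one prefers, one can compute $g$ explicitly as $(\ji{a,b}, \bot, \ji{b,c}, \bot, \ldots, \bot)$ and invoke Lemma~\ref{lemma:compJI} for $\ji{b,c} \circ \ji{a,b} = \bot$.)

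The step I expect to be the main obstacle is the claim, and within it the identity $h_{1,2} \oplus h_{2,3} = \bot$: this is precisely where the two hypotheses that $h_{1,2}$ is a \emph{finite} join of \emph{rational} one-step functions and that $b$ is \emph{irrational} are used essentially, to improve $\sup h_{1,2} \leq b$ to the strict inequality $\sup h_{1,2} < b$. This is also the feature that disappears for $d = 2$, where $\LId = \QjI$ and, by Remark~\ref{remark:QjIcompletion}, $\QjI$ \emph{is} the Dedekind-MacNeille completion of its sublattice of (finite joins of) rational one-step functions.
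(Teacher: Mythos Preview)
Your proof is correct and uses the same counterexample as the paper (a point $p$ with irrational middle coordinate), but the argument showing that $\ji{p}$ is not a join of elements of $\LRId$ proceeds differently.

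The paper first reduces to join-irreducibles of $\LRId$ (which are the $\ji{r}$ with $r \in (\IQ)^{3}$), then exploits the join-irreducibility of $\ji{p}$ in $\LId$ (Proposition~\ref{prop:jipji}) together with Proposition~\ref{prop:eqleqep}: splitting the putative join $\ji{p} = \bigvee \ji{r}$ according to the value of $(\mji{r},\Mji{r})$, one of the three sub-joins must already equal $\ji{p}$, and each case is ruled out---the key one, $(\mji{r},\Mji{r}) = (1,3)$, is empty because $r_{2} = p_{2}$ would have to be simultaneously rational and irrational. Your route instead works with an arbitrary $h \in \LRId$ below $\ji{p}$, uses the finiteness and rationality of $h_{1,2}$ to obtain a \emph{strict} bound $\sup h_{1,2} < b$, and then invokes the open condition $h_{1,3} \leq h_{1,2} \oplus h_{2,3}$ to force $h_{1,3} = \bot$; finally you compute the closure of the pointwise supremum explicitly. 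This avoids both Proposition~\ref{prop:jipji} and Proposition~\ref{prop:eqleqep} (your citation of the former is in fact unnecessary for your argument), at the cost of a small computation with $\oplus$ and the closure formula. The paper's version is shorter once the structural results on join-irreducibles are in hand; yours is more self-contained.
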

\begin{proof}
  We need to find an element of $\LId$ which is not an infinite join
  of elements of $\LRId$.
  For example, let $d = 3$ and choose $p \in \I^{3}$ such that
  $p_{1} < 1$, $p_{2}$ is irrational, and $0 < p_{3}$ (so
  $\mji{p} = 1$ and $\Mji{p} = 3$). If $\ji{p}$ can be written as an
  infinite join of elements from $\LRId$, then it can also be written
  as an infinite join of join-irreducible elements from $\LRId$ below
  it, and these are of the form $\ji{r}$ with $r \in (\IQ)^{3}$.  We
  can therefore write
  \begin{align*}
    \ji{p} & = \bigvee \set{\ji{r} \in \LRId[3] \mid \bot <\ji{r} \leq \ji{p} }
    =
    \bigvee_{(i,j) \in \couples{3}}
    \bigvee \JR(\ji{p},i,j) \,,
    \intertext{where}
    \JR(\ji{p},i,j) & := \set{\ji{r} \mid r \in (\IQ)^{3},\,\bot <\ji{r} \leq
      \ji{p}, \,\mji{r} = i,\, \Mji{r} = j }\,.
  \end{align*}
  Since $\ji{p}$ is \jirr, then we have
  $\ji{p} = \bigvee \JR(\ji{p},i,j)$ for some $(i,j) \in \couples{3}$.
  If $(i,j) = (1,2)$, then we deduce that $p_{3} = 0$, and if
  $(i,j) = (2,3)$, then we deduce that $p_{1} = 1$; these are
  contradictions.  Therefore we have $(i,j) = (1,3)$.  Yet, by
  Proposition~\ref{prop:eqleqep}, $\JR(\ji{p},1,3) = \emptyset$, since
  if $\bot < \ji{r} \leq \ji{p}$, then $r_{2} = p_{2}$ is
  irrational. We deduce therefore $\ji{p} = \bot$, a contradiction.
\end{proof}

To understand how the lattice $\LId$ is generated from $\LRId$, we
need to study its \mirr elements.
For $x,y \in \I$, we define $\mi{x,y} \in \LjI$ as follows:
\begin{align*}
  \mi{x,y}(t) & =
  \begin{cases}
    0 \,, & t = 0\,, \\
    y & 0 < t \leq x\,,\\
    1 & x < t \leq 1\,.
  \end{cases}
\end{align*}
Observe that $\mi{x,y} = \Opp{\ji{y,x}}$ and, therefore, \mirr
elements of $\LjI$ are, by duality, exactly those of the form
$\mi{x,y}$ for $x,y \in \I$ such that $0 < x$ and $y < 1$. Notice also
that
\begin{align}
  \label{eq:mirrfromjirr}
  \mi{x,y} & =  \ji{0,y} \vee \ji{x,1} \,.
\end{align}
Let now $d \geq 3$; for each $p \in \I^{d}$, let in the following
\begin{align*}
  \mi{p} & := \Fam{\mi{p_{i},p_{j}} \mid (i,j) \in \cd}\,,
\end{align*}
as well as
\begin{align*}
  \mmi_{p} &:= \min \set{i \in [d] \mid 0 < p_{i} }\,, 
  &\Mmi_{p} & := \max \set{j \in [d] \mid p_{j} < 1}\,, 
\end{align*}
where, by convention, $\min \emptyset = d+1$ and
$\max \emptyset = 0$. For $p \in \I^{d}$, let
\begin{align*}
  \dimmi(p) := \Mmi_{p} - \mmi_{p}\,.
\end{align*}
Notice that, since we assume $d \geq 1$, we cannot have $\Mmi_{p} = 0$
and $\mmi_{p} = d +1$, so $\dim(\mi{p}) \in \set{-d,\ldots, d}$.

\begin{proposition}
  The \mirr elements of $\L(\I^{d})$ are exactly the elements of the
  form $\mi{p}$ for some $p \in \I^{d}$ such that $\dimmi(p) > 0$.
\end{proposition}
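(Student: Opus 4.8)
We use that $\L(\I^{d})=\LId$ is self-dual, which lets us read off the \mirr elements directly from the \jirr ones described in Propositions~\ref{prop:jipji} and~\ref{prop:ji2} (recall that $\ji{p}\neq\bot$ is equivalent to $\dimji(p)>0$). Concretely, since $\LjI$ is an \irl, the Remark following the definition of closed and open tuples provides the antitone involution $f\mapsto\opp{f}$ of $\PrLI$, where $\opp{f}_{i,j}:=\opp{(f_{\sigma(j),\sigma(i)})}$ with $\sigma(i):=d-i+1$, which exchanges closed and open tuples. Being an involution it maps clopen tuples to clopen tuples, hence it restricts to an antitone involution of the lattice $\LId$; an antitone involution of a lattice is a dual automorphism, so $\opp{(-)}$ sends $\bigvee_{\LId}$ to $\bigwedge_{\LId}$ and carries the \jirr elements of $\LId$ bijectively onto its \mirr elements.

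It therefore remains to compute $\opp{\ji{p}}$ for $p\in\I^{d}$. From the identity $\mi{x,y}=\Opp{\ji{y,x}}$ recorded just before the statement we get $\opp{(\ji{a,b})}=\mi{b,a}$ for all $a,b\in\I$, whence
\begin{align*}
  \opp{\ji{p}}_{i,j} & = \opp{(\ji{p_{\sigma(j)},p_{\sigma(i)}})}
  = \mi{p_{\sigma(i)},p_{\sigma(j)}}\,,\qquad (i,j)\in\cd\,.
\end{align*}
Writing $\overline{p}:=(p_{\sigma(1)},\ldots,p_{\sigma(d)})=(p_{d},p_{d-1},\ldots,p_{1})$ for the reversal of $p$, this says exactly that $\opp{\ji{p}}=\mi{\overline{p}}$. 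The assignment $p\mapsto\overline{p}$ is a bijection of $\I^{d}$, and unwinding the definitions — keeping track of the conventions $\min\emptyset=d+1$ and $\max\emptyset=0$ on both sides — gives $\mmi_{\overline{p}}=d-\Mji{p}+1$ and $\Mmi_{\overline{p}}=d-\mji{p}+1$, so that
\begin{align*}
  \dimmi(\overline{p}) & = \Mmi_{\overline{p}}-\mmi_{\overline{p}} = \Mji{p}-\mji{p} = \dimji(p)\,.
\end{align*}

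Putting the pieces together finishes the proof: by Propositions~\ref{prop:jipji} and~\ref{prop:ji2} the \jirr elements of $\LId$ are precisely the tuples $\ji{p}$ with $\dimji(p)>0$; applying the dual automorphism $\opp{(-)}$ and the identity $\opp{\ji{p}}=\mi{\overline{p}}$, the \mirr elements of $\LId$ are precisely the tuples $\mi{\overline{p}}$ with $\dimji(p)>0$, which, since $p\mapsto\overline{p}$ is a bijection of $\I^{d}$ with $\dimmi(\overline{p})=\dimji(p)$, is precisely the family $\{\mi{q}\mid q\in\I^{d},\,\dimmi(q)>0\}$. I do not anticipate a genuine obstacle; the only delicate point is the bookkeeping of the order-reversal $\sigma$ in the definition of $\opp{(-)}$ on tuples — checking that, combined with $\mi{x,y}=\opp{\ji{y,x}}$, it produces exactly the reversal of the index vector $p$, and that the extremal conventions for the $\min/\max$ index functions transport correctly under this reversal. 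One could instead dualize the proofs of Propositions~\ref{prop:jipji} and~\ref{prop:ji2} verbatim, but the route through self-duality is considerably shorter.
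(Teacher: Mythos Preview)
Your proof is correct and follows essentially the same route as the paper: both exploit the antitone involution $\opp{(-)}$ on $\LId$, verify that it interchanges $\ji{p}$ with $\mi{\sigma(p)}$ (the paper computes $\opp{\mi{p}}=\ji{\sigma(p)}$, you the inverse direction), check that $\dimji$ and $\dimmi$ correspond under the reversal $\sigma$, and conclude by invoking Propositions~\ref{prop:jipji} and~\ref{prop:ji2}. Your bookkeeping of the indices and of the extremal $\min/\max$ conventions is accurate.
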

\begin{proof}
  It is enough to verify that these elements of $\LId$ correspond,
  under the duality, to \jirr elements.  Indeed we have
  \begin{align*}
    \Opp{\mi{(p_{1},\ldots ,p_{d})}} & =
    \Fam{\Opp{\mi{p_{\sigma(j)},p_{\sigma(i)}}} \mid (i,j) \in \cd } \\
    & =
    \Fam{\ji{p_{\sigma(i)},p_{\sigma(j)}}} = e_{p_{d},\ldots
      ,p_{1}}\,.
  \end{align*}
  Moreover, writing $\sigma(p)$ for $(p_{d},\ldots ,p_{1})$, we have
  $\dimmi(p) = \dimji(\sigma(p))$. The statement of the proposition
  follows now by the previous characterization of \jirr elements of
  $\LId$, see Propositions~\ref{prop:jipji} and~\ref{prop:ji2}.
\end{proof}

We find next an analogous of equation~\eqref{eq:mirrfromjirr} for
higher dimensions. Such an analogous will allow us to argue that every
$f \in \LId$ is a meet of joins (and, dually, a join of meets) of
elements from $\LRId$.
Let $\Mi{x}{y}{i}{j} \in \PrLI$ be the tuple that has $\mi{x,y}$ in
coordinate $(i,j)$ and $\bot$ in the other coordinates. Similarly,
$\JJi{x}{y}{i}{j} \in \PrLI$ denotes the tuple that has $\ji{x,y}$ in
coordinate $(i,j)$ and $\bot$ in the other coordinates.
The
following relations hold  within
$\PrLI$:
\begin{align*}
  \mi{p} & = \bigvee_{(i,j) \in \cd} \Mi{p_{i}}{p_{j}}{i}{j} \,, &
  \Mi{p_{i}}{p_{j}}{i}{j} & = \JJi{0}{p_{j}}{i}{j} \vee
  \JJi{p_{i}}{1}{i}{j} \,,
\end{align*}
and therefore
\begin{align*}
  \mi{p} & = \bigvee_{(i,j) \in \cd} \Mi{p_{i}}{p_{j}}{i}{j} =  \bigvee_{(i,j) \in
    \cd} \JJi{0}{p_{j}}{i}{j}
  \vee
  \bigvee_{(i,j) \in \cd} \JJi{p_{i}}{1}{i}{j}  =  \G{p} \vee \F{p} \,,
  \intertext{where}
  \G{p} & := \bigvee_{(i,j) \in \cd} \JJi{0}{p_{j}}{i}{j}\,
  \quad \tand
  \quad
  \F{p} := \bigvee_{(i,j) \in \cd} \JJi{p_{i}}{1}{i}{j}  .
\end{align*}

\begin{lemma}
  For each $p \in \I^{d}$, both $\G{p}$ and $\F{p}$ belong to $\LId$.
\end{lemma}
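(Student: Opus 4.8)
The plan is to verify directly that $\F{p}$ and $\G{p}$ are clopen tuples of $\PrLI$, which is precisely the assertion $\F{p},\G{p}\in\LId$. The first, purely bookkeeping, step is to compute their coordinates. Since $\JJi{x}{y}{i}{j}$ carries $\ji{x,y}$ in coordinate $(i,j)$ and $\bot$ elsewhere, in the join over $(i,j)\in\cd$ defining $\F{p}$ (resp.\ $\G{p}$) only the summand indexed by $(k,l)$ survives in coordinate $(k,l)$; hence $\F{p}_{k,l}=\ji{p_k,1}$ and $\G{p}_{k,l}=\ji{0,p_l}$ for every $(k,l)\in\cd$. With these explicit formulas I would then apply the sufficient condition of Remark~\ref{rem:suffCondClopen}: it suffices to check, for each triple $i<j<k$ in $\setof{d}$, that $f_{i,k}$ equals either $f_{i,j}\otimes f_{j,k}$ or $f_{i,j}\oplus f_{j,k}$.

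For $\F{p}$ the relevant case split is $p_j<1$ versus $p_j=1$. When $p_j<1$, Lemma~\ref{lemma:compJI} gives $\F{p}_{i,j}\otimes\F{p}_{j,k}=\ji{p_j,1}\circ\ji{p_i,1}=\ji{p_i,1}=\F{p}_{i,k}$, so the $\otimes$-clause holds. When $p_j=1$ we have $\F{p}_{j,k}=\ji{1,1}=\bot$, and I would instead check the $\oplus$-clause, i.e.\ the identity $\ji{p_i,1}\oplus\bot=\ji{p_i,1}$. Dually, for $\G{p}$ the split is $p_j>0$ versus $p_j=0$: if $p_j>0$ then $\G{p}_{i,j}\otimes\G{p}_{j,k}=\ji{0,p_k}\circ\ji{0,p_j}=\ji{0,p_k}=\G{p}_{i,k}$ by Lemma~\ref{lemma:compJI}, while if $p_j=0$ then $\G{p}_{i,j}=\ji{0,0}=\bot$ and one needs $\bot\oplus\ji{0,p_k}=\ji{0,p_k}$. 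So apart from Lemma~\ref{lemma:compJI} everything comes down to two ``absorption of $\bot$'' identities in $\QjI$.

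Establishing those identities, $\ji{x,1}\oplus\bot=\ji{x,1}$ and $\bot\oplus\ji{0,y}=\ji{0,y}$, is the one step that is not an immediate reading of an earlier lemma, and I expect it to be the main obstacle. The inequality ``$\le$'' in each case is free: $\bot$ lies below the unit $0$ of $\oplus$, so by monotonicity $\ji{x,1}\oplus\bot\le\ji{x,1}\oplus 0=\ji{x,1}$ and likewise $\bot\oplus\ji{0,y}\le 0\oplus\ji{0,y}=\ji{0,y}$. For the reverse inequalities I would transpose through the residuation laws~\eqref{eq:residuationoplus}: $\ji{x,1}\le\ji{x,1}\oplus\bot$ is equivalent to $\ji{x,1}\otimes\opp{\bot}\le\ji{x,1}$, that is to $\opp{\bot}\circ\ji{x,1}\le\ji{x,1}$, and $\ji{0,y}\le\bot\oplus\ji{0,y}$ is equivalent to $\opp{\bot}\otimes\ji{0,y}\le\ji{0,y}$, that is to $\ji{0,y}\circ\opp{\bot}\le\ji{0,y}$. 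Both hold for elementary reasons: $\ji{x,1}$ takes only the values $0$ and $1$ while $\opp{\bot}$, being \jcont, sends $0$ to $0$ and all of $\I$ into $\I$, so $\opp{\bot}(\ji{x,1}(t))\le\ji{x,1}(t)$ pointwise; and $\ji{0,y}$ takes only the values $0$ and $y$ and sends $0$ to $0$, so $\ji{0,y}(\opp{\bot}(t))\le\ji{0,y}(t)$ pointwise.

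Finally, I would observe that the work for $\G{p}$ could be avoided once $\F{p}\in\LId$ is known: the antitone involution $f\mapsto\opp{f}$ on $\PrLI$ interchanges closed and open tuples and hence preserves clopen ones, and since $\opp{\ji{p_i,1}}=\ji{0,p_i}$ (a one-line computation from~\eqref{eq:opp}) one has $\opp{\F{p}}=\G{q}$ with $q$ the reversal $(p_d,\dots,p_1)$ of $p$. Either route yields $\F{p},\G{p}\in\LId$, completing the proof.
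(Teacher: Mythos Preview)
Your proof is correct and follows the same overall strategy as the paper: compute the coordinates of $\F{p}$ and $\G{p}$, then invoke Remark~\ref{rem:suffCondClopen} together with Lemma~\ref{lemma:compJI} for the generic case, and handle the degenerate case $p_{j}\in\set{0,1}$ via the $\oplus$-clause. The only difference is in that last step: the paper dispatches it by appealing to Lemma~\ref{lemma:compJIbis}, which computes $\Ji{y,z}\circ\Ji{x,y}$ directly and, since $f\oplus g=\joinof{(\meetof{g}\circ\meetof{f})}$, yields $f_{i,j}\oplus f_{j,k}=f_{i,k}$ immediately; you instead establish the specific identities $\ji{x,1}\oplus\bot=\ji{x,1}$ and $\bot\oplus\ji{0,y}=\ji{0,y}$ by hand via residuation and a pointwise check. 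Both routes are short; the paper's is slightly cleaner as it reuses an existing lemma, while yours is self-contained and avoids unpacking the definition of $\oplus$ in terms of $\meetof{(-)}$. Your closing duality remark ($\opp{\F{p}}=\G{\sigma(p)}$) is also correct and gives a nice shortcut the paper does not mention.
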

\begin{proof}
  We firstly consider $\G{p}$, observing that
  $\G{p} = \Fam{f_{i,j} \mid (i,j) \in \cd}$ with
  $f_{i,j} = e_{0,p_{j}}$.  We argue that $\G{p}$ is clopen relying
  on Remark~\ref{rem:suffCondClopen}.  Let $i,j,k \in \setd$ with
  $i < j < k$.  If $0 < p_{j}$, then
  $f_{j,k} \circ f_{i,j}= \ji{0,p_{k}} \circ \ji{0,p_{j}} =
  \ji{0,p_{k}} = f_{i,k}$, by Lemma~\ref{lemma:compJI}.  If
  $p_{j} = 0$, then
  $\MeetOf{f_{j,k}} \circ \MeetOf{f_{i,j}}= \Ji{0,p_{k}} \circ
  \Ji{0,p_{j}} = \Ji{0,p_{k}} \circ \Ji{0,0} = \Ji{0,p_{k}} =
  \MeetOf{f_{i,k}}$, by Lemma~\ref{lemma:compJIbis}.

  Next, we observe that $\F{p} = \Fam{f_{i,j} \mid (i,j) \in \cd}$
  with $f_{i,j} = e_{p_{i},1}$.  We
  use again
  Remark~\ref{rem:suffCondClopen} to verify that $\F{p}$ is clopen.
  Let $i,j,k \in \setd$ with $i < j < k$; if $p_{j} < 1$, then
  $f_{j,k} \circ f_{i,j} = \ji{p_{j},1} \circ \ji{p_{i},1} =
  \ji{p_{i},1} = f_{i,k}$, by Lemma~\ref{lemma:compJI}; if
  $p_{j} = 1$, then
  $\MeetOf{f_{j,k}}\circ\MeetOf{f_{i,j}} =\Ji{p_{j},1} \circ
  \Ji{p_{i},1} = \Ji{1,1} \circ \Ji{p_{i},1} = \Ji{p_{i},1} =
  \MeetOf{f_{i,k}}$, using Lemma~\ref{lemma:compJIbis}.
\end{proof}
\begin{remark}
  \label{rem:joins}
  Let $L$ be a complete lattice and let $M$ be a subset of $L$ which
  is itself a complete lattice w.r.t. the order inherited from $L$. If
  $Q \subseteq M$, $q \in M$ and the relation $\bigvee Q = q$ holds in
  $L$, then the same relation holds in $M$.
\end{remark}
In view of the remark, we have achieved generalizing
equation~\ref{eq:mirrfromjirr} to higher dimensions:
\begin{corollary}
  The relation 
  \begin{align*}
    \mi{p} & = \G{p} \vee \F{p}  
  \end{align*}
  holds in $\LId$. 
\end{corollary}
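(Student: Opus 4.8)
The plan is to notice that the chain of displayed computations immediately preceding this corollary already establishes the desired equality \emph{in the ambient product lattice} $\PrLI$, and that the only real work is to transport it to the sublattice $\LId$ of clopen tuples. Indeed, reading the equalities $\mi{p} = \bigvee_{(i,j)\in\cd}\Mi{p_{i}}{p_{j}}{i}{j}$ and $\Mi{p_{i}}{p_{j}}{i}{j} = \JJi{0}{p_{j}}{i}{j}\vee\JJi{p_{i}}{1}{i}{j}$ coordinatewise, the relation $\mi{p} = \G{p}\vee\F{p}$ is nothing but equation~\eqref{eq:mirrfromjirr}, $\mi{x,y} = \ji{0,y}\vee\ji{x,1}$, applied in each coordinate with $x = p_{i}$ and $y = p_{j}$; so this equality holds with all joins computed in $\PrLI$.

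To pass from $\PrLI$ to $\LId$ I would invoke Remark~\ref{rem:joins} with $L := \PrLI$, $M := \LId$, $Q := \set{\G{p},\F{p}}$ and $q := \mi{p}$, checking its hypotheses in turn. First, $\PrLI$ is complete (a product of copies of the complete lattice $\QjI$), and by Theorem~\ref{theo:meetsAndJoin} together with the formulas given there for meets and joins of clopen tuples, $\LId$ is a complete lattice under the order inherited from $\PrLI$. Second, $\G{p},\F{p}\in\LId$ by the preceding Lemma. Third, $\mi{p}\in\LId$: for this I would use that $\opp{\mi{p}} = \ji{(p_{d},\ldots,p_{1})}$ is a clopen tuple by Proposition~\ref{prop:clopen}, and that the antitone involution $f\mapsto\opp{f}$ of $\PrLI$ interchanges closed and open tuples, hence maps clopen tuples to clopen tuples; alternatively, when $\dimmi(p)>0$ one may simply quote the preceding Proposition, which lists $\mi{p}$ among the \mirr elements of $\LId$. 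Remark~\ref{rem:joins} then yields $\mi{p} = \G{p}\vee\F{p}$ in $\LId$, which is the assertion.

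I do not expect a genuine obstacle here; the only subtlety worth flagging is that joins in $\LId$ are \emph{not} in general inherited from $\PrLI$ — they are the closures of the latter — so the coordinatewise computation by itself does not finish the proof. What saves the day is exactly the elementary monotonicity argument behind Remark~\ref{rem:joins}: any upper bound of $\set{\G{p},\F{p}}$ lying in $\LId$ is in particular an upper bound in $\PrLI$, hence sits above $\mi{p}$, so $\mi{p}$ remains the least such upper bound inside $\LId$. A minor point is to make sure $\mi{p}\in\LId$ for \emph{every} $p\in\I^{d}$, not only those with $\dimmi(p)>0$, which is why I would prefer the argument via $\opp{(-)}$ and Proposition~\ref{prop:clopen}.
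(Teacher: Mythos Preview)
Your proposal is correct and follows exactly the paper's approach: the equality is first established in $\PrLI$ by the coordinatewise computation, and then transported to $\LId$ via Remark~\ref{rem:joins}, using the preceding Lemma for $\G{p},\F{p}\in\LId$. Your additional care in verifying $\mi{p}\in\LId$ for \emph{all} $p$ (via the involution and Proposition~\ref{prop:clopen}) is a welcome detail that the paper leaves implicit.
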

For $i \in \setd$, $x \in \I$ and $y \in \set{0,1}$, let us use
$\myvec{i,x,y}$ to denote the point of $\I^{d}$ that has $x$ in
position $i$ and $y$ in all the other coordinates. For an example with
$d =3$, consider $\myvec{2,x,1} = (1,x,1)$; notice that
$\ji{\myvec{2,x,1}} = \langle \ji{1,x},\ji{1,1},\ji{x,1}\rangle =
\langle \bot, \bot,\ji{x,1}\rangle$.
\begin{lemma}
  The relations
  \begin{align*}
    \G{p} & =
    \bigvee_{1 < j \leq d} \ji{\myvec{j,p_{j},0}} \,, &
    \F{p} & = \bigvee_{1 \leq i < d} \ji{\myvec{i,p_{i},1}}  \,,
  \end{align*}
  hold in $\LId$.
\end{lemma}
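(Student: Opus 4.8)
The plan is to reduce both identities to coordinatewise computations inside $\PrLI$ and then to transfer the conclusion to $\LId$ via Remark~\ref{rem:joins}. First I would regroup the defining joins of $\G{p}$ and $\F{p}$ according to the second, resp.\ first, index: in $\PrLI$ we have
$\G{p} = \bigvee_{(i,j)\in\cd}\JJi{0}{p_j}{i}{j} = \bigvee_{1 < j \leq d}\bigl(\bigvee_{1\leq i<j}\JJi{0}{p_j}{i}{j}\bigr)$
and likewise
$\F{p} = \bigvee_{1\leq i<d}\bigl(\bigvee_{i<j\leq d}\JJi{p_i}{1}{i}{j}\bigr)$.

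Next I would identify the inner joins with the one-step tuples. Unwinding the definition of $\myvec{\cdot}$, the tuple $\ji{\myvec{j,p_j,0}}$ has, at coordinate $(a,b)\in\cd$, the value $\ji{0,p_j}$ when $b=j$ and $\bot$ otherwise: if $a=j$ (which forces $j<b$) the entry is $\ji{p_j,0}=\bot$, and if $j\notin\{a,b\}$ the entry is $\ji{0,0}=\bot$, both because $\ji{x,y}=\bot$ precisely when $x=1$ or $y=0$. Comparing this with the definition of $\JJi{0}{p_j}{i}{j}$ (which has $\ji{0,p_j}$ in coordinate $(i,j)$ and $\bot$ elsewhere) gives $\ji{\myvec{j,p_j,0}}=\bigvee_{1\leq i<j}\JJi{0}{p_j}{i}{j}$ in $\PrLI$. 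Dually, $\ji{\myvec{i,p_i,1}}$ has value $\ji{p_i,1}$ at each coordinate $(i,b)$ with $b>i$ and $\bot$ elsewhere (the degenerate entries are $\ji{1,p_i}=\bot$ and $\ji{1,1}=\bot$), so $\ji{\myvec{i,p_i,1}}=\bigvee_{i<j\leq d}\JJi{p_i}{1}{i}{j}$ in $\PrLI$. Substituting these into the displayed regroupings yields $\G{p}=\bigvee_{1<j\leq d}\ji{\myvec{j,p_j,0}}$ and $\F{p}=\bigvee_{1\leq i<d}\ji{\myvec{i,p_i,1}}$, both as identities in $\PrLI$.

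Finally I would upgrade these identities to $\LId$. By Proposition~\ref{prop:clopen} each $\ji{\myvec{j,p_j,0}}$ and each $\ji{\myvec{i,p_i,1}}$ lies in $\LId$, and by the previous Lemma so do $\G{p}$ and $\F{p}$; since in each case the supremum of the family taken in $\PrLI$ already belongs to the complete sublattice $\LId$ (Theorem~\ref{theo:meetsAndJoin}), Remark~\ref{rem:joins} applied with $L=\PrLI$ and $M=\LId$ shows that the same two relations hold in $\LId$. I do not expect any genuine obstacle here: the whole argument is finite index bookkeeping, and the only point needing a moment's attention is verifying that the off-axis entries of $\ji{\myvec{j,p_j,0}}$ and $\ji{\myvec{i,p_i,1}}$ collapse to $\bot$, which is immediate from the description of when a one-step function is $\bot$.
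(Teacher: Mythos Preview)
Your proposal is correct and follows essentially the same approach as the paper: regroup the defining join of $\F{p}$ (resp.\ $\G{p}$) by the first (resp.\ second) index, identify each inner join with the corresponding $\ji{\myvec{\cdot}}$ by checking that all off-axis entries collapse to $\bot$, and then invoke Remark~\ref{rem:joins} to transfer the $\PrLI$ identity to $\LId$. The paper's write-up differs only cosmetically, explicitly padding the inner join with $\bot$-terms $E^{i,j}_{1,1}$ rather than analyzing the coordinates of $\ji{\myvec{i,p_i,1}}$ directly.
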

\begin{proof}
  Recalling that $\F{p} = \Fam{f_{i,j} \mid (i,j) \in \cd}$ with
  $f_{i,j} = \ji{p_{i},1}$, we can compute within $\PrLI$ as follows:
  \begin{align*}
    \F{p} & = \bigvee_{(i,j) \in \cd} E^{i,j}_{p_{i},1}
    = \bigvee_{1 \leq i_{0} < d} \;\bigvee_{i_{0} < j \leq d}
    E^{i_{0},j}_{p_{i},1} \\
    & = \bigvee_{1 \leq i_{0} < d} \;\;(\;\bigvee_{i_{0} < j \leq d}
    E^{i_{0},j}_{p_{i_{0}},1}  \vee \bigvee_{(i,j) \in \cd, i \neq
      i_{0}} E^{i,j}_{1,1}\;)
    = \bigvee_{1 \leq i_{0} < d} \ji{\myvec{i_{0},p_{i_{0}},1}}\,.
  \end{align*}
  Again, Remark~\ref{rem:joins} ensures that the relation so derived
  holds in $\LId$ as well.
  The proof that $\G{p} = \bigvee_{1 < j \leq d}\ji{\myvec{j,p_{j},0}}$ is
  analogous.
\end{proof}

\begin{lemma}
  For each $i \in \setd$, $x \in \I$ and $y \in \set{0,1}$,
  $\ji{\myvec{i,x,y}}$ is a join of elements in $\LfId$.
\end{lemma}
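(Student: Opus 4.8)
The plan is to treat the two values $y = 1$ and $y = 0$ separately, reducing each to a single one-dimensional approximation statement in $\LjI$ and then lifting it coordinatewise through $\PrLI$ and finally through $\LId$ via Remark~\ref{rem:joins}.

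For $y = 1$ I would first unwind the definitions to see that $\ji{\myvec{i,x,1}}$ is the tuple whose coordinate $(i,\ell)$ equals $\ji{x,1}$ for every $\ell$ with $i < \ell \le d$, and whose other coordinates are $\bot$ (because $\ji{1,t} = \bot$ for all $t$). The crux is then the identity
\[
  \ji{x,1} = \bigvee\set{\,\ji{x',1} \mid x' \in \IQ,\; x < x'\,}
\]
in $\LjI$: evaluated at $t$, the right-hand side is $1$ as soon as $t > x$ (choose a rational $x'$ with $x < x' < t$) and is $0$ for $t \le x$, that is, it is exactly $\ji{x,1}(t)$; this uses nothing beyond density of $\IQ$ in $\I$. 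Since joins in the product $\PrLI$ are computed coordinatewise, this upgrades to $\ji{\myvec{i,x,1}} = \bigvee\set{\ji{\myvec{i,x',1}} \mid x' \in \IQ,\; x < x'}$ in $\PrLI$; as $\ji{\myvec{i,x,1}}$ is clopen by Proposition~\ref{prop:clopen}, Remark~\ref{rem:joins} carries this equality over to $\LId$. Finally every coordinate of $\ji{\myvec{i,x',1}}$ with $x' \in \IQ$ is either $\bot$ or $\ji{x',1}$, hence a rational one-step function, so $\ji{\myvec{i,x',1}} \in \LfId$, exhibiting $\ji{\myvec{i,x,1}}$ as a join of elements of $\LfId$.

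The case $y = 0$ is symmetric: $\ji{\myvec{i,x,0}}$ is the tuple with $\ji{0,x}$ in each coordinate $(k,i)$, $1 \le k < i$, and $\bot$ elsewhere, and in $\LjI$ one has $\ji{0,x} = \bigvee\set{\ji{0,x'} \mid x' \in \IQ,\; x' < x}$, since at $t > 0$ this join equals $\bigvee\set{x' \in \IQ \mid x' < x} = x$ and at $t = 0$ it is $0$. The same passage through $\PrLI$ and Remark~\ref{rem:joins} yields $\ji{\myvec{i,x,0}} = \bigvee\set{\ji{\myvec{i,x',0}} \mid x' \in \IQ,\; x' < x}$ in $\LId$, with each $\ji{\myvec{i,x',0}}$ (for $x' \in \IQ$) in $\LfId$. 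The only cases requiring a separate word are the degenerate ones --- $x \in \set{0,1}$, or the index set of nontrivial coordinates empty --- in which $\ji{\myvec{i,x,y}}$ is just the bottom of $\LId$, which already lies in $\LfId$. I do not expect any genuine obstacle here: the content is the two density identities in $\LjI$, everything else being bookkeeping with the coordinatewise order together with an appeal to Remark~\ref{rem:joins}.
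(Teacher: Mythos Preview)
Your proposal is correct and follows essentially the same route as the paper: approximate the single nontrivial one-step function $\ji{x,1}$ (resp.\ $\ji{0,x}$) by rational one-step functions from above (resp.\ below), lift coordinatewise to $\PrLI$, and pass to $\LId$. The only cosmetic differences are that the paper uses a countable sequence of rationals and appeals to Corollary~\ref{cor:orderonjp} rather than pointwise evaluation, and says ``a fortiori'' where you (more explicitly) invoke Remark~\ref{rem:joins}.
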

\begin{proof}
  Let us consider the case where $y = 1$ (the proof when $y = 0$ is
  similar).

  If $x = 1$, then $\ji{\myvec{i,x,y}}$ already belongs to $\LfId$.
  If $x \neq 1$, 
  then all the coordinates different from $i$ are rational. If $x$ is
  not rational, then we can choose a descending sequence $r_{n}$ of
  rational numbers such that $\bigwedge_{n \geq 0} r_{n} = x$. Then,
  using the characterization of the order given in
  Corollary~\ref{cor:orderonjp},
  we see that the relation
  $\bigvee_{n \geq 0} \ji{\myvec{i,r_{n},1}} = \ji{\myvec{i,x,1}}$
  holds in $\PrLI$.  A fortiori, the same relation holds in $\LId$.
\end{proof}

We can summarize our observations with the following statement:
\begin{proposition}
  Every \mirr element of $\LId$ is a join of elements from $\LRId$.
\end{proposition}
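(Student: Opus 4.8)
The plan is simply to chain together the lemmas just established, since between them they already do all the work. Let $\mu$ be a \mirr element of $\LId$. By the preceding characterization of \mirr elements, $\mu = \mi{p}$ for some $p \in \I^{d}$ with $\dimmi(p) > 0$. The Corollary immediately above rewrites this, as an identity in $\LId$, as $\mu = \G{p} \vee \F{p}$, and the last Lemma expands the two summands:
\begin{align*}
  \G{p} & = \bigvee_{1 < j \leq d} \ji{\myvec{j,p_{j},0}}\,,
  &
  \F{p} & = \bigvee_{1 \leq i < d} \ji{\myvec{i,p_{i},1}}\,,
\end{align*}
again in $\LId$. Thus $\mu$ is the join, taken in $\LId$, of the finite family consisting of the tuples $\ji{\myvec{j,p_{j},0}}$ for $1 < j \leq d$ and $\ji{\myvec{i,p_{i},1}}$ for $1 \leq i < d$.

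Next I would invoke the Lemma immediately preceding the statement, according to which each tuple $\ji{\myvec{i,x,y}}$ with $i \in \setd$, $x \in \I$ and $y \in \set{0,1}$ is a join (in $\LId$) of elements of $\LRId$: either the tuple already lies in $\LRId$, when $x$ is rational, or it is the join of the tuples $\ji{\myvec{i,r_{n},y}}$ for a decreasing sequence of rationals $r_{n} \searrow x$. Substituting these expressions into the decomposition of $\mu$ above and using that joins in a complete lattice may be flattened, one obtains $\mu$ as a join of elements of $\LRId$, which is the claim. The completeness of $\LId$ used here is immediate: $\PrLI = \QjI^{\cd}$ is a product of complete lattices, so by Theorem~\ref{theo:meetsAndJoin} every family of clopen tuples has a supremum in $\LId$, computed as the closure of its pointwise supremum.

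I do not expect a genuine obstacle in this Proposition; the substance was already spent in the preceding lemmas, namely in checking that $\G{p}$, $\F{p}$, and the tuples $\ji{\myvec{i,x,y}}$ are clopen and in the rational-approximation step. The only point to keep straight in the assembly is that all of the joins involved are computed in one ambient lattice: the lemmas first record the decompositions as identities inside $\PrLI$, where the coordinatewise computations with \osf{s} are transparent via Lemmas~\ref{lemma:compJI} and~\ref{lemma:compJIbis}, and then transport them to $\LId$ through Remark~\ref{rem:joins}, which applies precisely because $\LId$ is a sub-complete-lattice of $\PrLI$ for the inherited order. Finally I would note the dual statement: the antitone involution $\oppfun$ of $\LId$ merely permutes coordinates (by the Remark in Section~\ref{sec:latticesFromQuantales}), hence maps $\LRId$ onto itself and exchanges \mirr and \jirr elements, so every \jirr element of $\LId$ is correspondingly a meet of elements of $\LRId$; combined with the fact (dual to Corollary~\ref{cor:joinofeps}) that every element of $\LId$ is a meet of \mirr elements, this yields that every element of $\LId$ is a meet of joins — and, dually, a join of meets — of elements of $\LRId$.
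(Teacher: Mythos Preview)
Your proposal is correct and mirrors the paper exactly: the paper states this Proposition as a summary of the preceding lemmas without giving a separate proof, and your argument is precisely the chaining of those lemmas (the characterization of \mirr elements, the decomposition $\mi{p} = \G{p} \vee \F{p}$, the further decomposition into tuples $\ji{\myvec{i,x,y}}$, and the rational approximation of these).

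One small imprecision worth flagging in your closing remarks (which go beyond the Proposition and anticipate the next Theorem): the involution $\oppfun$ on $\LId$ does not ``merely permute coordinates''; it permutes coordinates \emph{and} applies $\oppfun$ componentwise, sending $f_{i,j}$ to $\opp{(f_{\sigma(j),\sigma(i)})}$. Your conclusion that $\LRId$ is stable under it is still correct, since $\Opp{\ji{x,y}} = \mi{y,x} = \ji{0,x} \vee \ji{y,1}$ remains a finite join of rational \osf{s} when $x,y$ are rational, but the reason is this computation rather than a mere permutation.
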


Let in the following $\Sigma_0(\LfId) = \Pi_0(\LfId) = \LfId$ be the set
of tuples that have discrete rational functions as components.  Let
$\Sigma_{n+1}(\LfId)$ be the closure under joins of $\Pi_n(\LfId)$; let
$\Pi_{n+1}(\LfId)$ be the closure under meets of $\Sigma_n(\LfId)$.
\begin{theorem}
  Every element of $\LId$ belongs both to $\Sigma_{2}(\LfId)$ and
  $\Pi_{2}(\LfId)$.
\end{theorem}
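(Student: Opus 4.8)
The plan is to prove the statement by reducing it, via the self-duality of $\LId$, to the single assertion that every element lies in $\Pi_{2}(\LfId)$, and then to read that assertion off from the description of the (co)irreducibles of $\LId$ obtained in Section~\ref{sec:weakBruhat} together with the fact — just proved — that every \mirr element of $\LId$ is a join of elements of $\LfId$ (recall that $f\in\LfId=\LRId$ means $f$ is clopen with each component a finite join of rational \osf{s}).

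The first thing I would check is the self-duality input. Recall that $f\mapsto\opp{f}$ is an antitone involution of $\PrLI$ carrying clopen tuples to clopen tuples (Section~\ref{sec:latticesFromQuantales}), so it restricts to an order-reversing bijection of the complete lattice $\LId$, which therefore interchanges arbitrary joins with arbitrary meets. I claim it also restricts to a bijection of $\LfId$. Indeed, for a single rational \osf we have $\opp{\ji{x,y}}=\mi{y,x}=\ji{0,x}\vee\ji{y,1}$, using the identity $\mi{x,y}=\opp{\ji{y,x}}$ and \eqref{eq:mirrfromjirr}, so this is again a finite join of rational \osf{s}; and, since $\opp{(-)}$ is an order anti-automorphism of $\LjI$, it sends a component $\bigvee_{k}\ji{x_{k},y_{k}}$ of an element of $\LfId$ to the finite \emph{meet} $\bigwedge_{k}\opp{\ji{x_{k},y_{k}}}$, which is again a discrete rational \jc function because finite meets in $\LjI$ are computed pointwise (Corollary~\ref{cor:distrlattice}). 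Hence $\opp{(-)}$ maps $\LfId$ onto $\LfId$, and a routine induction on $n$ then shows that $\opp{(-)}$ interchanges $\Sigma_{n}(\LfId)$ with $\Pi_{n}(\LfId)$. So it suffices to prove that every $f\in\LId$ lies in $\Pi_{2}(\LfId)$: applying $\opp{(-)}$ gives $\opp{f}\in\Sigma_{2}(\LfId)$, and as $f$ ranges over $\LId$ so does $\opp{f}$, whence every element lies in both classes.

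To show an arbitrary $f\in\LId$ lies in $\Pi_{2}(\LfId)$, I would argue as follows. By Corollary~\ref{cor:joinofeps} together with the characterization of \jirr elements (Propositions~\ref{prop:jipji} and~\ref{prop:ji2}), every element of $\LId$ is the join of the \jirr elements below it; applying $\opp{(-)}$ to the expansion of $\opp{f}$, and using that $\opp{\ji{p}}=\mi{q}$ for $q=(p_{d},\ldots,p_{1})$ with $\dimmi(q)=\dimji(p)$ (as in the proof of the characterization of \mirr elements), it follows that $f$ is the meet of the \mirr elements $\mi{q}$ with $f\leq\mi{q}$. Now each such $\mi{q}$ lies in $\Sigma_{1}(\LfId)$, by the Proposition stating that every \mirr element of $\LId$ is a join of elements of $\LfId$: concretely $\mi{q}=\G{q}\vee\F{q}$, where $\G{q}$ and $\F{q}$ are finite joins of tuples of the form $\ji{\myvec{i,q_{i},0}}$ and $\ji{\myvec{i,q_{i},1}}$, each of which is itself a (possibly infinite) join of elements of $\LfId$. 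Hence $f$ is a meet of elements of $\Sigma_{1}(\LfId)$, i.e.\ $f\in\Pi_{2}(\LfId)$, and the reduction above completes the proof. (For $d=2$, where $\LId=\QjI$, one even gets the stronger conclusion that every element is already in $\Sigma_{1}(\LfId)\cap\Pi_{1}(\LfId)$, directly from Remark~\ref{remark:QjIcompletion} and its dual.)

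The only step that is not pure bookkeeping is the verification that the involution $\opp{(-)}$ maps $\LfId$ into itself: this is exactly what allows the ``meet of joins'' conclusion to be transferred to a ``join of meets'' conclusion. Everything else is an assembly of the already-established description of the join- and \mirr elements of $\LId$ and of the decomposition $\mi{q}=\G{q}\vee\F{q}$ of a \mirr element into joins of elements of $\LfId$.
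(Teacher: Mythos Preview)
Your proof is correct and follows essentially the same route as the paper: write every element as a meet of \mirr elements (via Corollary~\ref{cor:joinofeps} and self-duality), use the preceding Proposition to place each \mirr element in $\Sigma_{1}(\LfId)$, conclude $\Pi_{2}(\LfId)$, and then transfer to $\Sigma_{2}(\LfId)$ by the autoduality of both $\LId$ and $\LfId$. The paper's proof simply asserts that ``$\LId$ and $\LfId$ are autodual'', whereas you spell out why $\opp{(-)}$ restricts to $\LfId$ --- which is the one point that actually needs checking --- so your write-up is, if anything, more complete than the paper's on this step.
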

\begin{proof}
  By Corollary~\ref{cor:joinofeps} and the fact that $\LId$ is
  autodual, every element of $\LId$ is a meet of \mirr elements. We
  have seen above that each \mirr element is a join of elements from
  $\LfId$ so it belongs to $\Sigma_{1}(\LfId)$. It follows that every
  element of $\LId$ is an element of $\Pi_{2}(\LfId)$. Since $\LId$
  and $\LfId$ are autodual, this also proves that every element of
  $\LId$ is an element of $\Sigma_{2}(\LfId)$.
\end{proof}

Using the terminology of \cite{GehrkeHarding2001}, the previous
theorem states that $\LRId$ is dense in $\LId$. Yet, $\LId$ is not a
canonical extension of $\LRId$. A canonical extension of a lattice is
a complete spatial lattice, meaning that every element is the infinite
join of the \cjirr elements below it, see
\cite[Lemma~3.4.]{GehrkeHarding2001}. As argued in
Section~\ref{subsec:lackspatials}, there are no \cjirr
elements in $\LId$, in particular the lattices $\LId$ are not spatial.

\bibliographystyle{abbrv}
\bibliography{biblio}

\end{document}